\newtheorem{theorem}{Theorem}[section]
\newtheorem{proposition}{Proposition}
\theoremstyle{definition}
\theoremstyle{remark}
\newtheorem{remark}[theorem]{Remark}
\numberwithin{equation}{section}
\DeclareMathOperator*{\argmax}{arg\,max}
\DeclareMathOperator*{\argmin}{arg\,min}
\newcommand{\bigzero}{\mbox{\normalfont\large 0}}
\newcommand{\rvline}{\hspace*{-\arraycolsep}\vline\hspace*{-\arraycolsep}}
\newcommand{\subscript}[2]{$#1 _ #2$}
\begin{document}

\title[GPR under diff. constraints with applications to the wave eq.]{Stochastic processes under linear differential constraints : Application to Gaussian process regression for the 3 dimensional \\ free space wave equation}

\author{Iain Henderson}
\address{INSA Toulouse, 135 Avenue de Rangueil, 31400 Toulouse, France}
\curraddr{}
\email{henderso@insa-toulouse.fr}
\thanks{}

\author{Pascal Noble}
\address{INSA Toulouse, 135 Avenue de Rangueil, 31400 Toulouse, France}
\curraddr{}
\email{}
\thanks{}
\author{Olivier Roustant}
\address{INSA Toulouse, 135 Avenue de Rangueil, 31400 Toulouse, France}
\curraddr{}
\email{}
\thanks{}

\subjclass[2020]{Primary 60G12 60G17 62F15 65M32;
Secondary 35L15 60G15 46E22}
\keywords{Gaussian Process Regression, Partial Derivative Equations, Wave equation, Physical Parameter Estimation, Initial Value Inverse Problems}

\begin{abstract}
Let $P$ be a linear differential operator over $\mathcal{D} \subset \mathbb{R}^d$ and $U = (U_x)_{x \in \mathcal{D}}$ a second order stochastic process. In the first part of this article, we prove a new necessary and sufficient condition for all the trajectories of $U$ to verify the partial differential equation (PDE) $T(U) = 0$. This condition is formulated in terms of the covariance kernel of $U$. When compared to previous similar results, the novelty lies in that the equality $T(U) = 0$ is understood in the \textit{sense of distributions}, which is a relevant framework for PDEs. This theorem provides precious insights during the second part of this article, devoted to performing ``physically informed" machine learning for the homogeneous 3 dimensional free space wave equation. We perform Gaussian process regression (GPR) on pointwise observations of a solution of this PDE. To do so, we propagate Gaussian processes (GP) priors over its initial conditions through the wave equation. We obtain explicit formulas for the covariance kernel of the propagated GP, which can then be used for GPR. 
We then explore the particular cases of radial symmetry and point source. For the former, we derive convolution-free GPR formulas; for the latter, we show a direct link between GPR and the classical triangulation method for point source localization used in GPS systems. Additionally, this Bayesian framework provides a new answer for the ill-posed inverse problem of reconstructing initial conditions for the wave equation with a limited number of sensors, and simultaneously enables the inference of physical parameters from these data. Finally, we illustrate this physically informed GPR on a number of practical examples.
\end{abstract}

\maketitle

\section{Introduction}
Machine learning techniques have proved countless times that they can provide efficient solutions to difficult problems when field data are available. One key element to a great part of this success is the incorporation of ``expert knowledge" in the corresponding statistical models. In a good deal of practical applications, this knowledge takes the form of mathematical models which, at times, are already well understood. This is very common when dealing with problems coming from physics such as thermodynamics, continuum mechanics or fluid mechanics to name a few. In these examples,  the mathematical models take the form of Partial Differential Equations (PDEs). The zoology of PDEs is incredibly vast \cite{evans1998} and their applications are ubiquitous. As such, tremendous efforts have been devoted to solving them, both theoretically \cite{evans1998} and numerically \cite{Ames1977NumericalMF}. These equations impose very specific structures on the observed data which can be extremely difficult to capture or mimic with general machine learning models.
Therefore, given the available knowledge of the underlying PDEs, one may try incorporating these structures in machine learning models. How can this be done? 
Gaussian Process Regression (GPR) \cite{gpml2006}, which is a type of Bayesian framework for machine learning, provides a possible answer when the PDE is \textit{linear}. Indeed, Gaussian processes (GPs) are the most ``linear" of all random processes : they are stable under (finite) linear combinations of their elements \cite{gpml2006}. 
Although they are very simple mathematical objects when compared to non linear ones, linear PDEs are central in general PDE theory and remain physically relevant in a number of applications such as acoustics, electromagnetics or quantum mechanics. In this paper, we focus on the 3D wave equation which in fact plays a fundamental role in all three of the aforementioned domains.

\subsection{State of the Art}
GPR is a ``kernel based" machine learning method, which means that it is built around a positive definite function (see equation \eqref{eq:pd kernel}) called kernel. Solving or ``learning" linear ODEs and PDEs thanks to GPR is not a very new idea. The first initiative in that direction may go back to \cite{graepel2003} and has been re-explored ever since. It has been developed in the context of latent forces (\cite{alvarez2009}, \cite{alvarez2013},  \cite{Srkk2019GaussianPL},  \cite{LpezLopera2021PhysicallyInspiredGP}) and was then applied to certain wave equations (\cite{alvarado2014}, \cite{alvarado2016}). Latent forces are interested in linear PDEs of the form 
\begin{equation}
Lu = f \label{eq:f source}
\end{equation}
where both $u$ and $f$ are defined on the same domain $\mathcal{D} \subset \mathbb{R}^d$, and $L$ is a linear differential operator.
Latent forces put a GP prior on the driving source term $f$. Explicit resolution of \eqref{eq:f source}, thanks to Green's function \cite{Duffy2015GreensFW}, translates this prior as a GP distribution over the solution $u$. Conversely, a second approach (\cite{raissi2017}, \cite{raissi2018numericalGP}) rather puts a GP prior on the \textit{solution} $u$ and straightforwardly translates \eqref{eq:f source} as a GP distribution on the driving term $f$, avoiding the need for Green's functions and convolutions. Though both of these approaches are ``physically informed", they may not account for strict linear equality constraints in the interior domain $\mathcal{D}$, i.e. when $f$ is known and should not be seen as random. In this case, this knowledge could be exploited for dimension reduction. Actually, a number of famous PDEs can be studied with no interior source term $(f \equiv 0)$, in which case initial conditions or more general boundary source terms are provided. Such PDEs fall in the category of initial value or boundary problems (IVP/BP). This is frequent in the evolution equation literature, which for instance gives rise to the elegant semi-group theory for PDEs (\cite{Pazy1983SemigroupsOL} or \cite{evans1998}, section 7.4). Equation \eqref{eq:f source} then writes $Lu = 0$ and while $u$ may be modelled as a random object, $0$ should be strictly $0$ and not just a centered stochastic process, as would be the case if using any of the two frameworks described above. 
A first step towards enforcing strict linear constraints in the approximation space probably dates back to \cite{Narcowich1994GeneralizedHI} where in a deterministic context, divergence-free interpolation spaces were first built. This idea was pursued in \cite{Schaback2009SolvingTL} where an interpolation space comprised of solutions to Laplace's equation was constructed. In both cases, these spaces are built upon positive definite kernels (see equation \eqref{eq:pd kernel}), which makes them easy to transpose in a GPR framework. As such, the kernel from \cite{Schaback2009SolvingTL} was then used in \cite{mendes2012} for performing GPR on Laplace's equation. Likewise for \cite{scheuerer2012}, where GPs are built so that their trajectories are systematically divergence/curl free.
This was then taken a step further in \cite{wahlstrom2013}, \cite{jidling2017},  \cite{Jidling2018ProbabilisticMA} and \cite{hegerman2018} in the more general context of Maxwell's stationary equations. Finally, \cite{albert2020} applies this framework to the 1D heat equation, Laplace's and Helmholtz' 2D equations. The matter of enforcing strict homogeneous boundary conditions in the context of GPR has also been addressed in \cite{gulian2022}, \cite{hegermann2021LinearlyCG}. Enforcing these constraints provides another way of lowering the dimension of the initial problem. Following \cite{Solin2020HilbertSM}, \cite{gulian2022} builds a PDE-tailored covariance kernel thanks to a spectral decomposition of the differential operator $L$. Note that \cite{owhadi_bayes_homog} tackles the matter of defining a unifying GPR framework in which all the above approaches can fit, in the context of numerical Bayesian homogenization. As in many of the references above, the central tool in \cite{owhadi_bayes_homog} is the PDE's Green's function (and the chosen example is elliptic); in applications, the philosophy of \cite{owhadi_bayes_homog} is more or less that of latent forces \cite{alvarez2009}. Actually, \cite{cockayne2017probabilistic} and \cite{cockayne_aip} reformulate the approach exposed in \cite{owhadi_bayes_homog} and term it probabilistic meshless method (PMM). They also consider putting the prior directly over the solution $u$, as in \cite{raissi2017}; they then expose a convergence result of PMM in a probability framework. GP priors can also be used to quantify uncertainty w.r.t. ODE and PDE discretization in numerical solvers (\cite{conrad2017statistical}). Interestingly, \cite{hegermann2021LinearlyCG} raises the question of rigorous proofs and regularity issues regarding the derivations and applications of GPR for PDEs, and resorts to algebraic techniques to justify the different steps of their approach. We raise the same questions here, though we rather make use of a functional analysis framework adapted to PDEs. 
All of the aforementioned approaches as well as the one presented in this article can be formulated using the theory of stochastic partial differential equations (SPDE), which are PDEs whose source term is a random function. The general matter of applying physically informed GPR to linear PDEs thanks to an SPDE formulation is tackled in \cite{sarkka2011}, without addressing regularity questions. \cite{Solin2016} reformulates GPR as an SPDE problem, enabling the use of Kalman filter theory for computational efficiency. In \cite{NGUYEN_peraire}, the variational formulation (see \cite{evans1998}, section 6.1.2 for a definition) of certain linear PDEs has been incorporated into a GPR framework thanks to a SPDE reformulation. This approach requires the use of Gaussian generalized stochastic processes (see \cite{agnan2004}, section 2.2.1.1), or ``functional Gaussian processes" following \cite{NGUYEN_peraire}. In \cite{Nikitin2021NonseparableSG}, covariance kernels on graphs are obtained thanks to an adaptation of SPDEs on graphs. Finally, \cite{vergara2022general} focuses on the study of stationary stochastic processes that are solutions of a wide class of linear SPDEs, outside the context of GPR. In particular, \cite{vergara2022general} provides a description of all the second order stationary stochastic processes that are solutions to the 3D wave equation, a central equation in the present article; this description is done in terms of the  covariance kernel of the corresponding stochastic process. Note that as in this article, \cite{vergara2022general} also makes use of the theory of generalized functions. On a side note, an in-depth theoretical study of a stochastic 3D wave equation (central in this paper) and the regularity of the associated random paths is exposed in \cite{Dalang2009wave_3D}. In a much wider framework, general linearly constrained stochastic processes and GPs in particular are thoroughly explored in \cite{ginsbourger2016}. Though \cite{ginsbourger2016} deals with many different types of linear operators, the application of the corresponding results to linear PDEs are not straightforward, see section \ref{section:sto diff}. Indeed, these results are not phrased using generalized functions, which are especially convenient to study differential equations. This is why we prove in Proposition \ref{prop : diff constraints} a theorem which is reminiscent of \cite{ginsbourger2016}, but specifically adapted to linear PDEs. A recent survey on linearly constrained GPs presents most of these different approaches \cite{Swiler2020ASO}. Finally, a recent article \cite{CHEN_owhadi_2021} extended the use of GPR to nonlinear PDEs by imposing nonlinear interpolation constraints, thus setting the way forward for many possible applications of GPR to nonlinear realistic PDE models, as found e.g. in biology of fluid mechanics.

Very closely related approaches are the kerned-based methods for solving PDEs (\cite{wendland2004scattered}, section 16.3), which make extensive use of the theory of reproducing kernel Hilbert spaces (RKHS, \cite{agnan2004}). These spaces are also built upon positive definite kernels (equation \eqref{eq:pd kernel}) and kerned based methods naturally generalize collocation methods with radial basis functions (RBF, \cite{wendland2004scattered},\cite{fasshauer2007}); those are actually not a new tool for solving PDEs (\cite{buhmann_2003}, section 2.4 and the references therein). The RKHS point of view can be seen as a deterministic counterpart of GPR (section \ref{subsub:rkhs}); in particular the final function approximation formula \eqref{eq:krig mean} is the same. Kernel based methods can also be used to tackle linear SPDEs : in this case, one can go back and forth between the RKHS and GPR points of view (\cite{cialenco2012approximation}). Interestingly, \cite{fasshauer2011reproducing} and \cite{fasshauer2013reproducing} also build positive definite kernels thanks to the PDE's Green's function when it exhibits sufficient Sobolev regularity.

In this article, we focus on the so-called (3 dimensional, free space, time dependent) wave equation. As a time-dependent PDE, we show that performing GPR on wave equation data amounts to reconstructing the corresponding initial conditions of the wave equation from incomplete scattered data. This immediately echoes with questions arising from the inverse problem community (\cite{Tarantola2005}, \cite{ammari2012}); the solution we provide in this article falls in the domain of Photo Acoustic Tomography (PAT), which aims at recovering the initial conditions of wave propagation problems similar to equation \eqref{eq:wave_eq}. We refer \cite{kuchment2010} and the many references therein for PAT. Closely related is \cite{Purisha_2019} (and the references within), where a GPR framework based on linear Radon transforms are described to solve x-ray tomography problems. Also, Bayesian approaches for inverse problems involving PDEs are in fact standard (\cite{Tarantola2005}, \cite{stuart_2010}, \cite{dashti_stuart_pde}, \cite{Dashti2017}) and have been set up a number of times for equations from fluid mechanics in particular (see e.g. \cite{stuart_2010}, \cite{dashti_stuart_pde} and the related references therein). However, these approaches never entirely follow the GPR methodology presented in this article when it comes down to reconstructing some initial conditions, see the contributions section for a more detailed discussion. Additionally, the questions at hand in \cite{stuart_2010}, \cite{dashti_stuart_pde} and \cite{Dashti2017} differ from those that are targeted in the present article, as these works rather study general theoretical questions such as well-posedness and stability in the Bayesian inversion framework. 
The general work-flow of Bayesian inversion is the following : a probability prior is set on the model's parameter space. This prior is then conditioned on field data thanks to Bayes' theorem, in order to estimate the model's true parameters. See \cite{Tarantola2005}, sections 5.6 to 5.8 for practical examples, or \cite{ESMAILZADEH201556} for an inversion of a non uniform propagation speed $c(x)$.


\subsection{Contribution}
We consider the general problem of applying GPR on scattered observations of solutions of the wave equation using ``physically informed" GPs. We explore both the theoretical and applied aspects of this task.

In Proposition \ref{prop : diff constraints}, we prove a general result that provides a simple necessary and sufficient condition for the trajectories of any second order stochastic process to be solutions to a given linear PDE in the distributional sense (section \ref{section:sto diff}). This condition is formulated in terms of its covariance function. The hypotheses are minimal and the displayed result is concise. This theorem is phrased in a functional analysis framework, using the permissive language of generalized functions.

We describe a general Gaussian process model for the homogeneous 3D wave equation, with the corresponding proofs (section \ref{section:GP_wave}). This model is obtained by putting GP priors over the initial conditions of the wave equation, which comes naturally if considering those to be unknown. In particular we derive the corresponding covariance kernel which we will directly use for GPR. For short, we denote WIGPR the use of this kernel to perform GPR, as in ``Wave Informed GPR". Our approach enforces strict linear homogeneous PDE constraints in the interior domain, similarly to what was observed in \cite{wahlstrom2013}, \cite{jidling2017} and \cite{hegerman2018} among others for different PDEs (see the state of the art section for more details). More precisely, we get from Proposition \ref{prop : diff constraints} that the trajectories of the corresponding GP all verify the wave equation, as well as the predictions provided by WIGPR. Here, these linear constraints are understood in light of our result from section 2, i.e. in the sense of distributions. The key difference with the kernels presented in \cite{vergara2022general} is that here, no stationarity assumptions are made on the underlying stochastic process. In particular the spectral measure provided by Bochner's theorem \cite{gpml2006}, which is the key tool used in \cite{vergara2022general}, is not available anymore. We thus resort to more general integration techniques in the proofs. 

We then provide an inverse problem interpretation of the use of WIGPR on wave equation data, as the prediction from WIGPR evaluated at $t = 0$ provides a finite dimensional reconstruction of the real initial conditions corresponding to the observed data.  This is a natural thing to do because here, the uncertainty (randomness) was initially set on the initial conditions. The resulting posterior distribution then enables the estimation of said initial conditions. Note that this approach differs from the Bayesian approach to PDE inverse problems described e.g. in \cite{stuart_2010} and \cite{dashti_stuart_pde} because here, the forward linear problem is completely solved inside the GPR kernel \textit{before} conditioning the GP prior on the field data. When compared to equation (2.6) from \cite{stuart_2010}, we use a wave equation-tailored GP prior $\pi_0$ for the full space-time solution : the forward model is already solved in the prior $\pi_0$ and doesn't appear in the noise term $\rho$. It is only after choosing this tailored prior that the corresponding initial conditions are estimated by evaluating the Kriging mean at $t=0$. Among other things, the solution to the corresponding inverse problem provided by our approach is implementable in practice.

We investigate the two particular cases of radial symmetry and point source. When the initial conditions exhibit radial symmetry, we derive convolution-free covariance formulas and discuss them when the initial conditions are compactly supported. Indeed, these formulas can be directly linked to the finite speed propagation principle for the 3D wave equation, also known as the strong Huygens principle. In the point source case, we show numerically (Figure \ref{fig : point source}) and theoretically that the parameter fitting step from WIGPR naturally reduces to the classic triangulation approach for point source localization, used for instance in GPS systems. Indeed, as in \cite{raissi2017}, WIGPR can be used to jointly estimate physical parameters such as wave speed, source localization or source size. 

Note that the wave equation differs from most of the PDEs mentioned in the introduction, such as the heat equation or Laplace's equation, because these are either parabolic or elliptic. There are known regularization effects for such PDEs (\cite{evans1998}, sections 6.3 and 7.1.3) which somewhat mitigate the need for overly prudent mathematical argumentation w.r.t. the derivations of GPR for PDEs. Such regularization effects disappear for hyperbolic PDEs such as the wave equation (\cite{evans1998}, section 7.2.4) and careful derivations now become critical (\cite{evans1998}, section 3.4.1). Equivalently, such derivations require special care because the wave equation's Green's function is singular, see equation \eqref{eq:ft ftp in 3D}. This is also why we cannot directly refer to \cite{owhadi_bayes_homog}, \cite{cockayne2017probabilistic} or \cite{cialenco2012approximation} for an adapted theoretical framework. This is the reason why throughout this section, effort is done to formulate the arguments necessary for the derivations that lead to the exposed formulas.

We illustrate WIGPR on a few numerical experiments (section \ref{section : num}). They are performed on simulated wave equation data, with radially symmetric compactly supported initial conditions. This data takes the form of a number of noise polluted time series, each of them corresponding to an ``artificial" sensor placed in the numerical simulation. We thus use the fast-to-compute covariance expressions derived in the previous section. The tackled questions concern $(i)$ the quality of the joint estimation of the wave speed, source position and size and the quality of the resulting initial condition reconstruction and $(ii)$ the sensibility of the reconstruction step w.r.t. the sensor location. We display initial condition reconstruction images, in light of the inverse problem interpretation described in the previous section. In appendix A are presented more complete numerical results, showing for each example the quality of the physical parameter estimation as well as $L^2$, $L^1$ and $L^{\infty}$ relative error estimates in terms of the number of sensors used.

\subsubsection{Organization of the paper}
The paper is organized as follow. For self-contain\-ment, section 2 is dedicated to reminders on GPs, GPR and generalized functions. This section and all the proofs are detailed enough so that this article is accessible both to the analyst and the statistician. In section 3, we state and prove our new necessary and sufficient condition on stochastic processes that are subject to linear differential constraints. Section 4 is dedicated to the study of the wave equation thanks to Gaussian processes and Gaussian process regression. In section 5, we showcase some numerical applications of the previous section on wave equation data. We conclude in section 6.





\section{Notations and background}
\subsection{Notations}
Let $m$ be a scalar function defined on an open set $\mathcal{D}\subset \mathbb{R}^d$ and $k$ a scalar function defined on $\mathcal{D} \times \mathcal{D}$. Let $X = (x_1,...,x_n)^T$ be a column vector in $\mathcal{D}^n$. Let $(\Omega,\mathcal{F},\mathbb{P})$ be a probability space, over which all the random objects of this article will be defined.
\begin{enumerate}[label=\textbf{N.\arabic*},ref=\textbf{N.\arabic*}]
    \item \label{not : vecmat} Note $m(X)$ the column vector such that $m(X)_i = m(x_i)$, $k(X,X)$ the square matrix such that $k(X,X)_{ij} = k(x_i,x_j)$ and given $x \in \mathcal{D}$, $k(X,x)$ the column vector such that $k(X,x)_i = k(x_i,x)$. 
    \item \label{not : rkhs} for any positive definite kernel $k$ (see equation \eqref{eq:pd kernel}), $\mathcal{H}_k$ denotes the associated Reproducing Kernel Hilbert Space (RKHS) as defined in \cite{agnan2004}.
    \item \label{not : gp et al} Note $L^2(\mathbb{P})$ the Hilbert space of real-valued random variables defined on $\Omega$ with finite second order moment endowed with the inner product $\langle X, Y\rangle = \mathbb{E}[XY]$. For a GP $(U(x))_{x \in \mathcal{D}}$, the trajectory of $U$ at point $\omega \in \Omega$ is the deterministic function $x \longmapsto U(x)(\omega)$ and is noted $U_{\omega}$. $\mathcal{L}(U) := \overline{\text{Span}(U(x), x \in \mathcal{D})} \subset L^2(\mathbb{P})$ denotes the Hilbert subspace of $L^2(\mathbb{P})$ induced by $U$. Since  $L^2(\mathbb{P})$-limits of Gaussian random variables drawn from the same GP remain Gaussian (\cite{janson_1997}, section 1.3), $\mathcal{L}(U)$ only encompasses Gaussian random variables.
    \item \label{not : as ae} If $X$ is a random variable, then ``$X \in A$ a.s." means ``$X \in A$ almost surely", or equivalently $\mathbb{P}(X \in A) = 1$. Likewise, if $f$ is a function defined on  $\mathcal{D}\subset \mathbb{R}^d$, then ``$f(x) \in A$ a.e." means ``$f(x) \in A$ almost everywhere" or equivalently, $\lambda_d(\{x \in \mathcal{D} : f(x) \notin A\}) = 0$ where $\lambda_d$ is the Lebesgue measure on $\mathbb{R}^d$.
    \item \label{not : function spaces} $L^1_{loc}(\mathcal{D})$ denotes the space of measurable scalar functions $f$ defined on $\mathcal{D}$ that are locally integrable, i.e. such that $\int_K |f| < +\infty$ for all compact sets $K \subset \mathcal{D}$. $\mathscr{D}(\mathcal{D})$ denotes the space of compactly supported infinitely differentiable functions supported on $\mathcal{D}$.    
    \item \label{not : derivatives} for a multi-index $k = (k_1,...,k_d) \in \mathbb{N}^d$, we use the usual notation $|k| = k_1+...+k_d$ and $\partial^k := \partial_{x_1}^{k_1}...\partial_{x_d}^{k_d}$ where $\partial_{x_i}^{k_i}$ is the $k_i^{th}$ derivative w.r.t the $i^{th}$ coordinate $x_i$.    
    \item \label{not : spherical} The variables $(r,\theta,\phi)$ will always denote spherical coordinates; $S(0,1)$ denotes the unit sphere of $\mathbb{R}^3$ and we will always write $d\Omega = \sin \theta d\theta d\phi$ its surface differential element; $\gamma = (\sin \theta \cos \phi,\sin \theta \sin \phi,  \cos \theta)^T \in S(0,1)$ denotes the unit length vector parametrized by $(\theta,\phi)$.
\end{enumerate}

\subsection{Gaussian Processes}\label{sub : GP}
We refer to \cite{gpml2006} for further details on on Gaussian processes. 
\subsubsection{Definition} Let $(\Omega, \mathcal{F},\mathbb{P})$ be a probability space and $\mathcal{D} \subset \mathbb{R}^d$ an open set. A Gaussian process $(U(x))_{x \in \mathcal{D}}$ is a collection of normally distributed random variables defined on $\Omega$ and indexed by $\mathcal{D}$ such that for any $(x_1,...,x_n) \in \mathcal{D}^n$, the law of $(U({x_1}),...,U({x_n}))^T$ is a multivariate normal distribution.
The law of a GP is characterized by its mean and covariance functions (\cite{janson_1997}, section 8), defined by 
\begin{itemize}
    \item $m(x) := \mathbb{E}[U(x)]$
    \item $k(x,x') = \text{Cov}(U(x),U({x'})) = \mathbb{E}[(U(x)-m(x))(U({x'})-m(x'))]$
\end{itemize}

We write $(U(x))_{x \in \mathcal{D}} \sim GP(m,k) $. 
\subsubsection{Covariance kernels}
The mean function $m$ can be any function, and is actually often set to zero. On the other hand, the function $k$ has to be positive definite (PD) :
\begin{align}\label{eq:pd kernel}
    \sum_{i,j=1}^n a_i a_j k(x_i,x_j) \geq 0 \ \ \ \forall \ (x_1,...,x_n) \in \mathcal{D}^n,\ \ \ \forall \  (a_1,...,a_n) \in \mathbb{R}^n
\end{align}
PD functions verify the Cauchy-Schwarz inequality \cite{gpml2006} :
\begin{align}\label{eq:CS PD}
\forall x,x' \in \mathcal{D}, \ \ \ |k(x,x')| \leq \sqrt{k(x,x)}\sqrt{k(x',x')}
\end{align} 
The covariance kernel $k$ is the core element that encodes the mathematical properties of the GP. Furthermore, there is a one-to-one correspondence between positive definite kernels and (covariance kernels of) centered GPs \cite{dudley2002}. Thus we will focus on the design of positive definite kernels.

Among all covariance kernels, some are said to be \textit{stationary}, in which case the value at $(x,x')$ only depends on the increment $x-x'$ : $k(x,x') = k_S(x-x')$. Common examples are the squared exponential and Matérn kernels \cite{gpml2006}; see equation \eqref{eq:matern 5/2} for an example. 
\subsubsection{Bayesian inference of functions}
A Gaussian process $U = (U(x))_{x \in \mathcal{D}}$ can also be seen as a random variable that is valued in a space of functions through the mapping $T : \omega \longmapsto U_{\omega}$, i.e. a random function. This in turn defines a probability distribution over functions \cite{gpml2006} through the pushforward measure of $\mathbb{P}$ through $T$. Suppose a function $u$ is unknown, one can model it as a GP $U \sim GP(m,k)$ just as one would model an unknown scalar $y$ as a normally distributed random variable $Y \sim \mathcal{N}(\mu,\sigma^2)$. We then say that we put a Gaussian process prior over $u$. Thanks to Bayes' theorem, the prior GP distribution of $U$ can then be updated through probability conditioning when data on $u$ is available. The resulting conditioned GP distribution is called the posterior. Statistical indicators can then be derived from the posterior, such as expectation and standard deviation, to estimate the unknown function $u$ over which the prior was initially set. Bayesian inference is one way of understanding Gaussian process regression.
\subsection{Gaussian Process Regression}\label{sub : GPR}
We refer to \cite{gpml2006} for further details on GPR.
\subsubsection{Kriging equations} GPs can be used for function interpolation. Let $u$ be  a function defined on $\mathcal{D}$ for which we know a dataset of values $B = \{u(x_1),...,u(x_n)\}$. Conditioning the law of a GP $(U(x))_{x \in \mathcal{D}} \sim GP(m,k)$ on the database $B$ yields a second GP $\Tilde{U}$ with
$ \Tilde{U}(x) := (U(x) | U({x_i}) = u(x_i), i = 1,...,n)$.
The law of $\Tilde{U}$ is known : $(\Tilde{U}(x))_{x \in \mathcal{D}} \sim GP(\Tilde{m},\Tilde{k})$. $\Tilde{m}$ and $\Tilde{k}$ are given by the so-called \textit{Kriging} equations \eqref{eq:krig mean} and \eqref{eq:krig cov}. Note $X = (x_1,...,x_n)^T$ and suppose that $K(X,X)$ is invertible, then \cite{gpml2006}
\begin{numcases}{}
    \Tilde{m}(x) &= \hspace{3mm}$m(x) + k(X,x)^Tk(X,X)^{-1}(u(X) - m(X))$ \label{eq:krig mean} \\
    \Tilde{k}(x,x') &= \hspace{3mm}$k(x,x') - k(X,x)^Tk(X,X)^{-1}k(X,x')$\label{eq:krig cov}
\end{numcases}
In a Bayesian framework, the initial GP $(U(x))_{x \in \mathcal{D}}$ is the prior and the conditioned GP $(\Tilde{U}(x))_{x \in \mathcal{D}}$ is the posterior; the Kriging mean $\tilde{m}$ and covariance $\tilde{k}$ are simply the mean and covariance of the posterior. At location $x$, $\Tilde{m}(x)$ is the prediction of $u(x)$. By construction, for all $i \in \{1,...,n\}$, we have that $\Tilde{m}(x_i) = u(x_i)$ and $\Tilde{k}(x_i,x_i) = 0$. If observing noisy data ${U_i = U(x_i) + \varepsilon_i}$ with $(\varepsilon_1,...,\varepsilon_n)^T \sim \mathcal{N}(0,\sigma^2I_n)$ independent from $U$, one replaces $K(X,X)$ with $K(X,X) + \sigma^2 I$ above and leaves the terms $k(X,x)$ unchanged. This amounts to applying Tikhonov regularization on $k(X,X)$ and may also be used to approximate \eqref{eq:krig mean} and \eqref{eq:krig cov}  when $k(X,X)$ is ill-conditioned.

\subsubsection{Tuning covariance kernels}
For discussions on general kernel construction and selection, we refer to \cite{gpml2006}, chapter 5. Usually, a family of kernels $k_{\theta}$ indexed by $\theta \in \Theta \subset \mathbb{R}^q$ is first selected. The elements of $\theta$ are the \textit{hyperparameters} of $k_{\theta}$. One may then try to find the value $\theta^*$ that fits the best the observations, which corresponds to maximizing the \textit{marginal likelihood}. It is the probability density of the Gaussian random vector $(U(x_1),...,U(x_n))^T$ at point $(u(x_1),...,u(x_n))^T$, see equation \eqref{eq:likelihood}. Note $u_{obs} = (u(x_1),...,u(x_n))^T$ the vector of observations at locations $X = (x_1,...,x_n)$ and $p(u_{obs}|\theta)$ the associated marginal likelihood at point $\theta$, we search for $\theta^*$ such that
\begin{align}\label{eq:learning theta}
    \theta^* = \argmax_{\theta \in \Theta} p(u_{obs} | \theta)
\end{align}
Explicitly, assuming that $m \equiv 0$, then $(U(x_1),...,U(x_n))^T \sim \mathcal{N}(0,k_{\theta}(X,X))$ and
\begin{align}\label{eq:likelihood}
p(u_{obs} | \theta) = \frac{1}{(2\pi)^{n/2}\det k_{\theta}(X,X)^{1/2}}e^{-\frac{1}{2} u_{obs}^T k_{\theta}(X,X)^{-1}u_{obs}}
\end{align}
Set $\mathcal{L}(\theta) := -2\log p(u_{obs} | \theta) - n\log 2\pi$, then \eqref{eq:learning theta} is equivalent to
\begin{align}\label{eq:learning theta log lik}
\theta^* = \argmin_{\theta \in \Theta} \mathcal{L}(\theta)
\end{align}
Problem \eqref{eq:learning theta log lik} is better behaved numerically. From now on, we call $\mathcal{L}(\theta)$ the negative log marginal likelihood and we have, for noiseless observations,
\begin{align}\label{eq:loglikdef}
\mathcal{L}(\theta) = u_{\mathrm{obs}}^T k_{\theta}(X,X)^{-1}u_{\mathrm{obs}} + \log \det k_{\theta}(X,X)
\end{align}
and for noisy observations with noise standard deviation $\sigma$,
\begin{align}\label{eq:loglik_noisy}
\mathcal{L}(\theta,\sigma^2) = u_{\mathrm{obs}}^T (k_{\theta}(X,X)+ \sigma^2 I_n)^{-1}u_{\mathrm{obs}} + \log \det (k_{\theta}(X,X) + \sigma^2 I_n)
\end{align}
$\sigma$ can be interpreted as an additional hyperparameter and estimated through \eqref{eq:learning theta log lik}.
\subsubsection{Scattered Data interpolation and the RKHS point of view}\label{subsub:rkhs}
Kriging equations \eqref{eq:krig mean} and \eqref{eq:krig cov} can be encountered without resorting to GPs.
Given a positive definite kernel $k$ defined on $\mathcal{D}$, one may build a Reproducing Kernel Hilbert Space (RKHS) of functions defined on $\mathcal{D}$ which we denote by $\mathcal{H}_k$, see \ref{not : rkhs}. The inner product of $\mathcal{H}_k$ verifies the so called reproducing property :
\begin{align}\label{eq:repro_rkhs}
\forall x,x'\in \mathcal{D}, \langle k(x,\cdot),k(x',\cdot) \rangle_{\mathcal{H}_k} = k(x,x')
\end{align}
In the \textit{meshfree interpolation} framework \cite{agnan2004} \cite{fasshauer2007}, one may formulate the following constrained (interpolation) optimization problem
\begin{align}\label{eq:rkhs pov pb}
    \min_{v \in \mathcal{H}_k}||v||_{\mathcal{H}_k} \hspace{5mm} \text{s.t.} \hspace{5mm} v(x_i) = u(x_i) \ \ \ \forall i \in \{1,...,n\}
\end{align}
Solving \eqref{eq:rkhs pov pb} leads to the Kriging equation for $\Tilde{m}$ in \eqref{eq:krig mean}; the second equation \eqref{eq:krig cov} is what is called the power function in \cite{fasshauer2007}. One may also show \cite{agnan2004} that equation \eqref{eq:krig mean} can be summerized as
\begin{align}\label{eq:krig projection}
    \Tilde{m} = m + p_F(u-m)    
\end{align}
with $F$ is the finite dimensional space $F = \text{Span}(k(x_1,\cdot),...,k(x_n,\cdot)) \subset \mathcal{H}_k$ and $p_F$ stands for the orthogonal projection operator on $F$ w.r.t. the inner product of $\mathcal{H}_k$. In particular, when $m \equiv 0$, equation \eqref{eq:krig projection} amounts to $\Tilde{m} = p_F(u)$. Likewise, equation \eqref{eq:krig cov} amounts to 
\begin{align}\label{eq:tilde k proj}
    \Tilde{k}(x,\cdot) &= P_{F^\perp}(k(x,\cdot)) \\
    \text{with} \ \ \ \Tilde{k}(x,x) &= ||P_{F^\perp}k(x,\cdot)||_{\mathcal{H}_k}^2 \leq ||k(x,\cdot)||_{\mathcal{H}_k}^2 = k(x,x)
\end{align}
One perk of this approach is that the Kriging mean can now be understood as an orthogonal projection over a finite dimensional deterministic space, which is reminiscent of Fourier series or Galerkin reconstruction approaches.

\subsection{Generalized functions}\label{sub : distrib}
We refer to \cite{rudin1991} and \cite{treves2006topological} for further details on generalized functions and Radon measures. In this whole subsection, $\mathcal{D}$ is an open set of $\mathbb{R}^d$.
\subsubsection{Definitions and properties} Endow $\mathscr{D}(\mathcal{D})$ with its usual LF-space topology, defined for example in \cite{rudin1991}. We call generalized function any continuous linear form on $\mathscr{D}(\mathcal{D})$, i.e. any element of $\mathscr{D}(\mathcal{D})'$, the topological dual of $\mathscr{D}(\mathcal{D})$. We will rather denote it by $\mathscr{D}'(\mathcal{D})$ as in \cite{rudin1991}. The topology of $\mathscr{D}(\mathcal{D})$ is such that $T \in \mathscr{D}'(\mathcal{D})$ if and only if for all compact set $K \subset \mathcal{D}$, there exists $C_K > 0$ and a non negative integer $n_K$ such that
\begin{align}\label{eq:def distrib continue}
\forall \varphi \in \mathscr{D}(\mathcal{D})\ \text{s.t} \ \text{Supp}(\varphi) \subset K, |T(\varphi)| \leq C_K \sum_{|k|\leq n_K} ||\partial^k \varphi||_{\infty}
\end{align}

Generalized functions are also called ``distributions", a terminology we will only use when there is no risk of confusion with probability distributions. The duality bracket will be denoted $\langle, \rangle$ : for $\varphi \in \mathscr{D}(\mathcal{D})$ and $T \in \mathscr{D}'(\mathcal{D})$, we have $\langle T, \varphi \rangle = T(\varphi)$.

\begin{itemize}
\item Any function $f \in L^1_{loc}(\mathcal{D})$ can be injectively identified to a generalized function $T_f$ \cite{rudin1991} defined as follow 
\begin{align}\label{eq:fL1 loc distrib}
\forall \varphi \in \mathscr{D}(\mathcal{D}), \ \ \ \langle T_f, \varphi \rangle := \int_{\mathcal{D}} f(x) \varphi(x) dx
\end{align}
The map $L^1_{loc}(\mathcal{D}) \ni f \longmapsto T_f$ is linear and injective. Throughout this article, we will use the abusive notation $\langle T_f, \varphi \rangle = \langle f, \varphi \rangle$, as if $\langle, \rangle$ were the $L^2$ inner product.
\item Any generalized function $T$ can be indefinitely differentiated \cite{rudin1991} with the following definition (see \ref{not : derivatives})
\begin{align}\label{eq:diff distrib}
\partial^k T : \varphi \longmapsto \langle T, (-1)^{|k|} \partial^k \varphi \rangle
\end{align}
which coincides with the definition of weak derivatives when $T$ is a function that admits the according weak derivatives \cite{rudin1991}.
\end{itemize}
In particular, \eqref{eq:fL1 loc distrib} and \eqref{eq:diff distrib} combined provide a flexible definition for the derivatives of any function $f \in L^1_{loc}(\mathcal{D})$ up to any order.

\subsubsection{Radon measures} In this paper, we call positive Radon measure any positive measure over $\mathcal{D}$ that is Borel regular (\cite{evans2018measure}, Def 1.9) and that has finite mass over any compact subset of $\mathcal{D}$. Borel regularity is a standard regularity hypothesis from measure theory. We call real-valued Radon measure, or simply Radon measure, any linear combination of positive Radon measures. In \cite{lang1993}, Chapter IX, it is proved that
the space of Radon measures over $\mathcal{D}$ is isomorphic to the space of continuous linear forms over $\mathcal{C}_c(\mathcal{D})$, the space of compactly supported continuous functions on $\mathcal{D}$ endowed with its usual LF-space topology described e.g. in \cite{treves2006topological}.
The corresponding isomorphism is given by
\begin{align}\label{eq:mu forme lin}
\mu \longmapsto 
\begin{cases}
\mathcal{C}_c(\mathcal{D}) &\longrightarrow \mathbb{R}\\
\ \ f &\longmapsto \int_{\mathcal{D}}f(x) \mu(dx) 
\end{cases}
\end{align}
We have the following facts :
\begin{itemize}
\item any signed measure that admits a density $f$ w.r.t. the Lebesgue measure such that $f \in L^1_{loc}(\mathcal{D})$ is a Radon measure (\cite{treves2006topological},p.217).
\item mimicking \eqref{eq:fL1 loc distrib} and \eqref{eq:mu forme lin}, any Radon measure can be injectively identified to a generalized function with the following identification \cite{treves2006topological}
\begin{align}\label{eq:mu distrib}
\forall \varphi \in \mathscr{D}(\mathcal{D}), \ \ \ \langle \mu, \varphi \rangle := \int_{\mathcal{D}} \varphi(x)\mu(dx) 
\end{align}
In particular, Radon measures can be differentiated up to any order through equation \eqref{eq:diff distrib}.
\item for any Radon measure $\mu$, there is a unique couple $(\mu^+,\mu^-)$ of positive Radon measures such that $\mu = \mu^+ - \mu^-$  (\cite{lang1993}, Chapter IX). We then define $|\mu|$ as
\begin{align}\label{eq:abs mu}
|\mu| := \mu^+ + \mu^- 
\end{align}
\item If $\mu$ and $\nu$ are two finite Radon measures over $\mathbb{R}^d$ (i.e. $\int_{\mathbb{R}^d}|\mu|(dx) < \infty$ and likewise for $\nu$), their convolution $\mu *\nu$ is defined as follow : let $\mathcal{B}(\mathbb{R}^d)$ be the Borel $\sigma-$algebra of $\mathbb{R}^d$, then
\begin{align}\label{eq:conv mesures}
\forall A \in \mathcal{B}(\mathbb{R}^d), \ \ \ (\mu * \nu)(A) = \int_{\mathbb{R}^d}\int_{\mathbb{R}^d} \mathbbm{1}_A(x+y)\mu(dx)\nu(dy)
\end{align}
and $\mu * \nu$ is also a Radon measure over $\mathbb{R}^d$. When $\mu$ and $\nu$ have densities $f_{\mu}$ and $f_{\nu}$, $\mu * \nu$ has the density $f_{\mu}*f_{\nu}$ defined by 
\begin{align*}
(f_{\mu}*f_{\nu})(x) = \int_{\mathbb{R}^d}f_{\mu}(y)f_{\nu}(x-y)dy
\end{align*}
\end{itemize}
\begin{remark}
What is meant behind the terminology of Radon measures varies between authors. \cite{evans2018measure} calls Radon measure what we call \textit{positive} Radon measure in this article. \cite{lang1993} proves that continuous linear forms over $\mathcal{C}_c(\mathcal{D})$ are differences of Radon measures in the sense of the Radon measures defined in \cite{evans2018measure}, but \cite{lang1993} never uses the term of Radon measures, positive of not. Likewise, \cite{treves2006topological} calls \textit{positive} Radon measure any \textit{positive} linear form over $\mathcal{C}_c(\mathcal{D})$ which, thanks to the proof from \cite{lang1993}, reduces to Radon measures in the sense of \cite{evans2018measure}.
\end{remark}

\subsubsection{Finite order generalized functions}
Let $k$ be a non negative integer, we consider $\mathcal{C}_c^k(\mathcal{D})$ the space of compactly supported functions of class $\mathcal{C}^k$ endowed with its usual LF-space topology \cite{rudin1991}. We denote $\mathcal{C}_c^k(\mathcal{D})'$ its topological dual. The topologies of $\mathcal{C}_c^k(\mathcal{D})$ and $\mathscr{D}(\mathcal{D})$ are such that the canonical injection $\mathscr{D}(\mathcal{D}) \rightarrow \mathcal{C}^k(\mathcal{D})$ is continuous \cite{treves2006topological}, which yields that $\mathcal{C}^k(\mathcal{D})' \subset \mathscr{D}'(\mathcal{D})$ : continuous linear forms over $\mathcal{C}^k(\mathcal{D})$, when restricted to $\mathscr{D}(\mathcal{D})$, become continuous linear forms over $\mathscr{D}(\mathcal{D})$, i.e. generalized functions. We then have the following definitions and facts.
\begin{itemize}
\item Generalized functions $T \in \mathscr{D}'(\mathcal{D})$ that are restrictions of continuous linear forms over $\mathcal{C}_c^k(\mathcal{D})$ are called generalized functions of order $k$. If $T$ is of order $k$ for some $k \in \mathbb{N}$, $T$ is said to be of finite order.
\item $T \in \mathscr{D}'(\mathcal{D})$ is at most of order $k$ if in equation \eqref{eq:def distrib continue}, the integer $n_K$ can always be taken to be equal to $n$, whatever the compact set $K$.
\item Let $T$ be a generalized function of order $k$. Then (\cite{treves2006topological}, p 259) there exists a family of Radon measures $\{\mu_p\}_{|p|\leq k}$ over $\mathcal{D}$ such that
\begin{align}\label{eq:finite order radon}
T = \sum_{|p|\leq k}\partial^p \mu_p
\end{align}
where the equality in \eqref{eq:finite order radon} holds in $\mathscr{D}'(\mathcal{D})$ and $\mathcal{C}_c^k(\mathcal{D)}'$. Note that we recover \eqref{eq:mu distrib} when $k = 0$.
\item Among the finite order generalized functions are those that are \textit{compactly supported}, i.e. those for which the measures $\mu_p$ such that $T = \sum_{|p|\leq k}\partial^p \mu_p$ all have compact support. One property is that one can define the Fourier transform of any compactly supported generalized functions \cite{rudin1991}.
\end{itemize}

\subsubsection{Convolution with generalized functions} Let $k$ be a non negative integer. As above, we consider $\mathcal{C}_c^k(\mathbb{R}^d)$ endowed with its usual topology. Let $f \in \mathcal{C}_c^k(\mathbb{R}^d)$ and $T \in \mathcal{C}_c^k(\mathbb{R}^d)'$. Note $\tau_x f$ the function $y \longmapsto f(y-x)$ and $\check{f}$ the function $y \longmapsto f(-y)$. Then \cite{treves2006topological} one may define the convolution between $T$ and $f$ by
\begin{align}\label{eq:conv distrib}
T * f : x \longmapsto \langle T, \tau_{-x} \check{f} \rangle
\end{align}
and $T * f$ is a function in the classical sense, i.e. defined pointwise. When $T$ lies in $L_{loc}^1(\mathcal{D})$=, equation \eqref{eq:conv distrib} reduces to the usual convolution of functions through the identification defined in equation \eqref{eq:fL1 loc distrib} :
\begin{align}
(T * f)(x) = \int_{\mathbb{R}^d} T(y)f(x-y)dy
\end{align}
Similarly if $T$ is in fact a Radon measure $\mu$ :
\begin{align}\label{eq:conv_mesure_func}
(T * f)(x) = \int_{\mathbb{R}^d} f(x-y)\mu(dy)
\end{align} 
More general definitions of generalized function convolution are available \cite{treves2006topological} but this one is sufficient for our use.

\subsubsection{Tensor product of generalized functions} For two generalized functions $T_1 \in \mathscr{D}'(\mathcal{D}_1)$ and $T_2 \in \mathscr{D}'(\mathcal{D}_2)$, $T_1 \otimes T_2 \in \mathscr{D}'(\mathcal{D}_1 \times \mathcal{D}_2)$ denotes their tensor product \cite{treves2006topological}, which is uniquely determined by the following tensor property :
\begin{align}
\forall \varphi_1 \in \mathscr{D}(\mathcal{D}_1), \forall \varphi_2 \in \mathscr{D}(\mathcal{D}_2), \ \langle T_1 \otimes T_2, \varphi_1 \otimes \varphi_2 \rangle = \langle T_1, \varphi_1 \rangle \times \langle T_2, \varphi_2 \rangle 
\end{align}

$T_1 \otimes T_2$ reduces to the tensor product of functions when $T_1$ and $T_2$ are functions through the identification of equation \eqref{eq:fL1 loc distrib}, and the product measure when $T_1$ and $T_2$ are Radon measures through \eqref{eq:mu distrib}.

\section{Stochastic processes under linear differential constraints}\label{section:sto diff}
One may wish to force the trajectories of a stochastic process $U = (U(x))_{x\in \mathcal{D}}$ to verify linear constraints, i.e. to lie in the kernel of some linear operator. This is a priori an ambitious task as the trajectories of $U$ form a vast set of functions. However, if  $U$ is a second order stochastic process (i.e. $\forall x \in \mathcal{D}, \text{Var}\big( U(x)\big) <+ \infty$), then in many cases linear constraints on the trajectories of $U$ can be completely translated as linear constraints on the covariance kernel of $U$. In particular, these new linear constraints are imposed on a much smaller set of accessible ``explicit" functions. Overall, the resulting constraints on the covariance kernel of $U$ are much easier to handle than the constraints on the trajectories of $U$. This idea was thoroughly explored in \cite{ginsbourger2016}, where different general frameworks were studied in order to formulate mathematical results on linearly constrained stochastic processes. In proposition \ref{prop : lin constraints classic}, we recall a particular result from \cite{ginsbourger2016} that was then applied to the stationary heat equation in the same article.

Note $\mathcal{F}(\mathcal{D},\mathbb{R})$ the space of real-valued functions defined on $\mathcal{D}$. Proposition \ref{prop : lin constraints classic} is based on the so called Loève isometry \cite{agnan2004} between $\mathcal{L}(U)$ and $L^2(\mathbb{P})$ (see \ref{not : gp et al} for notations), which in turn leads to the following theorem.
\begin{proposition}[Trajectories of GPs under linear constraints \cite{ginsbourger2016}]\label{prop : lin constraints classic}
Let $\big(U(x)\big)_{x\in \mathcal{D}}  \sim GP(0,k)$ be a centered GP. Note for all $x \in \mathcal{D}$ the function $k_x : y \longmapsto k(x,y)$. Let $E$ be a real vector space of functions defined on $\mathcal{D}$ that contains the trajectories of $U$ almost surely and $T : E \longrightarrow \mathcal{F}(\mathcal{D},\mathbb{R})$ be a linear operator. Suppose that for all $x \in \mathcal{D}, T(U)(x) \in \mathcal{L}(U)$. Then there exists a unique linear operator $\mathcal{T} : \mathcal{H}_k \longrightarrow \mathcal{F}(\mathcal{D},\mathbb{R})$ such that for all $x,x' \in \mathcal{D},$
\begin{align*}
    \mathbb{E}[T(U)(x)U(x')] = \mathcal{T}(k_{x'})(x)
\end{align*}
and $\forall x \in \mathcal{D}, \forall h_n \xrightarrow[]{\mathcal{H}_k} h, \ \mathcal{T}(h_n)(x) \longrightarrow \mathcal{T}(h)(x)$.
Moreover, the following statements are equivalent :
\begin{enumerate}[label=(\roman*)]
    \item $\mathbb{P}(\{\omega  \in \Omega : T(U_{\omega})= 0\}) = 1$
    \item $\forall x \in \mathcal{D}, \mathcal{T}(k_{x}) = 0$
\item $\mathcal{T}(\mathcal{H}_k) = \{0\}$
\end{enumerate}
\end{proposition}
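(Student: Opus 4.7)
The plan is to exploit the Loève isometry $\psi : \mathcal{H}_k \longrightarrow \mathcal{L}(U)$, which uniquely extends the correspondence $k_x \longmapsto U(x)$ to a Hilbert space isomorphism between the RKHS of $k$ and the Gaussian subspace spanned by $U$. This isometry will let me transfer all reasoning about second moments of elements of $\mathcal{L}(U)$ into the purely deterministic RKHS $\mathcal{H}_k$.

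For existence of $\mathcal{T}$, I would define
\[
\mathcal{T}(h)(x) \;:=\; \mathbb{E}\bigl[\,T(U)(x)\cdot\psi(h)\,\bigr] \qquad (h \in \mathcal{H}_k,\ x \in \mathcal{D}).
\]
This is well-defined because $T(U)(x) \in \mathcal{L}(U) \subset L^2(\mathbb{P})$ by hypothesis, linear in $h$ by linearity of $\psi$ and of the expectation, and substituting $h = k_{x'}$ gives $\mathcal{T}(k_{x'})(x) = \mathbb{E}[T(U)(x) U(x')]$ as required. The continuity condition follows from Cauchy–Schwarz: if $h_n \to h$ in $\mathcal{H}_k$, then $\psi(h_n) \to \psi(h)$ in $L^2(\mathbb{P})$, so $|\mathcal{T}(h_n)(x)-\mathcal{T}(h)(x)| \leq \|T(U)(x)\|_{L^2(\mathbb{P})}\,\|\psi(h_n)-\psi(h)\|_{L^2(\mathbb{P})} \to 0$. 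For uniqueness, any other operator satisfying the same relation agrees with $\mathcal{T}$ on the generating family $\{k_x\}_{x \in \mathcal{D}}$, hence on their linear span by linearity, and finally on the closure $\mathcal{H}_k = \overline{\mathrm{Span}(k_x : x \in \mathcal{D})}$ by the required pointwise continuity.

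For the equivalences, I would argue in the cycle (iii) $\Rightarrow$ (ii) $\Rightarrow$ (i) $\Rightarrow$ (ii) $\Rightarrow$ (iii). The implication (iii) $\Rightarrow$ (ii) is immediate since $k_x \in \mathcal{H}_k$, and (ii) $\Rightarrow$ (iii) uses the same density/continuity argument as in the uniqueness step: $\mathcal{T}$ vanishes on $\{k_x\}_{x \in \mathcal{D}}$, hence on its span, and by pointwise continuity on $\mathcal{H}_k$. For (i) $\Rightarrow$ (ii), if $T(U_\omega)=0$ almost surely then $T(U)(x)=0$ a.s. for each $x$, so $\mathbb{E}[T(U)(x)U(x')]=0 = \mathcal{T}(k_x)(x')$.

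The core implication is (ii) $\Rightarrow$ (i), and this is where I expect the main subtlety. The orthogonality argument is clean: for each $x$, the hypothesis $\mathcal{T}(k_x)=0$ means $T(U)(x) \perp U(x')$ in $L^2(\mathbb{P})$ for every $x' \in \mathcal{D}$, hence by linearity and $L^2$-closure $T(U)(x)$ is orthogonal to every element of $\mathcal{L}(U)$; since $T(U)(x) \in \mathcal{L}(U)$ itself, this forces $\|T(U)(x)\|_{L^2(\mathbb{P})}^2 = 0$ and hence $T(U)(x) = 0$ a.s. The genuine obstacle is the passage from this pointwise-in-$x$ almost sure statement to the trajectorial statement $\mathbb{P}(T(U_\omega) = 0) = 1$, which amalgamates uncountably many null sets. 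I would handle this by exploiting the specific form the proposition will take when applied to PDEs: $T(U_\omega)$ is meant in the distributional sense, so the equality $T(U_\omega)=0$ is tested against a separable space $\mathscr{D}(\mathcal{D})$, which reduces the uncountable family of null events to a countable dense one via duality, allowing the standard amalgamation to go through.
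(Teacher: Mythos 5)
The paper does not actually prove this proposition --- it is recalled from \cite{ginsbourger2016} with only the remark that it rests on the Lo\`eve isometry --- so your proposal can only be judged on its own terms. The architecture you set up is the right one and matches the intended route: defining $\mathcal{T}(h)(x) = \mathbb{E}[T(U)(x)\,\psi(h)]$ via the Lo\`eve isometry $\psi$, getting the identity by $\psi(k_{x'}) = U(x')$, continuity by Cauchy--Schwarz, uniqueness by density of $\mathrm{Span}(k_x)$ in $\mathcal{H}_k$, and the orthogonality argument $T(U)(x) \in \mathcal{L}(U) \cap \mathcal{L}(U)^{\perp} = \{0\}$ for the core implication. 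All of that is correct (up to an index slip in $(i)\Rightarrow(ii)$, where the right-hand side should read $\mathcal{T}(k_{x'})(x)$).

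The genuine gap is exactly where you flag it, and your proposed repair does not work for \emph{this} proposition. You resolve the amalgamation of the uncountable family of null sets $\{N_x\}_{x\in\mathcal{D}}$ by saying that $T(U_\omega)=0$ ``is meant in the distributional sense'' and can therefore be tested against a countable dense subset of $\mathscr{D}(\mathcal{D})$. But in Proposition \ref{prop : lin constraints classic} the operator $T$ is valued in $\mathcal{F}(\mathcal{D},\mathbb{R})$, the space of pointwise-defined functions, and the event in $(i)$ is $\{\omega : T(U_\omega)(x)=0 \ \forall x \in \mathcal{D}\}$ --- an everywhere statement over an uncountable index set with no separable test space in sight. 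The separability-of-$\mathscr{D}(\mathcal{D})$ device is precisely the mechanism of the paper's \emph{new} result, Proposition \ref{prop : diff constraints}, and the whole point of Section \ref{section:sto diff} is that this escape hatch is not available in the classical pointwise framework of Proposition \ref{prop : lin constraints classic}. As written, your argument for $(ii)\Rightarrow(i)$ only delivers the weaker conclusion ``for every $x$, $T(U)(x)=0$ a.s.''; upgrading this to a statement about whole trajectories (and even guaranteeing measurability of the event in $(i)$) requires an additional separability or path-regularity hypothesis on the field $x \mapsto T(U)(x)$ --- for instance a.s.\ continuity of $T(U_\omega)$ together with separability of $\mathcal{D}$, which lets you intersect over a countable dense set of points $x$ --- or an appeal to whatever such hypothesis is carried implicitly in \cite{ginsbourger2016}. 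You should either add such a hypothesis and the corresponding countable-dense-subset argument in $\mathcal{D}$ itself, or note explicitly that the equivalence is being asserted modulo this separability issue.
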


This theorem can be applied when $T$ is a differential operator as discussed in \cite{ginsbourger2016}. However, in Proposition \ref{prop : lin constraints classic}, the differential operator $T$ of order $n$ has to be valued in the space of (classical, pointwise defined) functions $\mathcal{F}(\mathcal{D},\mathbb{R})$; in particular for $u \in E$, the function $T(u)$ has to be defined pointwise in order to use the Loève isometry. To summarize, in all generality the derivatives in $T$ have to be understood in a classical sense and $E$ has to be contained in $\mathscr{D}^n(\mathcal{D})$, the space of $n$ times differentiable functions on $\mathcal{D}$. Requiring that $E \subset \mathscr{D}^n(\mathcal{D})$ is a very strong assumption w.r.t. the trajectories of $U$; furthermore, this is not compliant with the usual way of studying PDEs where derivatives are understood in a weaker sense. We present in Proposition \ref{prop : diff constraints} an adaptation of Proposition \ref{prop : lin constraints classic} where we make use of the distributional definition of derivatives and relax the assumptions made on  $U$ and its trajectories.  In this proposition, $\big(U(x)\big)_{x\in \mathcal{D}}$ is not supposed Gaussian and is only required to be second order. We refer to the notation paragraphs \ref{not : function spaces} and \ref{not : derivatives}.

\begin{proposition}[Trajectories of stochastic processes under linear differential constraints] \label{prop : diff constraints} Let $\mathcal{D} \subset \mathbb{R}^d$ be an open set and let $T = \sum_{|k| \leq n} a_k(x) \partial^k$ be a linear differential operator with coefficients $a_k(x) \in \mathcal{C}^{|k|}(\mathcal{D})$. Let $U = \big(U(x)\big)_{x\in \mathcal{D}}$ be a second order stochastic process with mean function $m(x)$ and covariance kernel $k(x,x')$. For all $x \in \mathcal{D}$, note ${k_{x} : y \longmapsto k(x,y)}$. Suppose that its mean function $m$ lies in $L^1_{loc}(\mathcal{D})$ as well as its standard deviation function ${\sigma : x \longmapsto \sqrt{k(x,x)}}$. \\
1) Then on a set of probability $1$, the trajectories of $U$ lie in $L^1_{loc}(\mathcal{D})$ as well as the functions $k_{x}$ for all $x \in \mathcal{D}$. \\ 
2) Suppose that $T(m) = 0$ in the sense of distributions. Then the following statements are equivalent :
\begin{enumerate}[label=(\roman*)]
    \item $\mathbb{P}(T(U) = 0 \text{ in the sense of distributions}) = 1$
    \item $\forall x \in \mathcal{D}, T(k_{x}) = 0$ in the sense of distributions.
\end{enumerate}
\end{proposition}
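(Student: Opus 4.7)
\emph{Part 1} follows from Fubini--Tonelli and the Cauchy--Schwarz bound $\mathbb{E}[|U(x)|] \leq |m(x)| + \sigma(x)$: for any compact $K \subset \mathcal{D}$, one has $\mathbb{E}\bigl[\int_K |U(x)|\,dx\bigr] \leq \int_K(|m|+\sigma) < \infty$, so $U_\omega \in L^1(K)$ almost surely. A countable exhaustion of $\mathcal{D}$ by compacts then gives $U_\omega \in L^1_{loc}(\mathcal{D})$ a.s. The fact that $k_x \in L^1_{loc}(\mathcal{D})$ follows immediately from \eqref{eq:CS PD}, which yields $|k(x,y)| \leq \sigma(x)\sigma(y)$.

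\emph{Part 2.} Introduce the formal adjoint $T^*\varphi := \sum_{|k|\leq n}(-1)^{|k|}\partial^k(a_k \varphi)$, which for $\varphi \in \mathscr{D}(\mathcal{D})$ is continuous and compactly supported because $a_k \in \mathcal{C}^{|k|}$; for any $u \in L^1_{loc}(\mathcal{D})$ the distributional action reads $\langle Tu,\varphi\rangle = \int_\mathcal{D} u\, T^*\varphi$. The plan is to analyze the family of centered real random variables
\begin{align*}
V_\varphi := \int_\mathcal{D}\bigl(U(x)-m(x)\bigr)T^*\varphi(x)\,dx, \qquad \varphi \in \mathscr{D}(\mathcal{D}),
\end{align*}
which are well defined by Part 1. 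Since $Tm = 0$ distributionally, $\int_\mathcal{D} m\, T^*\varphi = \langle Tm,\varphi\rangle = 0$, so (i) is equivalent to $\mathbb{P}(V_\varphi = 0 \text{ for every } \varphi) = 1$.

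For the implication (ii) $\Rightarrow$ (i), Fubini (justified by $|k(x,y)| \leq \sigma(x)\sigma(y)$ and compact support of $T^*\varphi$) yields
\begin{align*}
\mathrm{Var}(V_\varphi) = \int_\mathcal{D}\!\int_\mathcal{D} k(x,y) T^*\varphi(x) T^*\varphi(y)\,dx\,dy = \int_\mathcal{D} T^*\varphi(x)\langle Tk_x,\varphi\rangle\,dx,
\end{align*}
which vanishes under (ii), so $V_\varphi = 0$ almost surely for each fixed $\varphi$. To upgrade this to a single full-measure event on which $V_\varphi = 0$ \emph{for all} $\varphi$, I would invoke separability of $\mathscr{D}(\mathcal{D})$: pick a countable dense family $(\varphi_j)_{j\in\mathbb{N}}$, let $\Omega_0$ be the event of probability one on which $V_{\varphi_j}(\omega) = 0$ for every $j$, and extend by continuity. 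For $\omega \in \Omega_0$, the map $\varphi \mapsto \int_\mathcal{D} U_\omega T^*\varphi$ is continuous on $\mathscr{D}(\mathcal{D})$: any convergent sequence $\varphi_j \to \varphi$ has supports in a common compact $K$ with $T^*\varphi_j$ converging uniformly on $K$, so dominated convergence applies since $U_\omega \in L^1(K)$.

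Conversely, for (i) $\Rightarrow$ (ii), fix $y \in \mathcal{D}$ and $\varphi \in \mathscr{D}(\mathcal{D})$. Under (i), $V_\varphi(U(y)-m(y)) = 0$ almost surely, and Fubini (via $\mathbb{E}[|U(x)(U(y)-m(y))|] \leq \sqrt{\mathbb{E}[U(x)^2]}\,\sigma(y)$, locally integrable in $x$) gives
\begin{align*}
0 = \mathbb{E}\bigl[V_\varphi(U(y)-m(y))\bigr] = \int_\mathcal{D} T^*\varphi(x)\, k(x,y)\,dx = \langle Tk_y,\varphi\rangle
\end{align*}
for every $\varphi$, i.e.\ $Tk_y = 0$ in the distributional sense. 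The main obstacle throughout is the measure-theoretic upgrade in (ii) $\Rightarrow$ (i): separability of $\mathscr{D}(\mathcal{D})$ under its LF-topology is what permits reducing to a countable test family, and the pathwise $L^1_{loc}$ regularity from Part 1 is what makes the extension by continuity valid on a single full-measure event.
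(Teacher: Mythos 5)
Your proposal is correct and follows essentially the same route as the paper: Part 1 via Tonelli and the Cauchy--Schwarz bounds, and Part 2 via the centering reduction (using $T(m)=0$), the variance identity $\mathrm{Var}(V_\varphi)=\iint k(x,y)T^*\varphi(x)T^*\varphi(y)\,dx\,dy$ for (ii)$\Rightarrow$(i), the covariance identity $\mathbb{E}[V_\varphi(U(y)-m(y))]=\langle Tk_y,\varphi\rangle$ for (i)$\Rightarrow$(ii), and separability of $\mathscr{D}(\mathcal{D})$ plus sequential continuity of $\varphi\mapsto\int_{\mathcal{D}}U_\omega T^*\varphi$ to pass from a $\varphi$-dependent null set to a single one. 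The only cosmetic difference is that you verify sequential continuity directly by dominated convergence, where the paper invokes Hörmander's theorem that continuity of the linear form implies sequential continuity.
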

Here we write down precisely what we mean by $(i)$ and $(ii)$. Note $T^*$ the formal adjoint of $T$ defined by $T^*u = \sum_{|k|\leq n} (-1)^{|k|}\partial^k(a_k(x)u)$. By $(i)$, we mean that
\begin{align}\label{eq:meaning i}
    \exists A \in \mathcal{F},\ \ \mathbb{P}(A) = 1,\ \ \forall \omega \in A,\ \ \forall \varphi \in \mathscr{D}(\mathcal{D}),\ \ \langle U_{\omega}, T^*\varphi \rangle = \int_{\mathcal{D}}U_{\omega}(x)T^*\varphi(x)dx = 0  
\end{align}
Similarly, $(ii)$ means that
\begin{align}\label{eq:meaning ii}
\forall x \in \mathcal{D}, \ \ \forall \varphi \in \mathscr{D}(\mathcal{D}),\ \ \langle k_x, T^*\varphi \rangle = \int_{\mathcal{D}}k_x(y)T^*\varphi(y)dy = 0  
\end{align}
This definition can be found e.g. in \cite{Hrmander1990TheAO}. The fact that the functions $x \longmapsto U_{\omega}(x)$ and $y \longmapsto k_x(y)$ lie in $L^1_{loc}(\mathcal{D})$ ensure the existence of the integrals in equations \eqref{eq:meaning i} (see point 2 of the proof of Proposition \ref{prop : diff constraints}) as well as the continuity of the associated linear forms over $\mathscr{D}(\mathcal{D})$, following the definition \eqref{eq:fL1 loc distrib}. In every case, the term ``in the sense of distributions" can be replaced by ``in $\mathscr{D}'(\mathcal{D})$" : stating that $T(f) = 0$ in the sense of distributions means that $T(f)$, seen as an element of $\mathscr{D}'(\mathcal{D})$, is equal to the null generalized function $0_{\mathscr{D}'(\mathcal{D})} : \varphi \longmapsto 0$.

\begin{proof}
Suppose first that $U$ is centered, i.e. $m \equiv 0$. \\
1) We begin by showing that the trajectories of $U$ almost surely lie in $L^1_{loc}(\mathcal{D})$. Note first that thanks to the Cauchy-Schwarz inequality, $\mathbb{E}[|U(x)|] \leq \sigma(x)$.
Now, let $(K_n)_{n \in \mathbb{N}}$ be an increasing sequence of compact subsets of $\mathcal{D}$ such that $\bigcup_{n \in \mathbb{N}} K_n = \mathcal{D}$. Using Tonelli's theorem, we have that for any $n \in \mathbb{N}$,
\begin{align}
\mathbb{E}\big[\int_{K_n}|U(x)|dx\big] = \int_{K_n}\mathbb{E}[|U(x)|]dx \leq \int_{K_n} \sigma(x)dx < +\infty
\end{align}
since $\sigma \in L^1_{loc}(\mathcal{D})$. Using the property that ``$\mathbb{E}[|X|] < +\infty \implies |X| < + \infty$ almost surely", this yields a set $B_n \subset \Omega$ of probability 1 such that the random variable
$\omega \longmapsto \int_{K_n}|U_{\omega}(x)|dx$ takes finite values over $B_n$. Consider now the set $B = \bigcap_{n \in \mathbb{N}}B_n$ which remains of probability 1. For all compact subset $K \subset \mathcal{D}$, there exists an integer $n_K$ such that $K \subset K_{n_K}$ and thus for all $\omega \in B$,
\begin{align*}
\int_K |U_{\omega}(x)|dx \leq \int_{K_{n_K}} |U_{\omega}(x)|dx < + \infty
\end{align*}
which shows that the trajectories of $U$ lie in $L^1_{loc}(\mathcal{D})$ almost surely.

Now, we check that for all $x \in \mathcal{D},  k_{x} \in L^1_{loc}(\mathcal{D})$ : for any compact set $K$, since $\sigma \in  L^1_{loc}(\mathcal{D})$ and because of \eqref{eq:CS PD},

\begin{align*}
\int_K |k_{x}(y)|dy = \int_K |k(x,y)|dy \leq \sigma(x) \int_K \sigma(y)dy < \infty
\end{align*}

2) Let us check in advance that whatever $f \in L^1_{loc}(\mathcal{D})$, the map $T(f) : \varphi \longmapsto \langle f, T^*\varphi \rangle$ is a continuous linear form over $\mathscr{D}(\mathcal{D})$. Since $a_k \in \mathcal{C}^k(\mathcal{D})$, we can apply Leibniz' rule on $T^*\varphi = \sum_{|k|\leq n} (-1)^{|k|}\partial^k(a_k\varphi)$. This yields a family $\{f_k\}_{|k| \leq n}$ of continuous functions over $\mathcal{D}$ such that
\begin{align}\label{eq:les f_k}
\forall \varphi \in \mathscr{D}(\mathcal{D}), \ \ \forall x \in \mathcal{D},\ \ T^*\varphi(x) = \sum_{|k|\leq n}f_k(x) \partial^k\varphi(x)
\end{align}
For all $f \in L^1_{loc}(\mathcal{D})$, for all compact set $K \subset \mathcal{D}$ and for all $\varphi \in \mathscr{D}(\mathcal{D})$ such that $\text{Supp}(\varphi) \subset K$, \eqref{eq:les f_k} yields
\begin{align}\label{eq:control inf} 
|\langle f, T^*\varphi \rangle| &\leq \int_{\mathcal{D}}|f(x)||T^*\varphi(x)|dx \nonumber \\ 
&\leq \bigg( \int_{K}|f(x)|dx \times \max_{|k|\leq n} \sup_{x \in K} |f_k(x)|\bigg) \sum_{|k| \leq n} ||\partial^k\varphi ||_{\infty} < +\infty
\end{align}
This proves that $T(f) : \varphi \longmapsto \langle f, T^*\varphi \rangle$ is a continuous linear form over $\mathscr{D}(\mathcal{D})$ (see equation \eqref{eq:def distrib continue}). 

\underline{$(i) \implies (ii)$} : Suppose $(i)$. Let $\varphi \in \mathscr{D}(\mathcal{D})$. There exists a  set $A\subset \Omega$ such that $\mathbb{P}(A) = 1$ and such that for all $\omega \in A$,
\begin{align*}
    \int_{\mathcal{D}}U_{\omega}(x)T^*\phi(x)dx = 0
\end{align*} 
Multiplying equation above with $U_{\omega}(x')$, taking the expectancy and formally permuting (for now) the integral and the expectancy, we obtain
\begin{align*}
   0 &= \mathbb{E}\Bigg[U(x')\int_{\mathcal{D}}U(x)T^*\varphi(x)dx\Bigg] = \int_{\mathcal{D}}T^*\varphi(x)\mathbb{E}[U(x)U(x')]dx \\
     &= \int_{\mathcal{D}}T^*\varphi(x)k(x,x')dx = \langle k_{x'},T^*\varphi \rangle
\end{align*}
The integral-expectancy permutation is justified by writing down the expectancy as an integral and using Fubini's theorem, checking that the below quantity is finite. We use Tonelli's theorem and the Cauchy-Schwarz inequality :
\begin{align*}
    \mathbb{E}\Bigg[\int_{\mathcal{D}}|U(x')U(x)T^*\varphi(x)|dx\Bigg] &= \int_{\mathcal{D}}|T^*\varphi(x)|\mathbb{E}[|U(x)U(x')|]dx\\ 
    &\leq \int_{\mathcal{D}}|T^*\varphi(x)|\mathbb{E}[U(x)^2]^{1/2}\mathbb{E}[U(x')^2]^{1/2}dx \\
    &\leq \sigma(x')\int_{\mathcal{D}}|T^*\varphi(x)|\sigma(x) dx < +\infty
\end{align*}
Indeed, since $\sigma \in L^1_{loc}(\mathcal{D})$, setting $f = \sigma$ in \eqref{eq:control inf} shows that the last integral is finite.
Thus, $\forall x \in \mathcal{D}, \forall \varphi \in \mathscr{D}(\mathcal{D}), \langle k_{x}, T^*\varphi \rangle = 0$ which proves that $(i) \implies (ii)$. \\
\underline{$(ii) \implies (i)$} : Suppose $(ii)$. Let $\varphi \in \mathscr{D}(\mathcal{D})$, we have $\langle k_{x'}, T^*\varphi \rangle = 0$. Multiplying this with $T^*\varphi(x')$ and integrating w.r.t. $x'$ yields
\begin{align*}
    0 = \int_{\mathcal{D}}T^*\varphi(x') \int_{\mathcal{D}}T^*\varphi(x)k(x,x')dxdx' =  \int_{\mathcal{D}} \int_{\mathcal{D}}T^*\varphi(x)T^*\varphi(x')\mathbb{E}[U(x)U(x')]dxdx'
\end{align*}
Permuting formally the expectancy and the integrals (justified in equation \eqref{eq:justify fubini prop2}) yields
\begin{align*}
   0 &=  \int_{\mathcal{D}} \int_{\mathcal{D}}T^*\varphi(x)T^*\varphi(x')\mathbb{E}[U(x)U(x')]dxdx' \\
    &= \mathbb{E}\Bigg[\Bigg(\int_{\mathcal{D}}T^*\varphi(x)U(x)dx\Big)^2\Bigg] = \mathbb{E}[\langle U,T^*\varphi\rangle ^2]
\end{align*}
and thus $\langle U,T^*\varphi\rangle = 0 $ a.s. : there exists $A_{\varphi} \in \mathcal{F}$ with $\mathbb{P}(A_{\varphi}) = 1$ such that $\forall \omega \in A_{\varphi}, \langle U_{\omega}, T^*\varphi \rangle = 0$. We justify the expectancy-integral permutation with the computation below
\begin{align} \label{eq:justify fubini prop2}
    \int_{\mathcal{D}}\int_{\mathcal{D}}|T^*\varphi(x)&T^*\varphi(x')|\mathbb{E}[|U(x)U(x')|]dxdx' \\
    &\leq \int_{\mathcal{D}}\int_{\mathcal{D}}|T^*\varphi(x)T^*\varphi(x')|\sigma(x)\sigma(x')dxdx' \nonumber \\
     &\leq \Bigg(\int_{\mathcal{D}}|T^*\varphi(x)|\sigma(x)dx \Bigg)^2 < + \infty \nonumber
\end{align}
As previously, setting $f = \sigma$ in \eqref{eq:control inf} shows that the integral above is finite.

This does not finish the proof as we need to find a set $A$ with $\mathbb{P}(A) = 1$, independently from $\varphi$, such that $\forall \omega \in A, \langle U_{\omega}, T^*\varphi \rangle = 0$. 
For this we use the fact that $\mathscr{D}(\mathcal{D})$ is a separable topological space, which we prove at the end of this proof. Let $F \subset \mathscr{D}(\mathcal{D})$ be a countable dense subset of $\mathscr{D}(\mathcal{D})$, let $A := B \cap\big( \bigcap_{\varphi \in F}A_{\varphi}\big)$ and let $\omega \in A$. Since $U_{\omega} \in L^1_{loc}(\mathcal{D})$, \eqref{eq:control inf} shows that the map $ L_{\omega} : \varphi \longmapsto \langle U_{\omega},T^*\varphi\rangle$ is a continuous linear form on $\mathscr{D}(\mathcal{D})$. Even though $\mathscr{D}(\mathcal{D})$ is not a metric space, theorem 2.1.4 from \cite{Hrmander1990TheAO} states that $L_{\omega}$ is in fact \textit{sequentially} continuous. Let $\varphi \in \mathscr{D}(\mathcal{D})$ and a sequence $(\varphi_n) \subset F$ such that $\varphi_n \rightarrow \varphi$ (in the sense of the topology of $\mathscr{D}(\mathcal{D})$). Then $L_{\omega}(\varphi) = \text{lim}_n  L_{\omega}(\varphi_n) = 0$ since $\forall n, L_{\omega}(\varphi_n) = 0$. That is, we have proved that
\begin{align*}
    \forall \omega \in A,\ \ \forall \varphi \in \mathscr{D}(\mathcal{D}),\ \ \langle U_{\omega}, \ \ T^*\varphi \rangle = L_{\omega}(\varphi) = 0
\end{align*}
Since $\mathbb{P}(A) = 1$, this shows that $(ii) \implies (i)$. 

When $U$ is not centered, consider the centered stochastic process $V$ defined by $V(x) = U(x) - m(x)$ for which the above proof can be applied. Since $T$ is linear and $m$ is supposed to verify $T(m) = 0$ in the sense of distributions, the probabilistic events $\{T(U) = 0 \text{ in the sense of distributions}\}$ and $\{T(V) = 0 \text{ in the sense of distributions}\}$ coincide and thus have the same probability measure.
Finally, $U$ and $V$ have the same covariance kernel $k(x,x')$. Thus, 
\begin{align*}
&\mathbb{P}(T(U) = 0 \text{ in the sense of distributions}) = 1 \\
&\iff \mathbb{P}(T(V) = 0 \text{ in the sense of distributions}) = 1 \iff \forall x \in \mathcal{D}, T(k_{x}) = 0
\end{align*}
which finishes the proof in the general case.
\\

\textit{Proof that $\mathscr{D}(\mathcal{D})$ is separable : } $\mathscr{D}(\mathcal{D})$ is an LF-space as the inductive limit of the Fréchet spaces $\mathscr{D}_{K_i}(\mathcal{D}):= \{\varphi \in C^{\infty}(\mathcal{D}) : \text{Supp}(\varphi) \subset K_i\}, i \in \mathbb{N}$, where $K_1 \subset K_2 \subset ...$ are compact subsets of $\mathcal{D}$ such that $\bigcup_i K_i = \mathcal{D}$ (\cite{treves2006topological}, p.131-133). As such, $\mathscr{D}(\mathcal{D})$ is separable iff $\mathscr{D}_{K_i}(\mathcal{D})$ is separable for all $i \in \mathbb{N}$ \cite{vidossich1968}, which we now show. The Fréchet topology of $\mathscr{D}_{K_i}(\mathcal{D})$ is the one induced by the usual Fréchet topology of $C^{\infty}(\mathcal{D})$ when $\mathscr{D}_{K_i}(\mathcal{D})$ is seen as a subspace of $C^{\infty}(\mathcal{D})$ (\cite{rudin1991}, section 1.46). As a Fréchet space, $C^{\infty}(\mathcal{D})$ is metrizable. But $C^{\infty}(\mathcal{D})$ is also a Montel space (\cite{treves2006topological}, Prop 34.4) : as a metrizable space, it is automatically separable (\cite{schaefer1999}, p.195). Thus $\mathscr{D}_{K_i}(\mathcal{D})$ is also separable as a subspace of the separable metric space $C^{\infty}(\mathcal{D})$.
\end{proof}

\begin{remark}
Distributional solutions are the weakest types of solutions for PDEs. In general, additional regularity conditions have to be imposed to obtain physically realistic solutions, such as Sobolev regularity or entropy conditions as for non linear hyperbolic PDEs \cite{Serre1999SystemsOC}. However, every step in the above proof remains valid when replacing $\varphi \in \mathscr{D}(\mathcal{D})$ with $\varphi \in \mathcal{C}_c^n(\mathcal{D})$. Although we have not clarified the usual topology of $\mathcal{C}_c^n(\mathcal{D})$ in this article, we state that this is enough to show that the equalities stated in Proposition \ref{prop : diff constraints} also hold in $\mathcal{C}_c^n(\mathcal{D})'$, the space of finite order generalized functions of order $n$, rather than just in $\mathscr{D}'(\mathcal{D})$. $\mathcal{C}_c^n(\mathcal{D})'$ is a smaller space than $\mathscr{D}'(\mathcal{D})$, though less used in PDE theory than $\mathscr{D}'(\mathcal{D})$.
\end{remark}

\begin{remark}
We gave here an elementary proof that
\begin{align}\label{eq:sigma L1 loc traj}
\sigma \in L^1_{loc}(\mathcal{D}) \implies \text{the trajectories of $U$ lie in \ } L^1_{loc}(\mathcal{D}) \text{\ almost surely}
\end{align}
Similar results on Sobolev regularity of the trajectories of second order stochastic processes are scarce in the literature. Some are available in \cite{SCHEUERER2010}, though the result \eqref{eq:sigma L1 loc traj} is actually not covered in \cite{SCHEUERER2010}, where additional continuity hypotheses would be required in the left hand side of \eqref{eq:sigma L1 loc traj} to apply results from \cite{SCHEUERER2010}.
\end{remark}
We partially recover Proposition \ref{prop : lin constraints classic} when the trajectories of $U$ lie in $\mathcal{C}^n(\mathcal{D})$ and $k \in \mathcal{C}^{n,n}(\mathcal{D}\times \mathcal{D})$. Indeed, in that case one can show that if $T = \sum_{|k|\leq n} a_k(x)\partial^k$, then we simply have $\mathcal{T} = T$ in Proposition \ref{prop : lin constraints classic}. Additionally, $T(U_{\omega})$ and $T(k_x)$ both lie in $\mathcal{F}(\mathcal{D},\mathbb{R}) \cap L^1_{loc}(\mathcal{D})$, and for any function $g$ that lies in $L^1_{loc}(\mathcal{D})$, we have (see \ref{not : as ae})
\begin{align}\label{eq:g 0 ae}
g = 0 \text{\ in the sense of distributions} \iff
g = 0 \ a.e.
\end{align}
Equation \eqref{eq:g 0 ae} is just another way of saying that the linear map $f \longmapsto T_f$ given in \eqref{eq:fL1 loc distrib} is injective. In that framework, Proposition \ref{prop : lin constraints classic} states that
\begin{align}\label{eq:res prop 1}
\forall x \in \mathcal{D}, \ T(k_x) = 0 \iff \mathbb{P}(T(U) = 0) = 1
\end{align}
where the function equalities of the form $T(f) = 0$ in \eqref{eq:res prop 1} are valid everywhere on $\mathcal{D}$. Following equation \eqref{eq:g 0 ae}, Proposition \ref{prop : diff constraints} states a slightly weaker result, namely that
\begin{align}
\forall x \in \mathcal{D}, \ \ T(k_x) = 0 \ a.e. \iff \mathbb{P}(T(U) = 0 \ a.e.) = 1
\end{align}

We can now state the following corollary, which draws the consequences of Proposition \ref{prop : diff constraints} when applied to GPR.

\begin{proposition}[Heredity of Proposition \ref{prop : diff constraints} to conditioned GPs]\label{prop : inheritance pde conditioned}
Let $\mathcal{D}$ and $T$ be as defined in Proposition \ref{prop : diff constraints}. Let $(U(x))_{x\in \mathcal{D}} \sim GP(m,k)$ be a Gaussian process that verifies the hypotheses of Proposition \ref{prop : diff constraints}.
Suppose also that
\begin{align}\label{eq:m k_x sol}
T(m) = 0 \text{ \ and \ } \forall x \in \mathcal{D},\ T(k_x) = 0 \text{ \ both in the sense of distributions}
\end{align}
$(i)$ Then whatever the integer $p$, the vector $u = (u_1,...,u_p)^T \in \mathbb{R}^p$ and the vector $X = (x_1,...,x_p)^T \in \mathcal{D}^p$ such that $k(X,X)$ is invertible, the Kriging mean $\tilde{m}(x)$ and the Kriging standard deviation function $\tilde{\sigma}(x) = \sqrt{\tilde{k}(x,x)}$ both lie in $L^1_{loc}(\mathcal{D})$, and we have
\begin{align*}
T(\tilde{m}) = 0 \text{ \ and \ } \forall x \in \mathcal{D}, \ T(\tilde{k}_x) = 0 \text{ \ both in the sense of distributions}
\end{align*}
where $\tilde{m}$ and $\tilde{k}$ are defined in equations \eqref{eq:krig mean} and \eqref{eq:krig cov}. \\
$(ii)$ As such, the trajectories of the conditioned Gaussian process $\big(\tilde{U}(x)\big)_{x\in \mathcal{D}}$ defined by $\tilde{U}(x) = (U(x)|U(x_i) = u_i \ \forall i = 1,...,p)$ are almost surely solutions of the equation $T(f) = 0$ in the sense of distributions :
\begin{align*}
\mathbb{P}(T(\tilde{U}) = 0 \text{ in the sense of distributions}) = 1
\end{align*}
\end{proposition}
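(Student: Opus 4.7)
The plan is to reduce everything to Proposition \ref{prop : diff constraints} applied to the conditioned process $\tilde{U}$, observing that the Kriging formulas \eqref{eq:krig mean} and \eqref{eq:krig cov} express $\tilde{m}$ and $\tilde{k}_x$ as \emph{finite, scalar linear combinations} of functions that are already annihilated by $T$ in the distributional sense. Linearity of $T^*$, together with the $L^1_{loc}$-based definition \eqref{eq:fL1 loc distrib} of distributions associated to locally integrable functions, should then do almost all the work.

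\textbf{Part (i), distributional PDE constraints.} First I would rewrite \eqref{eq:krig mean} as
\begin{align*}
\tilde{m}(x) = m(x) + \sum_{i=1}^{p} \beta_i\, k_{x_i}(x), \qquad \beta := k(X,X)^{-1}(u - m(X)) \in \mathbb{R}^p,
\end{align*}
so $\beta$ is a fixed vector of scalars not depending on $x$, and I use the symmetry of $k$ to identify $k(X,x)_i = k(x_i,x) = k_{x_i}(x)$. By hypothesis \eqref{eq:m k_x sol}, $T(m)=0$ and each $T(k_{x_i})=0$ in $\mathscr{D}'(\mathcal{D})$; by linearity of $T^*$ in \eqref{eq:meaning i}, $T(\tilde{m}) = T(m) + \sum_i \beta_i\, T(k_{x_i}) = 0$ in $\mathscr{D}'(\mathcal{D})$. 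Analogously, for each fixed $x \in \mathcal{D}$, the Kriging covariance gives (viewing everything as a function of the free variable $y$)
\begin{align*}
\tilde{k}_x(y) = k_x(y) - \sum_{i=1}^{p} \alpha_i(x)\, k_{x_i}(y), \qquad \alpha(x) := k(X,X)^{-1} k(X,x),
\end{align*}
where the coefficients $\alpha_i(x)$ are scalars that do not depend on $y$. Linearity of the distributional operator $T$ (acting in $y$) then gives $T(\tilde{k}_x) = 0$ in $\mathscr{D}'(\mathcal{D})$.

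\textbf{Part (i), local integrability.} Next I would verify the $L^1_{loc}$ requirements of Proposition \ref{prop : diff constraints} for $\tilde{U}$. For $\tilde{m}$: both $m$ and the $k_{x_i}$ already lie in $L^1_{loc}(\mathcal{D})$ by hypothesis and by point $1)$ of Proposition \ref{prop : diff constraints} respectively, so the finite linear combination $\tilde{m}$ does too. For $\tilde{\sigma}$: the inequality recorded just after \eqref{eq:tilde k proj} gives $\tilde{k}(x,x) \leq k(x,x) = \sigma(x)^2$, hence $0 \leq \tilde{\sigma}(x) \leq \sigma(x)$; since $\sigma \in L^1_{loc}(\mathcal{D})$ by hypothesis, $\tilde{\sigma} \in L^1_{loc}(\mathcal{D})$.

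\textbf{Part (ii).} By the standard conditioning result recalled in Section \ref{sub : GPR}, the conditioned process $\tilde{U}$ is itself a Gaussian process with mean $\tilde{m}$ and covariance $\tilde{k}$. Part (i) establishes that $\tilde{U}$ satisfies every hypothesis of Proposition \ref{prop : diff constraints}: the mean function and the standard deviation function both lie in $L^1_{loc}(\mathcal{D})$, we have $T(\tilde{m})=0$ and $T(\tilde{k}_x)=0$ in the sense of distributions for every $x \in \mathcal{D}$. A direct application of Proposition \ref{prop : diff constraints} (implication $(ii) \Rightarrow (i)$) then yields $\mathbb{P}(T(\tilde{U}) = 0 \text{ in the sense of distributions}) = 1$.

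\textbf{Expected difficulty.} There is essentially no obstacle beyond bookkeeping: everything reduces to linearity of $T$ on a finite sum, plus reusing the two technical inputs already available in the paper, namely (a) the proof of point $1)$ of Proposition \ref{prop : diff constraints} showing $k_x \in L^1_{loc}(\mathcal{D})$, and (b) the RKHS-style pointwise estimate $\tilde{k}(x,x) \leq k(x,x)$. The only point requiring a moment of care is ensuring that in the covariance identity $T$ is applied distributionally with respect to the correct variable $y$, so that the coefficients $\alpha_i(x)$ genuinely pass through the bracket as scalars.
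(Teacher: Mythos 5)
Your proposal is correct and follows essentially the same route as the paper's proof: express $\tilde{m}$ and $\tilde{k}_x$ as finite scalar linear combinations of $m$ and the $k_{x_i}$ via the Kriging formulas, use linearity of $T$ in the distributional sense, get $\tilde{\sigma}\in L^1_{loc}$ from $\tilde{k}(x,x)\le k(x,x)$, and then apply Proposition \ref{prop : diff constraints} to the conditioned GP. You simply spell out the bookkeeping (the explicit coefficient vectors $\beta$ and $\alpha(x)$, and the $L^1_{loc}$ membership of $\tilde m$) that the paper leaves implicit.
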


\begin{proof}
Note first that for all $x \in \mathcal{D}, \tilde{k}(x,x) \leq k(x,x)$, which is immediate from \eqref{eq:tilde k proj}. Thus the function $\tilde{\sigma} : x \longmapsto \sqrt{\tilde{k}(x,x)}$ also lies in $L^1_{loc}(\mathcal{D})$.
Point $(i)$ is then a direct consequence of the definition of $\tilde{m}$ and $\tilde{k}$ in equations \eqref{eq:krig mean} and \eqref{eq:krig cov}, and the linearity of $T$.  
Proposition \ref{prop : diff constraints} can then be applied conjointly with $(i)$, which yields point $(ii)$ since the mean and covariance functions of the GP $\tilde{U}$ are $\tilde{m}$ and $\tilde{k}$ (see section \ref{sub : GP}, equations \eqref{eq:krig mean} and \eqref{eq:krig cov}).
\end{proof}
Proposition \ref{prop : inheritance pde conditioned} shows that when $U$ is a GP, the results of Proposition \ref{prop : diff constraints} are inherited on the conditioned posterior process $\tilde{U}$. One weak consequence of Proposition \ref{prop : inheritance pde conditioned} is that if GPR is performed with a kernel $k$ that verifies point $(ii)$ of Proposition \ref{prop : diff constraints}, then the predictions provided by GPR are all solutions of the PDE $T(\tilde{m}) = 0$.

The goal of the next section is to apply this idea  to a special case of the (3 dimensional) wave equation defined in eq.\eqref{eq:wave_eq}, by building an ``explicit" positive definite kernel $k$ such that $\forall x' \in \mathcal{D}, {\Box k_{x'} = 0}$ in the sense of distributions, where the box symbol $\Box$ classically denotes the linear wave operator a.k.a. the d'Alembert operator. With this new kernel, we will perform GPR on observations of a function that is solution to the wave equation and draw a number of related consequences.

%
\section{Gaussian Processes and the 3 Dimensional Wave Equation}\label{section:GP_wave}

\subsection{General Solution to the 3 Dimensional Wave Equation}
Denote the 3D Laplace operator $\Delta = \partial_{xx}^2 + \partial_{yy}^2 + \partial_{zz}^2$ and the d'Alembert operator $\Box = 1/c^2 \partial_{tt}^2 - \Delta$ with constant wave speed $c > 0$.
We focus on the general initial value problem in the free space $\mathbb{R}^3$
\begin{align}\label{eq:wave_eq}
\begin{cases}
    \hfil \Box w &= 0 \hspace{35pt} \forall (x,t) \in \mathbb{R}^3 \times \mathbb{R}_+^*  \\
    \hfil w(x,0) &= u_0(x) \hspace{16pt} \forall x \in \mathbb{R}^3  \\
    (\partial_t w)(x,0) &= v_0(x) \hspace{17pt} \forall x \in \mathbb{R}^3
\end{cases}
\end{align}
Throughout this article, we will refer to $u_0$ as the initial position and $v_0$ as the initial speed. The problem \eqref{eq:wave_eq} is a Cauchy problem with initial conditions (IC) $u_0$ and $v_0$. It admits a unique solution which can be extended to all times $t\in \mathbb{R}$, and is represented as follow (\cite{evans1998}, section 4.3.1.b, example 4)
\begin{align}\label{eq sol wave}
    w(x,t) = (F_t * v_0)(x) + (\Dot{F}_t * u_0)(x) \hspace{10mm} \forall (x,t) \in \mathbb{R}^3 \times \mathbb{R}
\end{align}
where $F_t$ and $\Dot{F}_t$ are known generalized functions. Actually, $F_t$ and $\Dot{F}_t$ are better known through their Fourier transforms \cite{evans1998}, as
\begin{align}\label{eq:fourier of ft ftp}
    \mathcal{F}(F_t)(\xi) = \frac{\sin(ct|\xi|)}{c|\xi|} \ \ \ \text{and} \ \ \ \mathcal{F}(\Dot{F}_t)(\xi) = \cos(ct|\xi|)
\end{align}
where $|\xi|$ is the euclidean norm of $\xi \in \mathbb{R}^3$. Note that the relation $\Dot{F}_t = \partial_t F_t$ can be directly deduced from \eqref{eq:fourier of ft ftp}. Additionally, the representation \eqref{eq sol wave} is valid in any dimension as well as the Fourier formulas \eqref{eq:fourier of ft ftp}, see \cite{evans1998}. Finally, $F_t$ also corresponds to the Green's function of the wave equation \cite{Duffy2015GreensFW}.

In dimension 3, $F_t$ and $\Dot{F}_t$ are compactly supported generalized functions of order $0$ and $1$ respectively. More explicitly, in dimension 3 $F_t$ and $\Dot{F}_t$ are given by
\begin{align}\label{eq:ft ftp in 3D}
    F_t = \frac{\sigma_{c|t|}}{4\pi c^2 t} \hspace{8mm} \text{and} \hspace{8mm} \Dot{F}_t = \partial_t F_t
\end{align} 
where $\sigma_R$ is the surface measure of the sphere of center $0$ and radius $R$. $\Dot{F}_t = \partial_t F_t$ means that
\begin{align*}
\forall \mathcal{C}_0^1(\mathbb{R}^3), \langle \dot{F}_t, f\rangle = \partial_t \langle F_t, f\rangle = \partial_t \int_{\mathbb{R}^3}f(x)F_t(dx)
\end{align*}
Suppose that $u_0 \in \mathcal{C}^1(\mathbb{R}^3) $ and $v_0 \in \mathcal{C}^0(\mathbb{R}^3)$, then $w$ as defined in \eqref{eq sol wave} is a function in the classical sense \cite{treves2006topological} and in that case an explicit formula for such convolutions is reminded in equation \eqref{eq:conv distrib} (yet one may actually make sense out of \eqref{eq sol wave} when $u_0$ and $v_0$ are only required to be any generalized functions \cite{treves2006topological}). 
Combining formulas \eqref{eq sol wave} and \eqref{eq:ft ftp in 3D} leads to the Kirschoff formula \cite{evans1998} (see \ref{not : spherical} for spherical coordinates notations) :
\begin{align}\label{eq:kirschoff}
w(x,t) = \int_{S(0,1)}tv_0(x-c|t|\gamma) + u_0(x-c|t|\gamma) -  c|t|\gamma \cdot \nabla u_0(x-c|t|\gamma) \frac{d\Omega}{4\pi}
\end{align}

\subsection{Gaussian Process Modelling of the Solution}
Suppose now that $u_0$ and $v_0$ are unknown, and only pointwise values of $w$ are observed. In a Bayesian approach, we model $u_0$ and $v_0$ as random functions and put a Gaussian process prior over $u_0$ and $v_0$. More precisely, we make the following assumptions.
\begin{enumerate} [label=(\subscript{A}{{\arabic*}})]
    \item Suppose that the initial conditions $u_0$ and $v_0$ of Problem \eqref{eq:wave_eq} are trajectories drawn from two independent Gaussian processes $U^0 \sim GP(0,k_{\mathrm{u}})$ and $V^0 \sim GP(0,k_{\mathrm{v}})$ : $\exists \omega \in \Omega, \forall x \in \mathbb{R}^3, u_0(x) = U^0_{\omega}(x)$ and $v_0(x) = V^0_{\omega}(x)$. 
    \item Suppose that all trajectories of $U^0$ lie in $\mathcal{C}^1(\mathbb{R}^3)$ and that those of $V^0$ lie in $\mathcal{C}^0(\mathbb{R}^3)$ almost surely. A sufficient condition for this is given in \cite{adler2007}, Thm 1.4.2. This theorem states that under mild technical assumptions, the paths of $(U(x))_{x \in \mathcal{D}} \sim GP(0,k)$ lie in $\mathcal{C}^l$ a.s. as soon as $ k \in \mathcal{C}^{2l}(\mathcal{D} \times \mathcal{D})$, \textit{which we assume from now on}.
\end{enumerate}
We now analyse the consequence of these two assumptions. First, they imply that by solving \eqref{eq:wave_eq}, one obtains a time-space stochastic process $W(x,t)$ defined by 
\begin{align}\label{eq:W sol}
    W(x,t) : \Omega \ni \omega \longmapsto (F_t * V^0_{\omega})(x) +  (\Dot{F}_t * U^0_{\omega})(x)    
\end{align}
Here again, $V^0_{\omega}$ denotes the trajectory of $V^0$ at $\omega \in \Omega$ and likewise for $U_{\omega}^0$. In particular, thanks to assumption $(A_2)$, \eqref{eq:W sol} defines a random variable for all $(x,t)$. 
Note the space-time variable $z = (x,t)$ and note the random variables
\begin{align}\label{eq:U_and_V}
V(z) : \omega \longmapsto (F_t * V^0_{\omega})(x) \ \text{ and } \ U(z) :  \omega \longmapsto (\Dot{F}_t * U^0_{\omega})(x)    
\end{align}
 that is, $W(z) = U(z) + V(z)$. We show in the next proposition that the stochastic processes $U,V$ and $W$ are GPs as well. In particular we describe their covariance kernels.
\begin{proposition}\label{prop : wave kernel}Define the two functions
\begin{align}
{k_{\mathrm{v}}^{\mathrm{wave}}(z,z') =[(F_t \otimes F_{t'}) * k_{\mathrm{v}}](x,x')}\label{eq: kv wave} \\
{k_{\mathrm{u}}^{\mathrm{wave}}(z,z') = [(\Dot{F}_t \otimes \Dot{F}_{t'}) * k_{\mathrm{u}}](x,x')} \label{eq: ku wave}
\end{align}
(i) Then $U = (U(z))_{z \in \mathbb{R}^3 \times \mathbb{R}}$ and $V =(V(z))_{z \in \mathbb{R}^3 \times \mathbb{R}}$ as defined in \eqref{eq:U_and_V} are two independent centered GPs with covariance kernels $k_{\mathrm{u}}^{\mathrm{wave}}$ and $k_{\mathrm{v}}^{\mathrm{wave}}$ respectively. Consequently, $(W(z))_{z \in \mathbb{R}^3 \times \mathbb{R}}$ is a centered GP whose covariance kernel is given by
\begin{align}\label{eq:wave kernel}
k_{W}(z,z') = k_{\mathrm{v}}^{\mathrm{wave}}(z,z') + k_{\mathrm{u}}^{\mathrm{wave}}(z,z')
\end{align}
(ii) Conversely, any centered second order stochastic process with covariance kernel $k_{W}$ has its sample paths solution of the 3 dimensional wave equation \eqref{eq:wave_eq} almost surely.
\end{proposition}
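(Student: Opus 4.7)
The strategy is to split $W = U + V$ into mutually independent components, handle each as a continuous linear action of a distribution on a Gaussian field for part (i), and then deduce (ii) from Proposition \ref{prop : diff constraints} applied to $T = \Box$. For $V$, observe that $F_t$ is a compactly supported Radon measure, so $V(z)(\omega) = \int V^0_\omega(x - y)\,F_t(dy)$ is a genuine integral of the continuous trajectory $V^0_\omega$ against the surface measure on $S(0,c|t|)$. I would approximate this spherical integral by Riemann sums $V_n(z) = \sum_i w_i^{(n)} V^0(x - y_i^{(n)})$; each $V_n(z)$ lies in the Gaussian Hilbert space $\mathcal{L}(V^0)$ (notation \ref{not : gp et al}) and converges almost surely to $V(z)$ by continuity of $V^0_\omega$. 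A Fubini--Tonelli argument using the local boundedness of $\mathbb{E}[V^0(y)^2] = k_\mathrm{v}(y,y)$ on the compact support of $F_t$ upgrades this to $L^2(\mathbb{P})$-convergence, placing $V(z)$ inside $\mathcal{L}(V^0)$ and making $V$ a centered GP. The same permutation immediately yields
\begin{align*}
\mathbb{E}[V(z)V(z')] = \int\!\!\int k_\mathrm{v}(x-y,\, x'-y')\,F_t(dy) F_{t'}(dy') = k_\mathrm{v}^{\mathrm{wave}}(z, z').
\end{align*}

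For $U$, the same scheme applies with $\dot{F}_t$, now a first-order distribution; the most direct route is the Kirchhoff formula \eqref{eq:kirschoff}, which rewrites $(\dot{F}_t * U^0)(x)$ as two spherical integrals involving $U^0$ and $\gamma \cdot \nabla U^0$. Both are continuous since $(A_2)$ with $k_\mathrm{u} \in \mathcal{C}^{2,2}$ gives $U^0 \in \mathcal{C}^1$ a.s., and each $\partial_{x_i} U^0$ belongs to $\mathcal{L}(U^0)$ as an $L^2(\mathbb{P})$-limit of finite differences. The Riemann-sum argument then places $U(z) \in \mathcal{L}(U^0)$ and yields the covariance $k_\mathrm{u}^{\mathrm{wave}}$ by the same expectation-integral interchange. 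Independence of $U^0$ and $V^0$ implies orthogonality of $\mathcal{L}(U^0)$ and $\mathcal{L}(V^0)$ in the ambient Gaussian Hilbert space, so $U$ and $V$ are independent as processes; any finite collection of $U(z_i)$'s and $V(z'_j)$'s is jointly Gaussian in the direct sum, making $W$ a centered GP with covariance $k_W = k_\mathrm{u}^{\mathrm{wave}} + k_\mathrm{v}^{\mathrm{wave}}$.

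For part (ii), I would invoke Proposition \ref{prop : diff constraints} with $T = \Box$: the mean being zero makes $T(m) = 0$ trivial, and $\sqrt{k_W(z,z)}$ is continuous (hence locally integrable) because the kernels are distributional convolutions of continuous functions with compactly supported distributions. The substantive step is verifying $\Box_z k_W(\cdot, z') = 0$ in the distributional sense for every fixed $z' = (x',t')$. Setting $h_{z'}(y) := \int k_\mathrm{v}(y, x'-y')\,F_{t'}(dy')$, I would recognize
\begin{align*}
k_\mathrm{v}^{\mathrm{wave}}(z, z') = (F_t * h_{z'})(x),
\end{align*}
which is the solution of \eqref{eq:wave_eq} in $z$ with data $(u_0, v_0) = (0, h_{z'})$ and is therefore annihilated by $\Box_z$ by the very definition of the Green's function $F_t$; the analogous rewriting $k_\mathrm{u}^{\mathrm{wave}}(z,z') = (\dot{F}_t * g_{z'})(x)$ with $g_{z'}(y) := \int k_\mathrm{u}(y, x'-y')\,\dot{F}_{t'}(dy')$ handles the second summand, giving the solution with data $(g_{z'}, 0)$. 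Proposition \ref{prop : diff constraints} then delivers (ii). The main obstacle is the rigorous justification of the Fubini-type interchanges between expectation and distributional pairing when $\dot{F}_t$ is involved, together with the upgrade from almost sure to $L^2(\mathbb{P})$ convergence of the approximating finite linear combinations, which is what preserves Gaussianity in the limit and lets one read off the covariance.
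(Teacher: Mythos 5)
Your proposal is correct and follows essentially the same route as the paper's proof: Riemann-sum approximation of the Kirchhoff integrals to place $U(z)$ and $V(z)$ in the Gaussian Hilbert spaces $\mathcal{L}(U^0)$ and $\mathcal{L}(V^0)$, a Fubini/Tonelli interchange (which the paper carries out in full, writing $\dot{F}_t$ as a sum of derivatives of compactly supported Radon measures) to identify the covariances, orthogonality of the two Gaussian subspaces for independence, and for (ii) the observation that $z \mapsto k_W(z,z')$ has the form \eqref{eq sol wave} so that Proposition \ref{prop : diff constraints} applies. The only cosmetic difference is the mechanism for upgrading a.s.\ convergence of the Gaussian Riemann sums to $L^2(\mathbb{P})$ convergence (the paper invokes the standard fact that a.s.\ convergence within a Gaussian space implies $L^2$ convergence, rather than a direct second-moment estimate), and the paper spells out in detail the integrability check you flag as the main remaining obstacle.
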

\begin{proof}
$(i)$ : first we prove that $U$ and $V$ are GPs. Since $U^0$ and $V^0$ are GPs, $\mathcal{L}(U^0)$ and $\mathcal{L}(V^0)$ are only comprised of Gaussian random variables (see \ref{not : gp et al}).

To prove that $U$ and $V$ are Gaussian processes, we rely on the Kirschoff formula \eqref{eq:kirschoff}, writing the integrals as limits of Riemann sums.
We start with $V$, that is, we focus on the first term in Kirschoff's formula \eqref{eq:kirschoff}. To show that $V$ is a Gaussian process, we only need to show that for any $z$, $V(z) \in \mathcal{L}(V^0)$ as this will ensure the Gaussian process property. Since the trajectories of $V^0$ are continuous almost surely, there exists a sequence of numbers $a_k^n$ and points $y_k^n$ such that for almost any $\omega \in \Omega$,
\begin{align*}
V(z)(\omega) &= (F_t * V_{\omega}^0)(x) = t\int_{S(0,1)} V^0(x-c|t|\gamma)(\omega)\frac{d\Omega}{4\pi} \\
= &\frac{t}{4\pi}\int_{0}^{2\pi}\int_{0}^{\pi} V^0(x-c|t|\gamma(\theta,\phi))(\omega) \sin(\theta)d\theta d\phi =  \lim_{n \rightarrow \infty} \sum_{k=1}^n a_k^n V^0(x-y_k^n)(\omega)
\end{align*}
This shows that $V(z)$ is an a.s. limit of a sequence of centered Gaussian random variables $(Y_n)\subset \mathcal{L}(V^0)$; a.s. convergence implies convergence in law. From \cite{legall2013}, Proposition 1.1, $V(z)$ is normally distributed and the convergence also takes place in $L^2(\mathbb{P})$. Therefore, $V(z) \in \mathcal{L}(V^0)$ and $V$ is a Gaussian process. From the same proposition, $V(z)$ is centered because the variables $Y_n$ are centered. Note that since $F_t$ is supported on the compact set $S(0,1)$, we only required the trajectories of $V^0$ to be continuous rather than continuous and compactly supported.

We apply the same reasoning to $U$, by applying the above steps to the second part of Kirschoff's formula \eqref{eq:kirschoff}. One's ability to write out the integrals as a limit of Riemann sums is ensured when the trajectories of $U^0$ lie in $\mathcal{C}^1(\mathbb{R}^3)$.

Finally, since $U^0$ and $V^0$ are independent, $\mathcal{L}(U^0)$ and $\mathcal{L}(V^0)$ are orthogonal in $L^2(\mathbb{P})$. Since $\mathcal{L}(U) \subset \mathcal{L}(U^0)$ and likewise for $V$, $U$ and $V$ are independent Gaussian processes (for Gaussian random variables, independence is equivalent to null covariance). Finally, the sum of independent Gaussian random variables is a Gaussian random variable. Therefore $\mathcal{L}(W) \subset \mathcal{L}(U) + \mathcal{L}(V)$ is only comprised of Gaussian random variables and $W$ is a Gaussian process. Now, we prove that
\begin{align}\label{eq:ftp ftp fubini}
\mathbb{E}[U(z)U(z')] = [(\dot{F}_t \otimes \dot{F}_{t'})*k_{\mathrm{u}}](x,x')
\end{align}
The main argument is Fubini's theorem. For this we use the fact that $\dot{F}_t$ is a distribution of order $1$ and can be identified to the a sum of derivatives of measures (see equation \eqref{eq:finite order radon}) : for all $t \in \mathbb{R}$, there exists $\{\mu^{t}_k\}_{k \in \mathbb{N}^3,|k|\leq 1}$ a family of Radon measures such that
\begin{align}
    \Dot{F}_t &= \sum_{|k| \leq 1} \partial^k \mu^{t}_k \ \ \ \text{in the sense of distributions}
\end{align}
Moreover, $\dot{F}_t$ is compactly supported, therefore all the measures $\mu^{t}_k$ are also compactly supported. 
First, we write $U_{\omega}(z)$ in integral form :
\begin{align}
U_{\omega}(z) &= \big(\Dot{F}_t * U_{\omega}^0\big)(x) = \langle \Dot{F}_t, \tau_{-x}\widecheck{U}_{\omega}^0 \rangle = \Big \langle \sum_{|k| \leq 1} \partial^k \mu^{t}_k, \tau_{-x}\widecheck{U}_{\omega}^0 \Big \rangle \\
    &= \sum_{|k|\leq 1} \langle \mu^{t}_k, (-1)^{|k|}\partial^k \tau_{-x}\widecheck{U}_{\omega}^0 \rangle = \sum_{|k|\leq 1} \int_{\mathbb{R}^3} (-1)^{|k|}\partial^k U_{\omega}^0(x-y) \mu^{t}_k(dy) 
\end{align}
Before applying Fubini's theorem, we need to check an integrability condition. Let $k \in \mathbb{N}^3$ such that $|k| \leq 1$. Recall that $|\mu_t^k|$ is defined in \eqref{eq:abs mu}; denote also $\sigma_{\partial^k U^0}(x) = \sqrt{\text{Var}(\partial^k U_0(x))}$. Since the trajectories of $U^0$ lie in $\mathcal{C}^1(\mathcal{D})$ a.s, those of $\partial^k U^0$ lie in $\mathcal{C}^0(\mathcal{D})$ and thus the function $x \longmapsto \text{Var}(\partial^k U_0(x))$ also lies in $\mathcal{C}^0(\mathcal{D})$ (\cite{azais_level_2009}, chapter 1, section 4.3). Therefore the function $x \longmapsto \sigma_{\partial^k U^0}(x)$ also lies in $\mathcal{C}^0(\mathcal{D})$. We now check that the integral $I$ below is finite. We use Tonelli's theorem and the Cauchy-Schwarz inequality:
\begin{align*}
    I :=& \int_{\Omega} \sum_{|k|\leq 1}\int_{\mathbb{R}^3}\Big|\partial^k U_{\omega}^0(x-y) \Big| |\mu^{t}_k|(dy) \sum_{|k'|\leq 1} \int_{\mathbb{R}^3} \Big|\partial^{k'} U_{\omega}^0(x'-y') \Big| |\mu^{t'}_{k'}|(dy')  \mathbb{P}(d\omega) \\
    =& \sum_{|k|,|k'|\leq 1} \int_{\mathbb{R}^3}\int_{\mathbb{R}^3} \int_{\Omega}  \Big|\partial^k U_{\omega}^0(x-y)\partial^{k'} U_{\omega}^0(x'-y')\Big| \mathbb{P}(d\omega) |\mu^{t}_k|(dy)|\mu^{t'}_{k'}|(dy') \\
    =& \sum_{|k|,|k'|\leq 1} \int_{\mathbb{R}^3}\int_{\mathbb{R}^3} \mathbb{E}\big[|\partial^kU^0(x-y)\partial^{k'}U^0(x'-y')|\big] |\mu^{t}_k|(dy)|\mu^{t'}_{k'}|(dy') \\
    \leq& \sum_{|k|,|k'|\leq 1} \int_{\mathbb{R}^3}\int_{\mathbb{R}^3} \bigg(\mathbb{E}\big[\partial^kU^0(x-y)^2\big]\mathbb{E}\big[\partial^{k'}U^0(x'-y')^2\big]\bigg)^{1/2}|\mu^{t}_k|(dy)|\mu^{t'}_{k'}|(dy') \\
    \leq& \Bigg( \sum_{|k|\leq 1} \int_{\mathbb{R}^3} \bigg(\mathbb{E}\big[\partial^kU^0(x-y)^2\big]\bigg)^{1/2} |\mu^{t}_k|(dy) \Bigg)\times \Bigg( \sum_{|k|\leq 1} \int_{\mathbb{R}^3} \bigg(\mathbb{E}\big[\partial^kU^0(x-y)^2\big]\bigg)^{1/2} |\mu^{t'}_k|(dy) \Bigg) 
    \\ \leq &\Big( \sum_{|k|\leq 1} (|\mu^{t}_k| * \sigma_{\partial^k U^0})(x) \Big) \times \Big( \sum_{|k|\leq 1} (|\mu^{t'}_k| * \sigma_{\partial^k U^0})(x') \Big) < +\infty
\end{align*}
For all multi-index $k$, the quantity $(|\mu^{t}_k| * \sigma_{\partial^k U^0})(x)$ is finite because $x \longmapsto \sigma_{\partial^k U^0}(x)$ is continuous and $|\mu^{t}_k|$ is compactly supported.
Note also that from hypothesis $(A_2)$, the GP $U^0$ is \textit{mean square differentiable} up to order 1, which implies (\cite{ritter2007average}, section III.1.4) that we have, for all multi-indexes $k,k'$ such that $|k|,|k'| \leq 1$, $x$ and $x'$ :
\begin{align}
\mathbb{E}\big[\partial^k U^0(x)\partial^{k'}U^0(x')\big] = \partial_1^k \partial_2^{k'}k_{\mathrm{u}}(x,x')
\end{align}
where $\partial_1$ (resp. $\partial_2$) denotes derivatives w.r.t. the first (resp. second) argument of $k_{\mathrm{u}}$.
We may thus permute integrals and differential operators in $\mathbb{E}\big[U(z)U(z')\big]$ : 
\begin{align}
    \mathbb{E}\big[U(z)U(z')\big] &= \mathbb{E}\Bigg[\sum_{|k|\leq 1} \int_{\mathbb{R}^3} (-1)^{|k|}\partial^k U^0(x-y) \mu^{t}_k(dy))\sum_{|k'|\leq 1} \int_{\mathbb{R}^3} (-1)^{|k'|}\partial^{k'} U^0(x-y) \mu^{t'}_{k'}(dy')\Bigg] \nonumber \\
    &= \sum_{|k|,|k'|\leq 1}\int_{\mathbb{R}^3} \int_{\mathbb{R}^3}(-1)^{|k|}(-1)^{|k'|}\partial_1^k \partial_2^{k'}\mathbb{E}\big[U^0(x-y)U^0(x'-y')\big]\mu^{t}_k(dy)\mu^{t'}_{k'}(dy') \nonumber \\
    &= \sum_{|k|,|k'|\leq 1}\int_{\mathbb{R}^3} \int_{\mathbb{R}^3}(-1)^{|k|}(-1)^{|k'|}\partial_1^k \partial_2^{k'}k_{\mathrm{u}}(x-y,x'-y')\mu^{t}_k(dy)\mu^{t'}_{k'}(dy') \nonumber \\
    &= \bigg[\Big(\sum_{|k|\leq 1} \partial^k\mu^{t}_k  \otimes \sum_{|k'|\leq 1} \partial^{k'}\mu^{t'}_{k'}\Big) * k_{\mathrm{u}}\bigg](x,x') = [(\Dot{F}_t \otimes \dot{F}_{t'}) * k_{\mathrm{u}}](x,x') \nonumber
\end{align}
which proves \eqref{eq:ftp ftp fubini}. 

One proves that $\mathbb{E}\big[V(z)V(z')\big] = [({F}_t \otimes {F}_{t'}) * k_{\mathrm{v}}](x,x')$ the exact same way, which is actually simpler as $F_t$ is directly a measure. To conclude,

\begin{align}
    k_{W}(z,z') &= \text{Cov}(W(z),W(z')) \nonumber \\
    &= \mathbb{E}[(W(z)W(z')] = \mathbb{E}\Big[\big(U(z) + V(z)\big)\big(U(z') + V(z') \big)\Big] \nonumber\\
    &= \mathbb{E}\Big[U(z)U(z')\Big] +  \mathbb{E}\Big[U(z)V(z')\Big] +  \mathbb{E}\Big[V(z)U(z')\Big] +  \mathbb{E}\Big[V(z)V(z')\Big] \nonumber \\
    &= [(\Dot{F}_t \otimes \Dot{F}_{t'}) * k_{\mathrm{u}}](x,x') + [(F_t \otimes F_{t'}) * k_{\mathrm{v}}](x,x')
\end{align}
The cross terms are null because $U(z)$ and $V(z')$ are independent as well as $U(z')$ and $V(z)$.

$(ii)$ : with expression \eqref{eq:wave kernel}, one checks that for any fixed $z'$, the function $z \longmapsto k_{W}(z,z')$ is of the form \eqref{eq sol wave} and thus verifies $\Box k_{x'} = 0$ in the sense of distributions. $(ii)$ is then a direct consequence of Proposition \ref{prop : diff constraints}.

\end{proof}
\begin{remark}
If $U$ and $V$ are not independent, then the two terms $[(\Dot{F}_t \otimes F_{t'}) * k_{uv}](x,x')$ and $[(F_t \otimes \Dot{F}_{t'}) * k_{vu}](x,x')$ must be added to equation \eqref{eq:wave kernel}, where $k_{uv}(x,x')$ denotes the cross covariance between $U$ and $V$ : $k_{uv}(x,x') = \text{Cov}(U(x),V(x'))$ and $k_{vu}(x,x') = \text{Cov}(V(x),U(x')) = k_{uv}(x',x)$.
\end{remark}
More explicitly, we have the following Kirschoff-like integral formulas for $k_{\mathrm{v}}^{\mathrm{wave}}$ and $k_{\mathrm{u}}^{\mathrm{wave}}$ :
\begin{align}
[(F_t \otimes F_{t'}) * k_{\mathrm{v}}]&(x,x') = tt'\int_{S(0,1) \times S(0,1)}k_{v}(x-c|t|\gamma,x'-c|t'|\gamma')\frac{d\Omega d\Omega'}{(4\pi)^2} \label{eq:explicit Ft Ftp}\\
[(\Dot{F}_t \otimes \Dot{F}_{t'}) * k_{\mathrm{u}}](x,x') &= \int_{S(0,1) \times S(0,1)}\Big( k_{u}(x-c|t|\gamma,x'-c|t'|\gamma') \nonumber \\ 
& \hspace{30pt}-ct \nabla_1 k_{u}(x-c|t|\gamma,x'-c|t'|\gamma')\cdot \gamma \nonumber \\
& \hspace{30pt}-c|t'| \nabla_2 k_{u}(x-c|t|\gamma,x'-c|t'|\gamma')\cdot \gamma' \nonumber\\
&\hspace{30pt}+ c^2tt' \gamma^T \nabla_1 \nabla_2 k_{u}(x-c|t|\gamma,x'-c|t'|\gamma')\gamma'\Big) \frac{d\Omega d\Omega'}{(4\pi)^2}\label{eq:explicit Ft Ftp point}
\end{align}
where $\nabla_1 k_{u}(x,x')$ is the gradient vector of $k_{u}$ w.r.t. $x$,  $\nabla_2 k_{u}(x,x')$ is the gradient vector of $k_{u}$ w.r.t. $x'$ and $\nabla_1 \nabla_2 k_{u}(x,x')$ is the matrix whose entry $(i,j)$ is given by $\partial_{x_i^1}\partial_{x_j^2}k_{u}(x,x')$ ($\partial_{x_i^1}$ is the partial derivative w.r.t. the $i^{th}$ coordinate of $x$, $\partial_{x_j^2}$ is the partial derivative w.r.t. the $j^{th}$ coordinate of $x'$).
From now on, we call WIGPR the act of performing GPR with a covariance kernel of the form \eqref{eq:wave kernel}.


\subsubsection{Stationary kernel for the initial conditions} Many standard covariance kernels used for GPR are stationary \cite{gpml2006}. For this reason, we study equation \eqref{eq: kv wave} when $k_{\mathrm{v}}$ is stationary. For conciseness, we restrict ourselves to the case where $u_0 = 0$, i.e. $k_{\mathrm{u}} = 0$ (and we discard \eqref{eq: ku wave}).



\begin{proposition}\label{prop : F_t * F_tp} Suppose that $k_{\mathrm{v}}$ is stationary : $k_{\mathrm{v}}(x,x') = k_S(x-x')$. \\
(i) Then $k_{\mathrm{v}}^{\mathrm{wave}}$ is stationary in space and
\begin{align}
[(F_t \otimes F_{t'}) * k_{\mathrm{v}}](x,x') = (F_t * F_{t'} * k_S)(x-x')
\end{align}
(ii) Moreover, the measure $F_t * F_{t'}$ is absolutely continuous over $\mathbb{R}^3$ and identifying it to its density, we have
\begin{align} \label{f_t star f_t'}
    (F_t * F_{t'})(h) = \frac{\text{sgn}(t)\text{sgn}(t')}{8\pi c^2|h|}\mathbbm{1}_{\big[c\big||t|-|t'|\big|, c(|t| + |t'|)\big]}(|h|)
\end{align}
where $|h|$ is the euclidean norm of $h \in \mathbb{R}^3$.
\end{proposition}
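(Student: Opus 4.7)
The plan is to unfold the definitions of convolution of measures carefully in (i), and then in (ii) to reduce the problem to the classical convolution of two spherical surface measures, which can be evaluated explicitly via a single change of variable thanks to rotational invariance. No deep tools are needed: it is essentially a bookkeeping exercise plus a Jacobian computation.

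For (i), by definition of the tensor product measure and the measure-convolution formula \eqref{eq:conv_mesure_func},
\[
[(F_t \otimes F_{t'}) * k_{\mathrm{v}}](x,x') = \int_{\mathbb{R}^3}\int_{\mathbb{R}^3} k_{\mathrm{v}}(x-y, x'-y')\, F_t(dy)\, F_{t'}(dy').
\]
Substituting $k_{\mathrm{v}}(x,x') = k_S(x-x')$ gives an integrand of the form $k_S((x-x')-(y-y'))$. Since $F_{t'}$ is, up to a scalar, the surface measure of a sphere centered at the origin, it is invariant under $y'\mapsto -y'$; Fubini (licit since $F_t,F_{t'}$ are finite compactly supported Radon measures and $k_S$ is continuous) then recognises the double integral as $\int k_S((x-x')-u)\,(F_t*F_{t'})(du) = ((F_t*F_{t'})*k_S)(x-x')$, which depends only on $h:=x-x'$, establishing spatial stationarity.

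For (ii), writing $F_t = \text{sgn}(t)\sigma_{c|t|}/(4\pi c^2|t|)$, it suffices to compute $\sigma_{R_1}*\sigma_{R_2}$ for $R_1 = c|t|$, $R_2 = c|t'|$. By rotational invariance this convolution is a radial Radon measure, so I test against a radial function $f(h)=g(|h|)$ and aim to identify the radial density $\psi$. Parametrising $y = R_1\omega$, $y' = R_2\omega'$ with $\omega,\omega'\in S(0,1)$ and using the rotational invariance of $dS(\omega')$ to align the polar axis of $\omega'$ with $\omega$, the inner integral reduces to
\[
\int_{S(0,1)} g(|R_1\omega + R_2\omega'|)\,dS(\omega') = 2\pi\int_{-1}^1 g\bigl(\sqrt{R_1^2+R_2^2+2R_1R_2 u}\bigr)du.
\]
The substitution $r = \sqrt{R_1^2+R_2^2+2R_1R_2 u}$ gives $du = r\,dr/(R_1R_2)$ with $r$ ranging over $[|R_1-R_2|, R_1+R_2]$. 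Multiplying by the outer $\omega$-integral ($4\pi$) and the Jacobians $R_1^2R_2^2$ from $dS(y)dS(y')$ yields $8\pi^2 R_1R_2\int_{|R_1-R_2|}^{R_1+R_2}g(r)r\,dr$. Comparing with $\int g(|h|)\psi(|h|)dh = 4\pi\int_0^\infty r^2 g(r)\psi(r)dr$ identifies $\psi(r) = (2\pi R_1R_2/r)\mathbbm{1}_{[|R_1-R_2|, R_1+R_2]}(r)$; absolute continuity is then immediate. Restoring the prefactor $\text{sgn}(t)\text{sgn}(t')/(16\pi^2 c^4|t||t'|)$ and using $R_1R_2 = c^2|t||t'|$ produces exactly \eqref{f_t star f_t'}.

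The main subtlety is managing the $\text{sgn}$ factors (since $F_t$ is written with $t$, not $|t|$, in the denominator) and checking that the support interval comes out with $|\,|t|-|t'|\,|$ on the lower endpoint; both are handled cleanly by the substitution $R_i = c|t_i|$. Everything else is a routine computation of the convolution of two uniform surface measures on concentric spheres.
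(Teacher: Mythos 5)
Your proof of part (i) is the same as the paper's: unfold the measure convolution, use the symmetry of the spherical surface measure under $y'\mapsto -y'$, and recognize $(F_t*F_{t'}*k_S)(x-x')$. For part (ii), however, you take a genuinely different and correct route. The paper works entirely in the Fourier domain: it multiplies the transforms $\sin(ct|\xi|)/(c|\xi|)$, rewrites the product via a product-to-sum identity, and inverts term by term in spherical coordinates, ultimately relying on the Dirichlet integral $\int_0^\infty \sin(\alpha r)/r\,dr = \mathrm{sgn}(\alpha)\pi/2$ and a case analysis on signs to extract the indicator of the annulus. Your computation stays in physical space: you reduce to the convolution $\sigma_{R_1}*\sigma_{R_2}$ of two concentric spherical surface measures, observe it is rotation invariant (hence determined by its action on radial test functions), and evaluate it exactly with the substitution $r=\sqrt{R_1^2+R_2^2+2R_1R_2u}$. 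I checked the bookkeeping: the factor $8\pi^2R_1R_2\int_{|R_1-R_2|}^{R_1+R_2}g(r)r\,dr$, the identification $\psi(r)=2\pi R_1R_2/r$ on the annulus, and the restoration of the prefactor $\mathrm{sgn}(t)\mathrm{sgn}(t')/(16\pi^2c^4|t||t'|)$ do combine to give exactly $\mathrm{sgn}(t)\mathrm{sgn}(t')/(8\pi c^2|h|)$ on $[c||t|-|t'||,\,c(|t|+|t'|)]$. Your approach buys something real: it avoids the inverse Fourier transform of the non-integrable function $\cos(a|\xi|)/|\xi|^2$, which in the paper is handled as an oscillatory improper integral and is the most delicate (somewhat formal) step of their argument; absolute continuity also falls out immediately rather than being read off at the end. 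The paper's Fourier route is more systematic in that it starts directly from the given formulas \eqref{eq:fourier of ft ftp} and would generalize to other dimensions where the Green's function is only known spectrally. The only points you gloss over are minor: one should state that the convolution of two rotation-invariant finite measures is rotation invariant (immediate from the definition \eqref{eq:conv mesures}), and the formula degenerates harmlessly when $t$ or $t'$ vanishes since $\mathrm{sgn}(0)=0$.
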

\begin{proof}
$(i) :$ Suppose for simplicity that $c=1$. Using \eqref{eq:conv_mesure_func},
\begin{align*}
[(F_t \otimes F_{t'}) * k_{\mathrm{v}}](x,x')&= 
\int_{\mathbb{R}^3 \times \mathbb{R}^3} k(x-s_1,x'-s_2)dF_t(s_1) dF_{t'}(s_2)\\
&= \int_{\mathbb{R}^3 \times \mathbb{R}^3} k_S(x-x'-s_1+s_2)dF_t(s_1) dF_{t'}(s_2)
\end{align*}
But $S(0,1)$ is invariant under the change of variable $\gamma \longmapsto - \gamma$ and thus for any function $f$, $\int_{\mathbb{R}^3} f(s_2)dF_{t'}(s_2) = \int_{\mathbb{R}^3} f(-s_2)dF_{t'}(s_2)$. This yields
\begin{align*}
[(F_t \otimes F_{t'}) * k_{\mathrm{v}}](x,x')= \int_{\mathbb{R}^3 \times \mathbb{R}^3} k_S(x-x'-s_1-s_2)dF_t(s_1) dF_{t'}(s_2)
\end{align*}
Then, using the definition of $F_t * F_{t'}$ in \eqref{eq:conv mesures},
\begin{align*}
(F_t * F_{t'} * k_S)(h) &= \int_{\mathbb{R}^3} k_S(h-s)d(F_t * F_{t'})(s) = \int_{\mathbb{R}^3}\int_{\mathbb{R}^3} k_S(h-s_1-s_2)dF_t(s_1)dF_{t'}(s_2)
\end{align*}
Setting $h=x-x'$ finishes the proof. \\

$(ii) :$ Without loss of generality we suppose that $c=1$. The computation is derived in the Fourier domain. We use here the following convention for the Fourier transform of functions and measures over $\mathbb{R}^d$, along with its inverse transform
\begin{align}
\mathcal{F}(f)(\xi) &= \int_{\mathbb{R}^d}f(x)e^{-i\langle x,\xi \rangle}dx \label{eq:fourier func} \\
f(x) &= \frac{1}{(2\pi)^d}\int_{\mathbb{R}^d}\mathcal{F}(f)(\xi)e^{i\langle x,\xi\rangle}d\xi \label{eq:inverse fourier} \\
\mathcal{F}(\mu)(\xi) &= \int_{\mathbb{R}^d}f(x)e^{-i\langle x,\xi \rangle}\mu(dx) \label{eq:fourier mesure}
\end{align}
Above, $\langle,\rangle$ denotes the usual dot product of $\mathbb{R}^d$.
Equation \eqref{eq:fourier mesure} for an absolutely continuous measure $\mu(dx)=f(x)dx$ reduces to \eqref{eq:fourier func}, i.e. the Fourier transform of its density $f$. This convention is the one that yields the expressions \eqref{eq:fourier of ft ftp} and it is such that $\mathcal{F}(f*g)(\xi) = \mathcal{F}(f)(\xi)\mathcal{F}(g)(\xi)$ \cite{rudin1991} whatever $f$ and $g$, functions or measures. Thus,
\begin{align}\label{eq:facto fourier}
     \mathcal{F}(F_t * F_{t'})(\xi) = \mathcal{F}(F_t)(\xi)\mathcal{F}(F_{t'})(\xi) = \frac{\sin(t|\xi|)\sin(t'|\xi|)}{|\xi|^2} = \frac{\cos(a|\xi|) - \cos(b|\xi|)}{2|\xi|^2}
 \end{align}
 with $a = t-t', b = t+t$. We then compute the inverse Fourier transform of the quantity above. Let $h \in \mathbb{R}^3$. In spherical coordinates (see \ref{not : spherical}), noting the unit vectors $\gamma_h = \frac{h}{|h|}$ and $\gamma = \frac{\xi}{|\xi|} = \frac{\xi}{r}$, we define $f_a$ by
 \begin{align}
     f_a(h) &= \int_{\mathbb{R}^3}e^{i \langle h,\xi \rangle} \frac{\cos(a|\xi|)}{|\xi|^2}d\xi = 
    \int_0^{+\infty}\int_0^{2\pi} \int_0^{\pi}e^{i r\langle h,\gamma \rangle} \frac{\cos(ar)}{r^2}r^2\sin \theta d\theta d\phi dr \label{eq:simplify r2}\\
    &=\int_0^{+\infty} \cos(ar) \int_{S(0,1)}e^{i r|h|\langle \gamma,\gamma_h \rangle}d\Omega dr \nonumber
 \end{align}
In the lines above, we used the spherical coordinate change $\xi = r\gamma, d\xi = r^2\sin \theta d\theta d\phi dr$. We then make use of radial symmetry in the interior integral. Note $e_3$ the third vector of the canonical basis of $\mathbb{R}^3$ and let $M$ be an orthogonal matrix such that $M\gamma_h = e_3$. We perform the change of variable $\gamma' = M\gamma$, using that $MS(0,1) = S(0,1)$ and that the corresponding Jacobian is equal to $1$:
 \begin{align}
     \int_{S(0,1)}e^{i r|h|\langle \gamma,\gamma_h \rangle}d\Omega &= \int_{M S(0,1)}e^{i r|h|\langle M^T\gamma', \gamma_h \rangle}d\Omega' \label{eq:change M} \\
      &= \int_{S(0,1)}e^{i r|h|\langle \gamma', M\gamma_h \rangle}d\Omega'\\
     &= \int_{S(0,1)}e^{i r|h|\langle \gamma, e_3 \rangle}d\Omega = 2\pi \int_0^\pi e^{i r|h|\cos(\theta)}\sin(\theta)d\theta \nonumber \\
     &= 2\pi \left[-\frac{e^{i r|h|\cos(\theta)}}{ir|h|}\right]_0^\pi = 2\pi \frac{e^{ir|h|}-e^{-ir|h|}}{ir|h|} = 4\pi\frac{\sin(r|h|)}{r|h|} \label{eq:integ_exact}
 \end{align}
 and thus
 \begin{align}\label{eq:sin a sin b}
     f_a(h) &= 4\pi \int_0^{\infty} \frac{\cos(ar)\sin(|h|r)}{r|h|}dr = 4\pi \int_0^{\infty} \frac{\sin((|h|+a)r) + \sin((|h|-a)r)}{2r|h|}dr \nonumber \\
     &= \frac{2\pi}{|h|}\int_0^{\infty}\frac{\sin(\alpha r)}{r} + \frac{\sin(\beta r)}{r}dr
 \end{align}
 with $\alpha = |h|+a, \beta = |h|-a$. Finally, we have the Dirichlet integral
 \begin{align}\label{eq:dirichlet}
     \int_0^{\infty}\frac{\sin(\alpha r)}{r}dr = \text{sgn}(\alpha)\frac{\pi}{2}
 \end{align}
We define the function $f_b$ exactly as $f_a$, and compute it by replacing $a$ by $b$ in every step above. Putting \eqref{eq:facto fourier}, \eqref{eq:sin a sin b} and \eqref{eq:dirichlet} together, the inverse Fourier transform of $F_t*F_{t'}$ computed as in equation \eqref{eq:inverse fourier} is an absolutely continuous measure with density
 \begin{align}
     f(h) &= \frac{1}{(2\pi)^3}\frac{1}{2}\big(f_a(h) - f_b(h)\big) \nonumber \\ 
     &= \frac{1}{16\pi |h|}\Big(\text{sgn}(|h| + t-t') + \text{sgn}(|h| - t + t') \nonumber\\
     & \hspace{45pt} - \text{sgn}(|h| + t + t') - \text{sgn}(|h| -t-t')\Big) \label{ft sign}\\
     &=: \frac{1}{16\pi |h|}K(|h|,t,t') \nonumber
 \end{align}
$K(|h|,t,t')$ is defined in the line above. Note that $K(|h|,-t,t') = -K(|h|,t,t')$ and likewise with $t'$, thus $K(|h|,t,t') = \text{sgn}(t)\text{sgn}(t')K(|h|,T,T')$ with $T = |t|, T' = |t'|$. Using the symmetries in $t$ and $t'$ in \eqref{ft sign} and the fact that $\text{sgn}(s) = 1$ if $s > 0$, we obtain 
 \begin{align*}
     K(|h|,T,T') &= \hspace{4pt} \text{sgn}(|h| + |T-T'|) + \text{sgn}(|h| - |T-T'|)\\
     &\hspace{9pt}- \text{sgn}(|h| + T + T') - \text{sgn}(|h| -T-T') \\
     &= 1 + \text{sgn}(|h| - |T-T'|) - 1 - \text{sgn}(|h| -T-T') \\
     &= \text{sgn}(|h| - |T-T'|) - \text{sgn}(|h| -T-T')
 \end{align*}
 Therefore,
 \begin{align*}
      K(|h|,T,T') = &\begin{cases}
                    &0 \text{  if  } |h| < |T-T'| \\
                    &2 \text{  if  } |T-T'| < |h| < T + T' \\
                    &0 \text{  if  } |h| > T+T'
                    \end{cases}
                    \ \ \ = \ 2 \times \mathbbm{1}_{[|T-T'|, T + T']}(|h|)
 \end{align*}
 Identifying the measure $F_t*F_{t'}$ with its density, we have
 \begin{align}
     (F_t*F_{t'})(h) = \frac{\text{sgn}(t)\text{sgn}(t')}{8\pi|h|}\mathbbm{1}_{\Big[\big||t|-|t'|\big|, |t| + |t|'\Big]}(|h|)
 \end{align}
\end{proof}
The proof above makes extensive use of the specificities of the dimension $3$. First in equation \eqref{eq:simplify r2}, where the scalars $r^2$ cancel each other out; second in \eqref{eq:integ_exact} where an exact antiderivative of the integrated function can be computed. None of these two simplifications hold in higher dimension or in dimension 2.
\begin{remark}
Note that expression \eqref{f_t star f_t'} with $h=x-x'$ is the covariance kernel of the solution process $U$ with initial condition the ``formal" white noise process $V^0$ with the stationary Dirac delta covariance kernel $k_{\mathrm{v}}(x,x') = \delta_0(x-x')$ : 
\begin{align}
{[(F_t \otimes F_{t'}) * k_{\mathrm{v}}](x,x') = (F_t * F_{t'} * \delta_0)(x-x') = (F_t * F_{t'})(x-x')}
\end{align}
Somewhat surprisingly, although formula \eqref{f_t star f_t'} yields a summable function over $\mathbb{R}^3$ when $t$ and $t'$ are fixed, it is not directly usable for practical computations as the diagonal terms of the related covariance matrices are all singularities : $(F_t * F_t)(0) = +\infty$... Yet, formula \eqref{f_t star f_t'} may be used together with explicit kernels $k_S$ to yield usable expressions. For instance, if $k_{\mathrm{v}}(x,x') = k_S(x-x') = C\exp\Big(-\frac{|x-x'|^2}{2L^2}\Big)$, we state without proof that :
\begin{align}
    (F_t * F_{t'}* k_S)(h) = \text{sgn}(tt') \frac{C'L^3}{c^2}\Bigg(\frac{\Phi\big( \frac{R_1 + |h|}{L}\big)-\Phi\big( \frac{R_1 - |h|}{L}\big)}{2|h|}- \frac{\Phi\big( \frac{R_2 + |h|}{L}\big)-\Phi\big( \frac{R_2 - |h|}{L}\big)}{2|h|}\Bigg)
\end{align}
where $h = x-x'$, $\ \Phi(s) = \int_{-\infty}^s \frac{exp(-t^2/2)}{\sqrt{2\pi}}dt$, $R_1 = c\big||t|-|t'|\big|$, $R_2 = c(|t| + |t'|) $. Such a kernel always takes finite values : when $h$ goes to $0$, the above formula reduces to computable finite derivatives.
\end{remark}

Although these formulas are interesting in themselves, the study of propagation phenomena is usually done thanks to compactly supported initial conditions, which can never be modelled with a stationary GP. Compactly supported initial conditions are dealt with in subsection \ref{sub : radial} within the context of radial symmetry.

\subsection{Initial Condition Reconstruction and Error control}
\subsubsection{Initial condition reconstruction} 
Consider a set of locations $(x_i)_{1\leq i \leq q}$ and moments $(t_j)_{1 \leq j \leq N}$ (imagine $q$ sensors each collecting an observation at time $t_j$ for all $j$). Consider now the following inverse problem 
\begin{equation}\label{eq:inverse_pb}
\begin{aligned}
\text{Recover } &u_0 \text{ and } v_0 \text{ from the set of observations} \{w(x_i,t_j)\}_{i,j} \\
& \text{ where } (w, u_0, v_0) \text{ are subject to \eqref{eq:wave_eq}}
\end{aligned}
\end{equation}
We will see that WIGPR provides an answer to the problem \eqref{eq:inverse_pb}. This is not surprising because the model described in the previous section was derived under the hypothesis that $u_0$ and $v_0$ were unknown.


We first detail some background from the inverse problem community for the problem \eqref{eq:inverse_pb}. The general problem of reconstructing the initial conditions $u_0$ and $v_0$ from some observations of the solution $w$ of \eqref{eq:wave_eq} is known as Photoacoustic Tomography (PAT)(\cite{kuchment2010},\cite{ammari2012}). In the usual inverse problem approach, the solution is accessible over a whole surface $S \subset \mathbb{R}^3$ or an open subset of $\mathbb{R}^3$. The initial conditions are recovered from the observations over $S$ thanks to an integral transform such as the Radon transform (\cite{kuchment2010},\cite{ammari2012}). Under certain geometric conditions, the corresponding inverse problem is known to be well-posed (\cite{kuchment2010}). In real life applications though, only discrete finite dimensional data is available : one then has to discretize the integrals of the inversion step (\cite{ammari2012}, chapter 3). This means that a sufficiently large amount of sensors is needed for the numerical integration step to yield satisfying results; additionally, the sensors have to be located at specific locations dictated by the numerical integration method used.
General Bayesian inversion approaches are also standard for this type of; see the state of the art section for a discussion and references.

We now return to WIGPR. From Proposition \ref{prop : inheritance pde conditioned}, we observe that performing GPR on any data with kernel \eqref{eq:wave kernel} automatically produces a prediction $\tilde{m}$ that verifies $\Box \tilde{m} = 0$ in the sense of distributions. Therefore this function $\tilde{m}$ is the solution of problem \eqref{eq:wave_eq} for some initial conditions $\Tilde{u}_0$ and $\Tilde{v}_0$ :
\begin{align}
    \Tilde{m}(x,t) &= (F_t * \Tilde{v}_0)(x) + (\Dot{F}_t * \Tilde{u}_0)(x)
\end{align}
These initial conditions are simply given by $\Tilde{u}_0(x) = \tilde{m}(x,0) $ and $\Tilde{v}_0(x) = \partial_t\tilde{m}(x,0)$. If the data $(w_1,...,w_n)^T$ on which GPR is performed is comprised of observations of a function $w$ that is another solution of problem \eqref{eq:wave_eq}, the initial conditions $(\Tilde{u}_0,\Tilde{v}_0)$ can be understood as approximations of the initial conditions $(u_0,v_0)$ corresponding to $w$. More precisely, following equation \eqref{eq:krig projection}, we have
\begin{align}
	\tilde{m}(x,t) &= p_F(w)(x,t) &&\forall (x,t) \in \mathbb{R}^3 \times \mathbb{R}^+ \\
    \Tilde{u}_0(x) &= \Tilde{m}(x,0) = p_F(w)(x,0) &&\forall x \in \mathbb{R}^3 \label{eq:U^0 tilde}\\
    \Tilde{v}_0(x) &= \partial_t\Tilde{m}(x,0) = \partial_t p_F(w)(x,0) = p_F(\partial_t w)(x,0)  &&\forall x \in \mathbb{R}^3 \label{eq:V^0 tilde}
\end{align}
where $F$ denotes the finite dimensional space $\text{Span}(k_{W}(z_1,\cdot),...,k_{W}(z_n,\cdot))$ and $p_F$ is the orthogonal projection on $F$ defined right after equation \eqref{eq:krig projection}. Here, $z_m$ is of the form $z_m = (x_i,t_k) \in \mathbb{R}^4$).
We see here that this use of WIGPR provides a flexible framework for tackling the problem \eqref{eq:inverse_pb} : the sensors are not constrained in number or location by any integration scheme. Taking a look at equations \eqref{eq:U^0 tilde} and \eqref{eq:V^0 tilde}, we can quickly discuss the matter of optimal sensor locations for WIGPR. First, we expect that $\tilde{m}$ will provide a better approximation of $w$ when the functions $k_{W}(z_i,\cdot)_{i=1,...,n}$ are as orthogonal as possible in $\mathcal{H}_{k_W}$ since $\tilde{m}$ is an orthogonal projection on $F$ w.r.t. the $\mathcal{H}_{k_W}$ metric. That is, the following inequality should be required
\begin{align}\label{eq:ortho_F}
\langle k_{W}((x_i,t_k),\cdot),k_{W}((x_j,t_l),\cdot) \rangle_{\mathcal{H}_{k_{W}}} = k_{W}((x_i,t_k),(x_j,t_l)) \ll 1
\end{align}
A close inspection of expressions \eqref{eq:explicit Ft Ftp} and \eqref{eq:explicit Ft Ftp point} shows that the property \eqref{eq:ortho_F} can be obtained for most times $t_k$ and $t_l$ when the sensors are far apart from each other, as soon as the kernels $k_{\mathrm{u}}$ and $k_{\mathrm{v}}$ are such that $k(x,x') \longrightarrow 0$ when $|x-x'| \longrightarrow +\infty$ (which is common, see e.g. the kernel \eqref{eq:matern 5/2}).

\subsubsection{Error Control} Now that we have showed that WIGPR can provide approximations of the initial conditions of \eqref{eq:wave_eq}, we underline the fact that these initial condition reconstructions are  enough to control the spatial error between the target function $u$ and the Kriging mean $\tilde{m}$, at all times. Indeed, we have the following $L^p$ control in terms of the initial condition reconstruction error. For $p \in [1,+\infty]$, denote the Sobolev space ${W^{1,p}(\mathbb{R}^3) = \{f \in L^p(\mathbb{R}^3) : \forall |k| \leq 1, \partial^k f \in L^p(\mathbb{R}^3) \}}$.
  
\begin{proposition}\label{lemma : Lp control}
For any $p \in [1,+\infty]$ and any pair $v_0 \in L^p(\mathbb{R}^3)$, $u_0 \in W^{1,p}(\mathbb{R}^3)$ we have the $L^p$ estimates for all $t \in \mathbb{R}$
\begin{align}
    || F_t * v_0 ||_p &\leq |t| \ ||v_0||_p \label{eq:estim Lp v_0} \\
    || \Dot{F}_t * u_0 ||_p &\leq ||u_0||_p + C_p c|t| \ ||\nabla u_0||_p \label{eq:estim Lp u_0} 
\end{align}
with $C_p = \Big(\int_S |\gamma|_q^p \frac{d\Omega}{4\pi}\Big)^{1/p} \leq 3^{1/q} \leq 3, 1/p + 1/q = 1$. Supposing that the correct speed $c$ is known and plugged in $k_{W}$, equations \eqref{eq:estim Lp v_0} and \eqref{eq:estim Lp u_0} lead to the following $L^p$ error estimate between the target $w$ and its approximation $\tilde{m}$
\begin{align}\label{eq:diff w m tilde}
    ||w(\cdot,t) - \Tilde{m}(\cdot,t)||_p \leq |t| \ ||v_0 - \Tilde{v}_0||_p + ||u_0 - \Tilde{u}_0||_p + C_pc|t| \ ||\nabla(u_0 - \Tilde{u}_0)||_p
\end{align}
where $\tilde{u_0}$ and $\tilde{v}_0$ are defined in \eqref{eq:U^0 tilde} and \eqref{eq:V^0 tilde}.
\end{proposition}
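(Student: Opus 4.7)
The plan is to derive each of \eqref{eq:estim Lp v_0} and \eqref{eq:estim Lp u_0} directly from Kirchhoff's representation \eqref{eq:kirschoff}, using only Minkowski's integral inequality on the probability measure $d\Omega/(4\pi)$, together with the translation invariance of Lebesgue measure; then combine them by linearity of the wave propagation formula \eqref{eq sol wave} to obtain \eqref{eq:diff w m tilde}.

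For the first estimate, I read off from \eqref{eq:kirschoff} that
\begin{align*}
(F_t * v_0)(x) = t \int_{S(0,1)} v_0(x-c|t|\gamma)\frac{d\Omega}{4\pi}.
\end{align*}
Since $d\Omega/(4\pi)$ is a probability measure on $S(0,1)$, Minkowski's integral inequality combined with $\|v_0(\cdot - c|t|\gamma)\|_p = \|v_0\|_p$ yields $\|F_t*v_0\|_p \leq |t|\int_S \|v_0\|_p\, d\Omega/(4\pi) = |t|\|v_0\|_p$, which is \eqref{eq:estim Lp v_0}.

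For \eqref{eq:estim Lp u_0}, I split the Kirchhoff expression for $(\dot{F}_t*u_0)(x)$ into the term $\int_S u_0(x-c|t|\gamma)\,d\Omega/(4\pi)$, whose $L^p$ norm is bounded by $\|u_0\|_p$ exactly as above, and the gradient term $-c|t|\int_S \gamma\cdot \nabla u_0(x-c|t|\gamma)\,d\Omega/(4\pi)$. On the gradient term I plan to apply Jensen's inequality pointwise in $x$ (probability measure on $S$), then the elementary Hölder inequality $|\gamma\cdot v|\leq |\gamma|_q |v|_p$ on $\mathbb{R}^3$, which gives
\begin{align*}
\Bigl|\int_S \gamma\cdot \nabla u_0(x-c|t|\gamma)\tfrac{d\Omega}{4\pi}\Bigr|^p \leq \int_S |\gamma|_q^p\, |\nabla u_0(x-c|t|\gamma)|_p^p\tfrac{d\Omega}{4\pi}.
\end{align*}
Integrating in $x$ and swapping the order of integration (Fubini) followed by translation invariance gives the bound $C_p^p \|\nabla u_0\|_p^p$; taking $p$-th roots and combining with the zero-order contribution yields \eqref{eq:estim Lp u_0}. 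The inequality $C_p \leq 3^{1/q}$ comes from $|\gamma|_q \leq 3^{1/q}|\gamma|_\infty \leq 3^{1/q}|\gamma|_2 = 3^{1/q}$ on $\mathbb{R}^3$, and $3^{1/q}\leq 3$.

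For the final inequality, Proposition \ref{prop : inheritance pde conditioned} guarantees that $\Box \tilde m = 0$ in the distributional sense, so by \eqref{eq:U^0 tilde}--\eqref{eq:V^0 tilde} the Kriging mean admits the same propagation representation $\tilde m(x,t) = (F_t*\tilde v_0)(x) + (\dot F_t*\tilde u_0)(x)$. Provided the wave speed $c$ used in $k_W$ matches the true one, linearity of \eqref{eq sol wave} gives $w(\cdot,t)-\tilde m(\cdot,t) = F_t*(v_0-\tilde v_0) + \dot F_t*(u_0-\tilde u_0)$, and a triangle inequality combined with \eqref{eq:estim Lp v_0}--\eqref{eq:estim Lp u_0} yields \eqref{eq:diff w m tilde}. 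The main delicate point is the Jensen-then-Hölder sandwich that produces exactly the constant $C_p$; one must be consistent about using the Euclidean convention on $|\gamma|$ in $S(0,1)$ and the $\ell^p$ convention on $|\nabla u_0|_p$ so that the dual exponent $q$ appears correctly.
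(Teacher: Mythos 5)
Your proposal is correct and follows essentially the same route as the paper's proof: the same splitting of $\dot{F}_t * u_0$ into a zero-order and a gradient term, the same pointwise Jensen-then-H\"older step producing $C_p$, and the same linearity argument for \eqref{eq:diff w m tilde}. The only (harmless) variation is that you bound the zero-order terms via Minkowski's integral inequality where the paper uses Jensen's inequality plus Tonelli; this has the small advantage of treating $p=+\infty$ uniformly instead of as a separate trivial case.
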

Equations \eqref{eq:estim Lp v_0} and  \eqref{eq:estim Lp u_0} are simple stability estimates for the 3D wave equation, though they do not seem to have been documented in that form in the literature (notably the explicit control constants $|t|$ and $C_p c|t|$). They fall in the category of Strichartz estimates with $L^p$ control for the space variable and $L^{\infty}$ control for the time variable. We thus provide a proof of the Proposition \ref{lemma : Lp control}.

\begin{proof}
We have 
\begin{align}
(F_t*v_0)(x) = t\int_{S(0,1)}v_0(x-c|t|\gamma)\frac{d\Omega}{4\pi}
\end{align}
$\frac{d\Omega}{4\pi}$ is the normalized Lebesgue measure on $S(0,1)$. For $p\in [1,+\infty [$, since $t \longmapsto t^p$ is convex on $\mathbb{R}^+$, Jensen's inequality yields
\begin{align*}
||F_t * v_0||_p^p &= t^p \int_{\mathbb{R}^3}|(F_t*v_0)(x)|^p dx = |t|^p\int_{\mathbb{R}^3}\bigg|\int_{S(0,1)}v_0(x-c|t|\gamma)\frac{d\Omega}{4\pi}\bigg|^p dx \\
 &\leq |t|^p\int_{\mathbb{R}^3} \int_{S(0,1)}|v_0(x-c|t|\gamma)|^p \frac{d\Omega}{4\pi} dx = |t|^p \int_{S(0,1)} \int_{\mathbb{R}^3} |v_0(x-c|t|\gamma)|^p dx \frac{d\Omega}{4\pi} \\
 &\leq \int_{S(0,1)} ||v_0||_p^p \frac{d\Omega}{4\pi} = |t|^p||v_0||_p^p
\end{align*}
Which yields the first estimate. Next,
\begin{align*}
(\dot{F}_t *u_0)(x) &= \partial_t (F_t * u_0)(x) = \partial_t \Bigg( t\int_{S(0,1)}u_0(x-c|t|\gamma)\frac{d\Omega}{4\pi} \Bigg) \\
&= \int_{S(0,1)}u_0(x-c|t|\gamma)\frac{d\Omega}{4\pi} + t\int_{S(0,1)}- c \gamma \cdot \nabla u_0(x-c|t|\gamma)\frac{d\Omega}{4\pi} =: I_1(x) + I_2(x)
\end{align*}
Functions $I_1$ and $I_2$ are defined in the line above. We have $||\dot{F}_t *u_0||_p = ||I_1 + I_2||_p \leq ||I_1||_p + ||I_2||_p$.
As above, $||I_1||_p \leq ||u_0||_p$. Using again Jensen's inequality,
\begin{align*}
||I_2||_p^p = c^p |t|^p\int_{\mathbb{R}^3} \Bigg|\int_{S(0,1)} \gamma \cdot \nabla u_0(x-c|t|\gamma)\frac{d\Omega}{4\pi} \Bigg|^p dx \leq c^p |t|^p \int_{\mathbb{R}^3}\int_{S(0,1)} |\gamma \cdot \nabla u_0(x-c|t|\gamma)|^p \frac{d\Omega}{4\pi}dx
\end{align*}
Next, we use Hölder's inequality in $\mathbb{R}^3$ : $|\gamma \cdot \nabla u_0| \leq |\nabla u_0|_p \times |\gamma|_q$ with $\frac{1}{p} + \frac{1}{q} = 1$, where $|v|_p = (|v_1|^p + |v_2|^p + |v_3|^p)^{1/p}$ is the $p$-norm of $v \in \mathbb{R}^3$ and likewise for $|v|_q$. Thus,
\begin{align*}
||I_2||_p^p &\leq c^p |t|^p \int_{\mathbb{R}^3}\int_{S(0,1)} |\nabla u_0(x-c|t|\gamma)|_p^p \times |\gamma|_q^p \frac{d\Omega}{4\pi} dx \\
&\leq c^p |t|^p\int_{S(0,1)}|\gamma|_q^p \int_{\mathbb{R}^3} |\nabla u_0(x-c|t|\gamma)|_p^p dx \frac{d\Omega}{4\pi} = c^p |t|^p\Bigg(\int_{S(0,1)}|\gamma|_q^p \frac{d\Omega}{4\pi}\Bigg) ||\nabla u_0||_p^p
\end{align*}
which yields the second estimate. Finally, the case $p = +\infty$ is trivial.

Equation \eqref{eq:diff w m tilde} is then the result of equations \eqref{eq:estim Lp v_0} and  \eqref{eq:estim Lp u_0} applied to the function 
\begin{align*}
w(x,t) - \tilde{m}(x,t) = [F_t *(v_0 - \tilde{v}_0)](x) + [\dot{F} *(u_0 - \tilde{u}_0)](x) 
\end{align*}
\end{proof}

Equation \eqref{eq:diff w m tilde} shows that $L^p$ approximations of the initial conditions provide $L^p$ control between the solution $w$ and the approximation $\tilde{m}$ for any time $t$. This is why in our numerical applications (Section \ref{section : num}), we only focus on initial condition reconstruction. 

When $c$ is unknown and estimated by $\hat{c}$ through maximizing the log marginal likelihood, we have (highlighting the $c$ dependence $F_t^c = \frac{\sigma_{c|t|}}{4\pi c^2 t}$)
\begin{align*}
    ||w(\cdot,t) - &\Tilde{m}(\cdot,t)||_p = ||F_t^c * u_0 - F_t^{\hat{c}} * \Tilde{u}_0 + \Dot{F}_t^c *v_0 - \Dot{F}_t^{\hat{c}} *\Tilde{v}_0||_p \nonumber \\
    &= ||F_t^c * (u_0 - \Tilde{u}_0) + (F_t^c-F_t^{\hat{c}}) * \Tilde{u}_0 + \Dot{F}_t^c *(v_0 - \Tilde{v}_0) + (\Dot{F}_t^c- \Dot{F}_t^{\hat{c}}) *\Tilde{v}_0||_p \nonumber 
\end{align*}    
and thus
 \begin{align}
	\begin{split}    
   ||w(\cdot,t) - \Tilde{m}(\cdot,t)||_p &\leq |t| \ ||v_0 - \Tilde{v}_0||_p + ||u_0 - \Tilde{u}_0||_p + C_p c|t| \ ||\nabla(u_0 - \Tilde{u}_0)||_p\\
    &\hspace{20pt} + ||(F_t^c-F_t^{\hat{c}}) * \Tilde{u}_0
||_p + ||(\Dot{F}_t^c- \Dot{F}_t^{\hat{c}}) *\Tilde{v}_0||_p
\end{split}
\end{align}
The terms with $F_t^c-F_t^{\hat{c}}$ and $\Dot{F}_t^c-\Dot{F}_t^{\hat{c}}$ may be further controlled in terms of $|c - \hat{c}|$ with additional hypotheses such as Lipschitz continuity of $u_0$ and $v_0$. Likewise, the quantity $||w(\cdot,t) - \Tilde{m}(\cdot,t)||_p$ may be further controlled if additional hypotheses are made on $u_0$ and/or $v_0$, such as the ones presented in the next section. 

\subsection{The special case of radial symmetry}\label{sub : radial}
Suppose that the trajectories of $V^0$ have radial symmetry centered on some $x_0 \in \mathbb{R}^3$, which can be expressed in terms of linear operators with the spherical coordinates $(r,\theta,\phi)$ around $x_0$:
\begin{align}
    \mathbb{P}(\{\omega \in \Omega : \partial_{\theta}V_{\omega}^0 = 0\}) = 1 \ \ \ \text{ and } \ \ \
    \mathbb{P}(\{\omega \in \Omega : \partial_{\phi}V_{\omega}^0 = 0\}) = 1
\end{align}
Then by Proposition \ref{prop : diff constraints}, it is equivalent to  $k_{\mathrm{v}}$ verifying (in the sense of distributions)
\begin{align*}
    \forall x \in \mathcal{D},\ \ \ \partial_{\theta}(k_{\mathrm{v}}(x,\cdot)) = 0 \ \ \ \text{ and } \ \ \ \partial_{\phi}(k_{\mathrm{v}}(x,\cdot)) = 0
\end{align*}
This means that there exists a kernel $k_{\mathrm{v}}^0$ on $\mathbb{R}_+$ such that $k_{\mathrm{v}}(x,x') = k_{\mathrm{v}}^0(r^2,r'^2)$. Suppose also that the trajectories of $U^0$ exhibit radial symmetry : there exists $k_{\mathrm{u}}^0$ such that $k_{\mathrm{u}}(x,x') = k_{\mathrm{u}}^0(r^2,r'^2)$. Then we have the following ``explicit" formulas :

\begin{proposition}\label{prop : radial sym kernel}
Set $K_{\mathrm{v}}(r,r') = \int_0^{\mathrm{R}} \int_0^{r'}k_{\mathrm{v}}^0(s,s')dsds'$. Then $\forall z = (x,t) \in \mathbb{R}^3 \times \mathbb{R}$, $\forall z' = (x',t') \in \mathbb{R}^3 \times \mathbb{R}$,
\begin{align}
k_{\mathrm{v}}^{\mathrm{wave}}(z,z') &= \frac{\text{sgn}(tt')}{16c^2 r r'} \sum_{\varepsilon,\varepsilon' \in \{-1,1\}}\varepsilon \varepsilon' K_{\mathrm{v}}\big( (r + \varepsilon c|t|)^2,(r' + \varepsilon' c|t'|)^2\big) \label{eq:ft ft' gen}
\end{align}
\begin{align}
k_{\mathrm{u}}^{\mathrm{wave}}(z,z') &= \frac{1}{4rr'}\sum_{\varepsilon,\varepsilon' \in \{-1,1\}}(r+\varepsilon c|t|)(r'+\varepsilon' c|t'|)k_{\mathrm{u}}^0\big((r + \varepsilon c|t|)^2,(r' + \varepsilon' c|t'|)^2\big) \label{eq:ftp ft'p gen}
\end{align}


\end{proposition}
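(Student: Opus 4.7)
The plan is to reduce both formulas to a single sphere-averaging computation. By translation invariance of the wave equation I may assume without loss of generality that the center of radial symmetry is $x_0 = 0$, so that $k_{\mathrm{v}}(x,x') = k_{\mathrm{v}}^0(|x|^2,|x'|^2)$ and similarly for $k_{\mathrm{u}}$. Starting from the Kirschoff-like expression \eqref{eq:explicit Ft Ftp}, I have
\begin{align*}
k_{\mathrm{v}}^{\mathrm{wave}}(z,z') = tt'\int_{S(0,1)\times S(0,1)} k_{\mathrm{v}}^0\big(|x-c|t|\gamma|^2,\,|x'-c|t'|\gamma'|^2\big)\,\frac{d\Omega\,d\Omega'}{(4\pi)^2}.
\end{align*}
Since $\gamma$ appears only in the first argument and $\gamma'$ only in the second, the two spherical integrals decouple and can be treated by the same one-variable identity, applied twice.

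The key lemma is the radial spherical average formula: for any measurable $g : \mathbb{R}_+ \to \mathbb{R}$ and $R \geq 0$, $r = |x|$,
\begin{align*}
\int_{S(0,1)} g\big(|x-R\gamma|^2\big)\,d\Omega = \frac{2\pi}{Rr}\int_{|r-R|}^{r+R} s\, g(s^2)\,ds.
\end{align*}
This is obtained by exactly the rotation-and-change-of-variable argument used in equations \eqref{eq:change M}--\eqref{eq:integ_exact} of the proof of Proposition~\ref{prop : F_t * F_tp}: rotate so that $x = re_3$, parameterise by $\cos\theta$, then substitute $s^2 = r^2 + R^2 - 2Rr\cos\theta$. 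Applying this identity first in $\gamma'$ (treating the first argument as a parameter), then in $\gamma$, and finally performing the change of variables $u = s^2$, $u' = s'^2$ in the resulting two-dimensional integral yields
\begin{align*}
\int_{S\times S} k_{\mathrm{v}}^0\big(|x-c|t|\gamma|^2,|x'-c|t'|\gamma'|^2\big)\,d\Omega d\Omega' = \frac{(2\pi)^2}{4c^2|t||t'|rr'}\int_{(r-c|t|)^2}^{(r+c|t|)^2}\!\!\int_{(r'-c|t'|)^2}^{(r'+c|t'|)^2} k_{\mathrm{v}}^0(u,u')\,du\,du'.
\end{align*}
Expressing the double integral through the primitive $K_{\mathrm{v}}$ as the signed sum of four corner values gives $\sum_{\varepsilon,\varepsilon'\in\{-1,1\}} \varepsilon\varepsilon'\, K_{\mathrm{v}}((r+\varepsilon c|t|)^2,(r'+\varepsilon' c|t'|)^2)$. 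Multiplying by the prefactor $tt'/(4\pi)^2$ and simplifying $tt'/(|t||t'|) = \mathrm{sgn}(tt')$ produces exactly \eqref{eq:ft ft' gen}.

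For \eqref{eq:ftp ft'p gen}, I exploit the identity $\dot F_t = \partial_t F_t$ (already noted right after \eqref{eq:fourier of ft ftp}), which gives
\begin{align*}
k_{\mathrm{u}}^{\mathrm{wave}}(z,z') = \partial_t\partial_{t'}\big[(F_t\otimes F_{t'})*k_{\mathrm{u}}\big](x,x'),
\end{align*}
so the formula of the previous step applies verbatim with $k_{\mathrm{v}}^0, K_{\mathrm{v}}$ replaced by $k_{\mathrm{u}}^0, K_{\mathrm{u}}$, and it remains to differentiate. Away from $t = 0$ and $t' = 0$, the factor $\mathrm{sgn}(tt')$ is locally constant, so
\begin{align*}
\partial_t\,K_{\mathrm{u}}\!\big((r+\varepsilon c|t|)^2,(r'+\varepsilon'c|t'|)^2\big) = 2\varepsilon c\,\mathrm{sgn}(t)(r+\varepsilon c|t|)\,\partial_1 K_{\mathrm{u}}(\cdots),
\end{align*}
and a second derivative in $t'$ produces the symmetric factor together with $\partial_2\partial_1 K_{\mathrm{u}} = k_{\mathrm{u}}^0$. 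The two factors of $\mathrm{sgn}(t)\mathrm{sgn}(t')$ multiply with the prefactor $\mathrm{sgn}(tt')$ to give $+1$, and the constants $4c^2$ combine with $1/(16c^2)$ to yield the factor $1/(4rr')$ in \eqref{eq:ftp ft'p gen}.

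The main obstacle is really just the careful bookkeeping in the second half: tracking which $\varepsilon$, $\mathrm{sgn}$ and $c|t|$ factors emerge from differentiating the four nested expressions, and verifying that the $\mathrm{sgn}$ factors collapse cleanly. The only substantive analytical step—the sphere average of a radial function—has already been carried out in the proof of Proposition~\ref{prop : F_t * F_tp}, so no new integration technique is required.
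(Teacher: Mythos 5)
Your approach is essentially the paper's: the same one-variable spherical-average identity (obtained by the rotation argument of Proposition~\ref{prop : F_t * F_tp} and then either a direct antiderivative in $\theta$, as the paper does, or your equivalent substitution $s^2 = r^2+R^2-2Rr\cos\theta$ followed by $u=s^2$), applied twice to decouple the $\gamma$ and $\gamma'$ integrals, and then differentiation in $t$ and $t'$ via $\dot F_t = \partial_t F_t$ to pass from \eqref{eq:ft ft' gen} to \eqref{eq:ftp ft'p gen}. All the constants and sign bookkeeping in your sketch check out.

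The one genuine omission is the case $t=0$ or $t'=0$ in the second formula. Your differentiation argument explicitly restricts to "away from $t=0$ and $t'=0$", where $|t|$ is differentiable and $\mathrm{sgn}(t)$ is locally constant, but the proposition asserts \eqref{eq:ftp ft'p gen} for all $(t,t') \in \mathbb{R}\times\mathbb{R}$, and the right-hand side prefactor $\mathrm{sgn}(tt')/(16c^2rr')$ of \eqref{eq:ft ft' gen} is not differentiable through $t=0$. The paper closes this by observing from \eqref{eq:fourier of ft ftp} that $\dot F_0 = \delta_0$, so that e.g. $[(\dot F_0\otimes\dot F_0)*k_{\mathrm{u}}](x,x') = k_{\mathrm{u}}(x,x')$ and $[(\dot F_t\otimes\dot F_0)*k_{\mathrm{u}}](x,x') = \partial_t[(F_t\otimes\delta_0)*k_{\mathrm{u}}](x,x')$ can be computed directly, and then checking that these agree with the value of \eqref{eq:ftp ft'p gen} at $t=0$ and/or $t'=0$. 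You should add this verification (or an equivalent continuity argument) to cover the full range of $t,t'$ claimed in the statement.
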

\begin{proof}
Without loss of generality, we suppose that $c = 1$ and $x_0 = 0$.
We first derive expression \eqref{eq:ft ft' gen}. Let $f$ be a function defined on $\mathbb{R}_+$ and $g$ the function defined on $\mathbb{R}^3$ by $g(x) = f(|x|^2)$. Let $F$ be an antiderivative of $f$. Recall $\gamma$ and $d\Omega$ defined in  \ref{not : spherical}. As in \eqref{eq:change M}, let $M$ be an orthogonal matrix such that $M(x/|x|) = e_3$ and use the change of variable $\gamma' = M\gamma$. Note that $M S(0,1) = S(0,1)$:
\begingroup
\allowdisplaybreaks
\begin{align}
    (F_t * g)(x) &= \frac{1}{4\pi t}\int_{S(0,1)}g(x-t\gamma)t^2d\Omega = \frac{t}{4\pi} \int_{S(0,1)}f(|x-t\gamma|^2)d\Omega \nonumber\\
    &=  \frac{t}{4\pi} \int_{S(0,1)}f(|x|^2 + t^2 - 2|t|\langle x,\gamma \rangle )d\Omega \nonumber \\
    &= \frac{t}{4\pi} \int_{M S(0,1)}f(|x|^2 + t^2 - 2|t||x|\Big\langle \frac{x}{|x|},M^T\gamma' \Big\rangle )d\Omega' \nonumber \\
    &=  \frac{t}{4\pi} \int_{S(0,1)}f(|x|^2 + t^2 - 2|t||x|\langle e_3,\gamma \rangle )d\Omega \nonumber \\
    &= \frac{t}{4\pi} \int_{\phi = 0}^{2\pi}\int_{\theta = 0}^{\pi}f(|x|^2 + t^2 - 2|t||x|\cos(\theta))\sin(\theta)d\theta d\phi \nonumber \\
    &= \frac{t}{2} \int_{\theta = 0}^{\pi}f(|x|^2 + t^2 - 2|t||x|\cos(\theta))\sin(\theta)d\theta  \nonumber\\
    &= \frac{t}{4|x||t|}\Big[F(|x|^2 + t^2 - 2|t||x|\cos(\theta))\Big]_{\theta = 0}^{\theta = \pi} \nonumber \\
    &= \frac{\text{sgn}(t)}{4|x|}\Big(F((|x|+|t|)^2) - F((|x|-|t|)^2) \Big)\nonumber \\
   &= \frac{\text{sgn}(t)}{4|x|}\sum_{\varepsilon \in \{-1,1\}}\varepsilon F((|x|+\varepsilon|t|)^2) \label{eq:ft on f}
\end{align}
\endgroup
First, we note
\begin{align}
    &k_0^1(r,r') := \int_0^{r'} k_0(r,s)ds \hspace{5mm} \text{ and } \hspace{5mm} K_{\mathrm{v}}(r,r') :=  \int_0^{\mathrm{R}}\int_0^{r'} k(s,s')ds'ds \label{def Kint}
\end{align}
We apply twice result \eqref{eq:ft on f} on $k_{\mathrm{v}}$ : first by setting $g(x') = k_0(|x-t\gamma|^2,|x'|^2)$ where $x-t\gamma$ is fixed, which integrates to $F(s) = k_0^1(|x-t\gamma|^2,s)$. Second, by setting $g(x) = k_0^1(|x|^2,(|x'|+\varepsilon|t'|)^2)$ where $|x'|+\varepsilon'|t'|$ is fixed, which integrates to $F(s) = K_{\mathrm{v}}(s,(|x'|+\varepsilon'|t'|)^2)$ :

\begin{align} \label{eq:ft ft proof}
    [(F_t \otimes F_{t'})*k_{\mathrm{v}}](x,x')& = \frac{1}{4\pi t}\frac{1}{4 \pi t'} \int_{S(0,1)} \int_{S(0,1)} k_0(|x-t\gamma|^2,|x'-t'\gamma'|^2){t'}^2d\Omega' t^2 d\Omega \nonumber\\
    & = \frac{1}{4 \pi t} \frac{\text{sgn}(t')}{4|x'|} \int_{S(0,1)} \sum_{\varepsilon' \in \{-1,1\}}\varepsilon' k_0^1(|x-t\gamma|^2,(|x'|+\varepsilon'|t'|)^2) t^2 d\Omega \nonumber\\
     & = \frac{\text{sgn}(tt')}{16 r r'} \sum_{\varepsilon,\varepsilon' \in \{-1,1\}}\varepsilon \varepsilon' K_{\mathrm{v}}\big( (r + \varepsilon |t|)^2,(r' + \varepsilon' |t'|)^2\big)
\end{align}
     
We can then use this result to compute 
\begin{align}
[(\Dot{F}_t \otimes \Dot{F}_{t'})*k_{\mathrm{u}}](x,x') = \partial_t \partial_{t'} [(F_t \otimes F_{t'})*k_{\mathrm{u}}](x,x')
\end{align}
First, we compute it for $t \neq 0$ and $t' \neq 0$ by derivating \eqref{eq:ft ft proof} w.r.t. $t$ and $t'$, using that for $t \neq 0$, $d|t|/dt = \text{sgn}(t)$ and $d \text{sgn}(t)/dt = 0$. It immediately yields
\begin{align} \label{eq:ftp ftp}
    [(\Dot{F}_t \otimes \Dot{F}_{t'})*k_{\mathrm{u}}](x,x') = \frac{1}{4rr'}\sum_{\varepsilon,\varepsilon' \in \{-1,1\}}(r+\varepsilon |t|)(r'+\varepsilon' |t'|)k_{\mathrm{u}}^0\big((r + \varepsilon |t|)^2,(r' + \varepsilon' |t'|)^2\big)
\end{align}
For the case where $t = 0$ or $t' = 0$, note from \eqref{eq:fourier of ft ftp} that $\mathcal{F}(\dot{F}_0)(\xi) = 1$ and thus $\dot{F}_0 = \delta_0$, the Dirac mass at $0$, which is the neutral element for the convolution. Therefore, when we have both $t = 0$ and $t' = 0$ :
\begin{align*}
[(\Dot{F}_0 \otimes \Dot{F}_{0})*k_{\mathrm{u}}](x,x') = [(\delta_0 \otimes \delta_{0})*k_{\mathrm{u}}](x,x') = [\delta_{(0,0)}*k_{\mathrm{u}}](x,x') = k_{\mathrm{u}}(x,x')
\end{align*}
which is also the result provided by \eqref{eq:ftp ftp} evaluated at $t = t' = 0$. When $t'=0$ and $t \neq 0$, then we still have $d|t|/dt = \text{sgn}(t)$ and $d\text{sgn}(t)/dt = 0$ :
\begin{align*}
[(\Dot{F}_t \otimes \Dot{F}_{0})*k_{\mathrm{u}}](x,x') &= [(\Dot{F}_t \otimes \delta_{0})*k_{\mathrm{u}}](x,x') = \partial_t [(F_t \otimes \delta_{0})*k_{\mathrm{u}}](x,x') \\
 &= \partial_t \int_{\mathbb{R}^3}\int_{\mathbb{R}^3}k_{\mathrm{u}}^0(|x-y|^2,|x'-y'|^2)F_t(dy)\delta_0(dy') \\
 &= \partial_t \frac{1}{4\pi t}\int_{S(0,1)} k_{\mathrm{u}}^0(|x-t\gamma|^2,|x'|^2)t^2d\Omega \\
 &= \partial_t \frac{\text{sgn}(t)}{4r} \sum_{\varepsilon \in \{-1,1\}}\varepsilon k_0^1((r+\varepsilon |t|)^2,|x'|^2) \\
 &= \frac{1}{2r}\sum_{\varepsilon \in \{-1,1\}}(r+\varepsilon |t|) k_{\mathrm{u}}^0((r+\varepsilon |t|)^2,|x'|^2)
\end{align*}
which is also the result provided by \eqref{eq:ftp ftp} evaluated at $t' = 0$. The same arguments apply to show that expression \eqref{eq:ftp ftp} is valid when $t = 0$ and $t' \neq 0$. Therefore the expression \eqref{eq:ftp ftp} is valid whatever the value of $t,t' \in \mathbb{R}$.
\end{proof}

\begin{remark}
Note that expressions \eqref{eq:ft ft' gen} and \eqref{eq:ftp ft'p gen} are much faster to compute numerically than \eqref{eq: ku wave} and \eqref{eq: kv wave}, which require to compute convolutions. 
\end{remark}
\subsubsection{Compactly supported Initial Conditions and Finite Speed Propagation} Suppose that $v_0$ is compactly supported on a ball $B(x_0,R)$. The Strong Huygens Principle for the 3 dimensional wave equation (\cite{evans1998}, p.80) states that $F_t *v_0$ is supported on the ring $B(x_0,R + c|t|) \setminus B(x_0,(R - c|t|)_+)$, where $x_+ := \max(0,x)$.
From a Gaussian process modelling point of view, supposing that $\text{Supp}(V^0) \subset B(x_0,R)$ amounts to imposing that if $x \notin B(x_0,R)$, then $V^0(x) = 0 \ a.s.$. This is equivalent to $\text{Var}(V^0(x)) = k_{\mathrm{v}}(x,x) = 0$ since $V^0$ is supposed centered. The same reasoning in terms of compactly supported initial conditions can be applied to $u_0$ and $U(z)$. In the next proposition, we explore this possibility by forcing the covariance kernels of $U^0$ and $V^0$ to be zero outside of a ball centered on $x_0$ and identifying the consequences on the formulas \eqref{eq:ft ft' gen} and \eqref{eq:ftp ft'p gen}. These formulas are the ones used in section \ref{section : num}.



\begin{proposition}\label{prop : radial sym kernel compact}
Let $R_{\mathrm{v}} > 0$ and $R_{\mathrm{u}} > 0$. Let $\alpha \in (0,1)$, $\varphi_{\alpha} : \mathbb{R}_+ \rightarrow [0,1]$ be a $C^{\infty}$ decreasing function such that
\begin{align*}
\varphi_{\alpha}(s) = 
\begin{cases}
1 \ \text{ if } s < \alpha \\
0 \ \text{ if } s \geq 1
\end{cases}
\end{align*}
Set the truncated kernels
\begin{align}
k_{\mathrm{v}}^{R_{\mathrm{v}}}(x,x') &= k_{\mathrm{v}}^{0,R_{\mathrm{v}}}(r^2,r'^2) =  k_{\mathrm{v}}^0(r^2,r'^2)\mathbbm{1}_{[0,R_{\mathrm{v}}]}(r)\mathbbm{1}_{[0,R_{\mathrm{v}}]}(r') \label{eq:truncated kernel v}  \\
k_{\mathrm{u}}^{R_{\mathrm{u}}}(x,x') &= k_{\mathrm{u}}^{0,R_{\mathrm{u}}}(r^2,r'^2) = k_{\mathrm{u}}^0(r^2,r'^2)\varphi\big(r/R_{\mathrm{u}}\big)\varphi\big(r'/R_{\mathrm{u}}\big) \label{eq:truncated kernel u}
\end{align}
Suppose now that $V^0 \sim GP(0,k_{\mathrm{v}}^{R_{\mathrm{v}}})$ and $U^0 \sim GP(0,k_{\mathrm{u}}^{R_{\mathrm{u}}})$.
Then, with $K_{\mathrm{v}}(r,r') = \int_0^{\mathrm{R}} \int_0^{r'}k_{\mathrm{v}}^0(s,s')dsds'$,
\begin{align}
k_{\mathrm{v}}^{\mathrm{wave}}(z,z') &= \frac{\text{sgn}(tt')}{16c^2 r r'} \sum_{\varepsilon,\varepsilon' \in \{-1,1\}}\varepsilon \varepsilon' K_{\mathrm{v}}\Big(\min\big( (r + \varepsilon c|t|)^2,R_{\mathrm{v}}^2\big),\min\big((r' + \varepsilon' c|t'|)^2,R_{\mathrm{v}}^2\big)\Big) \label{eq:ft ft' compact} \\
k_{\mathrm{u}}^{\mathrm{wave}}(z,z') &= \frac{1}{4rr'}\sum_{\varepsilon,\varepsilon' \in \{-1,1\}}(r+\varepsilon c|t|)(r'+\varepsilon' c|t'|)k_{\mathrm{u}}^{0,R_{\mathrm{u}}}\big((r + \varepsilon c|t|)^2,(r' + \varepsilon' c|t'|)^2\big) \label{eq:ftp ft'p compact}
\end{align}
\end{proposition}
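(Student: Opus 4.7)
The strategy is to observe that both truncated kernels remain radial, so Proposition \ref{prop : radial sym kernel} applies verbatim; one then only has to massage the resulting expressions. First I would recast the truncations at the level of the radial representations: since $r \geq 0$, the condition $r \leq R_{\mathrm{v}}$ is equivalent to $r^2 \leq R_{\mathrm{v}}^2$, so
\begin{align*}
k_{\mathrm{v}}^{0,R_{\mathrm{v}}}(s,s') = k_{\mathrm{v}}^0(s,s')\mathbbm{1}_{[0,R_{\mathrm{v}}^2]}(s)\mathbbm{1}_{[0,R_{\mathrm{v}}^2]}(s'),
\end{align*}
and $k_{\mathrm{u}}^{0,R_{\mathrm{u}}}(s,s') = k_{\mathrm{u}}^0(s,s')\varphi_{\alpha}(\sqrt{s}/R_{\mathrm{u}})\varphi_{\alpha}(\sqrt{s'}/R_{\mathrm{u}})$. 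Plugging these into Proposition \ref{prop : radial sym kernel} is what produces both announced formulas.

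For equation \eqref{eq:ftp ft'p compact} the argument is immediate: formula \eqref{eq:ftp ft'p gen} only involves the pointwise evaluation of the radial kernel at the shifted arguments $((r+\varepsilon c|t|)^2,(r'+\varepsilon' c|t'|)^2)$, so substituting $k_{\mathrm{u}}^{0,R_{\mathrm{u}}}$ for $k_{\mathrm{u}}^0$ gives \eqref{eq:ftp ft'p compact} with no extra work. The smooth cutoff $\varphi_{\alpha}$ is chosen precisely so that the $C^1$ regularity of the trajectories of $U^0$, required by hypothesis $(A_2)$ and Proposition \ref{prop : wave kernel}, is preserved after truncation.

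For equation \eqref{eq:ft ft' compact}, the task is to identify the antiderivative
\begin{align*}
K_{\mathrm{v}}^{R_{\mathrm{v}}}(r,r') := \int_0^{r}\int_0^{r'} k_{\mathrm{v}}^{0,R_{\mathrm{v}}}(s,s')\,ds\,ds'
\end{align*}
associated to the truncated radial kernel. Since $k_{\mathrm{v}}^{0,R_{\mathrm{v}}}(s,s')$ vanishes as soon as $s>R_{\mathrm{v}}^2$ or $s'>R_{\mathrm{v}}^2$, the integration domain is effectively $[0,\min(r,R_{\mathrm{v}}^2)]\times[0,\min(r',R_{\mathrm{v}}^2)]$, yielding
\begin{align*}
K_{\mathrm{v}}^{R_{\mathrm{v}}}(r,r') = K_{\mathrm{v}}\!\left(\min(r,R_{\mathrm{v}}^2),\,\min(r',R_{\mathrm{v}}^2)\right).
\end{align*}
Inserting this into \eqref{eq:ft ft' gen} with the shifted arguments $(r+\varepsilon c|t|)^2$ and $(r'+\varepsilon' c|t'|)^2$ produces exactly \eqref{eq:ft ft' compact}.

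The only subtlety worth flagging is a regularity one for the $v$ case: the indicator-based truncation makes $k_{\mathrm{v}}^{R_{\mathrm{v}}}$ discontinuous across $\{r=R_{\mathrm{v}}\}$, so the continuity-of-trajectories hypothesis of Proposition \ref{prop : wave kernel} is not literally met. However, the spherical-coordinates computation carried out in the proof of Proposition \ref{prop : radial sym kernel} only requires $k_{\mathrm{v}}^0$ to be locally integrable on $\mathbb{R}_+^2$ for the double surface integral in \eqref{eq:explicit Ft Ftp} to be well defined, and the step-by-step derivation goes through unchanged. If one prefers a cleaner argument, it suffices to approximate $\mathbbm{1}_{[0,R_{\mathrm{v}}]}$ by a sequence of smooth cutoffs and pass to the limit via dominated convergence in \eqref{eq:explicit Ft Ftp}, the limit kernel being given by \eqref{eq:ft ft' compact}. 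This is the only mildly non-routine point in the proof.
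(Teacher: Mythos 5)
Your proof is correct and follows essentially the same route as the paper's: both formulas are obtained by substituting the truncated radial kernels into Proposition \ref{prop : radial sym kernel}, with the key observation that the iterated antiderivative of the indicator-truncated kernel is $K_{\mathrm{v}}$ evaluated at the arguments capped by $R_{\mathrm{v}}^2$. Your closing remark on the discontinuity of $k_{\mathrm{v}}^{R_{\mathrm{v}}}$ across $\{r=R_{\mathrm{v}}\}$ and how to handle it (local integrability of the surface integrals, or smoothing plus dominated convergence) is a genuine subtlety that the paper's own proof passes over in silence, so it is a worthwhile addition rather than a deviation.
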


\begin{proof}
When using kernel $k_{\mathrm{v}}^{0,R_{\mathrm{v}}}$, we can directly use equation \eqref{eq:ft ft' gen} by substituting $K_{\mathrm{v}}$ with $K_{\mathrm{v}}^{R_{\mathrm{v}}}(r,r') := \int_0^{r} \int_0^{r'}k_{\mathrm{v}}^{0,R_{\mathrm{v}}}(s,s')dsds'$ and realizing that for all $r,r' \geq 0$,
\begin{align*}
K_{\mathrm{v}}^{R_{\mathrm{v}}}(r^2,r'^2) := \int_0^{r^2} \int_0^{r'^2}k_{\mathrm{v}}^{0,R_{\mathrm{v}}}(s,s')dsds' = K_{\mathrm{v}}\Big(\min\big( r^2,R_{\mathrm{v}}^2\big),\min\big(r'^2,R_{\mathrm{v}}^2\big)\Big)
\end{align*}
which directly proves \eqref{eq:ft ft' compact}. Additionally, \eqref{eq:ftp ft'p compact} is only a substitution of $k_{\mathrm{u}}^0$ with $k_{\mathrm{u}}^{0,R_{\mathrm{u}}}$ in \eqref{eq:ftp ft'p gen} : all the mathematical steps are justified as $\varphi \in C^{\infty}(\mathbb{R}_+)$.
\end{proof}
\begin{remark}
$k_{\mathrm{v}}^{R_{\mathrm{v}}}$ and $k_{\mathrm{u}}^{R_{\mathrm{u}}}$ are the covariance kernels of the truncated processes $V_{\mathrm{trunc}}^0(x) = \mathbbm{1}_{[0,R_{\mathrm{v}}]}(|x-x_0|)V^0(x)$ and $U_{\mathrm{trunc}}^0(x) = \varphi\big(|x-x_0|/R_{\mathrm{u}}\big)U^0(x)$ respectively. These kernels can never be stationary as their trajectories are compactly supported.
\end{remark}
Using equation \eqref{eq:ft ft' compact}, one can then check that $k_{\mathrm{v}}^{\mathrm{wave}}(z,z) = \text{Var}(V(z)) = 0$ as soon as $(r - c|t|)^2 > R_{\mathrm{v}}^2$, ie $ V(z) = 0 \ a.s. $ and likewise for $k_{\mathrm{u}}^{\mathrm{wave}}$ : this is the expression of the strong Huygens principle on the kernels $k_{\mathrm{v}}^{\mathrm{wave}}$ and $k_{\mathrm{u}}^{\mathrm{wave}}$.

Such compactly supported kernels may lead to sparse covariance matrices which may then be used for computational speedups, see section 3.6.

\subsubsection{Estimation of physical parameters}
The wave kernel \eqref{eq:wave kernel}, using for $k_{\mathrm{u}}$ and $k_{\mathrm{v}}$ radially symmetric kernels supported in $B(x_0^u,R_{\mathrm{u}})$ and $B(x_0^v,R_{\mathrm{v}})$ respectively, has for hyperparameters
\begin{align*}
    \theta = (c,x_0^u,R_{\mathrm{u}},\theta_{k_{\mathrm{u}}^0},x_0^v,R_{\mathrm{v}},\theta_{k_{\mathrm{v}}^0})
\end{align*}
Among those, $(c,x_0^u,R_{\mathrm{u}},x_0^v,R_{\mathrm{v}})$ all correspond to physical parameters that can be estimated via likelihood maximisation. This is numerically investigated in Section \ref{section : num}.
\begin{remark}\label{rk:R_difficult}
Note that finding the correct $R_{\mathrm{u}}$ and $R_{\mathrm{v}}$ is not a well posed problem : if $\text{Supp} (U^0) \subset B(x_0^u,R_{\mathrm{u}})$ then $\text{Supp}(U^0) \subset B(x_0^u,\alpha R_{\mathrm{u}})$ for any $\alpha \geq 1$ and $\alpha R_{\mathrm{u}}$ is also a correct solution. This is discussed in Section \ref{section : num}.
\end{remark}

\subsubsection{Gaussian process regression, radial symmetry and the 1D wave equation} Note that supposing that the initial conditions exhibit radial symmetry reduces Problem \eqref{eq:wave_eq} to the 1D wave equation since for a radially symmetric function $f$ defined on $\mathbb{R}^3$, we have $(\Delta_{3D} f)(r) = \frac{1}{r^2}[\partial_{rr}^2 (r^2f)](r)$ and thus if $w$ in \eqref{eq:wave_eq} exhibits radial symmetry,
\begin{align*}
    \Box_{3D} w = 0 &\iff \frac{1}{c^2}\partial_{tt}^2 w - \Delta_{3D} w = 0 \iff \frac{1}{c^2}\partial_{tt}^2 w - \frac{1}{r^2}\partial_{rr}^2 (r^2w) = 0 \\
    &\iff \frac{1}{c^2}\partial_{tt}^2 \Tilde{w} - \partial_{rr}^2 \Tilde{w} = 0
\end{align*}
with $\Tilde{w}(r,t) = r^2w(r,t)$. However, the joint problem of approximating the solution $w$ of Problem \eqref{eq:wave_eq} with GPR in presence of radial symmetry \textit{and} searching for the real source location parameters $(x_0^u,R_{\mathrm{u}},x_0^v,R_{\mathrm{v}})$ cannot be reduced to the one dimensional case, as the centers $x_0^u$ and $x_0^v$ both belong to $\mathbb{R}^3$. 

\subsection{The Point Source Limit}
When studying linear PDEs, the case of the point source is very important. From an application point of view, modelling the source term as a point source (i.e. a Dirac mass) is relevant in a number of real life cases : a localized detonation in acoustics, an electric point source in electromagnetics, a mass point in mechanics, and so forth. From a theoretical point of view, the solution of a linear PDE when the source is a Dirac mass also plays a special role. This solution is called the PDE's Green's function and encodes all of the mathematical properties of the corresponding PDE \cite{Duffy2015GreensFW}. In this section, we will not make use of the Kriging equations \eqref{eq:krig mean} and \eqref{eq:krig cov} as reconstructing an initial condition that is a point source is actually of little interest. Also, reconstructing the wave equation's Green function thanks to GPR is expected to yield very poor results because this Green's function in particular is extremely irregular; it is not even a function, it is a family of singular measures, see equation \eqref{eq:ft ftp in 3D}. However, estimating the physical parameters attached to it, essentially the position parameter $x_0$, is a relevant question and an attainable goal. This is what we do in this section, by studying the behaviour of the log marginal likelihood that comes with WIGPR when the initial condition reduces to point source. Here, we restrict ourselves to the case $u_0 = 0$ in \eqref{eq:wave_eq} and thus focus on the kernel $k_{\mathrm{v}}^{\mathrm{wave}}(z,z')$. We begin by clarifying the setting in which we will work.

\subsubsection{Setting} 
\begin{itemize}
\item Note $x_1,...,x_q$ the $q$ sensor locations and suppose we have $N$ time measurements in $[0,T]$ corresponding to times $0 = t_1<... < t_N = T$ for each sensor ; we have overall $n = Nq$ pointwise observations of a function $w$ that is a solution of the problem \eqref{eq:wave_eq}. These observations are stored in a vector $w_{\mathrm{obs}} \in \mathbb{R}^{Nq}$.
\item We suppose that the initial condition $v_0$ that corresponds to $w$ is almost a point source : in particular it is supported on a small ball $B(x_0^*,R^*)$ where $R^* \ll 1$.
\item We are interested in finding $x_0^*$, the correct source location. To do so, we study the log marginal likelihood that corresponds to the observations $w_{\mathrm{obs}}$ using the following truncated kernel
\begin{align}
k_{\mathrm{nor,x_0}}^{\mathrm{R}}((x,t),(x',t')) &= \bigg(\frac{1}{\frac{4}{3}\pi R^3}\bigg)^2[(F_t \otimes F_{t'})*k_{x_0}^{\mathrm{R}}](x,x') \label{eq:trunc_point} \\
\text{where}\ \ \ k_{x_0}^{\mathrm{R}}(x,x') &= k_{\mathrm{v}}(x,x')\mathbbm{1}_{B(x_0,R)}(x)\mathbbm{1}_{B(x_0,R)}(x') \nonumber
\end{align}
The renormalization by the constant $(\frac{4}{3}\pi R^3)^2$ is an anticipation of the upcoming Proposition \ref{prop : point source}. Note that the kernel $k_{x_0}^{\mathrm{R}}$ corresponds to the kernel \eqref{eq:truncated kernel v} in Proposition \ref{prop : radial sym kernel compact}, though we do not make any radial symmetry assumption here (in the point source limit, radial symmetry appears by itself anyway).
Here, $(x_0^*,R^*)$ denotes the real source position and source size corresponding to the observations $w_{\mathrm{obs}}$ while $(x_0,R)$ denotes the position and source size hyperparameter of $k_{\mathrm{nor,x_0}}^{\mathrm{R}}$. We suppose that except for $x_0$, all the other hyperparameters $\theta$ of $k_{\mathrm{nor,x_0}}^{\mathrm{R}}$ are fixed. In particular, we suppose that the kernel speed $c$ is fixed to the correct speed value $c^*$ ; we also suppose that $R = R^*$ for simplicity.
\end{itemize}

In that framework, the log-marginal likelihood $p(w_{\mathrm{obs}}|\theta)$ only depends on $x_0$. We thus write $K_{x_0} = k_{x_0}^{\mathrm{R}}(X,X)$ and $\mathcal{L}(\theta,\lambda) = \mathcal{L}(x_0,\lambda)$. The log-marginal likelihood then writes
\begin{align}\label{eq:log lkhd x_0}
\mathcal{L}(\theta,\lambda) = \mathcal{L}(x_0,\lambda) = w_{\mathrm{obs}}^T(K_{x_0} + \lambda I_n)^{-1}w_ {\mathrm{obs}} + \log \det(K_{x_0} + \lambda I_n)
\end{align}

In Figure \ref{fig : point source}, we provide a 3 dimensional image that shows some numerical values of \eqref{eq:log lkhd x_0} as a function of $x_0$ on a test case. This figure constitutes visual evidence that in the limit $R \rightarrow 0$, trying to recover a point source location from minimizing the log marginal likelihood provided by the kernel \eqref{eq:trunc_point} reduces to the classic triangulation method used for example in GPS systems. More precisely, we observe three facts. $(i)$ As a function of $x_0$, $\mathcal{L}(x_0,\lambda)$ reaches local minima over the whole surface of spheres centered on each sensor. $(ii)$ At the intersection of two of those spheres, the local minima are smaller. $(iii)$ The spheres all intersect at a single point $x_0^*$, which is the global minima of $\mathcal{L}(x_0,\lambda)$ and the real source location.

\begin{figure}[!ht]
	\hspace{-20pt}\centerline{\includegraphics[width=1.8\textwidth]{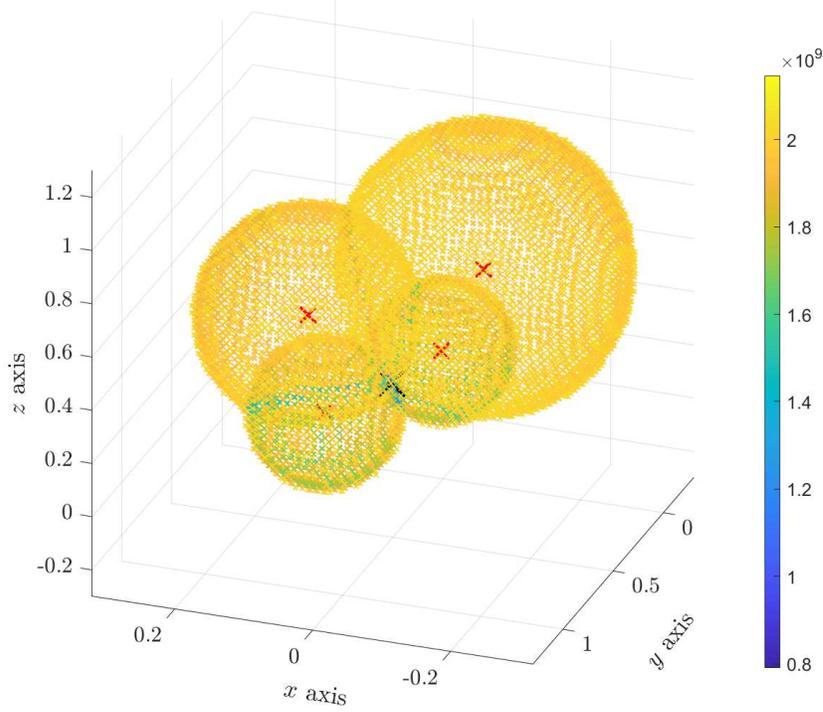}}
	\caption{Negative log marginal likelihood as a function of $x_0 \in \mathbb{R}^3$. Are only represented values of the negative log marginal likelihood that are below $2.035 \times 10^9$. There only remains thin spherical shells. Red crosses : sensor locations. Black cross : source position. The source is located at the intersection of thin spherical shells centered at the sensor locations.}
	\label{fig : point source}
\end{figure}

Our task now is to explain why we observe such patterns on the Figure \ref{fig : point source}. In that sense, we begin with a convergence statement to understand what happens in the point source limit from a Gaussian process point of view. 
\begin{proposition}\label{prop : point source}
Let $k$ be a continuous PD kernel defined on $\mathbb{R}^3 \times \mathbb{R}^3$ and $x_0 \in \mathbb{R}^3$. For $R > 0$, define $k_{x_0}^{\mathrm{R}}$ its truncation around $x_0$ by
\begin{align*}
    k_{x_0}^{\mathrm{R}}(x,x') = k(x,x')\mathbbm{1}_{B(x_0,R)}(x)\mathbbm{1}_{B(x_0,R)}(x')
\end{align*} 
Let $t,t' \in \mathbb{R}$. Then $(F_t \otimes F_{t'})*k_{x_0}^{\mathrm{R}}$ defines an absolutely continuous Radon measure over $\mathbb{R}^3 \times \mathbb{R}^3$. Furthermore we have the following weak-$\star$ convergence in the space of Radon measures :
\begin{align}\label{eq:cv point source}
    \frac{1}{\big(\frac{4}{3}\pi R^3\big)^2}[(F_t \otimes F_{t'})*k_{x_0}^{\mathrm{R}}] \xrightarrow[R \rightarrow 0]{\mathcal{C}_c(\mathbb{R}^3 \times \mathbb{R}^3)'}  k(x_0,x_0) \times (\tau_{x_0} F_t) \otimes (\tau_{x_0}F_{t'})
\end{align}
where $\tau_{x}\mu$, the translation of a measure $\mu$ by $x$, is defined by 
\begin{align*}
\int f(y)\tau_{x}\mu(dy) := \int f(x+y)\mu(dy)
\end{align*}
\end{proposition}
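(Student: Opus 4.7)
The plan is to make the convolution explicit via equation \eqref{eq:conv_mesure_func}, then test against an arbitrary $\varphi \in \mathcal{C}_c(\mathbb{R}^3 \times \mathbb{R}^3)$ and compute the limit using Fubini and dominated convergence, exploiting the fact that $\frac{1}{\frac{4}{3}\pi R^3}\mathbbm{1}_{B(x_0,R)}(u)du$ is an averaging measure on $B(x_0,R)$ that weakly tends to $\delta_{x_0}$.

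First I would establish the first claim. By \eqref{eq:conv_mesure_func} applied to the finite Radon measure $F_t \otimes F_{t'}$ on $\mathbb{R}^6$ (recall $|F_t|(\mathbb{R}^3)=|t|$ since $F_t = \sigma_{c|t|}/(4\pi c^2 t)$), the convolution is the pointwise-defined function
\begin{align*}
h_R(x,x') = \int\!\!\int k(x-y, x'-y')\mathbbm{1}_{B(x_0,R)}(x-y)\mathbbm{1}_{B(x_0,R)}(x'-y')\, F_t(dy)F_{t'}(dy').
\end{align*}
Since $k$ is continuous, $k_{x_0}^R$ is bounded and supported in $B(x_0,R)^2$, so $h_R$ is bounded and compactly supported in $[B(x_0,R)+S(0,c|t|)]\times[B(x_0,R)+S(0,c|t'|)]$. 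Hence $h_R \in L^1(\mathbb{R}^6)$, and the measure $h_R(x,x')\,dx\,dx'$ is an absolutely continuous Radon measure.

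Next, for the weak-$\star$ limit, fix $\varphi \in \mathcal{C}_c(\mathbb{R}^6)$ and set $I_R := \int \varphi(x,x')h_R(x,x')\,dx\,dx'$. Since all integrands are bounded and supported in a bounded set, Fubini applies and, after the change of variables $u = x-y,\, u' = x'-y'$, yields
\begin{align*}
I_R = \int\!\!\int G_R(y,y')\, F_t(dy)F_{t'}(dy'),\quad G_R(y,y'):=\int_{B(x_0,R)^2}\!\!\varphi(u+y,u'+y')k(u,u')\,du\,du'.
\end{align*}
Dividing $G_R$ by $(\tfrac{4}{3}\pi R^3)^2$ gives the double average of $(u,u')\mapsto \varphi(u+y,u'+y')k(u,u')$ over $B(x_0,R)^2$. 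By continuity of $\varphi$ and $k$, this average converges pointwise to $k(x_0,x_0)\,\varphi(x_0+y,x_0+y')$ as $R \to 0$, uniformly on compact sets in $(y,y')$.

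Finally, pick any $R_0>0$; for $R\le R_0$ we have the uniform bound
\begin{align*}
\Bigl|\tfrac{G_R(y,y')}{(\tfrac{4}{3}\pi R^3)^2}\Bigr| \le \|\varphi\|_\infty \sup_{B(x_0,R_0)^2}|k|,
\end{align*}
which is integrable against the finite measure $F_t \otimes F_{t'}$. Dominated convergence then gives
\begin{align*}
\frac{I_R}{(\tfrac{4}{3}\pi R^3)^2} \;\longrightarrow\; k(x_0,x_0)\int\!\!\int \varphi(x_0+y,x_0+y')\,F_t(dy)F_{t'}(dy') = k(x_0,x_0)\,\langle (\tau_{x_0}F_t)\otimes(\tau_{x_0}F_{t'}),\varphi\rangle,
\end{align*}
the last equality being the definition of the translated tensor measure. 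As $\varphi$ is arbitrary, this is exactly the weak-$\star$ convergence \eqref{eq:cv point source}. No step here is delicate; the only care needed is uniformly controlling the integrand in $R$ to justify Fubini and dominated convergence, which is immediate from boundedness and compactness of supports.
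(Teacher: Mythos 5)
Your proof is correct, and its analytic core (Fubini, then recognizing $\frac{1}{\frac{4}{3}\pi R^3}\mathbbm{1}_{B(x_0,R)}$ as an averaging kernel, then dominated convergence against the finite measure $F_t\otimes F_{t'}$) is the same engine the paper uses. The execution differs in a way worth noting. The paper unpacks $(F_t\otimes F_{t'})*k_{x_0}^{\mathrm{R}}$ via the Kirchhoff-type formula \eqref{eq:explicit Ft Ftp}, i.e.\ as an explicit double integral over $S(0,1)\times S(0,1)$, and then parametrizes each ball $B(x_0+ct\gamma,R)$ in rescaled spherical coordinates $x=x_0+ct\gamma+R\rho\gamma_x$ so that the normalizing factor $(\frac{4}{3}\pi R^3)^{-2}$ is absorbed into the Jacobian and dominated convergence is applied on the fixed compact domain $[0,1]^2\times S(0,1)^4$. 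You instead keep $F_t$ abstract, apply Fubini directly, and perform the translation change of variables $u=x-y$, which isolates the ball average $G_R/(\frac{4}{3}\pi R^3)^2$ in a coordinate-free form. What your route buys is generality: nothing in your argument uses the specific structure of $F_t$ beyond finiteness and compact support of the measure, so it proves verbatim the extension to arbitrary finite Radon measures $\mu,\nu$ in place of $F_t, F_{t'}$ — a statement the paper relegates to a remark, claiming the general proof is ``much more tedious'' and requires replacing $\mathbbm{1}_{B(x_0,R)}$ by a continuous cutoff; your Fubini argument shows neither concession is needed, since boundedness and Borel measurability of $k_{x_0}^{\mathrm{R}}$ suffice. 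One cosmetic caveat: equation \eqref{eq:conv_mesure_func} is stated in the paper for continuous compactly supported $f$, and $k_{x_0}^{\mathrm{R}}$ is only piecewise continuous, so strictly speaking you are taking the integral $\int k_{x_0}^{\mathrm{R}}((x,x')-(y,y'))\,(F_t\otimes F_{t'})(dy\,dy')$ as the \emph{definition} of the convolution here — but the paper does exactly the same thing implicitly in \eqref{eq:piecewise}, so this is not a gap.
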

 
\begin{proof} First, using equation \eqref{eq:explicit Ft Ftp}, we have
\begin{align}\label{eq:piecewise}
[(F_t \otimes F_{t'}) * k_{x_0}^{\mathrm{R}}](x,x') = tt'\int_{S(0,1) \times S(0,1)}k_{x_0}^{\mathrm{R}}(x-c|t|\gamma,x'-c|t'|\gamma')\frac{d\Omega d\Omega'}{(4\pi)^2}
\end{align}
The integrated function in equation \eqref{eq:piecewise} is piecewise continuous over $\mathbb{R}^3 \times \mathbb{R}^3$ and the integral in \eqref{eq:piecewise} is well defined, whatever the values of $x$ and $x'$. Let $f$ be a continuous compactly supported function on $\mathbb{R}^3 \times \mathbb{R}^3$. We define
\begin{align*}
I_R :&= \frac{1}{(\frac{4}{3}\pi R^3)^2}\langle (F_t \otimes F_{t'}) * k_{x_0}^{\mathrm{R}},f \rangle
\end{align*}
and wish to show that $I_R \rightarrow k(x_0,x_0)\langle \tau_{x_0}F_t \otimes \tau_{x_0}F_{t'},f \rangle$ when $R \rightarrow 0$. Using equation \eqref{eq:explicit Ft Ftp}, we have
\begin{align*}
I_R &= \frac{1}{(\frac{4}{3}\pi R^3)^2}\ \int_{\mathbb{R}^3 \times \mathbb{R}^3} f(x,x')[(F_t \otimes F_{t'}) * k_{x_0}^{\mathrm{R}}](x,x')dxdx' \\
&= \frac{1}{(\frac{4}{3}\pi R^3)^2} \int_{\mathbb{R}^3 \times \mathbb{R}^3} f(x,x')tt'\int_{S(0,1) \times S(0,1)}k_{x_0}^{\mathrm{R}}(x-c|t|\gamma,x'-c|t'|\gamma')\frac{d\Omega d\Omega'}{(4\pi)^2} dxdx' \\
&= \frac{1}{(\frac{4}{3}\pi R^3)^2}\ tt'\int_{S(0,1) \times S(0,1)}\int_{\mathbb{R}^3 \times \mathbb{R}^3} \Bigg( f(x,x')k_{x_0}(x-c|t|\gamma,x'-c|t'|\gamma')\\
& \hspace{2cm} \times \mathbbm{1}_{[0,R]}(|x-c|t|\gamma - x_0|)\mathbbm{1}_{[0,R]}(|x'-c|t'|\gamma' - x_0|) \Bigg) dxdx'\frac{d\Omega d\Omega'}{(4\pi)^2}
\end{align*}
The first indicator function restricts the integration domain of $x$ to $B(x_0 + ct\gamma,R)$, and likewise for the second indicator function and $x'$.
In spherical coordinates, write $x = x_0 + ct\gamma + R \rho \gamma_x$ with $\rho \in [0,1]$ and $\gamma_x \in S(0,1)$ and likewise for $x'$.
\begin{align*}
I_R &= tt'\int_{S(0,1) \times S(0,1)} \int_{S(0,1) \times S(0,1)} \int_0^1 \int_0^1 \Bigg(f(x_0 + ct\gamma + R \rho \gamma_x,x_0 + c|t'|\gamma' + R \rho' \gamma_{x'}) \\
& \hspace{3cm} \times  k(x_0 + R \rho \gamma_x,x_0 + R \rho' \gamma_{x'}) \Bigg) \times 9 \rho^2 d\rho \rho'^2 d\rho' \frac{d\Omega_x d\Omega_{x'}}{(4\pi)^2}\frac{d\Omega d\Omega'}{(4\pi)^2}
\end{align*}
The integration domain above is a compact subset of $\mathbb{R}^{10}$. Since $f$ is continuous and $k$ is supposed continuous in the vicinity of $(x_0,x_0)$, Lebesgue's dominated convergence theorem can be applied when $R \rightarrow 0$, which yields
\begin{align*}
I_R \xrightarrow[R \rightarrow 0]{} tt' k(x_0,x_0) \int_{S(0,1) \times S(0,1)} &f(x_0 + ct\gamma,x_0 + c|t'|\gamma')\frac{d\Omega d\Omega'}{(4\pi)^2} \\
& = k(x_0,x_0)\langle \tau_{x_0}F_t \otimes \tau_{x_0}F_{t'},f \rangle
\end{align*}
\end{proof} 
In the  Proposition \ref{prop : point source} and as in the propositions \ref{prop : radial sym kernel compact}, the kernel $k_{x_0}^{\mathrm{R}}$ is the covariance kernel of the truncated process $V_{\mathrm{trunc}}^0(x) = \mathbbm{1}_{B(x_0,R)}(x)V^0(x)$.

\begin{remark}
We provided above an ad hoc proof of Proposition \ref{prop : point source}, tailored to the measures $F_t$ and $F_{t'}$. Proposition \ref{prop : point source} is actually a consequence of a more general fact, where $F_t$ and $F_{t'}$ are replaced by any Radon measure $\mu$ and $\nu$. However, the corresponding proof is much more tedious (though not much more difficult) and the truncation function $\mathbbm{1}_{B(x_0,R)}$ should be replaced by a continuous counterpart, to ensure minimal regularity for integrability conditions. Finally, Proposition \ref{prop : point source} can also be seen as a weak-$*$ convergence statement of mollifications of $F_t \otimes F_{t'}$ when convolved with $k_{\mathrm{nor,x_0}}^{\mathrm{R}}$, although the authors could not find references in the literature that state such weak-$*$ convergence theorems in the space of Radon measures.
\end{remark}



The limit object we obtain by inspecting equation \eqref{eq:cv point source} is not a function, it is only a measure; and thus it is not a covariance kernel. This means that we do not obtain a Gaussian process in the point source limit. Rather, we obtain a generalized stochastic process in the sense of \cite{agnan2004}, paragraph 2.2.1.1. This is actually not surprising as the Green function $F_t$ of the wave equation is only a measure. This is another manifestation of the fact that the wave equation exhibits no regularization effect, because of its hyperbolicity. Yet, Proposition \ref{prop : point source} provides an entry point for studying the log marginal likelihood \eqref{eq:log lkhd x_0} associated with the kernel \eqref{eq:trunc_point} when $R$ is small. Indeed, Proposition \ref{prop : point source} states that for small values of $R$, the kernel \eqref{eq:trunc_point} behaves like a rank one kernel, i.e. a kernel of the form $k(z,z') = f(z)f(z')$ for some particular function $f$. We will see that this observation is actually enough to provide mathematical understanding of the patterns observed in Figure \ref{fig : point source}. 

Rather than proving that we can use the tensor shape of the limit object in Proposition \ref{prop : point source} even though it is not a function, we prefer to introduce regularized versions of both this limit object and of $\mathcal{L}(x_0,\lambda)$, and study these regularized terms instead. This will be enough to exhibit a limit object that explains the patterns visible on Figure \ref{fig : point source}. This is done in propositions \ref{prop : log lik reg} and \ref{prop : log lik reg N}, which are statements on the regularized log marginal likelihood $\mathcal{L}_{\mathrm{reg}}(x_0,\lambda)$ defined in equation \eqref{eq:log lik reg} rather than on $\mathcal{L}(x_0,\lambda)$. Note however that proving a complete mathematical statement linking the behaviours of $\mathcal{L}(x_0,\lambda)$ and $\mathcal{L}_{\mathrm{reg}}(x_0,\lambda)$ is lacking for the moment.

\subsubsection{Regularizing the point source and corresponding approximations}
First, we regularize $F_t$ thanks to a mollifier $\varphi(x)$ on $\mathbb{R}^3$ which we choose to be radially symmetric as in \cite{evans2018measure}, section 4.2.1. 
Define $\varphi_R(y) = \varphi(y/R)/R^3$, then a $\mathcal{C}^{\infty}_c$ regularization of $F_t$ is obtained by
\begin{align*}
f_t^{\mathrm{R}}(x) := (F_t * \varphi_R)(x) \ \ \ \forall x \in \mathbb{R}^3
\end{align*}

As $F_t$, $f_t^{\mathrm{R}}$ exhibits radial symmetry. We will next use the following approximated objects :
\begin{itemize}
\item Note $k_{x_0}^{\mathrm{reg}}((x,t),(x',t')) := f_t^{\mathrm{R}}(x-x_0)f_{t'}^{\mathrm{R}}(x'-x_0)$ a regularized version of the limit object in  Proposition \ref{prop : point source}. Proposition \ref{prop : point source} states that in some sense, when $R$ approaches $0$, $k_{\mathrm{nor,x_0}}^{\mathrm{R}}$ is close to $k_{x_0}^{\mathrm{reg}}$.
\item We also assume that $w(x_i,t_j)$ may be approximated by $\tilde{w}(x_i,t_j) = f_{t_j} ^{\mathrm{R}}(x_i-x_0^*)$ as in the point source limit, $v_0 = \delta_{x_0^*}$ and in that case we would have $w(x_i,t_j) = (F_{t_j} * v_0)(x_i) = F_{t_j}(x_i-x_0^*)$ (forgetting for a second that $F_t$ is not defined pointwise). We thus introduce the column vector of ``approximated observations" $W = \big(\tilde{w}(x_i,t_j)\big)_{i,j}$ and we suppose that $W$ is ordered in the following way :
\begin{align*}
W = \begin{pmatrix}
    W_1 \\
     \cmidrule(lr){1-1}
     \vdots \\
     \cmidrule(lr){1-1}
	W_q
\end{pmatrix}
\end{align*}
where $W_i$ corresponds to the sensor $i$ : $W_i = (\tilde{w}(x_i,t_1),...,\tilde{w}(x_i,t_N))^T$.
\end{itemize}
These approximated objects enable us to introduce the ``regularized" log marginal likelihood built by replacing $k$ with $k_{x_0}^{\mathrm{reg}}$ and $w_{\mathrm{obs}}$ by $W$ :
\begin{align}\label{eq:log lik reg}
\mathcal{L}_{\mathrm{reg}}(x_0,\lambda) := W^T(K_{x_0}^{\mathrm{reg}} + \lambda I_n)^{-1}W + \log \det(K_{x_0}^{\mathrm{reg}} + \lambda I_n)
\end{align}
with $K_{x_0}^{\mathrm{reg}} = k_{x_0}^{\mathrm{reg}}(X,X)$. Instead of studying $\mathcal{L}(x_0,\lambda)$, we now rather analyse $\mathcal{L}_{\mathrm{reg}}(x_0,\lambda)$; we expect $\mathcal{L}_{\mathrm{reg}}(x_0,\lambda)$ to be close to $\mathcal{L}(x_0,\lambda)$ in some sense, although here we do not state any approximation theorem w.r.t. the approximation of $\mathcal{L}(x_0,\lambda)$ with $\mathcal{L}_{\mathrm{reg}}(x_0,\lambda)$. 

Because of the tensor product shape of $k_{x_0}^{\mathrm{reg}}$, $K_{x_0}^{\mathrm{reg}}$ has rank one : $K_{x_0}^{\mathrm{reg}} = F_{x_0}F_{x_0}^T$ with 
\begin{align*}
F_{x_0} = \begin{pmatrix}
    F_{x_0}^1 \\
     \cmidrule(lr){1-1}
     \vdots \\
     \cmidrule(lr){1-1}
	F_{x_0}^q
\end{pmatrix}
\end{align*}
and $F_{x_0}^i = (f_{t_1}^{\mathrm{R}}(x_i-x_0),...,f_{t_N}^{\mathrm{R}}(x_i-x_0))^T$. Below, we write a proposition which describes the asymptotic behaviour of $\mathcal{L}_{\mathrm{reg}}(x_0,\lambda)$ in the limit of $\lambda \rightarrow 0$. This limit corresponds to noiseless observations; we will see that the limit object in Proposition \ref{prop : log lik reg} provides an explanation of the patterns of Figure \ref{fig : point source}. We note $||\cdot||$ and $\langle \cdot,\cdot \rangle$ the usual Euclidean norm and dot product in $\mathbb{R}^n$.


\begin{proposition}[Asymptotic behaviour of $\mathcal{L}_{\mathrm{reg}}(x_0,\lambda)$ when $\lambda \rightarrow 0$]\label{prop : log lik reg}
Let $\varepsilon > 0$ and $E_{\varepsilon} := \{x_0 \in \mathbb{R}^3 : \rho(K_{x_0}^{\mathrm{reg}}) > \varepsilon \}$ where $\rho(A)$ denotes the spectral radius of $A$. Define the correlation coefficient between $F_{x_0}$ and $W$ by $r(x_0) = \text{Corr}(F_{x_0},W) = \langle F_{x_0},W \rangle /(||W|| \times ||F_{x_0}||) \in [-1,1]$. We set $r(x_0) = 0$ if $F_{x_0} = 0$.

Then we have the pointwise convergence
\begin{align*}
\forall x_0 \in \mathbb{R}^3, \ \ \ \big|\lambda \mathcal{L}_{\mathrm{reg}}(x_0,\lambda) - ||W||^2\big(1-r(x_0)^2\big)\big| = O_{\lambda \rightarrow 0}(\lambda \log \lambda)
\end{align*}
and the uniform convergence on $E_{\varepsilon}$
\begin{align*}
\sup_{x_0 \in E_{\varepsilon}} \big|\lambda \mathcal{L}_{\mathrm{reg}}(x_0,\lambda) - ||W||^2\big(1-r(x_0)^2\big)\big| = O_{\lambda \rightarrow 0}(\lambda \log \lambda)
\end{align*}
\end{proposition}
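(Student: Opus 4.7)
The plan is to exploit the rank-one structure of $K_{x_0}^{\mathrm{reg}} = F_{x_0}F_{x_0}^T$ to get closed-form expressions for both the quadratic form and the log-determinant in \eqref{eq:log lik reg}, and then read off the limit and the error as $\lambda \to 0$.

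First, I would invert $K_{x_0}^{\mathrm{reg}} + \lambda I_n$ explicitly via the Sherman--Morrison formula:
\begin{align*}
(F_{x_0}F_{x_0}^T + \lambda I_n)^{-1} = \frac{1}{\lambda}I_n - \frac{F_{x_0}F_{x_0}^T}{\lambda(\lambda + \|F_{x_0}\|^2)}.
\end{align*}
Plugging this into the quadratic form and using $\langle F_{x_0},W\rangle^2 = r(x_0)^2 \|F_{x_0}\|^2 \|W\|^2$ gives, after multiplying by $\lambda$,
\begin{align*}
\lambda \, W^T(K_{x_0}^{\mathrm{reg}} + \lambda I_n)^{-1} W = \|W\|^2 - \frac{\|F_{x_0}\|^2 \|W\|^2 \, r(x_0)^2}{\lambda + \|F_{x_0}\|^2}.
\end{align*}
The limit of the right-hand side as $\lambda\to 0$ is $\|W\|^2(1 - r(x_0)^2)$, with explicit error
\begin{align*}
\frac{\lambda \|W\|^2 r(x_0)^2}{\lambda + \|F_{x_0}\|^2}.
\end{align*}
The edge case $F_{x_0}=0$ is handled by the convention $r(x_0)=0$, for which both sides equal $\|W\|^2$ already.

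Next, I would handle $\log\det(K_{x_0}^{\mathrm{reg}} + \lambda I_n)$. The spectrum of the rank-one matrix $K_{x_0}^{\mathrm{reg}}$ is $\{\|F_{x_0}\|^2, 0, \ldots, 0\}$, hence
\begin{align*}
\det(K_{x_0}^{\mathrm{reg}} + \lambda I_n) = (\lambda + \|F_{x_0}\|^2)\,\lambda^{n-1},
\end{align*}
so $\lambda \log\det(K_{x_0}^{\mathrm{reg}}+\lambda I_n) = \lambda\log(\lambda+\|F_{x_0}\|^2) + (n-1)\lambda\log\lambda = O(\lambda\log\lambda)$ as $\lambda\to 0$ (pointwise in $x_0$). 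Combining this with the quadratic-form expansion above yields the claimed pointwise convergence with rate $O(\lambda\log\lambda)$.

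For the uniform statement on $E_\varepsilon$, the key observation is simply that $\rho(K_{x_0}^{\mathrm{reg}}) = \|F_{x_0}\|^2$ because the matrix is rank one, so $x_0\in E_\varepsilon$ means $\|F_{x_0}\|^2 > \varepsilon$. Then the quadratic-form error is bounded uniformly by $\lambda \|W\|^2/\varepsilon$, and in the log-determinant piece one has $|\lambda\log(\lambda+\|F_{x_0}\|^2)| \leq |\lambda\log\varepsilon| + \lambda \log(1+\lambda/\varepsilon)$ for small $\lambda$, which is $O(\lambda)$ uniformly; the $(n-1)\lambda\log\lambda$ term does not depend on $x_0$. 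Both uniform bounds are $O(\lambda\log\lambda)$, which gives the second claim. I do not anticipate a real obstacle: once Sherman--Morrison is applied, everything reduces to elementary asymptotics, and the only place the hypothesis $x_0\in E_\varepsilon$ enters is to keep $\|F_{x_0}\|^2$ bounded below so that the denominators $\lambda + \|F_{x_0}\|^2$ and the logarithm $\log(\lambda+\|F_{x_0}\|^2)$ stay uniformly controlled.
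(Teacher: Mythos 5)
Your overall strategy is exactly the paper's: Sherman--Morrison for the rank-one perturbation, the explicit spectrum $\{\|F_{x_0}\|^2,0,\dots,0\}$ for the log-determinant, the identification $\rho(K_{x_0}^{\mathrm{reg}})=\|F_{x_0}\|^2$, and the separate treatment of the degenerate case $F_{x_0}=0$. The pointwise statement and the bound $\lambda\|W\|^2/\varepsilon$ on the quadratic-form error over $E_\varepsilon$ are all correct and match the paper's computation.

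There is, however, one genuine flaw in the uniform part: the inequality
$|\lambda\log(\lambda+\|F_{x_0}\|^2)| \leq |\lambda\log\varepsilon| + \lambda\log(1+\lambda/\varepsilon)$
is false in general. Membership in $E_\varepsilon$ only gives the \emph{lower} bound $\|F_{x_0}\|^2>\varepsilon$, which controls $\log(\lambda+\|F_{x_0}\|^2)$ from below but says nothing about how large it can be; e.g.\ with $\varepsilon=1$, $\|F_{x_0}\|^2=100$, $\lambda=10^{-2}$ your right-hand side is about $10^{-4}$ while the left-hand side is about $4.6\times 10^{-2}$. To make the log-determinant term uniformly $O(\lambda)$ you need an \emph{upper} bound on $\|F_{x_0}\|^2$ that is uniform in $x_0$. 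This is available from the structure of $F_{x_0}$, and it is exactly the extra observation the paper inserts: $f_t^{\mathrm{R}}$ is continuous and compactly supported, so $F_{x_0}=0$ as soon as $\sup_i|x_0-x_i|>cT+R$, hence $x_0\longmapsto\|F_{x_0}\|^2$ is continuous and vanishes outside a compact set, and is therefore bounded by some $M>0$ on all of $\mathbb{R}^3$; then $\lambda\log(\lambda+\|F_{x_0}\|^2)\leq\lambda\log(M+1)$ for $\lambda\leq 1$, and the lower bound $\log(\lambda+\|F_{x_0}\|^2)>\log\varepsilon$ handles the other sign. With that correction your argument goes through verbatim.
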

\begin{proof}
Suppose first that $||F_{x_0}||^2 = 0$. Then obviously, $r(x_0) = 0$ and $\mathcal{L}_{\mathrm{reg}}(x_0,\lambda) =||W||^2/\lambda + n \log \lambda$ which shows that 
\begin{align}\label{eq:pointwise}
\big|\lambda\mathcal{L}_{\mathrm{reg}}(x_0,\lambda) - ||W||^2 \big| = O_{\lambda \rightarrow 0}(\lambda \log \lambda)
\end{align}

Now, suppose that $||F_{x_0}||^2 > 0$. Let $\varepsilon > 0$ such that $||F_{x_0}||^2 \geq \varepsilon$.

We first deal with the first term in \eqref{eq:log lik reg}. Using the Sherman–Morrison formula (\cite{2007numerical_recipes}, section 2.7.1), we may invert $(K_{x_0}^{\mathrm{reg}} + \lambda I_n)$ explicitly :
\begin{align*}
	(K_{x_0}^{\mathrm{reg}} + \lambda I_n)^{-1} = \frac{1}{\lambda}I_n - \frac{1}{\lambda^2} \frac{F_{x_0}F_{x_0}^T}{1+ \frac{1}{\lambda}F_{x_0}^TF_{x_0}} = \frac{1}{\lambda}\Big(I_n - \frac{F_{x_0}F_{x_0}^T}{\lambda + ||F_{x_0}||^2}\Big)
\end{align*}
One also easily derives the determinant term as $F_{x_0}F_{x_0}^T$ only has one non zero eigenvalue equal to $||F_{x_0}||^2$, since $(F_{x_0}F_{x_0}^T)F_{x_0} = F_{x_0}(F_{x_0}^TF_{x_0}) = ||F_{x_0}||^2F_{x_0}$ :
\begin{align}
\log \det(K_{x_0}^{\mathrm{reg}} + \lambda I_n) = (n-1) \log \lambda + \log \lambda + ||F_{x_0}||^2)
\end{align} 
(The same argument shows that $\rho(K_{x_0}^{\mathrm{reg}}) = ||F_{x_0}||^2$.) Thus,
\begin{align}\label{eq:L reg corr}
\mathcal{L}_{\mathrm{reg}}(x_0,\lambda) &= W^T (K_{x_0}^{\mathrm{reg}} + \lambda I_n)^{-1} W + \log \det(K_{x_0}^{\mathrm{reg}} + \lambda I_n) \nonumber \\
&= \frac{1}{\lambda}\bigg(||W||^2 - \frac{\langle F_{x_0},W\rangle^2}{\lambda + ||F_{x_0}||^2}\bigg) + (n-1) \log \lambda + \log (\lambda + ||F_{x_0}||^2) \nonumber \\
&= \frac{||W||^2}{\lambda}\bigg(1 - \frac{\langle F_{x_0},W\rangle^2}{||W||^2(\lambda + ||F_{x_0}||^2)}\bigg) + (n-1) \log \lambda + \log (\lambda + ||F_{x_0}||^2) 
\end{align}
and
\begin{align}
\label{eq:log reg diff}
\lambda\mathcal{L}_{\mathrm{reg}}(x_0,\lambda)&-||W||^2(1-r(x_0)^2) \nonumber \\
&= ||W||^2\bigg(\frac{\langle F_{x_0},W \rangle^2}{||W||^2 ||F_{x_0}||^2} - \frac{\langle F_{x_0},W \rangle^2}{||W||^2 (\lambda + ||F_{x_0}||^2)} \bigg) \\ &\hspace{15pt}+ (n-1)\lambda \log \lambda + \lambda \log (\lambda + ||F_{x_0}||^2) \nonumber
\end{align}
But for the term in \eqref{eq:log reg diff} that is multiplied by $||W||^2$,
\begin{align}\label{eq:bound 1 log reg}
\frac{\langle F_{x_0},W \rangle^2}{||W||^2 ||F_{x_0}||^2} - \frac{\langle F_{x_0},W \rangle^2}{||W||^2 (\lambda + ||F_{x_0}||^2)} &= \frac{\langle F_{x_0},W \rangle^2}{||W||^2}\bigg(\frac{1}{||F_{x_0}||^2} -\frac{1}{\lambda + ||F_{x_0}||^2}  \bigg) \nonumber \\
&= \frac{\langle F_{x_0},W \rangle^2}{||W||^2} \frac{\lambda}{||F_{x_0}||^2(\lambda + ||F_{x_0}||^2)} \nonumber \\
&\leq r(x_0)^2  \frac{\lambda}{\lambda + ||F_{x_0}||^2} \leq \frac{\lambda}{||F_{x_0}||^2} \leq \frac{\lambda}{\varepsilon}
\end{align}
and obviously, since $\lambda > 0$, 
\begin{align}\label{eq:truc_positif}
\frac{\langle F_{x_0},W \rangle^2}{||W||^2 ||F_{x_0}||^2} - \frac{\langle F_{x_0},W \rangle^2}{||W||^2 (\lambda + ||F_{x_0}||^2)} \geq 0
\end{align}
Also, one sees that $F_{x_0} = 0$ as soon as $\sup_i |x_0-x_i| > cT + R$, ie $x_0$ is too far from the receivers for them to capture non zero signal during the time interval $[0,T]$. Thus the function $x_0 \longmapsto ||F_{x_0}||^2$ is zero outside of a compact set. It is obviously continuous on $\mathbb{R}^3$ and is thus bounded on $\mathbb{R}^3$ by some constant $M > 0$. Using this together with \eqref{eq:bound 1 log reg} and \eqref{eq:truc_positif} inside \eqref{eq:log reg diff} and assuming that $\lambda \leq 1$ yields
\begin{align}
\big|\lambda\mathcal{L}_{\mathrm{reg}}(x_0,\lambda)-||W||^2(1-r(x_0)^2)\big| \leq \frac{\lambda}{\varepsilon}||W||^2 + (n-1)|\lambda \log \lambda| + \lambda \log (M+1)
\end{align}
which shows the uniform convergence statement as well as the pointwise one (together with \eqref{eq:pointwise}).
\end{proof}

The set $E_{\varepsilon}$ is a set on which all the matrices $K_{x_0}^{\mathrm{reg}}$ are all uniformly large enough in the spectral radius sense.

\begin{remark}
In the proof of Proposition \ref{prop : log lik reg}, the second term in \eqref{eq:log lik reg} has no influence in the limit object and only pollutes the rate of convergence. Discarding it leads to a $O_{\lambda \rightarrow 0}(\lambda)$ rate of convergence.
\end{remark}

It also makes sense to inspect the limiting case $N \rightarrow \infty$, which corresponds to having the sampling frequency of the captors go to infinity. In this case, the discrete objects in Proposition \ref{prop : log lik reg} behave as Riemann sums if the time steps $t_k$ are equally spaced; as a result,  we obtain integrals in the limit  $N \rightarrow \infty$. We state a weak asymptotic behaviour result of $\mathcal{L}_{\mathrm{reg}}(x_0,\lambda)$ in that limit, with a very similar limit object. Highlight the dependence in $N$ in $\mathcal{L}_{\mathrm{reg}}(x_0,\lambda)$ by noting it instead $\mathcal{L}_{\mathrm{reg}}^N(x_0,\lambda)$.

\begin{proposition}[Asymptotic behaviour of $\mathcal{L}_{\mathrm{reg}}^N(x_0,\lambda)$ when $N \rightarrow \infty$]\label{prop : log lik reg N}
Define the following vector valued functions $L^2([0,T],\mathbb{R}^q)$: 
\begin{align}
\forall t\in [0,T], \hspace{11pt} I_{\mathrm{u}}(t) &:= \big(\tilde{u}(x_1,t),...,\tilde{u}(x_q,t)\big)^T \nonumber \\
\forall t \in [0,T], \hspace{7pt} I_{x_0}(t) &:= \big(f_t^{\mathrm{R}}(x_1-x_0),...,f_t^{\mathrm{R}}(x_q-x_0)\big)^T \nonumber
\end{align}
and the scalar valued function
\begin{align}\label{eq:r infinity}
r_{\infty}(x_0) &:= \frac{\langle I_{\mathrm{u}},I_{x_0} \rangle }{||I_{\mathrm{u}}|| \ ||I_{x_0}||}
\end{align}
whenever $||I_{x_0}|| \neq 0$; in \eqref{eq:r infinity}, the norms and the dot product are those of the usual euclidean structure of $L^2([0,T],\mathbb{R}^q)$. Suppose that for all $k \in \{1,...,N\}$, $t_k = T (k-1)/(N-1)$, i.e. the $t_k$ are equally spaced in $[0,T]$.

Then whenever $x_0$ is such that $||I_{x_0}|| \neq 0$, we have the following pointwise convergence at $x_0$
\begin{align}
\frac{\lambda}{N}\mathcal{L}_{\mathrm{reg}}^N(x_0,\lambda) \xrightarrow[N \rightarrow \infty]{} ||I_{\mathrm{u}}||_2^2\big(1-r_{\infty}(x_0)^2\big) + q \lambda\log \lambda
\end{align}
\end{proposition}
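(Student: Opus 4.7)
The plan is to build directly on the rank-one computation carried out in the proof of Proposition \ref{prop : log lik reg}. Since $K_{x_0}^{\mathrm{reg}} = F_{x_0}F_{x_0}^T$ has rank at most one, Sherman--Morrison still applies and yields the closed form
\begin{align*}
\mathcal{L}_{\mathrm{reg}}^N(x_0,\lambda) = \frac{||W||^2}{\lambda} - \frac{\langle F_{x_0},W\rangle^2}{\lambda(\lambda + ||F_{x_0}||^2)} + (Nq-1)\log\lambda + \log(\lambda + ||F_{x_0}||^2).
\end{align*}
After multiplying by $\lambda/N$, the task reduces to taking the $N\to\infty$ limit of each of the four resulting terms separately.

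The key step is then to recognize the norms and inner product above as (unnormalized) Riemann sums over $[0,T]$ for the continuous vector-valued functions $I_{\mathrm{u}}$ and $I_{x_0}$. Explicitly, under the equispaced sampling $t_k = T(k-1)/(N-1)$,
\begin{align*}
||W||^2 = \sum_{k=1}^N ||I_{\mathrm{u}}(t_k)||_{\mathbb{R}^q}^2, \qquad ||F_{x_0}||^2 = \sum_{k=1}^N ||I_{x_0}(t_k)||_{\mathbb{R}^q}^2, \qquad \langle F_{x_0}, W\rangle = \sum_{k=1}^N \langle I_{\mathrm{u}}(t_k), I_{x_0}(t_k)\rangle_{\mathbb{R}^q}.
\end{align*}
Continuity (in fact smoothness) of $t \mapsto f_t^{\mathrm{R}}(x) = (F_t * \varphi_R)(x)$, which follows from the mollification by $\varphi_R \in \mathcal{C}_c^\infty$, makes $I_{x_0}$ continuous on $[0,T]$; the same holds for $I_{\mathrm{u}}$. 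Standard Riemann sum convergence then gives that $||W||^2/N$, $||F_{x_0}||^2/N$ and $\langle F_{x_0},W\rangle/N$ converge to the integrals of $||I_{\mathrm{u}}(t)||^2$, $||I_{x_0}(t)||^2$ and $\langle I_{\mathrm{u}}(t),I_{x_0}(t)\rangle$ over $[0,T]$ respectively, matching the $L^2([0,T],\mathbb{R}^q)$ objects in the statement.

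It then remains to combine the four terms of $(\lambda/N)\mathcal{L}_{\mathrm{reg}}^N$. The $(Nq-1)\log\lambda$ part immediately yields $q\lambda\log\lambda$. The $\log(\lambda + ||F_{x_0}||^2)$ part is $O((\log N)/N)$ and vanishes, because $||F_{x_0}||^2$ grows linearly in $N$. The leading $||W||^2/N$ converges directly. The only delicate term is the second one, which I would rewrite as $(\langle F_{x_0}, W\rangle/N)^2 / (\lambda/N + ||F_{x_0}||^2/N)$ so that all three Riemann sums are isolated; passing to the limit is then immediate provided the denominator has a nonzero limit, which is exactly the hypothesis $||I_{x_0}|| \neq 0$. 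Plugging in the definition of $r_\infty(x_0)$ assembles the final expression. The principal obstacle is really just this nondegeneracy: without $||I_{x_0}||\neq 0$, the denominator limit vanishes, creating a $0/0$ form that would require a different (and presumably slower) rate; the stated hypothesis precisely sidesteps this difficulty.
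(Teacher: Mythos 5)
Your proposal is correct and takes essentially the same route as the paper: the Sherman--Morrison rank-one closed form from the previous proposition, recognition of $||W||^2$, $||F_{x_0}||^2$ and $\langle F_{x_0},W\rangle$ as Riemann sums converging (after division by $N$) to the $L^2([0,T],\mathbb{R}^q)$ quantities, and the hypothesis $||I_{x_0}||\neq 0$ used exactly to keep the denominator of the correlation term bounded away from zero. Your explicit remark that continuity of $t\mapsto f_t^{\mathrm{R}}(x)$ justifies the Riemann-sum convergence is a detail the paper leaves implicit, but the argument is otherwise identical.
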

\begin{proof}
 In all concerned mathematical objects, we highlight the $N$ dependency with an exponent, i.e. $W^N$, $F_{x_0}^N$, etc.
We use the exact same tool as in the previous proof, namely that we have
\begin{align*}
\mathcal{L}_{\mathrm{reg}}^N(x_0,\lambda) &= \frac{||W^N||^2}{\lambda}\bigg(1 - \frac{\langle F_{x_0}^N,W^N\rangle^2}{||W^N||^2\big(\lambda + ||F_{x_0}^N||^2\big)}\bigg) + (n-1) \log \lambda + \log (\lambda + ||F_{x_0}^N||^2)
\end{align*}
But
\begin{itemize}
\item $||W^N||^2 = \sum_{i=1}^q \sum_{k=1}^N \tilde{u}(x_i,t_k)^2$
\item $||F_{x_0}^N||^2 = \sum_{i=1}^q \sum_{k=1}^N f_{t_k}^{\mathrm{R}}(x_i-x_0)^2$
\item $\langle F_{x_0}^N,W^N\rangle = \sum_{i=1}^q \sum_{k=1}^N f_{t_k}^{\mathrm{R}}(x_i-x_0)\tilde{u}(x_i,t_k)$
\end{itemize}
Since the time steps are equally spaced, we can study the limit $N \rightarrow \infty$ of the above objects thanks to Riemann sums. When $N \rightarrow \infty$,
\begin{align}
\frac{1}{N}||W^N||^2 &\longrightarrow \sum_{i=1}^q\int_0^T \tilde{u}(x_i,t)^2dt = ||I_{\mathrm{u}}||^2 \\
\frac{1}{N}||F_{x_0}^N||^2 &\longrightarrow \sum_{i=1}^q\int_0^T f_t(x_i-x_0)^2dt = ||I_{x_0}||^2 \label{eq:I_x0_lim} \\
\frac{1}{N}
\langle W^N,F_{x_0}^N \rangle &\longrightarrow \sum_{i=1}^q\int_0^T \tilde{u}(x_i,t) f_t(x_i-x_0)dt = \langle I_{\mathrm{u}},I_{x_0} \rangle 
\end{align}
Suppose that $x_0$ is such that $||I_{x_0}|| \neq 0$, then because of \eqref{eq:I_x0_lim}, the quantity $||F_{x_0}^N||$ is bounded from below by $||I_{x_0}||/2$ for sufficiently large $N$. We then have the following convergence
\begin{align}
\frac{\langle F_{x_0}^N,W^N\rangle^2}{||W^N||^2(\lambda + ||F_{x_0}^N||^2)} = \frac{(\frac{1}{N}\langle F_{x_0}^N,W^N\rangle)^2}{\frac{1}{N}||W^N||^2(\frac{\lambda}{N} + \frac{1}{N} ||F_{x_0}^N||^2)}  \xrightarrow[N \rightarrow \infty]{} r_{\infty}(x_0)
\end{align}
Likewise, since $n = qN$, when $N \rightarrow \infty$ :
\begin{align}
\frac{(n-1)\log \lambda}{N} &+ \frac{1}{N}\log (\lambda + ||F_{x_0}||^2) \nonumber \\
&= \frac{(Nq-1)\log \lambda}{N} + \frac{\log N}{N} + \frac{1}{N}\log \Big(\frac{\lambda}{N} + \frac{1}{N}||F_{x_0}||^2\Big)  \xrightarrow[N \rightarrow \infty]{} q \log \lambda
\end{align}
which shows the announced result.
\end{proof}

\begin{remark}
One could surely obtain better rates of convergence in Proposition \ref{prop : log lik reg N} using basic convergence results for Riemann sums and regularity assumptions, but we restrict ourselves to a $o_{N \rightarrow \infty}(1)$ convergence to avoid further technicalities.
\end{remark}

\subsubsection{Discussion : why do we observe that the actual source position is located at the intersection of spheres centered on receivers in Figure \ref{fig : point source} ?}\mbox{}\\

The answer can be found by analysing the limit term in proposition  \ref{prop : log lik reg} (or the one in Proposition \ref{prop : log lik reg N}). Note $L(x_0)$ the said limit object from Proposition \ref{prop : log lik reg} :
\begin{align}
L(x_0) = ||W||^2\big(1-r(x_0)^2\big) = ||W||^2\bigg(1 - \frac{_Big(\sum_{i=1}^q \langle F_{x_0}^i,W_i\rangle\Big)^2}{||W||^2 ||F_{x_0}||^2}\bigg)
\end{align}
Note $T_i$ the time of arrival of the point source wave at sensor $i$ : $|x_i - x_0^*| = c^*T_i$. Define also the sphere $S_i$ centered on $x_i$ and the thin spherical shell $A_i$ of thickness $2R$ that surrounds it
\begin{align}
S_i &:= S(x_i,cT_i) \\
A_i &:= A(x_i,cT_i-R,cT_i+R) = \overline{B(x_0,cT_i + R) \setminus B(x_0,cT_i- R)}
\end{align}
\textit{(i) $L(x_0)$ reaches a local minima over the whole sphere $S_i$.} 
When $x_0$ is located inside $A_i$, the subvectors $W_i$ and $F_{x_0}^i$ of $W$ and $F_{x_0}$ respectively become almost colinear because $f_t^{\mathrm{R}}$ is radially symmetric. They become exactly colinear when $x_0 \in S_i$. This maximizes the term $\langle F_i,W_i\rangle$ in virtue of the Cauchy-Schwarz inequality. When $x_0$ lies on one and only one of those spherical shells $A_i$, the other terms $\langle F_j,W_j \rangle$ are all zero. \\

\textit{(ii) The local minima of $L(x_0)$ located at the intersection of $2$ or more spheres $S_i$ are smaller.}
More generally, when $I$ is a subset of $\{1,...,n\}$ and when $x_0 \in \bigcap_{i \in I} A_i \setminus \bigcap_{j \notin I} A_j$, the term $\sum_{i \in I} \langle F_i,W_i \rangle$ is (almost) maximized while $\sum_{j \notin I} \langle F_j,W_j \rangle = 0$, which explains why the intersection of spheres are darker coloured than the other parts of the spheres in Figure \ref{fig : point source}. \\

\textit{(iii) The spheres $S_i$ intersect at a single point, which is exactly $x_0^*$ as well as the global minima of $L(x_0)$.} The quantity $r(x_0)$ reaches a global maximum when all subvectors $W_i$ and $F_{x_0}^i$ are colinear, which is the case only when $x_0 \in \bigcap_i S_i$. When there are at least 4 sensors, the intersection of all the spheres $\bigcap_i S_i$ is reduced to at most one point. Recall that we have supposed that $c = c^*$ : this implies that $x_0^* \in \bigcap_i S_i$, and thus the minimum of $L(x_0)$ is located at $x_0 = x_0^*$. \\

Note that if the speed $c$ in $k_{x_0}^{\mathrm{R}}$ does not correspond to the real speed $c^*$, the intersection $\bigcap_i S_i$ will be empty. Additionally, from an optimization point of view, numerically solving $\inf_{x_0} \mathcal{L}(x_0,\lambda)$ is obviously highly non convex and none of our numerical experiments lead to the correct solution.


\subsection{Computational Speedups}
In the Kriging formulas \eqref{eq:krig mean} and \eqref{eq:krig cov} as well as in the log marginal likelihood equation \eqref{eq:loglik_noisy}, we are interested in inverting matrices of the form $K + \lambda I$ where $K$ in positive semidefinite. Using basic algebra, we show an exact computational shortcut to computing $(K + \lambda I)^{-1}$ when $K$ has null lines, which is typically the case when using kernels \eqref{eq:ft ft' compact} because of finite speed propagation. It may provide huge computational speedups in the limit of small source size as is the case in the previous subsection.

\subsubsection{Inversion shortcut for Tichonov-regularized PSD matrices with null columns } \mbox{}\\
Let $K$ be a symmetric semi-positive definite matrix of size $n$. Suppose it has $p$ non zero columns $i_1,...,i_p$ and $q = n-p$ zero columns $i_{p+1},...,i_n$. Since $K$ is symmetric, its lines $i_1,...,i_p$ are also zero. Finally, a simple necessary and sufficient condition for its column $j$ to be zero is $K_{jj} = 0$ because of the Cauchy-Schwarz inequality for PSD matrices : $|a_{ij}| \leq \sqrt{a_{ii}}\sqrt{a_{jj}}$.
Define the matrix $\Tilde{K}$ of the extracted non-zero columns of $K$, of size $p \times p$ by $\Tilde{K}_{k,l} = K_{i_k,i_l} \forall k,l \in \{1,...,p\}$. Note $\mathcal{E} = (e_1,...,e_n)$ the canonical basis of $\mathbb{R}^n$ and $\mathcal{F} = (e_{i_1},...,e_{i_n})$ a permutation according to the indexes $i_1,...,i_p$ and $i_{p+1},...,i_n$ defined above (that permutation is unique up to additional permutations of the null columns of $K$). Then for the orthogonal permutation matrix $P$ such that
\begin{align}\label{eq:P permutation K}
\forall k \in \{1,...,n\}, \ \ \ Pe_k = e_{i_k}
\end{align}
we have
\begin{align}\label{eq:recomp K}
    K = P^T \times    
    \begin{pmatrix}
                    \begin{matrix}
                      \Tilde{K}
                    \end{matrix}
                    & \rvline & \bigzero_{p,q} \\
                    \hline
                    \bigzero_{q,p} & \rvline &
                    \begin{matrix}
                      \bigzero_{q,q}
                    \end{matrix}
    \end{pmatrix} \times P
\end{align}
Then we have blockwise, 
\begin{align} \label{eq:fast inv}
    (K + \lambda I_n)^{-1} = P^T \times \begin{pmatrix}
                    \begin{matrix}
                      (\Tilde{K} + \lambda I_p)^{-1}
                    \end{matrix}
                    & \rvline & \bigzero_{p,q} \\
                    \hline
                    \bigzero_{q,p} & \rvline &
                    \begin{matrix}
                      \frac{1}{\lambda}I_q
                    \end{matrix}
    \end{pmatrix} \times P
\end{align}
Note that matrix $P$ is easy to find as one only has to compute the diagonal elements of $K$ in order to find the null lines/columns of $K$, since $K$ is PSD. 
That is, we only need to invert a $p \times p$ matrix. For $a$ in $\mathbb{R}^n$, define $a_{in} \in \mathbb{R}^p$ the first $p$ components of $Pa$ and $a_{out} \in \mathbb{R}^{n-p}$ the last $n-p$ components of $Pa$, ie 
\begin{align}\label{aq : a_in a_out}
Pa = \begin{pmatrix}
    (Pa)_1 \\
    \vdots \\
    (Pa)_p \\
     \cmidrule(lr){1-1}
     (Pa)_{p+1} \\
     \vdots \\
    (Pa)_n
\end{pmatrix}  =
\begin{pmatrix}
    {a}_{in} \\
     \cmidrule(lr){1-1}
    a_{out}
\end{pmatrix}   
\end{align}
Using \eqref{eq:fast inv} and \eqref{aq : a_in a_out}, we have immediately that 
\begin{align}\label{eq_speedup}
a^T (K+ \lambda I_n)^{-1}a = a_{in}^T(\tilde{K}+ \lambda I_p)^{-1}a_{in} + \frac{1}{\lambda}||a_{out}||^2
\end{align}
In the term $a^T (K+ \lambda I_n)^{-1}a$, $a_{in}$ corresponds to the coordinates of $a$ that are "kept" by the non zero elements of $K$ and $a_{out}$ corresponds to those that are left out by the non zero elements of $K$. 

\subsubsection{Computational shortcuts for Kriging}
We may now use the previous section to speed up GPR computations when using a covariance kernel $k_{\theta}$. This is summarized in Proposition \ref{prop : fast GPR} in which we use the following notations.

\begin{itemize}
\item Note $X = (x_1,...,x_n)^T \in \mathbb{R}^n$ the observation locations, $Y \in \mathbb{R}^n$ the vector of observations ($Y_i = u(x_i)$ for noiseless observations).
\item Note $K_{\theta} := k_{\theta}(X,X)$ the associated covariance matrix, $P_{\theta}$ and $\tilde{K}_{\theta}$ the matrices $P$ and $\tilde{K}$ defined by \eqref{eq:P permutation K} for $K = K_{\theta}$, $p_{\theta}$ the number of non zero columns of $K_{\theta}$ and $q_{\theta} = n - p_{\theta}$.
\item As in \eqref{aq : a_in a_out}, note $X_{in}^{\theta}$ the first $p_{\theta}$ coordinates of $P_{\theta}X$ and $X_{out}^{\theta}$ the $q_{\theta}$ last, and likewise for $Y_{in}^{\theta}$ and $Y_{out}^{\theta}$.
\end{itemize}

Plugging equation \eqref{eq:fast inv} in the log-marginal likelihood and Kriging formulas, we obtain Proposition \ref{prop : fast GPR}.
\begin{proposition}\label{prop : fast GPR}
If using a covariance kernel $k_{\theta}$ with hyperparameter $\theta$ for GPR, we have the following exact computational shortcuts. \\
$(i)$ For the log-marginal likelihood,
\begin{align}\label{eq:fast likelihood} 
    \mathcal{L}(\theta,\lambda)
    &=  (Y_{in}^{\theta})^T(\tilde{K}_{\theta} + \lambda I_{p_{\theta}})^{-1}Y_{in}^{\theta} \nonumber \\ 
    &+ \frac{1}{\lambda}||Y_{out}^{\theta}||^2 + \log \det(\Tilde{K}_{\theta} + \lambda I_{p_{\theta}}) + q_{\theta} \log \lambda
\end{align}
$(ii)$ For the Kriging formulas,
\begin{numcases}{}
    \Tilde{m}(x) &= \hspace{3mm}$m(x) +  k_{\theta}(X_{in}^{\theta},x)^T(\Tilde{K}_{\theta} + \lambda I_{p_{\theta}})^{-1}(Y_{in}^{\theta} - m(X_{in}^{\theta}))$ \label{eq:krig mean fast} \\
    \Tilde{k}(x,x') &= \hspace{3mm}$k_{\theta}(x,x') - k_{\theta}(X_{in}^{\theta},x)^T(\tilde{K}_{\theta} + \lambda I_{p_{\theta}})^{-1}k_{\theta}(X_{in}^{\theta},x')$\label{eq:krig cov fast}
\end{numcases}
$(iii)$ if $k_{\theta}(x,x) = 0$ then $\Tilde{m}(x) = m(x)$ and $\tilde{k}(x,x') = 0$.
\end{proposition}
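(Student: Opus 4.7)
The entire proposition is essentially a bookkeeping exercise built on a single observation: by the Cauchy--Schwarz inequality \eqref{eq:CS PD}, $k_\theta(x_{i_k},x_{i_k}) = 0$ forces $k_\theta(x_{i_k},y) = 0$ for \emph{every} $y \in \mathcal{D}$, not just for $y$ among the observation points. In particular, for the indices $i_{p_\theta+1},\dots,i_n$ labelling the null columns of $K_\theta$, the full row vector $k_\theta(x_{i_k},\cdot)$ vanishes identically. This is the key ingredient that lets one reduce the Kriging computations to the non-degenerate block.

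For $(i)$, I would apply the block inversion formula \eqref{eq:fast inv} to $(K_\theta + \lambda I_n)^{-1}$ with $a = Y$, yielding the quadratic form via \eqref{eq_speedup}:
\begin{align*}
Y^T(K_\theta + \lambda I_n)^{-1}Y = (Y_{in}^\theta)^T(\tilde{K}_\theta + \lambda I_{p_\theta})^{-1} Y_{in}^\theta + \tfrac{1}{\lambda}\|Y_{out}^\theta\|^2.
\end{align*}
For the log-determinant term, I would use that $P_\theta$ is orthogonal so $|\det P_\theta| = 1$, and that the block-diagonal matrix in \eqref{eq:recomp K} has determinant $\det(\tilde{K}_\theta + \lambda I_{p_\theta}) \cdot \lambda^{q_\theta}$. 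Substituting into \eqref{eq:loglik_noisy} directly yields \eqref{eq:fast likelihood}.

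For $(ii)$, I would first write $k_\theta(X,x) = P_\theta^T \bigl( k_\theta(X_{in}^\theta,x)^T, k_\theta(X_{out}^\theta,x)^T \bigr)^T$ using the permutation convention \eqref{eq:P permutation K}. The crucial step is invoking the observation above: since each coordinate of $k_\theta(X_{out}^\theta,x)$ is of the form $k_\theta(x_{i_k},x)$ with $k_\theta(x_{i_k},x_{i_k}) = 0$, Cauchy--Schwarz forces $k_\theta(X_{out}^\theta,x) = 0$. Applying \eqref{eq:fast inv} to both $(K_\theta+\lambda I_n)^{-1}(Y - m(X))$ in \eqref{eq:krig mean} and to $(K_\theta+\lambda I_n)^{-1} k_\theta(X,x')$ in \eqref{eq:krig cov}, the $\frac{1}{\lambda}I_{q_\theta}$ block is annihilated on the left by $k_\theta(X,x)^T P_\theta^T$, leaving only the $\tilde{K}_\theta$ block, which gives \eqref{eq:krig mean fast} and \eqref{eq:krig cov fast}.

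For $(iii)$, Cauchy--Schwarz \eqref{eq:CS PD} applied to $k_\theta$ with $k_\theta(x,x) = 0$ gives $k_\theta(x,y) = 0$ for all $y$, so in particular $k_\theta(X_{in}^\theta,x) = 0$, and plugging into \eqref{eq:krig mean fast} and \eqref{eq:krig cov fast} yields $\tilde{m}(x) = m(x)$ and $\tilde{k}(x,x') = 0$. The only genuinely subtle point anywhere in the argument is the extension of Cauchy--Schwarz from pairs $(x_i,x_j)$ within the observation set to arbitrary $(x_{i_k},x)$ with $x \in \mathcal{D}$, which follows because \eqref{eq:CS PD} is a pointwise statement about the kernel itself; all remaining steps are routine linear algebra.
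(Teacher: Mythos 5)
Your proposal is correct and follows essentially the same route as the paper: part $(i)$ via the block inversion/determinant identities \eqref{eq_speedup} and \eqref{eq:recomp K}, and parts $(ii)$--$(iii)$ via the Cauchy--Schwarz inequality \eqref{eq:CS PD} forcing $k_\theta(x_{i_k},\cdot)\equiv 0$ whenever $k_\theta(x_{i_k},x_{i_k})=0$, which annihilates the $\frac{1}{\lambda}I_{q_\theta}$ block. The one point you flag as subtle — that \eqref{eq:CS PD} applies to arbitrary $x\in\mathcal{D}$ and not just observation points — is indeed the step the paper also relies on, and your justification is the same as theirs.
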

\begin{proof}
As in equation \eqref{eq_speedup}, we have immediately that
\begin{align}
Y^T (K_{\theta}+ \lambda I_n)^{-1}Y = (Y_{in}^{\theta})^T(\tilde{K}_{\theta} + \lambda I_{p_{\theta}})^{-1}Y_{in}^{\theta} + \frac{1}{\lambda}||Y_{out}^{\theta}||^2
\end{align}
It is also obvious from \eqref{eq:recomp K} that $\log \det (K_{\theta} + \lambda I_n) = \log \det(\Tilde{K}_{\theta} + \lambda I_{p_{\theta}}) + q_{\theta} \log \lambda$, which proves \eqref{eq:fast likelihood}.

For the Kriging equations, suppose without loss of generality that $m \equiv 0$. The Kriging mean is, as in \eqref{eq_speedup},
\begin{align}\label{eq:m_fast}
\tilde{m}(x) &= k(X,x)^T(K + \lambda I_n)^{-1}Y \nonumber \\
 &= k(X_{in}^{\theta},x)^T(\Tilde{K}_{\theta} + \lambda I_{p_{\theta}})^{-1}Y_{in}^{\theta} + \frac{1}{\lambda}k(X_{out}^{\theta},x)^T Y_{out}^{\theta}
\end{align}
Now, suppose that the observation location $x_i$ is such that $k_{\theta}(x_i,x_i) = (K_{\theta})_{ii} = 0$, which is exactly the case of all the coordinates of $X_{out}^{\theta}$ (by definition). Then from the Cauchy-Schwarz inequality \eqref{eq:CS PD}, the function $x \longmapsto k(x,x_i)$ is identically zero. Thus for all $x$, we have $k(X_{out}^{\theta},x)^T Y_{out}^{\theta} = 0$ which, when plugged in \eqref{eq:m_fast}, proves \eqref{eq:krig mean fast}. The exact same reasoning leads to \eqref{eq:krig cov fast}. Finally, for the same reason, if $k_{\theta}(x,x) = 0$ then the vector $k_{\theta}(X_{in},x)$ is null, which implies that $\tilde{m}(x) = 0$.
\end{proof}

We remind that formulas \eqref{eq:fast likelihood}, \eqref{eq:krig mean fast} and \eqref{eq:krig cov fast} are exact. There are actually three shortcuts for the Kriging equations \eqref{eq:krig mean fast} and \eqref{eq:krig cov fast}. First, the fast matrix inversion, which is especially useful during the log marginal likelihood optimization step (hyperparameter calibration). Second, the selection of the pertinent data points in $k_{\theta}(X_{in},x)$ by using $X_{in}$ instead of $X$. Third, the selection of the pertinent evaluation points thanks to the point $(iii)$ of Proposition \ref{prop : fast GPR}. These last two are useful when $\tilde{m}$ and/or $\tilde{k}$ have to be evaluated many times. Note also that all the mathematical objects on the right-hand side of equations \eqref{eq:fast likelihood}, \eqref{eq:krig mean fast} and \eqref{eq:krig cov fast} depend on $\theta$ and have to be recomputed for each new value of $\theta$. However this is fast since checking which columns of $K_{\theta}$ are zero only requires to compute the diagonal of $K_{\theta}$ : this requires $n$ calls to $k_{\theta}$. 

\begin{remark}
In equation \eqref{eq:fast likelihood}, the term $\frac{1}{\lambda}||Y_{out}^{\theta}||^2$ can be understood as a very heavy penalization term that sanctions non zero observations that do not lie in the support of $k_{\theta}$. When applying this to kernel \eqref{eq:wave kernel}, the term $\frac{1}{\lambda}||Y_{out}^{\theta}||^2$ penalizes the non zero observations that do not lie in the space time domain (aka light cone) allowed by Huygens' principle.
\end{remark}


\subsubsection{Expected numerical gain when using the truncated wave kernel \eqref{eq:wave kernel}} \mbox{}\\

In this case, to examine if a point $(x,t)$ is such that $k_{\theta}((x,t),(x,t)) = 0$, one only needs to check that $(x,t)$ lies in the light cone that corresponds to $\theta$ (among others, $\theta$ contains $(x_0,R,c)$). Explicitly, the condition to check is whether or not $x \in B(x_0,c|t|+R) \setminus B(x_0,(c|t|-R)_+)$ or equivalently, $c|t|-R\leq |x-x_0| \leq c|t| + R$. This is actually much cheaper than evaluating $k_{\theta}((x,t),(x,t))$. \\

\textit{For the log-marginal likelihood} : using Cholesky decomposition to invert $\tilde{K}_{\theta} + \lambda I_{p_{\theta}}$,  only $p_{\theta}^3/3$ flops are required versus $n^3/3$ previously (in both cases, the computation of the determinant term in \eqref{eq:fast likelihood} is immediate from the Cholesky decomposition). \\

\textit{For the Kriging formulas} : here we suppose that we are performing GPR using the truncated wave kernel \eqref{eq:wave kernel}. Suppose that the hyperparameter vector $\theta$ has been previously estimated : in particular, it provides estimated wave speed $c$, radius $R$ and source position $x_0$. Now, equation \eqref{eq:krig mean fast} states that 
\begin{align}\label{eq:krig p theta terms}
\tilde{m}(x,t) = \sum_{i=1}^{p_{\theta}}a_i k_{\theta}\big((x,t),(X_{in}^{\theta})_i \big)
\end{align}
for some fixed real numbers $a_1,...,a_{p_{\theta}}$. Suppose that we want to compute $\tilde{m}(x,t)$ over a cubic spatial grid $[0,L]^3$ of side length $L$ at a fixed time $t$. Suppose that the space step $\Delta x$ of this grid verifies $\Delta x \ll L$ : then the number of grid points inside a volume $V$ is approximately $V/\Delta x^3$ (each grid point occupies a cubic volume of $\Delta x^3$). In compliance with the strong Huygens principle, computing $\tilde{m}(x,t)$ over the whole grid using Proposition \ref{prop : fast GPR} results in $N_{\theta}$ calls to $k$, with
\begin{align}\label{eq:ntheta_vol}
N_{\theta} \simeq p_{\theta} \times \frac{\text{Vol}\Big([B(x_0,ct+R) \setminus B(x_0,(ct-R)_+)]\cap [0,L]^3 \Big)}{\Delta x ^3}
\end{align}
The set $B(x_0,ct+R) \setminus B(x_0,(ct-R)_+)$ corresponds to the admissible domain allowed by the strong Huygens principle at time $t$. This is to be compared to $N_{\mathrm{full}} = n \times L^3/\Delta x^3$, the number of calls to $k$ needed without using Proposition \ref{prop : fast GPR}; obviously, $N_{\theta} < N_{\mathrm{full}}$ and actually, $N_{\theta} \ll N_{\mathrm{full}}$ when $R$ is small. More precise estimations of the number of grid points inside the volume in equation \eqref{eq:ntheta_vol} can be derived, though this is a difficult research topic in itself : see for instance \cite{Arkhipova2008NumberOL} or the Gauss circle problem for its 2D counterpart. Also, the volume appearing in \eqref{eq:ntheta_vol} can be explicitly derived but we consider the additional insights this provides to be negligible.

\section{Numerical experiments}\label{section : num}
In this section, we showcase WIGPR on functions $w$ that are solutions of problem \eqref{eq:wave_eq}, using kernels \eqref{eq:ft ft' gen} and \eqref{eq:ftp ft'p gen} separately as well as together as in \eqref{eq:wave kernel}. The goal is twofold : reconstructing the target function $w$, which amounts to reconstructing its initial conditions, and estimating the physical parameters attached. Note that with badly estimated physical parameters, the reconstruction step is more or less bound to fail, especially with inaccurate wave speed $c$ and/or source centers $x_0^u$ and $x_0^v$. Throughout this section, we only consider compactly supported and radially symmetric initial conditions, for which we can use the formulas \eqref{eq:ft ft' compact} and \eqref{eq:ftp ft'p compact}.

At the end of this section, we also briefly show that perfect transparent boundary conditions are automatically enforced on the numerical reconstructions of $w$ provided by WIGPR. This is true simply because the Kriging means provided by WIGPR are built to be exact solutions of the free space problem \eqref{eq:wave_eq}. Transparent boundary conditions are a well known research topic in the numerical analysis community (\cite{ref_PML},\cite{Engquist1977AbsorbingBC}, \cite{fu_tbc},\cite{noble_kazakova}), which arises from the need to numerically simulate free space (unbounded) PDE problems on numerical simulation domains that are inherently finite.

\subsection{General test protocol for WIGPR} 

\subsubsection{Test cases} Three general test cases for WIGPR are considered here. They correspond to different initial conditions $(u_0,v_0)$ :
\begin{enumerate}
\item A test case for $k_{\mathrm{u}}^{\mathrm{wave}}$ described in Subsection \ref{sub:GP_pos}, for which $v_0 = 0$.
\item A test case for $k_{\mathrm{v}}^{\mathrm{wave}}$ described in Subsection \ref{sub:GP_spd}, for which $u_0 = 0$.
\item A test case for $k_{\mathrm{u}}^{\mathrm{wave}} + k_{\mathrm{v}}^{\mathrm{wave}}$ described in Subsection \ref{sub:GP_mix} using both of the non zero initial conditions of the two previous test cases.
\end{enumerate}
For each test case, the complete procedure described below is performed.

\subsubsection{Numerical simulation} Given initial conditions $u_0$ and $v_0$, we numerically simulate the solution $w$ over a given time period. We use a basic two step explicit finite difference time domain (FDTD) numerical scheme for the wave equation as described in \cite{bilbao2004}, equation A.24, over the cube $[0,1]^3$. The space unit is the meter (each side is $L = 1$ meter), the time unit is the second. We also use second order Engquist-Majda transparent boundary conditions \cite{Engquist1977AbsorbingBC}.
The simulation parameters are as follows. We use a sample rate $SR = 200 \ Hz$ (time step $\Delta t = 1/200$ s), a space step $\Delta x = 43 \ mm$, a wave speed $c = 0.5 \ m/s$. The simulation duration is $T = 1.5 \ s = 0.75 \ L/c$.


\subsubsection{Database generation} 30 sensors are scattered in the cube $[0.2,0.8]^3$ using a Latin hypercube repartition and a minimax space filling algorithm. Signal outputs correspond to time series for each sensor, with a sample rate of $50 \ Hz$, so $75$ data points altogether equally spanned over the interval $[0, 1.5] \ s$ for each sensor, which leads to $30 \times 75 = 2250$ observations. Each signal is then artificially polluted by a centered Gaussian white noise with standard deviation $\sigma_{noise} = 0.09$. This value corresponds to around $10\%$ of the maximal amplitude of the signals, see Figures \ref{fig_cos pos sig}, \ref{fig_ring cos spd sig} and \ref{fig_mix sig}.

\subsubsection{WIGPR} We perform WIGPR on (portions of) the dataset obtained above, using the kergp package \cite{citekergp} from R \cite{citeR}. For that we use kernels \eqref{eq:ft ft' compact} and/or \eqref{eq:ftp ft'p compact} which are ``fast" to evaluate, with $K_v$ and $k_u^0$ both 1D $5/2-$Matérn kernels. This Matérn kernel is stationary and writes, in term of the increment $h = x-x'$,
\begin{align}\label{eq:matern 5/2}
k_{5/2}(h) = \sigma^2 \bigg(1 + \frac{|h|}{\rho} + \frac{h^2}{3\rho^2} \bigg)\exp\bigg(-\frac{|h|}{\rho} \bigg)
\end{align}
It has two additional hyperparameters : $\rho$ and $\sigma^2$. $\rho$ is the length scale of the kernel \eqref{eq:matern 5/2} and should correspond to the typical variation length scale of the function approximated with GPR. $\sigma^2$ is the variance of the kernel. We tackle two different questions related to WIGPR.
\begin{enumerate}[label=(\subscript{Q}{{\arabic*}})]
\item \label{num:multistart} We study how well the physical parameters (propagation speed $c$, source position $x_0$ and size $R$) can be estimated with WIGPR. For this, we first select $N_s$ time series corresponding to the first $N_s$ sensors with $N_s \in \{1,2,3,4,5,6,8,10,15,20,25,30\}$. The corresponding database encompasses $75 \times N_s$ data points. For this database, we perform negative log marginal likelihood minimization to estimate the corresponding hyperparameters, which are either
\begin{align*}
\theta = 
\begin{cases}
(x_0^a,R_a,\theta_{k_a^0},c,\lambda) \in \mathbb{R}^{8}, a \in \{u,v\}, &\text{ if } v_0 = 0 \hspace{1pt} \text{ or } \hspace{1pt} u_0 = 0 \\
(x_0^u,R_u,\theta_{k_u^0},x_0^v,R_v,\theta_{k_v^0},c,\lambda) \in \mathbb{R}^{14} &\text{ if } v_0 \neq 0 \text{ and } u_0 \neq 0 
\end{cases}
\end{align*}
$\lambda$ corresponds to $\sigma^2$ in \eqref{eq:loglik_noisy}, and is viewed as an additional hyperparameter in the log marginal likelihood. We use a COBYLA optimization algorithm to solve \eqref{eq:learning theta} and a multistart procedure with $n_{mult} = 100$ different starting points. That is, $100$ different values of $\theta_0$ are scattered over an hypercube $H \subset \mathbb{R}^8$ or $H \subset \mathbb{R}^{14}$, and the COBYLA log marginal likelihood optimization procedure is run using each value of $\theta_0$ as a starting point. Among all the obtained results, the hyperparameter value that provides the maximal log marginal likelihood is selected. The multistart procedure mitigates the risk of getting stuck in local maxima. COBYLA is a gradient-free optimization method used in kergp and available in the \texttt{nloptr} package from R. We then reconstruct the initial conditions using WIGPR.
\item \label{num:fig_sensibility} We study the sensibility of the reconstruction step w.r.t. the sensor locations. Consider $40$ different Latin hypercube layouts of the $30$ sensors, each obtained with a minimax space filling algorithm. For each layout, we provide the correct hyperparameter values to the model; these valued are described in each test case. We then reconstruct the initial conditions using GPR and $N_s$ sensors, with $N_s \in \{1,5,10,15,20,25,30\}$. $L^p$ relative errors (see equation \eqref{eq:LP_rel_err}) are computed between the reconstructed initial condition and the real initial condition, and for each number of sensors $N_s$, statistics over the 40 different datasets for these $L^p$ errors are summarized in boxplots.
\end{enumerate}
In both cases, the approximated initial position $\tilde{u}_0$ is recovered by evaluating the WIGPR Kriging mean at $t = 0$ over a 3D grid and the initial speed $\tilde{v}_0$ is recovered by evaluating the Kriging mean at $t=0$ and $t = dt = 10^{-7}$ over the same 3D grid: $\tilde{v}_0 \simeq (\tilde{m}(\cdot,dt) - \tilde{m}(\cdot,0))/dt$ with $dt = 10^{-7}$.

\subsubsection{Result visualization} For each test case are provided an example of captured data (observation points), for one particular design of experiment (out of 40) for 6 sensors (out of 30). This displayed data takes the form of a sequence of 6 time series (Figures \ref{fig_cos pos sig}, \ref{fig_ring cos spd sig} and \ref{fig_mix sig}). For each test case, an example of initial condition reconstruction achieved thanks to WIGPR is also showcased. These examples take the form of a slice of the corresponding 3D functions evaluated on some plane $z = Cst$. Figures are obtained with MATLAB \cite{MATLAB:2020a}.

\subsubsection{Numerical indicators} 
Precise numerical results are displayed in appendix A; they are as follows. \\

For \ref{num:multistart} are plotted the distances between the correct physical parameters and the estimated ones, depending on the number of sensors used. Additionally, for every $p \in \{1,2,\infty\}$ are plotted relative $L^p$ errors $e_{p,rel}$ defined below depending on the number of sensors used.
\begin{align}\label{eq:LP_rel_err}
e_{p,rel}^u = \frac{||u_0-\tilde{u}_0||_p}{||u_0||_p}\ \ \text{ and } \ \ e_{p,rel}^v = \frac{||v_0-\tilde{v}_0||_p}{||v_0||_p}
\end{align}
A relative error of $50\%$ means that $||u_0-\tilde{u}_0||_p = 0.5 ||u_0||_p$. A relative error of over $100\%$ means that $||u_0-\tilde{u}_0||_p \geq ||u_0||_p$, in which case the null estimator $\hat{u}_0 = 0$ performs better than the estimator $\tilde{u}_0$ provided by WIGPR. Note that obtaining a small $L^p$ relative error is harder when the target function has a small $L^p$ norm. \\

For \ref{num:fig_sensibility} are plotted boxplots of the relative $L^p$ errors over the 40 different sensor layouts, depending on the number of sensors used.

Integrals for the $L^p$ error plots are approximated using Riemann sums over a 3D grid with space step $dx = 0.01$. 

\subsection{Test case for $k_{\mathrm{u}}^{\mathrm{wave}}$}\label{sub:GP_pos}
In this test case, $v_0$ is assumed null and thus we set $k_{\mathrm{v}} = 0$, which yields $k_{\mathrm{v}}^{\mathrm{wave}} = 0$. We thus use $k_{\mathrm{u}}^{\mathrm{wave}}$ defined in \eqref{eq:ftp ft'p compact} for GPR. 
We use the 1D Matérn kernel \eqref{eq:matern 5/2} for $k_u^0$ in equation \eqref{eq:ftp ft'p compact}. The initial condition $u_0$ is a radial raised cosine described as follows. Set $x_0 = (0.65,0.3,0.5)^T$, $R = 0.25$ and $A = 5$, the corresponding initial conditions (IC) are given by :
\begin{align*}
\begin{cases}
    u_0(x) &= A\mathbbm{1}_{[0,R]}(|x-x_0|)\Bigg(1 + \cos\bigg(\frac{\pi|x-x_0|}{R}\bigg) \Bigg) \\
    v_0(x) &= 0
    \end{cases}
\end{align*}
See Figure \ref{fig_cos pos slice}, left column, for a graphical representation. For problem \ref{num:multistart}, the optimization domain is chosen to be the following hypercube
\begin{align}\label{eq:dom_optim_1}
\theta &= (x_0,R,\rho,\sigma^2,c,\lambda) \nonumber \\
 &\in [0,1]^3 \times [0.03,0.5] \times [0.02,2] \times [0.1,5]\times [0.2,0.8]\times [10^{-8},10^{-2}]
\end{align}
For problem \ref{num:fig_sensibility}, the hyperparameter $\theta_0$ provided to the model is
\begin{align}
\theta_0 = (x_0,R,(\rho,\sigma^2),c,\lambda) = ((0.65,0.3,0.5),0.3,(0.2,3),0.5,\sigma_{noise}^2)
\end{align}
with $\sigma_{noise}^2 = 0.0081$. The value of $0.2$ provided for $\rho$ is a visual estimation of the length scale of $u_0$ based on Figure \ref{fig_cos pos slice}. Additionally, the couple $(0.2,3)$ provided for $(\rho,\sigma^2)$ is more or less confirmed by the estimations of $(\rho,\sigma^2)$ performed for problem \ref{num:multistart}, test case\#1, visible in Table \ref{table:estim_matern}.
\begin{figure}[tb]
\includegraphics[width=0.8\textwidth]{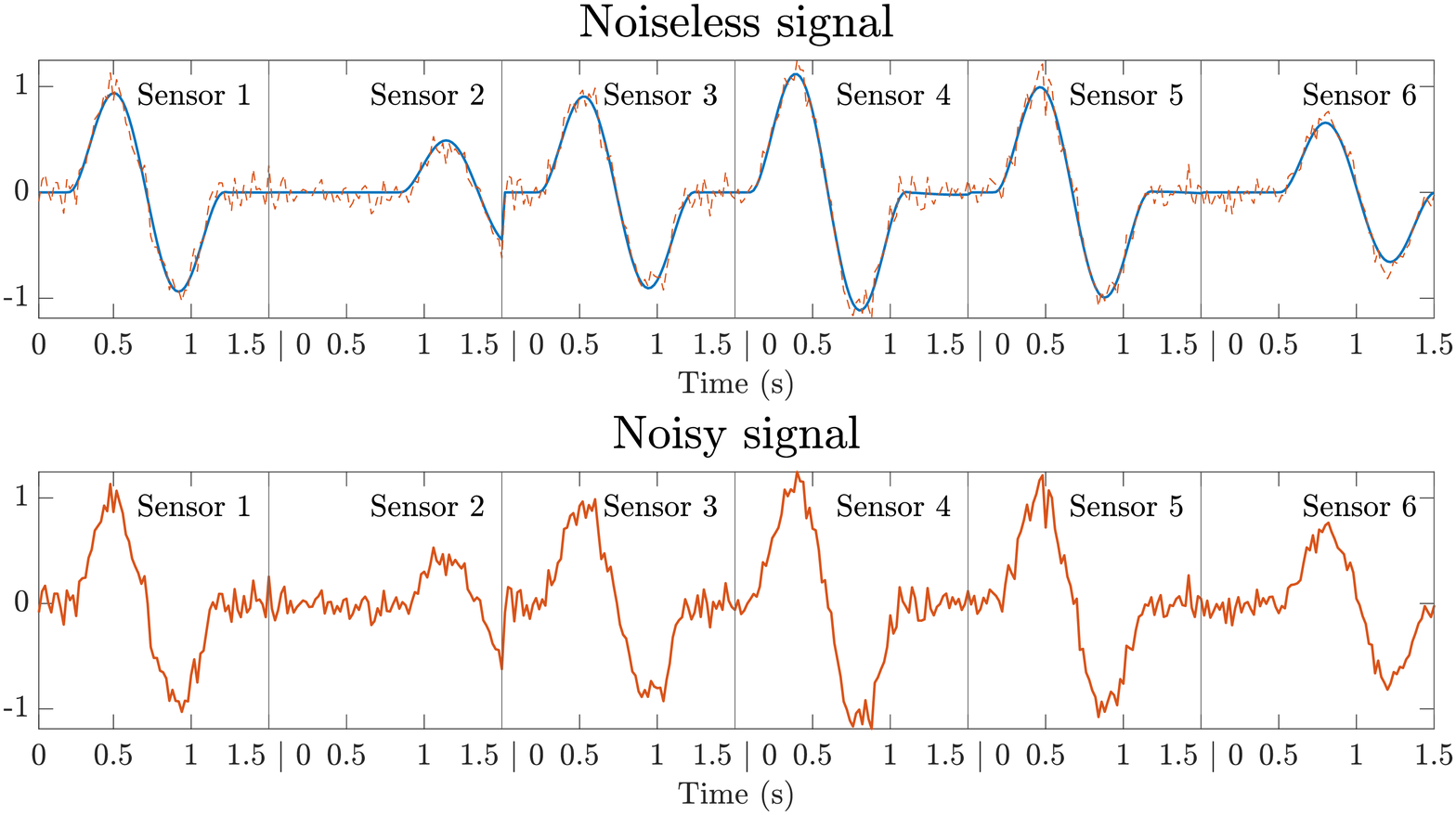}	
\caption{Example of captured signals for the test case \#1}
\label{fig_cos pos sig}
\end{figure}
\begin{figure}[tb]
\includegraphics[width=\textwidth]{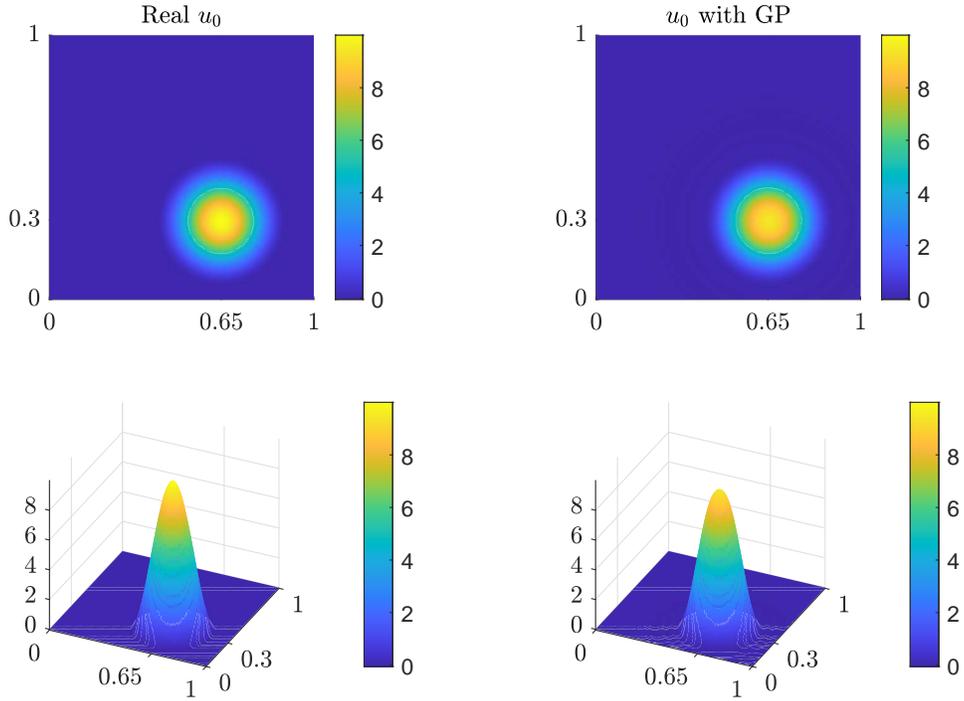}	
\caption{Test case \#1 : top and lateral view of the reconstruction of $u_0$ provided by WIGPR in comparison with $u_0$. Left : $u_0$. Right : reconstruction of $u_0$ using WIGPR. 20 sensors were used. The images correspond to the 3D solutions evaluated at $z = 0.5$.}
\label{fig_cos pos slice}
\end{figure}
\subsubsection{Discussion on the numerical results} For problem \ref{num:multistart}, Figures \ref{fig_cos_pos_x0} and \ref{fig_cos_pos_c} show that the physical parameters $x_0$ and $c$ are well estimated. Figure \ref{fig_cos_pos_R} shows that the source size parameter $R$ is overestimated, as could be expected from Remark \ref{rk:R_difficult}. The relative errors in Figures \ref{fig_cos_pos_L2}, \ref{fig_cos_pos_Linf} and \ref{fig_cos_pos_L1} show that the overall function reconstruction is satisfying, with around $10 \%$ relative error. For problem \ref{num:fig_sensibility} (figures \ref{fig_sensib_u_u_L2}, \ref{fig_sensib_u_u_Linf} and \ref{fig_sensib_u_u_L1}), the relative errors stagnate at around $2\%$. The data summarized by the boxplots are not very scattered, as is visible from the interquartile range (IQR) of the boxplots. The interquartile range is the difference between the $3^{rd}$ and the $1^{st}$ quartiles, and remains between $0.5\%$ and $2\%$ in the three figures aforementioned. This means that for this test case, the reconstruction step is not very sensitive to the sensors layout when they are scattered as a Latin hypercube.
For problems \ref{num:multistart} and \ref{num:fig_sensibility}, the results are very good overall. Note though that this is a simple test case : the function to be reconstructed is differentiable and exhibits rather small variations compared to the next test case.

\subsection{Test case for $k_{\mathrm{v}}^{\mathrm{wave}}$}\label{sub:GP_spd}
We use the kernel \eqref{eq:ft ft' compact} and the 1D Matérn kernel \eqref{eq:matern 5/2} for $K_v$ in equation \eqref{eq:ft ft' compact}. The initial condition $v_0$ is a radial ring cosine described as follows. Set $x_0 = (0.3,0.6,0.7)^T$, $R_1 = 0.05$, $R_2 = 0.15$, $A = 50$, the corresponding IC are given by :
\begin{align*}
\begin{cases}
	u_0(x) &= 0 \\
    v_0(x) &= A\mathbbm{1}_{[R_1,R_2]}(|x-x_0|)\Bigg(1 + \cos\bigg(\frac{2\pi(|x-x_0|-\frac{R_1+R_2}{2})}{R_2-R_1}\bigg) \Bigg)
    \end{cases}
\end{align*}
See Figure \ref{fig_cos ring spd slice}, left column, for a graphical representation. For problem \ref{num:multistart}, the optimization domain is chosen to be the one from the previous test case defined in \eqref{eq:dom_optim_1}. For problem \ref{num:fig_sensibility}, the hyperparameter $\theta_0$ provided to the model is
\begin{align}
\theta_0 = (x_0,R,(\rho,\sigma^2),c,\lambda) = ((0.3,0.6,0.7),0.15,(0.03,3),0.5,\sigma_{noise}^2)
\end{align}
with $\sigma_{noise}^2 = 0.0081$. As in the test case \#1, the value of $0.03$ provided for $\rho$ is a visual estimation of the length scale of $v_0$ based on Figure \ref{fig_cos ring spd slice}. Additionally, the couple $(0.03,3)$ provided for $(\rho,\sigma^2)$ is more or less confirmed by the estimations of $(\rho,\sigma^2)$ performed for problem \ref{num:multistart}, test case \#2, visible in Table \ref{table:estim_matern}.
\begin{figure}[tb]
\includegraphics[width=0.8\textwidth]{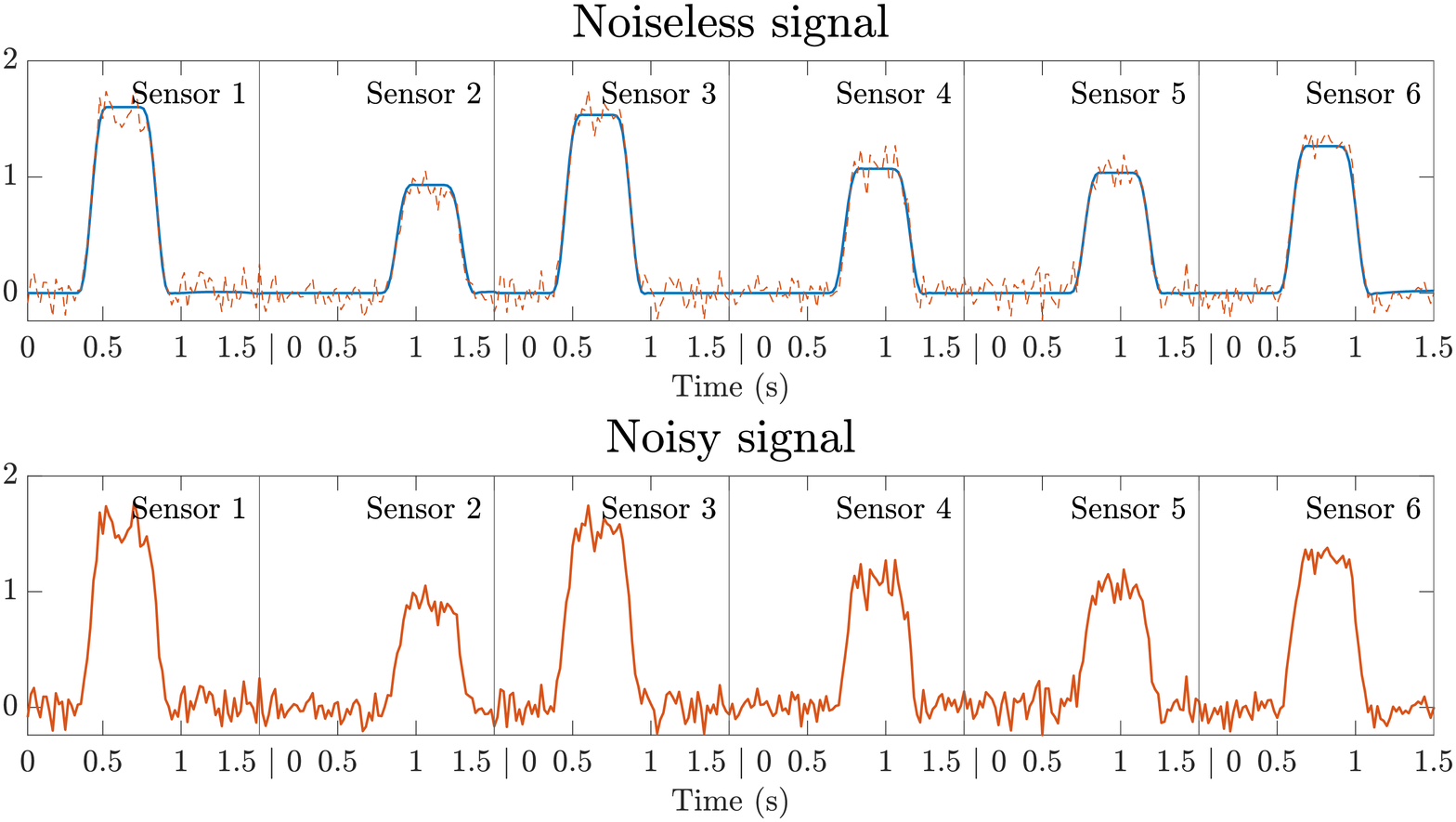}	
\caption{Example of captured signals for the test case \#2}
\label{fig_ring cos spd sig}
\end{figure}
\begin{figure}[tb]
\includegraphics[width=\textwidth]{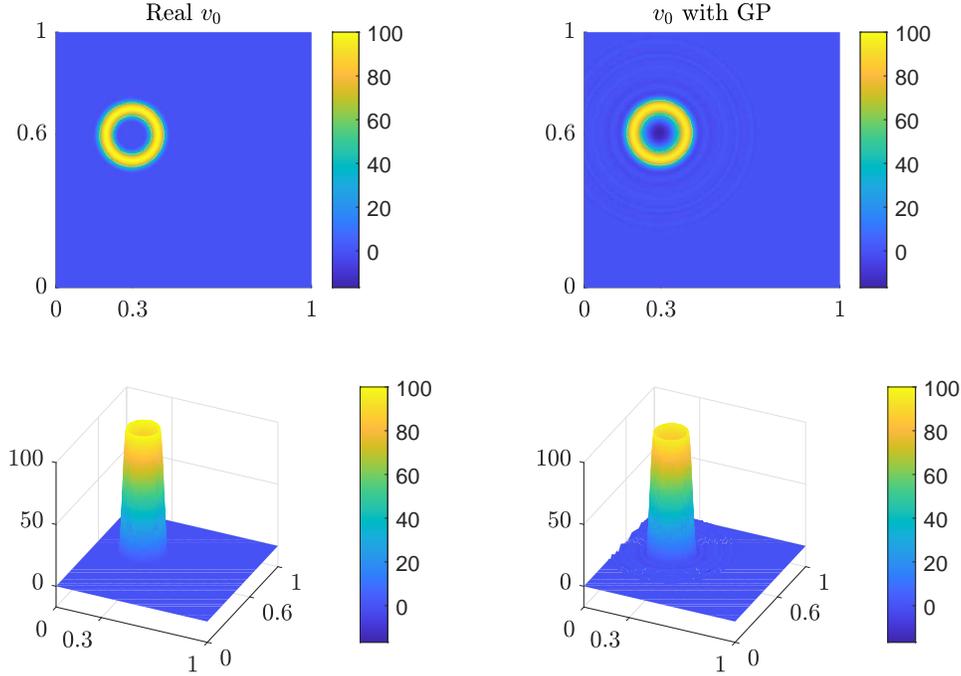}	
\caption{Test case \#2 : top and lateral view of the reconstruction of $v_0$ provided by WIGPR in comparison with $v_0$. Left : $v_0$. Right : reconstruction of $v_0$ using WIGPR. 25 sensors were used. The images correspond to the 3D solutions evaluated at $z = 0.7$.}
\label{fig_cos ring spd slice}
\end{figure}
\subsubsection{Discussion of the numerical results} For problem \ref{num:multistart}, Figures \ref{fig_cos_ring_speed_x0} and \ref{fig_cos_ring_speed_c} show that the physical parameters $x_0$ and $c$ are well estimated. As in the first test case, Figure \ref{fig_cos_ring_speed_R} shows that the source size parameter $R$ is overestimated, see Remark \ref{rk:R_difficult}. However, the relative error plots are not as good as in the previous example. The poor $L^1$ relative error between $60\%$ and $100\%$ in Figure \ref{fig_cos_ring_speed_L1} is an indicator that the reconstructed function (WIGPR Kriging mean) is supported on a set significantly larger than the original function, as the $L^1$ norm is minimized for sparse functions. This is indeed visible on Figure \ref{fig_cos ring spd slice}, and is partly due to the fact that $R$ was overestimated. The $L^2$ and $L^{\infty}$ plots in Figures \ref{fig_cos_ring_speed_L2} and \ref{fig_cos_ring_speed_Linf} are not too bad, with $20\%$ to $30\%$ relative error. Although these errors are large when compared to the test case \#1, WIGPR still managed to capture the structure of the initial condition, as is visible on Figure \ref{fig_cos ring spd slice}. 
For problem \ref{num:fig_sensibility}, the relative error plots are better than for \ref{num:multistart}. The $L^2$ and $L^{1}$ relative errors stagnate at around $6\%$  with an IQR of around $2\%$ (figures \ref{fig_sensib_v_v_L2} and \ref{fig_sensib_v_v_L1}). The $L^{\infty}$ relative error stagnates at around $10\%$ with an IQR of around $5\%$ (figure \ref{fig_sensib_v_v_Linf}). Note finally that this test case is more difficult than the previous one : the initial condition to reconstruct has a smaller support and exhibits much larger variations.

\subsection{Test case for $k_{\mathrm{u}}^{\mathrm{wave}} + k_{\mathrm{v}}^{\mathrm{wave}}$}\label{sub:GP_mix}
Here, we combine the initial conditions from the two previous test cases. Set $x_0^u = (0.65,0.3,0.5)^T$, $R_u = 0.25$, $A_u = 2.5$, $x_0^v = (0.3,0.6,0.7)^T$, $R_1^v = 0.05$, $R_2^v = 0.15$ and $A_v = 30$. The corresponding IC are given by :
    \begin{align*}
    \begin{cases}
    u_0(x) &= A_u\mathbbm{1}_{[0,R_u]}(|x-x_0^u|)\Bigg(1 + \cos\bigg(\frac{\pi|x-x_0^u|}{R_u}\bigg) \Bigg) \\
    v_0(x) &= A_v\mathbbm{1}_{[R_1^v,R_2^v]}(|x-x_0^v|)\Bigg(1 + \cos\bigg(\frac{2\pi(|x-x_0^v|-\frac{R_1^v+R_2^v}{2})}{R_2^v-R_1^v}\bigg) \Bigg)
    \end{cases}
\end{align*}
See Figures \ref{fig_cos pos slice} and \ref{fig_cos pos slice}, left columns, for graphical representations. For problem \ref{num:multistart}, the optimization domain is chosen to be the following hypercube
\begin{align}\label{eq:dom_optim_3}
\theta = &(x_0^u,R_u,(\rho_u,\sigma_u^2),x_0^v,R_v,(\rho_v,\sigma_v^2),c,\lambda)\nonumber \\
 \in &[0,1]^3 \times [0.05,0.4]\times [0.02,2] \times [0.1,5] \nonumber \\
 \times &[0,1]^3\times [0.05,0.4]\times [0.02,2] \times [0.1,5]\times [0.2,0.8] \times [10^{-8},10^{-2}]
\end{align}
For problem \ref{num:fig_sensibility}, the hyperparameter value $\theta_0$ provided to the model is
\begin{align}
\theta_0 = ((0.65,0.3,0.5),0.3,(0.2,0.3),(0.3,0.6,0.7),0.15,(0.03,3),0.5,\sigma_{noise}^2)
\end{align}
with $\sigma_{noise}^2 = 0.0081$. The provided values for $(\rho_u,\sigma_u^2)$ and $(\rho_v,\sigma_v^2)$ are those of the two previous test cases; see Subsections \ref{sub:GP_pos} and \ref{sub:GP_spd}. 
\begin{figure}[tb]
\includegraphics[width=0.8\textwidth]{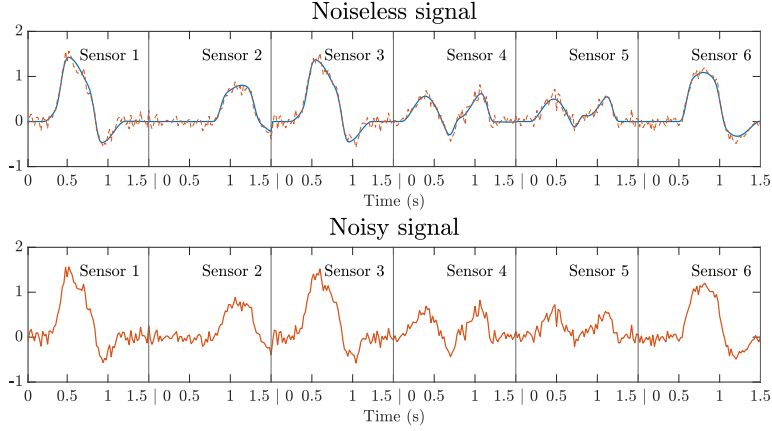}	
\caption{Example of captured signals for the test case \#3}
\label{fig_mix sig}
\end{figure}
\begin{figure}[tb]
\includegraphics[width=\textwidth]{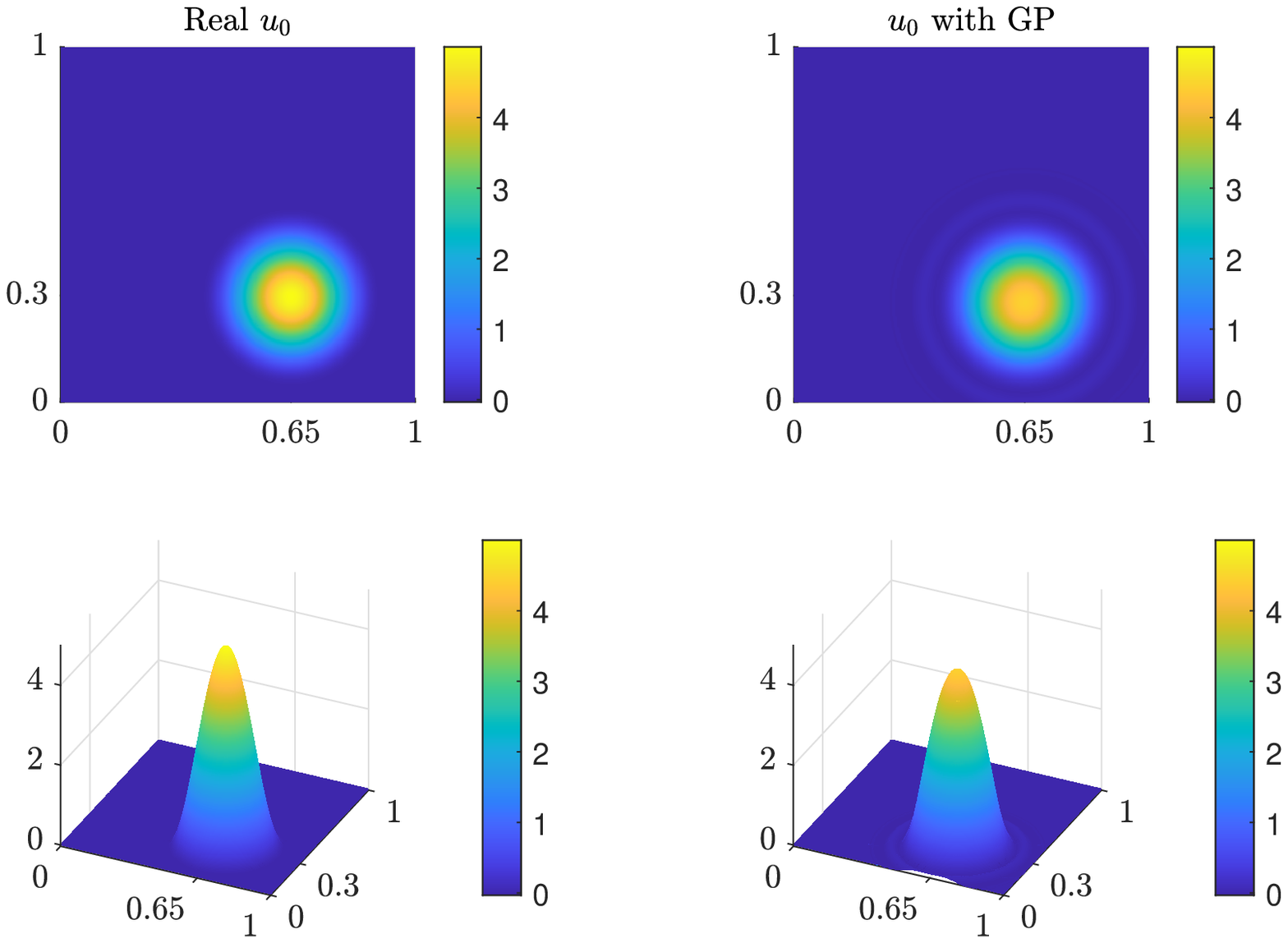}	
\caption{Test case \#3 : top and lateral view of the reconstruction of $u_0$ provided by WIGPR in comparison with $u_0$. Left : $u_0$. Right : reconstruction of $u_0$ using WIGPR. 30 sensors were used. The images correspond to the 3D solutions evaluated at $z = 0.5$.}
\label{fig_mix pos slice}
\end{figure}
\begin{figure}[tb]
\includegraphics[width=\textwidth]{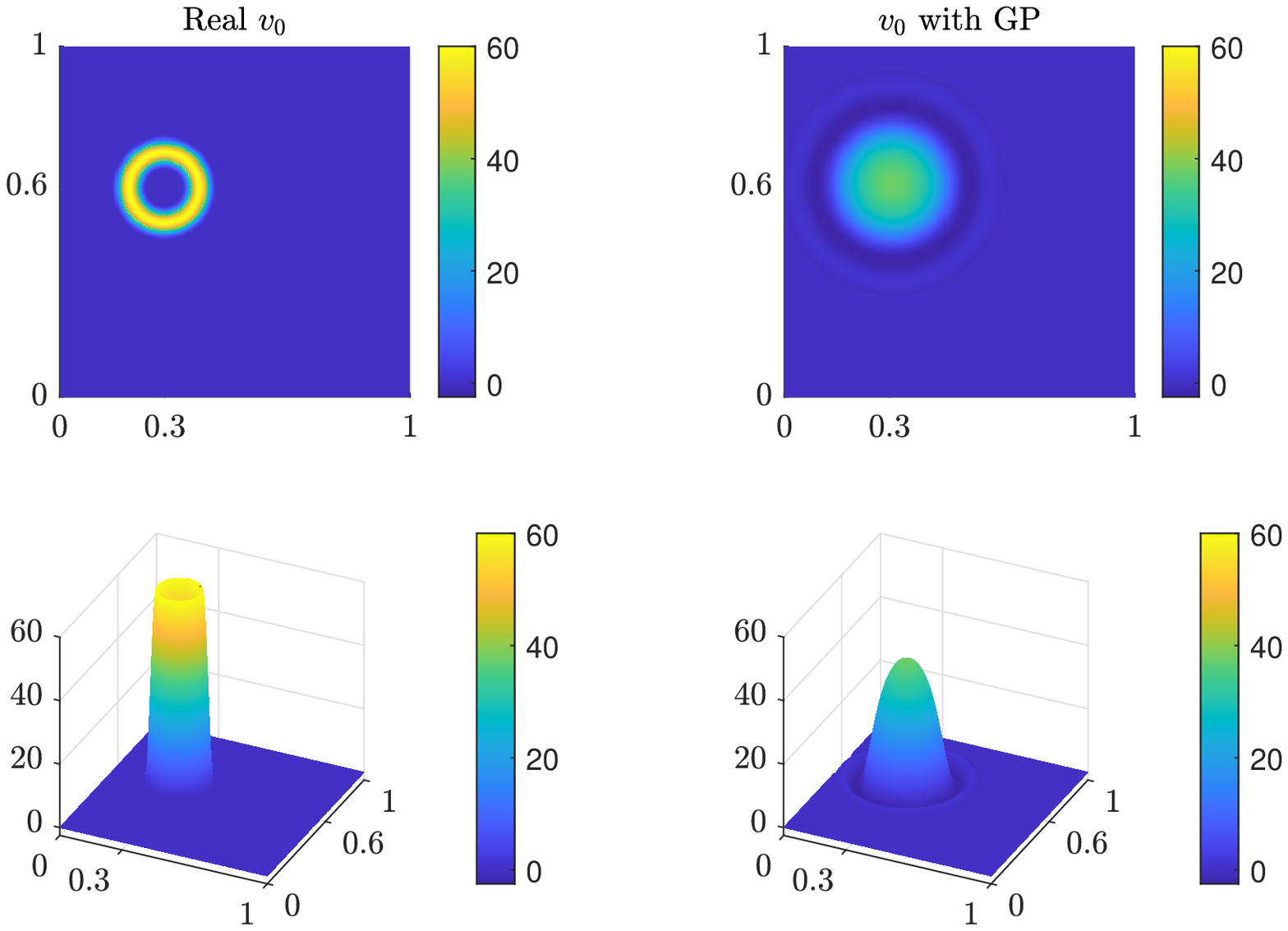}	
\caption{Test case \#3 : top and lateral view of the reconstruction of $v_0$ provided by WIGPR in comparison with $v_0$. Left : $v_0$. Right : reconstruction of $v_0$ using WIGPR. 30 sensors were used. The images correspond to the 3D solutions evaluated at $z = 0.7$.}
\label{fig_mix spd slice}
\end{figure}
\subsubsection{Discussion of the numerical results} Figures \ref{fig_mix_x0_u}, \ref{fig_mix_x0_v} and \ref{fig_mix_c} show that the physical parameters $x_0^u$, $x_0^v$ and $c$ are well estimated. Figures \ref{fig_mix_R_u} and \ref{fig_mix_R_v} show that the source sizes $R_u$ and $R_v$ are overestimated, see Remark \ref{rk:R_difficult}. For the initial position $u_0$, Figures \ref{fig_mix_L2_u} and \ref{fig_mix_Linf_u} show that with $N_s = 30$ sensors, WIGPR yielded good results with around $10\%$ $L^2$ and $L^{\infty}$ of relative errors. However, for the other values of $N_s$, WIGPR did not perform as well with around $50\%$ of $L^2$ and $L^{\infty}$ relative error. As in the previous test case, the $L^1$ relative error plot in Figure \ref{fig_mix_L1_u} shows that the reconstructed function $\tilde{u}_0$ is usually supported on larger sets than the true function $u_0$. With $30$ sensors, WIGPR yielded $30\%$ relative error and between $60\%$ and $90\%$ of relative $L^1$ error for the other values of $N_s$. The reconstruction of $u_0$ in the case $N_s = 30$ is displayed in Figure \ref{fig_mix pos slice}, showing a satisfactory result. For the reconstruction of the initial speed $v_0$, all the numerical attempts failed. The best result is attained for $N_s = 30$ sensors with $65\%$ of $L^2$ and $L^{\infty}$ relative errors (\ref{fig_mix_L2_v} and \ref{fig_mix_Linf_v}). But for this case, Figure \ref{fig_mix spd slice} shows that WIGPR failed to capture the structure of $v_0$. One explanation of this is that for the test case \#3, the value of the length scale hyperparameter $\rho_v$ was overestimated by a factor of at least 10 (see the $4^{th}$ row of Table \ref{table:estim_matern} versus the $3^{rd}$ row). Therefore the WIGPR model was never able to capture the fast variations of $v_0$.
For problem \ref{num:fig_sensibility}, the results are better. For the initial position $u_0$, the results are very similar to those of the test case \#1. Figures \ref{fig_sensib_mix_L2_u}, \ref{fig_sensib_mix_Linf_u} and \ref{fig_sensib_mix_L1_u}  show that relative errors stagnate at around $2\%$ to $4\%$ for the three relative errors. The corresponding IQR are of the same order of magnitude : $1\%$ to $2\%$. For the initial speed $v_0$ (figures \ref{fig_sensib_mix_L2_v}, \ref{fig_sensib_mix_Linf_v} and \ref{fig_sensib_mix_L1_v}), the results are very similar to those of the test case \#2 but multiplied by a factor between 3 and 6. The $L^2$ and $L^1$ relative errors now stagnate below $30\%$ and the $L^{\infty}$ relative stagnates below $45\%$. The corresponding IQR are of around $10\%$ for the $L^2$ and $L^1$ errors and $15\%$ for the $L^{\infty}$ error. 
Note finally that this test case combines the difficulties of the first and second test cases.

\begin{table}[tb]
\caption{Estimated Matérn hyperparams. and noise var. for \ref{num:multistart}}

\begin{tabular}{|c|c|c|c|c|c|c|}
 \hline & Test case & $N_s = 10$ & $N_s = 15$ & $N_s = 20$ & $N_s = 25$ & $N_s = 30$ \\ 
\hline \multirow{2}{*}{$\rho_u$} & 1 & 0.117 & 0.11 & 0.043 & 0.184 & 0.144 \\ 
& 3 & 1.145 & 0.357 & 0.891 & 0.529 & 0.088 \\ 
\hline \multirow{2}{*}{$\rho_v$} & 2 & 0.105 & 0.149 & 0.023 & 0.068 & 0.046 \\ 
 & 3 & 1.453 & 1.703 & 0.595 & 0.482 & 0.353 \\ 
\hline \multirow{2}{*}{$\sigma_u^2$} & 1 & 4.97 & 3.88 & 4.23 & 3.09 & 4.89 \\ 
& 3 & 3.9 & 4.97 & 3.84 & 4.64 & 4.89 \\ 
\hline \multirow{2}{*}{$\sigma_v^2$} & 2 & 4.25 & 5 & 1.61 & 4.17 & 4.89 \\ 
& 3 & 3.54 & 3.05 & 3.5 & 3.27 & 3.26 \\ 
\hline 
\multirow{3}{*}{$\lambda$} &
1 & 0.01 & 0.0099 & 0.0088 & 0.0086 & 0.0096 \\ 
& 2 & 0.0097 & 0.0096 & 0.0036 & 0.0094 & 0.0095 \\ 
& 3 & 0.0098 & 0.01 & 0.0099 & 0.0083 & 0.0094 \\ 
\hline 
\end{tabular}
\label{table:estim_matern}
\end{table}
\subsubsection{Noise level estimation} The estimated noise variance is displayed in the last three rows of Table \ref{table:estim_matern}. In all the test cases, the noise variance $\lambda$ was rather well estimated for $N_s = 25$ or $N_s = 30$, relatively to the target value of $\sigma_{noise}^2= 0.0081$.

\subsection{Transparent boundary conditions}
The search for perfect or approximated transparent boundary conditions for numerical schemes is an active research topic. They became a popular research topic in 1970' \cite{Engquist1977AbsorbingBC} and several different strategies have since been set up to deal with them : see e.g. \cite{ref_PML},\cite{Engquist1977AbsorbingBC}, \cite{fu_tbc}, or \cite{noble_kazakova} for an overview of these different methods. The need for them rises from the fact that numerical simulation domains are necessarily finite and thus bounded; or at the very least, discretized thanks to a finite grid. To numerically solve a free space PDE such as \eqref{eq:wave_eq}, one first uses an adapted numerical scheme (finite differences, finite elements, etc) for the desired PDE applied in the interior domain, i.e. at the interior points of the discretization grid. At the boundaries of the numerical domain, one is then forced to encode boundary conditions : common examples are the Dirichlet and Von Neumann boundary conditions, which cause reflections on the boundaries of the domain. For the free space problem, the question is then the following : can we numerically implement boundary conditions that produce no reflections?
\begin{figure}[h!]

\includegraphics[width=\textwidth]{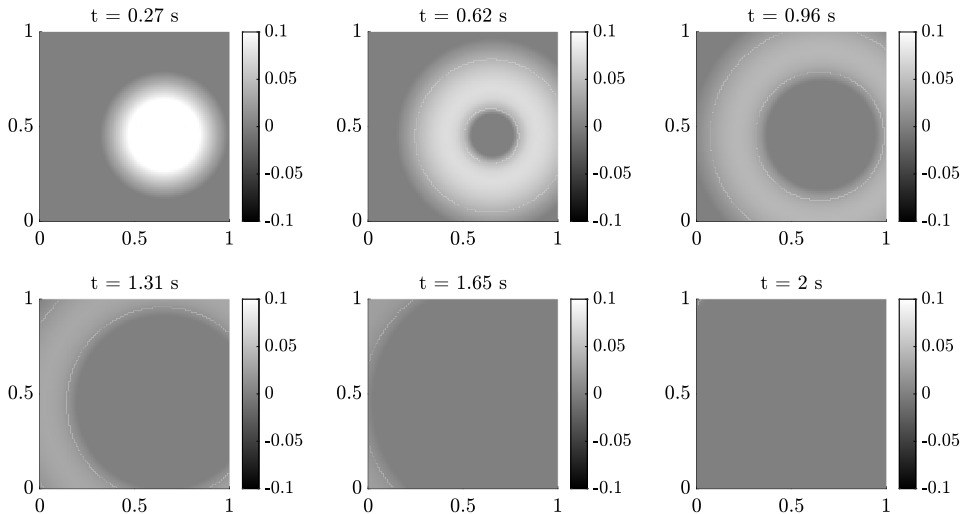}	
\caption{Numerical example showing that the outgoing waves produced by WIGPR do not suffer from numerical reflection issues.}
\label{fig_transparent}
\end{figure}

Figure \ref{fig_transparent} shows that WIGPR provides numerical solutions for problem \eqref{eq:wave_eq} that do not suffer from numerical reflection issues. The reason is simple : these numerical reconstructions are evaluations on a grid of functions that are exact solutions of the free space problem \eqref{eq:wave_eq}. While being a promising start for new types of transparent boundary conditions, the discussion above is not enough to provide viable transparent boundary conditions in the sense of numerical schemes. Given the covariance kernels from \eqref{eq: ku wave} and \eqref{eq: kv wave}, two questions should now be investigated. 1) How can WIGPR be associated to classical numerical schemes such as finite difference schemes for the wave equation? 2) Is the corresponding numerical transparent boundary condition numerically stable?

\section{Conclusion and perspectives}
In Section 3, we have presented a new result that provides a simple characterization of the second order stochastic processes whose trajectories verify linear differential constraints within the framework of generalized functions. 
This result is one of the starting points of Section 4, in which a Gaussian process model of the wave equation was described and thoroughly studied. This model was derived by putting a GP prior on the wave equation's initial conditions, following a Bayesian approach when considering these initial conditions to be unknown. We then showed that WIGPR naturally provides an approximation of the initial conditions of the wave equation, as expected with a Bayesian approach w.r.t. the initial conditions. The radial symmetry WIGPR formulas from Section 4 were then showcased in Section 5, where two practical questions were tackled. First, WIGPR can correctly estimate certain physical parameters attached to the corresponding wave equation, such as wave speed or source position. When these parameters are well estimated, the initial condition reconstruction step is rather satisfactory in terms of $L^1$, $L^2$ and $L^{\infty}$ relative errors. Second, we observed that the WIGPR reconstruction step is not very sensitive to the layout of sensors when they are spread according to a Latin hypercube, assuming that the correct set of hyperparameters is provided to the model. 

Future possible investigations may concern the practical use of the more general formula \eqref{eq:wave kernel} without any symmetry assumptions. To compute the convolution efficiently, one may then resort to Fast Fourier transforms (FFT). The case of the two dimensional wave equation is also of practical interest, notably in oceanography \cite{lannes2D}, and presents many different properties than its 3D counterpart (\cite{evans1998}, p.80). It would thus deserve a theoretical and practical study in its own right when coupled with GPR. Moreover, proposition \ref{prop : diff constraints} constitutes a first step towards understanding PDE constrained stochastic processes in an integrated sense; different functional analysis frameworks can now be considered, for example by assuming Sobolev regularity of the coefficients of $T$ instead of $\mathcal{C}^k$ regularity or by trying to work in smaller function spaces than $\mathscr{D}'(\mathcal{D})$ by transferring only a part of the derivatives of the PDE to the test function. A well known particular case is when the differential operator can be ``split in two" by this procedure : it is then alternatively viewed as a bilinear form over some Hilbert space. This leads to the variational formulation of the PDE and is the canonical approach to studying elliptic PDEs.

\subsubsection*{Acknowledgements} We would like to thank the Service Hydrographique et Océano\-graphique de la Marine (SHOM) for funding this work, and Rémy Baraille in particular for his personal involvement in the project. We thank the GMM laboratory of INSA Toulouse for providing us access to their servers on which we ran our numerical computations.

\clearpage

\begin{center}
\section*{Appendix A : Numerical Results}
\end{center}
\section{Hyperparameter estimation and related error plots}

\subsection{Test case \#1 : $k_{\mathrm{u}} ^{\mathrm{wave}}$}
Here, $\theta = (x_0^u,R_u,(\rho_u,\sigma_u^2),c,\lambda) \in \mathbb{R}^8$. The target value is $\theta^* = \big((0.65,0.3,0.5),0.25,(\sim 0.15,?),0.5,0.0081\big)$. The values $(\rho_u,\sigma_u^2)$ are the hyperparameters of the underlying Matérn kernel $k_u^0$. The a priori target value of $0.15$ for $\rho_u$ is a visual estimation of the characteristic length scale of $u_0$ from the figure \ref{fig_cos pos slice}. The corresponding variance $\sigma_u^2$ is difficult to estimate a priori and its target value is left at ``?".

\begin{figure}[b]

\begin{subfigure}[b]{0.45\textwidth}
\includegraphics[width=\textwidth]{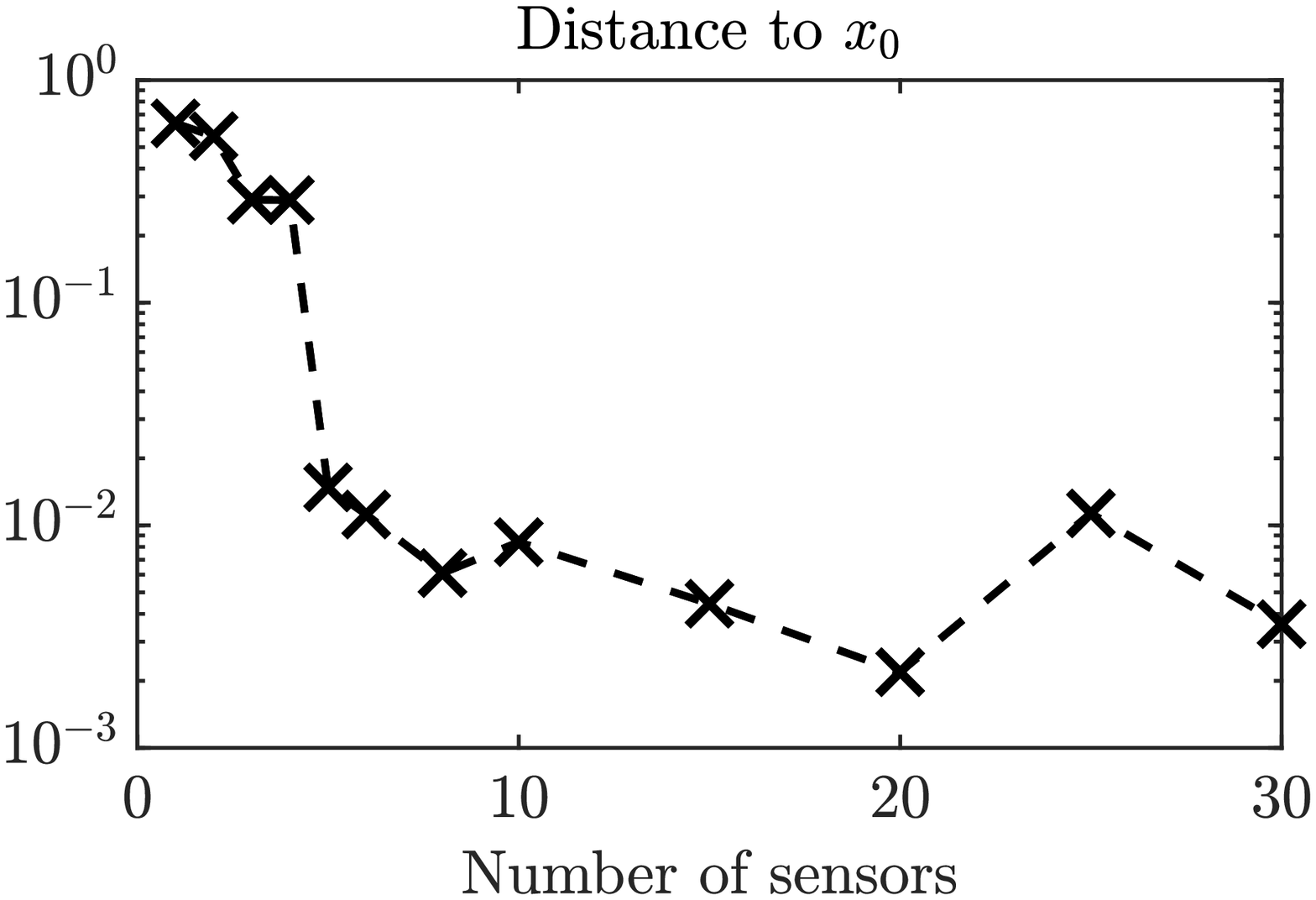}	
\caption{3D distance $|x_0-\hat{x}_0|$}
\label{fig_cos_pos_x0}
\end{subfigure}
\hfill
\begin{subfigure}[b]{0.45\textwidth}
\includegraphics[width=\textwidth,height=3.95cm,keepaspectratio]{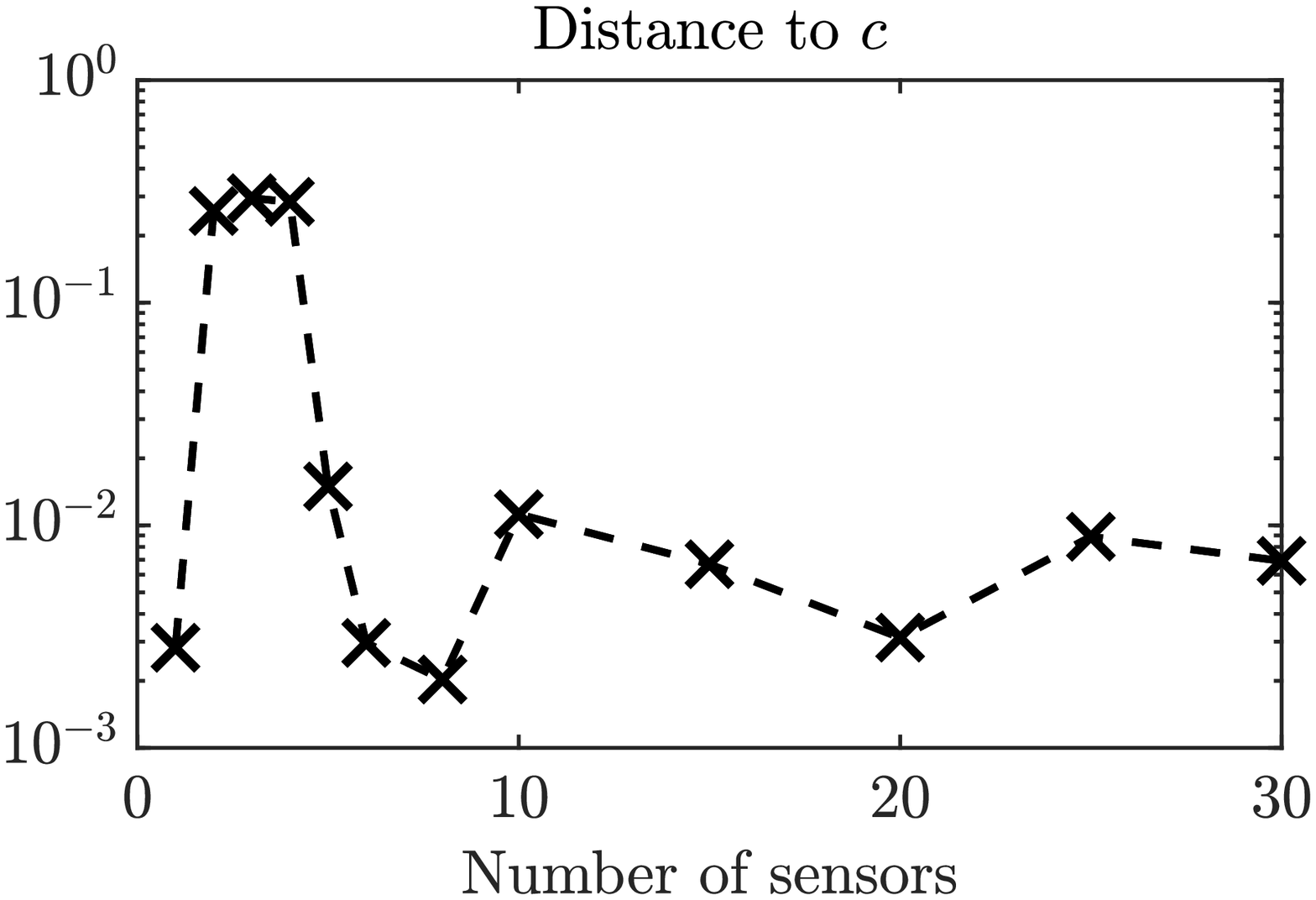}	
\caption{Distance $|c-\hat{c}|$}
\label{fig_cos_pos_c}
\end{subfigure} %

\vspace{5mm}

\begin{subfigure}[b]{0.45\textwidth}
\includegraphics[width=\textwidth,height=3.95cm,keepaspectratio]{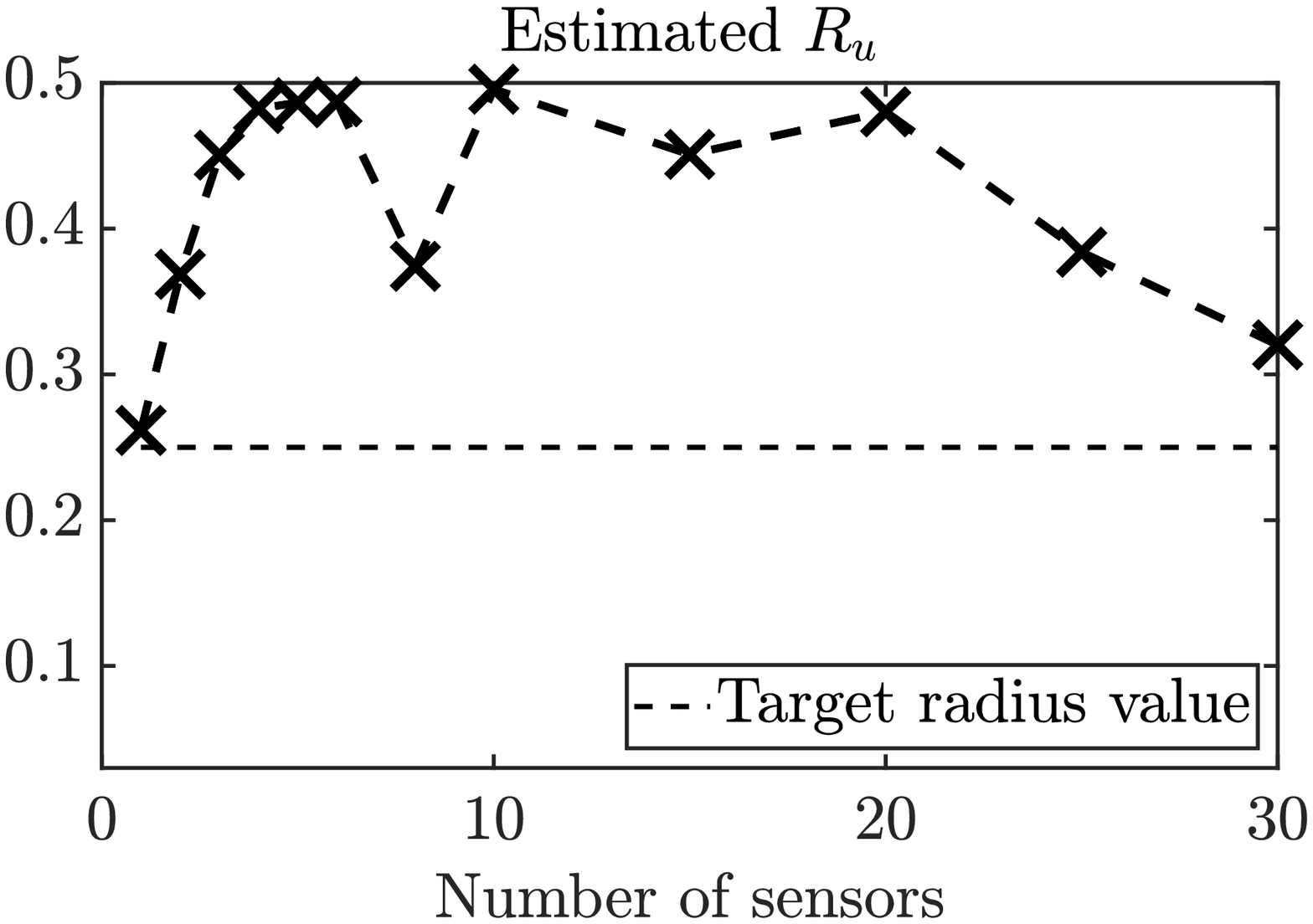}	
\caption{Estimated source radius}
\label{fig_cos_pos_R}
\end{subfigure}
\hfill
\begin{subfigure}[b]{0.45\textwidth}
\includegraphics[width=\textwidth,height=3.95cm,keepaspectratio]{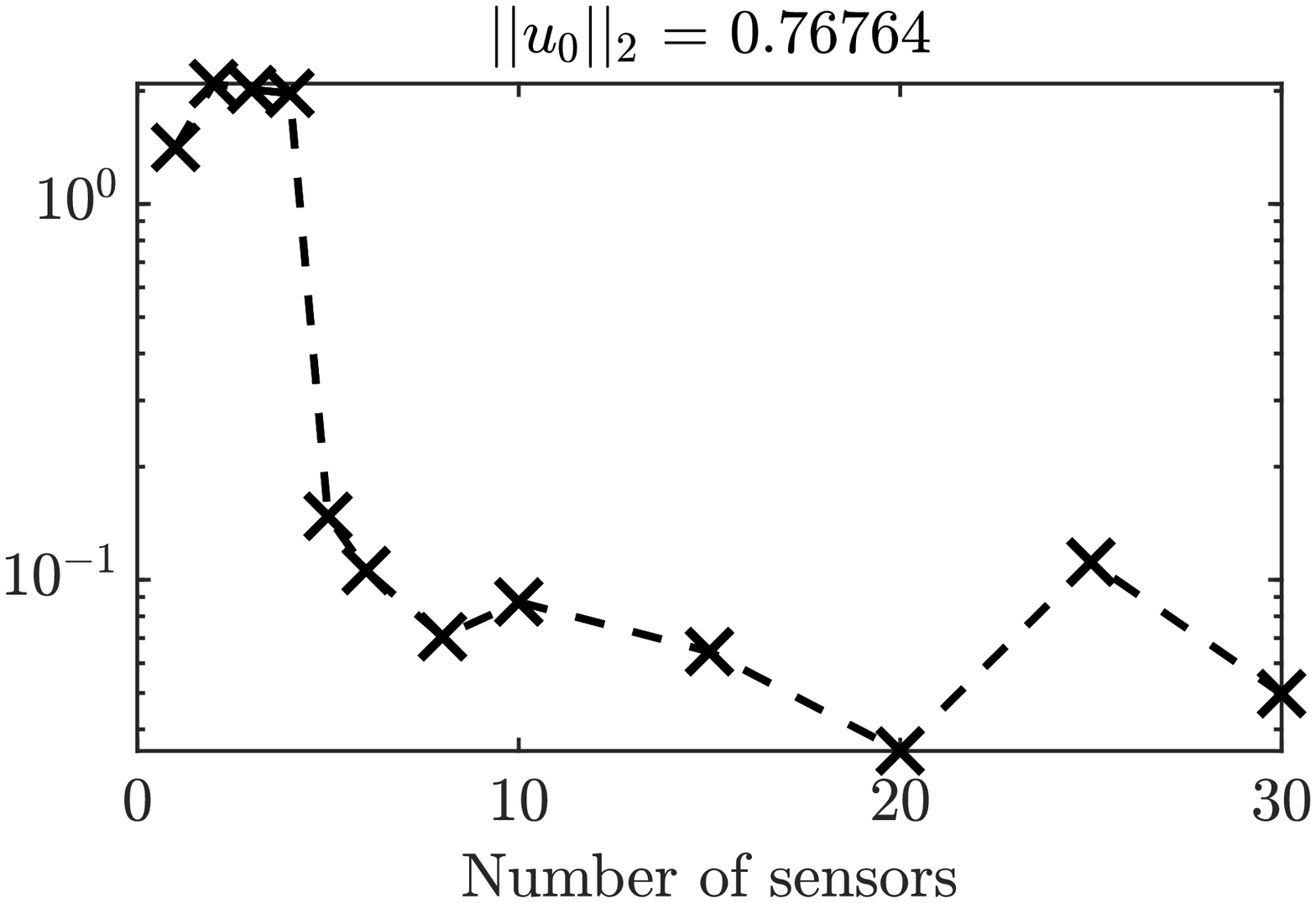}
\caption{$L^2$ rel. error}	
\label{fig_cos_pos_L2}
\end{subfigure} %

\vspace{5mm}

\begin{subfigure}[b]{0.45\textwidth}
\includegraphics[width=\textwidth,height=3.95cm,keepaspectratio]{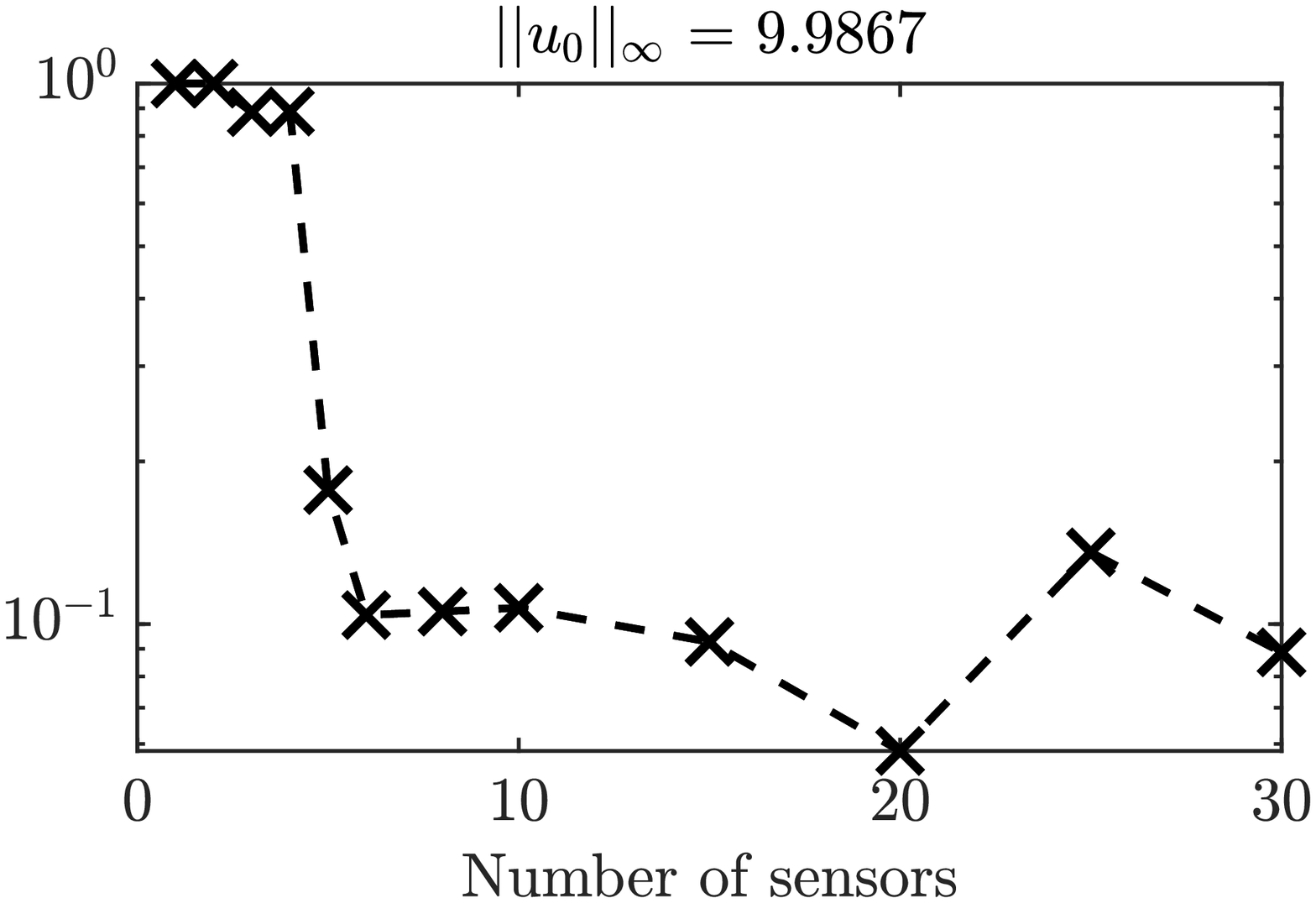}
\caption{$L^{\infty}$ rel. error}	
\label{fig_cos_pos_Linf}
\end{subfigure} 
\hfill
\begin{subfigure}[b]{0.45\textwidth}
\includegraphics[width=\textwidth,height=3.95cm,keepaspectratio]{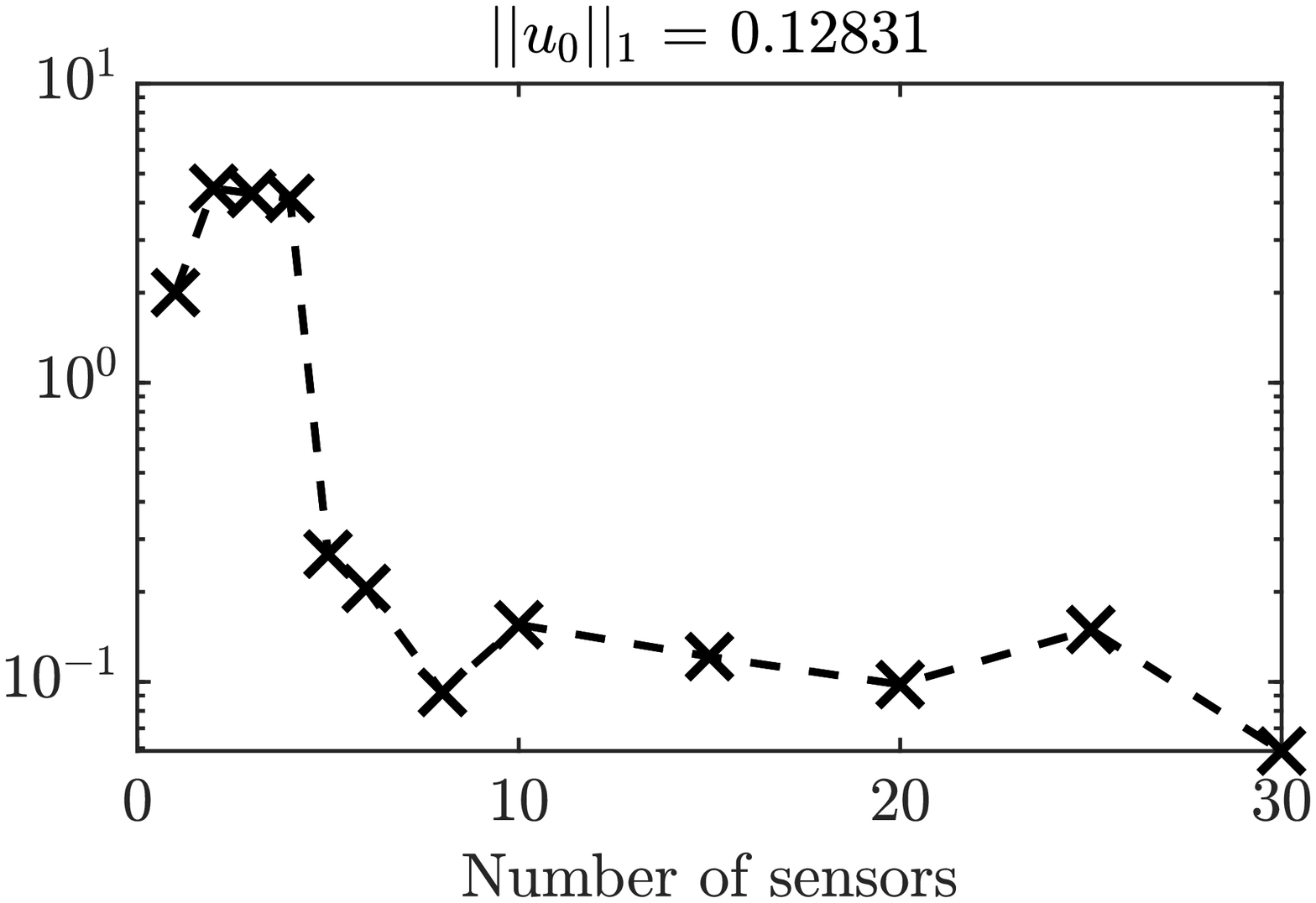}	
\caption{$L^1$ rel. error}
\label{fig_cos_pos_L1}
\end{subfigure} %

\caption{Hyperparameter estimation and error plots for the test case \#1}
\label{fig_cos_pos_multistart}
\end{figure}


%
%

\subsection{Test case \#2 : $k_{\mathrm{v}} ^{\mathrm{wave}}$}
Here, $\theta = (x_0^v,R_v,(\rho_v,\sigma_v^2),c,\lambda) \in \mathbb{R}^8$. The target value is $\theta^* = \big((0.3,0.6,0.7),0.15,(\sim 0.05,?),0.5,0.0081\big)$. The values $(\rho_v,\sigma_v^2)$ are the hyperparameters of the underlying Matérn kernel $k_v^0$. The a priori target value of $0.05$ for $\rho_v$ is a visual estimation of the characteristic length scale of $v_0$ from the figure \ref{fig_cos ring spd slice}. The corresponding variance $\sigma_v^2$ is difficult to estimate a priori and its target value is left at ``?".

\begin{figure}[b]

\begin{subfigure}[b]{0.45\textwidth}
\includegraphics[width=\textwidth,height=3.95cm,keepaspectratio]{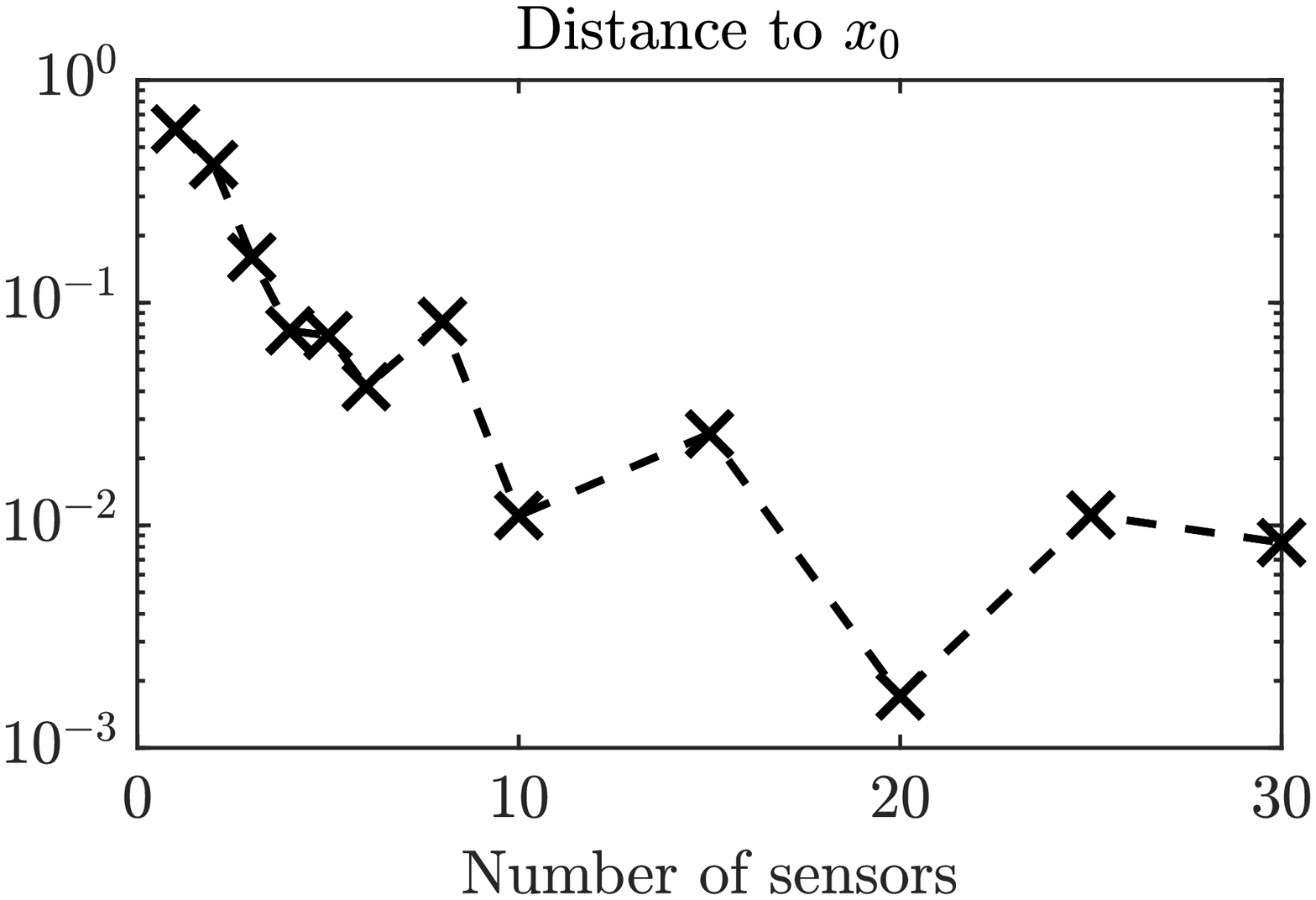}	
\caption{3D distance $|x_0-\hat{x}_0|$}
\label{fig_cos_ring_speed_x0}
\end{subfigure}
\hfill
\begin{subfigure}[b]{0.45\textwidth}
\includegraphics[width=\textwidth,height=3.95cm,keepaspectratio]{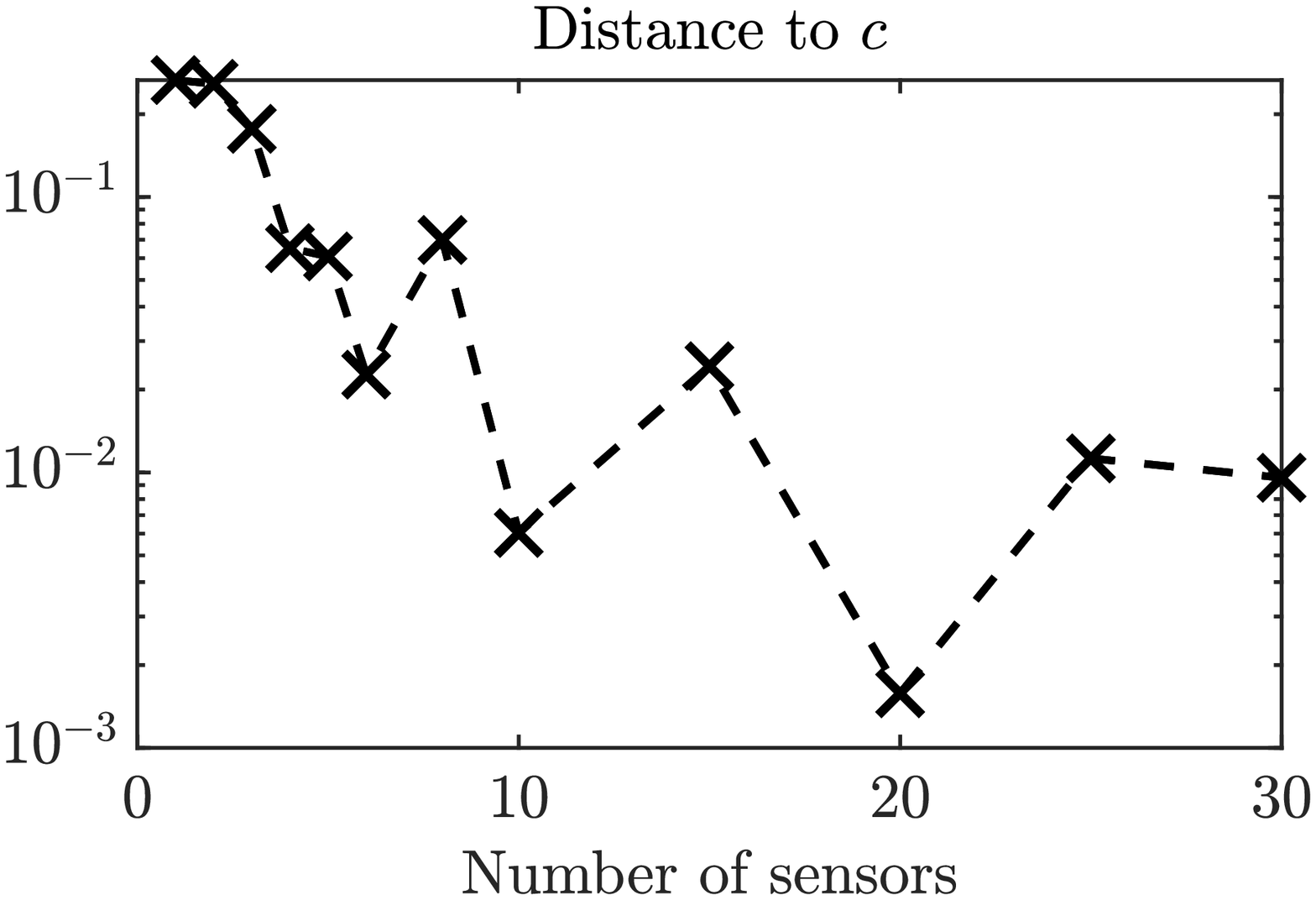}	
\caption{Distance $|c-\hat{c}|$}
\label{fig_cos_ring_speed_c}
\end{subfigure} %
\vspace{5mm}

\begin{subfigure}[b]{0.45\textwidth}
\includegraphics[width=\textwidth,height=3.95cm,keepaspectratio]{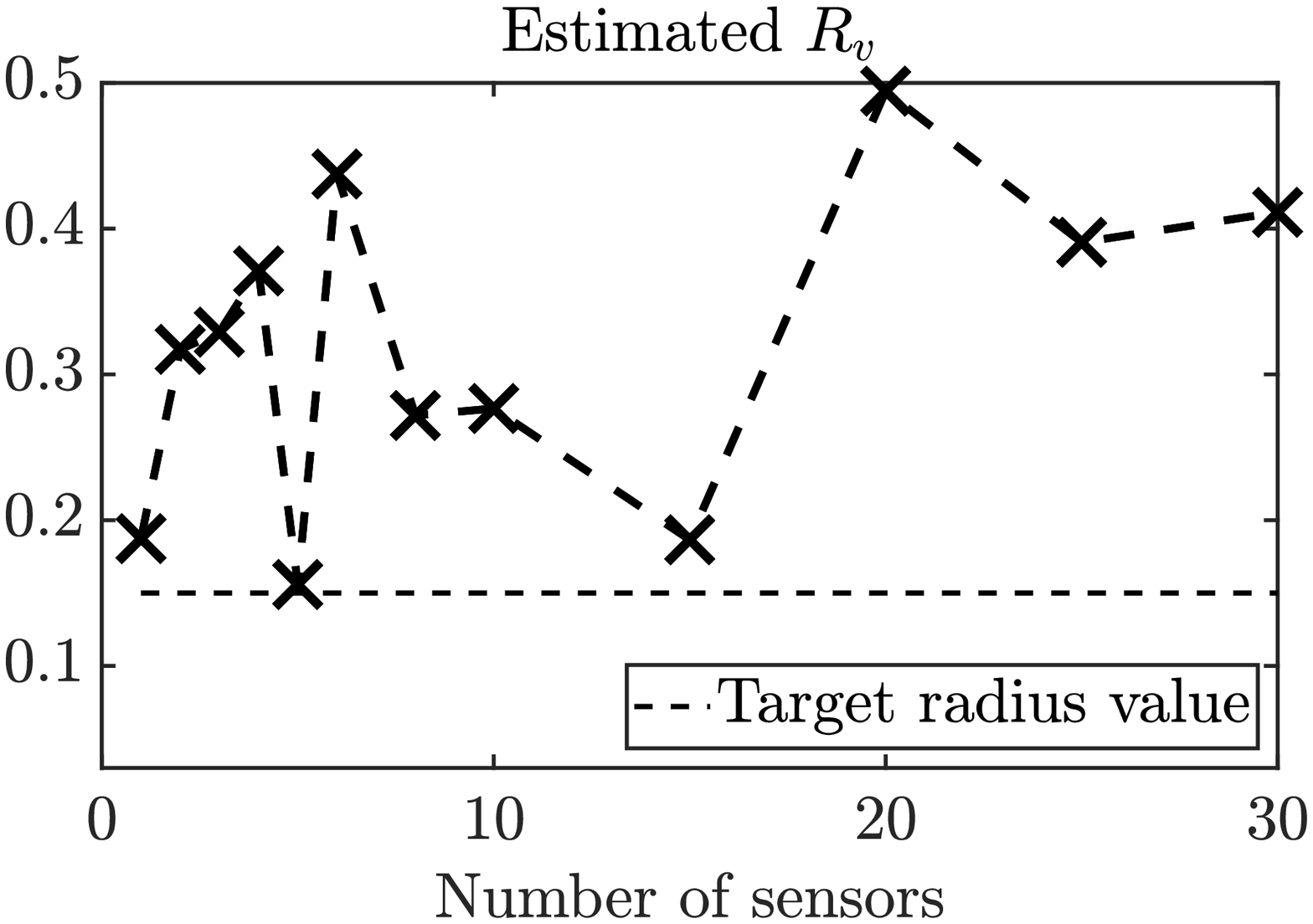}	
\caption{Estimated source radius}
\label{fig_cos_ring_speed_R}
\end{subfigure}
\hfill
\begin{subfigure}[b]{0.45\textwidth}
\includegraphics[width=\textwidth,height=3.95cm,keepaspectratio]{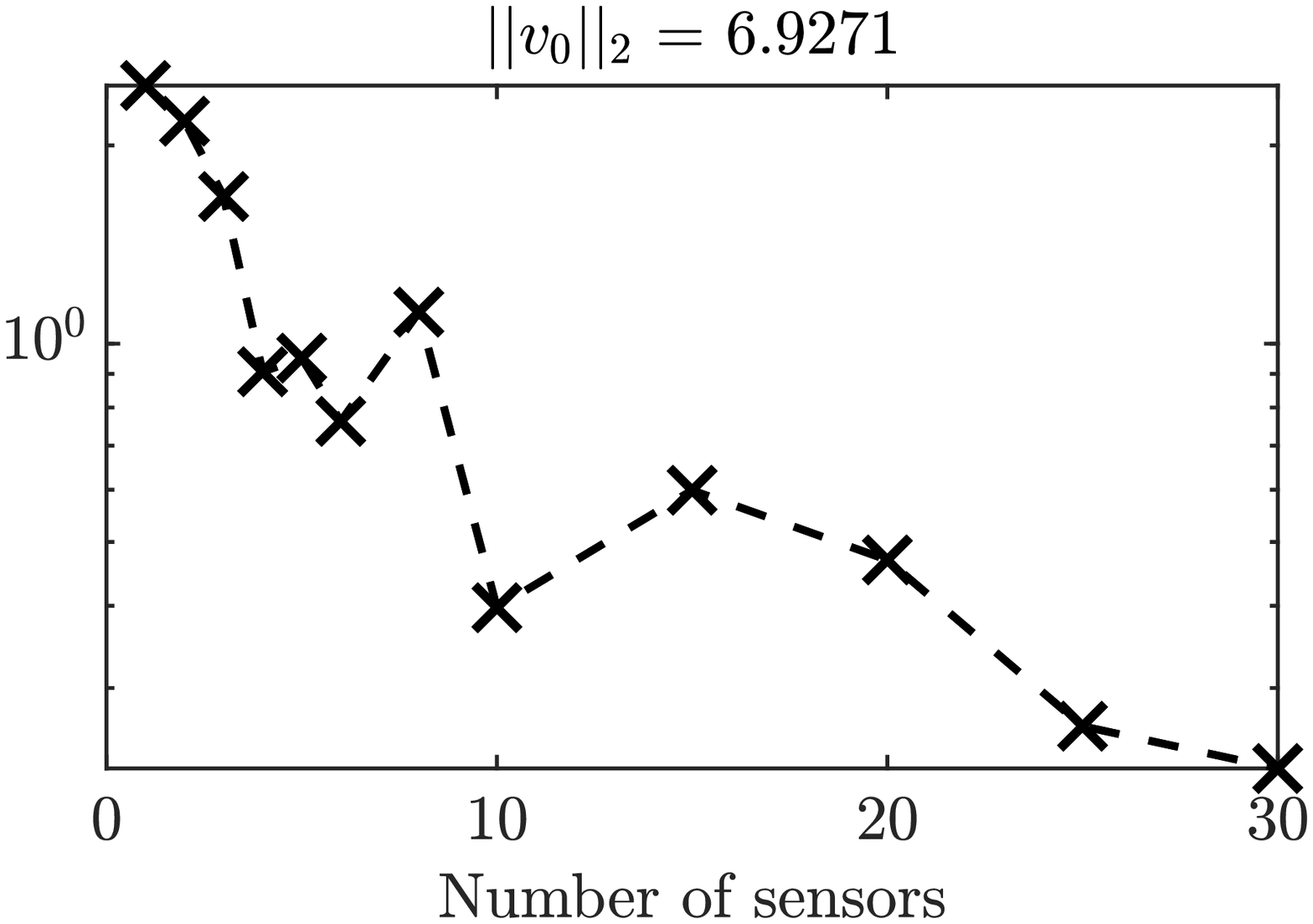}	
\caption{$L^2$ rel. error}
\label{fig_cos_ring_speed_L2}
\end{subfigure} %
\vspace{5mm}

\begin{subfigure}[b]{0.45\textwidth}
\includegraphics[width=\textwidth,height=3.95cm,keepaspectratio]{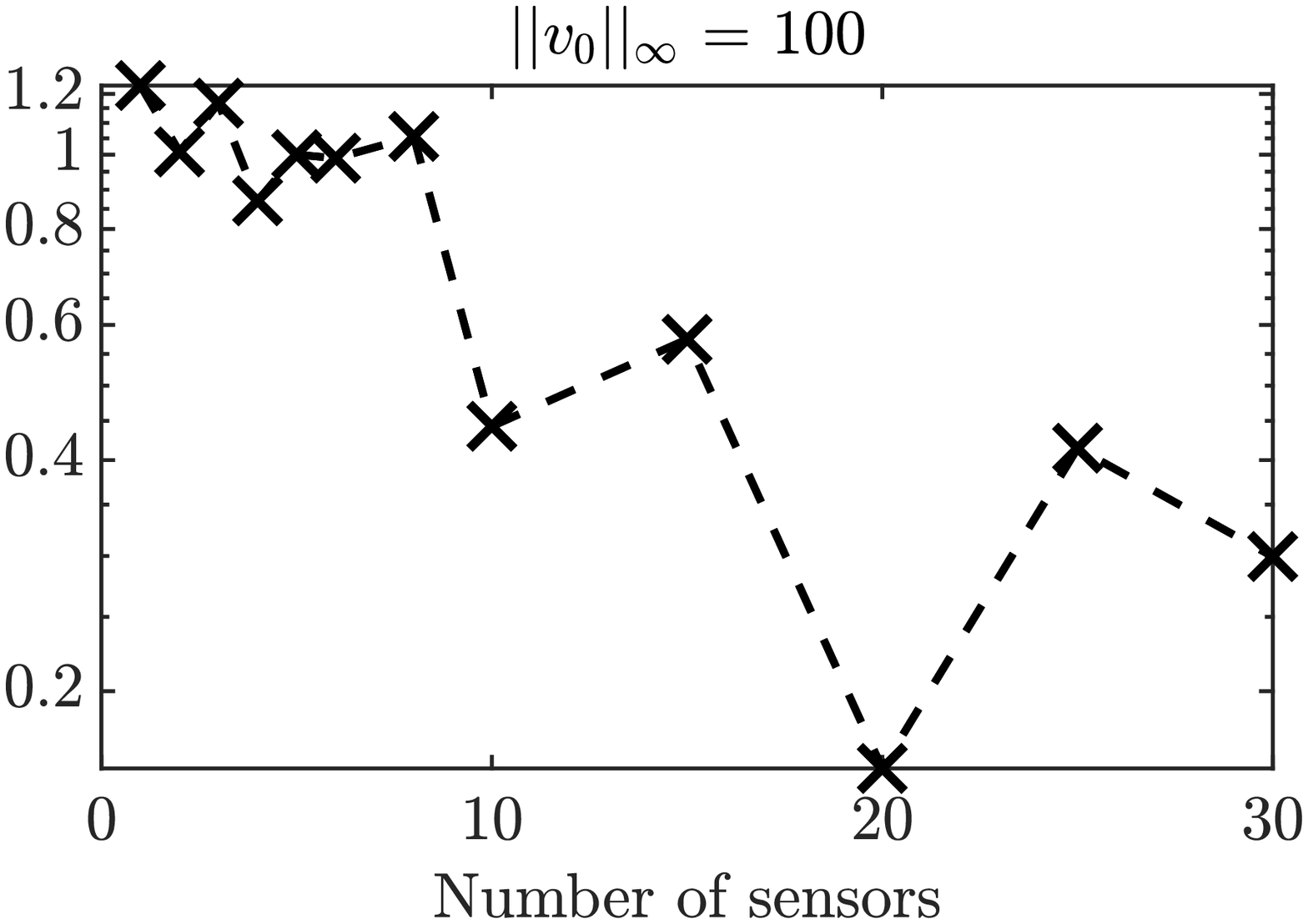}	
\caption{$L^{\infty}$ rel. error}
\label{fig_cos_ring_speed_Linf}
\end{subfigure}
\hfill
\begin{subfigure}[b]{0.45\textwidth}
\includegraphics[width=\textwidth,height=3.95cm,keepaspectratio]{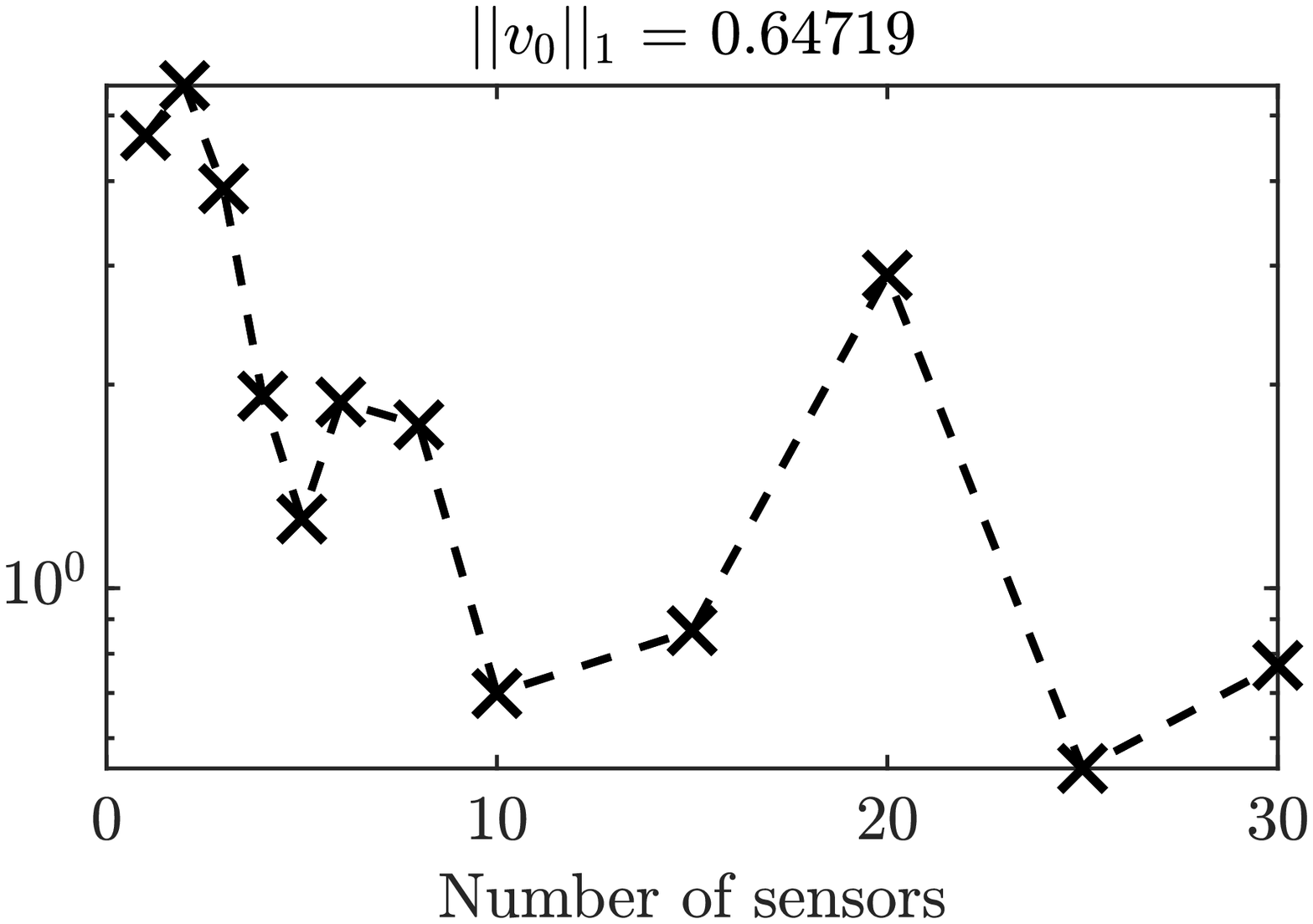}	
\caption{$L^1$ rel. error}
\label{fig_cos_ring_speed_L1}
\end{subfigure}
\caption{Hyperparameter estimation and error plots for the test case \#2}
\end{figure}
\newpage

\subsection{Test case \#3 : $k_{\mathrm{u}} ^{\mathrm{wave}} + k_{\mathrm{v}}^{\mathrm{wave}}$}
Here, $\theta = (x_0^u,R_u,(\rho_u,\sigma_u^2),x_0^v,R_v,(\rho_v,\sigma_v^2),c,\lambda)  \in \mathbb{R}^{14}$. The target value is $\theta^* = \big((0.65,0.3,0.5),0.3,(\sim 0.15,?),(0.3,0.6,0.7),0.15,(\sim 0.05,?),0.5,\lambda\big)$. The values $(\rho_u,\sigma_u^2)$ and $(\rho_v,\sigma_v^2)$ are the hyperparameters of the underlying Matérn kernels $k_u^0$ and $k_v^0$. The a priori target values of $0.15$ for $\rho_u$ and $0.05$ for $\rho_v$ are  visual estimations of the characteristic length scale of $u_0$ and $v_0$ from the figures \ref{fig_cos pos slice} and \ref{fig_cos ring spd slice}. The corresponding variances $\sigma_u^2$ and $\sigma_v^2$ are difficult to estimate a priori and the corresponding target values are left at ``?".
\begin{figure}[b]

\begin{subfigure}[b]{0.45\textwidth}
\includegraphics[width=\textwidth,height=3.95cm,keepaspectratio]{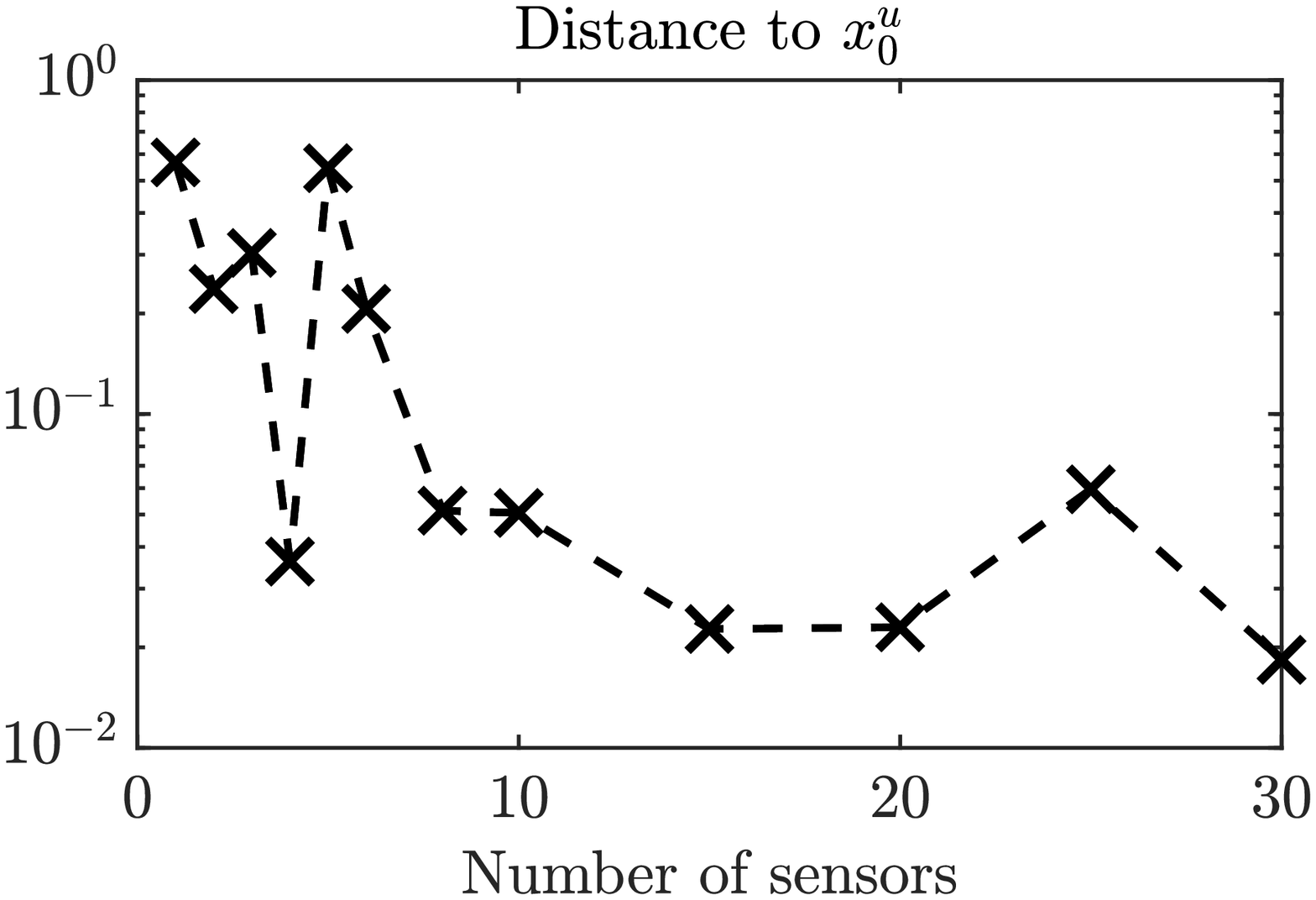}	
\caption{3D distance $|x_0^u-\hat{x}_0^u|$}
\label{fig_mix_x0_u}
\end{subfigure}
\hfill
\begin{subfigure}[b]{0.45\textwidth}
\includegraphics[width=\textwidth,height=3.95cm,keepaspectratio]{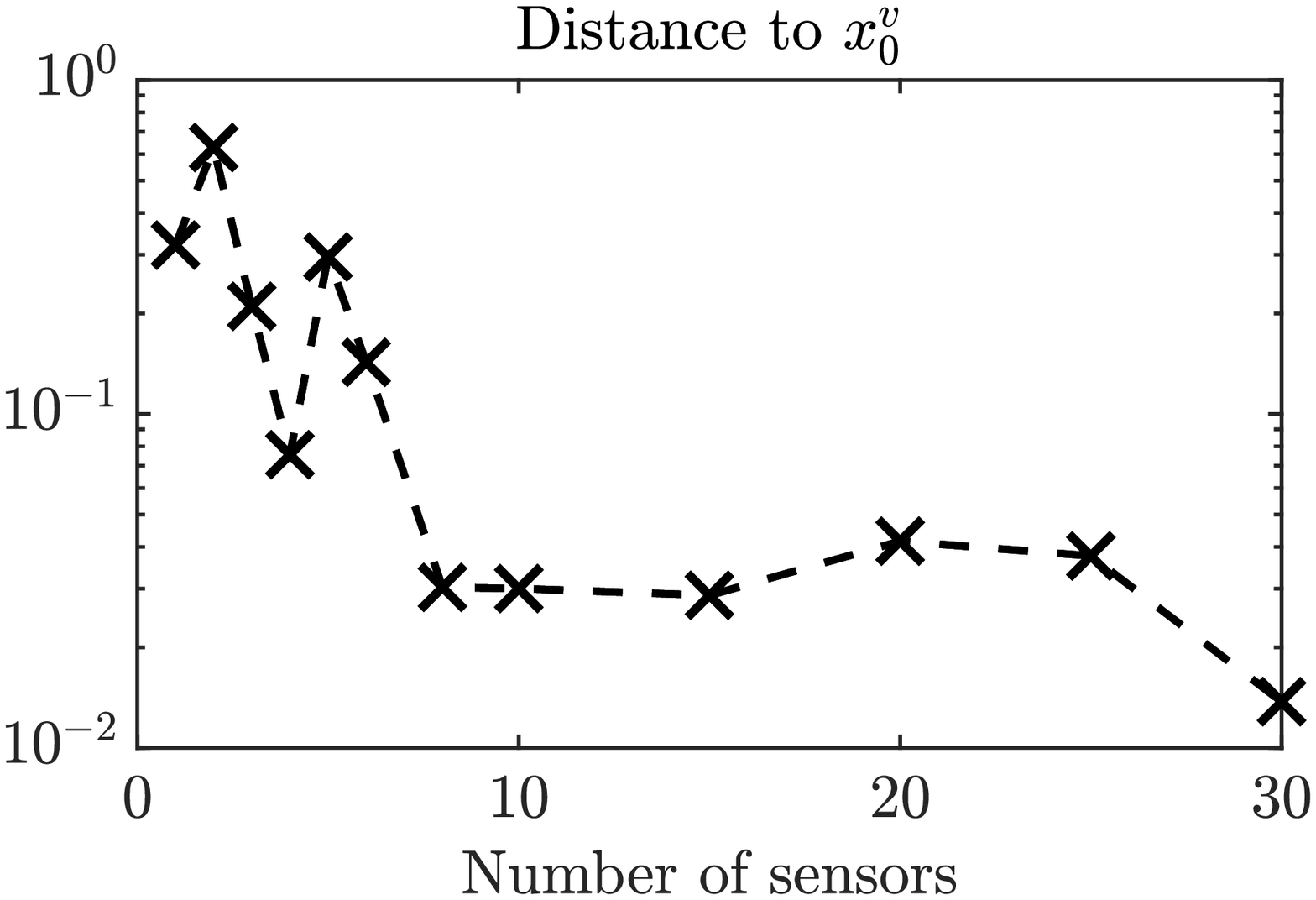}	
\caption{3D distance $|x_0^v-\hat{x}_0^v|$}
\label{fig_mix_x0_v}
\end{subfigure}

\vspace{5mm}

\begin{subfigure}[b]{0.45\textwidth}
\includegraphics[width=\textwidth,height=3.95cm,keepaspectratio]{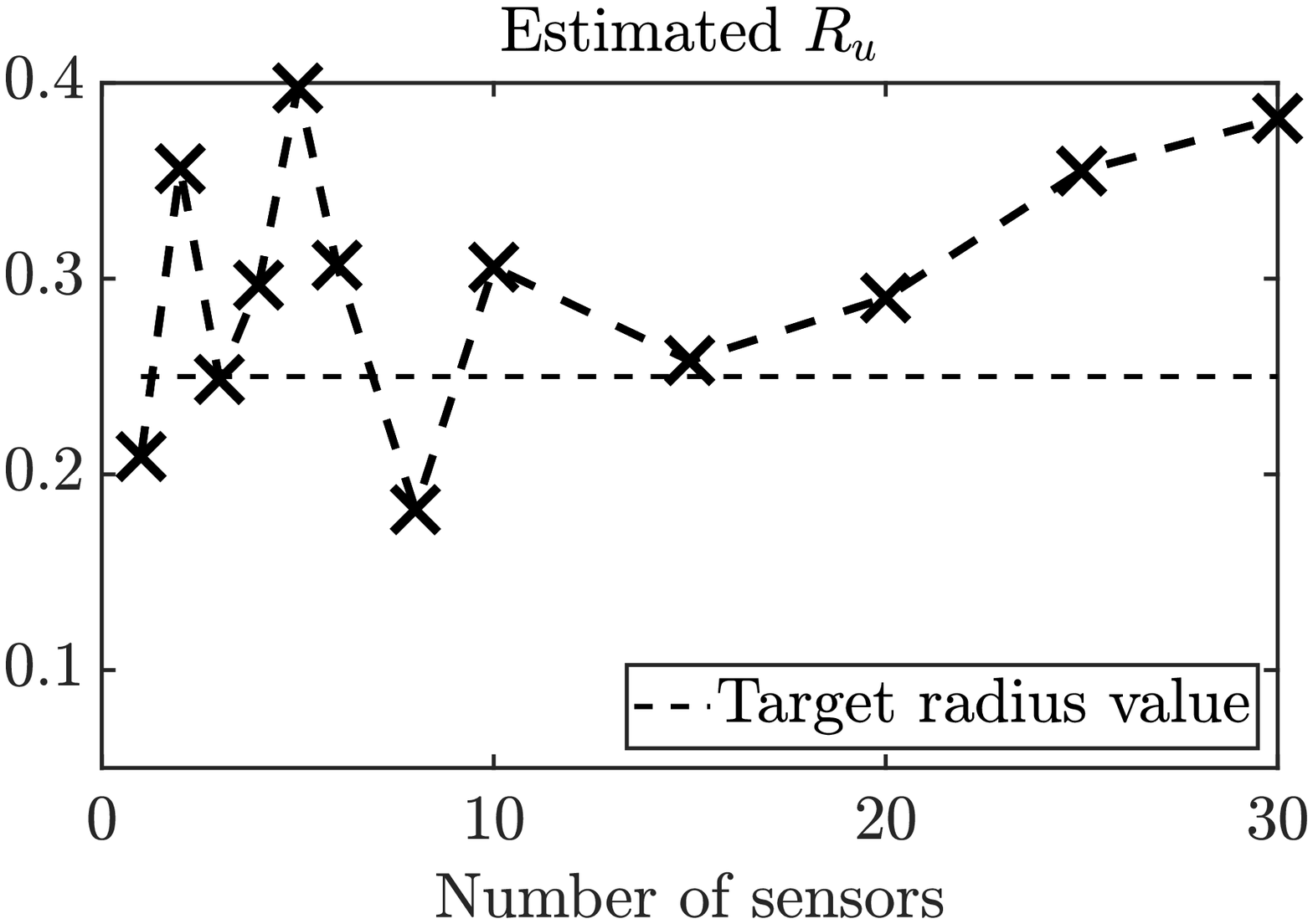}	
\caption{Estimated source radius for $u_0$}
\label{fig_mix_R_u}
\end{subfigure}
\hfill
\begin{subfigure}[b]{0.45\textwidth}
\includegraphics[width=\textwidth,height=3.95cm,keepaspectratio]{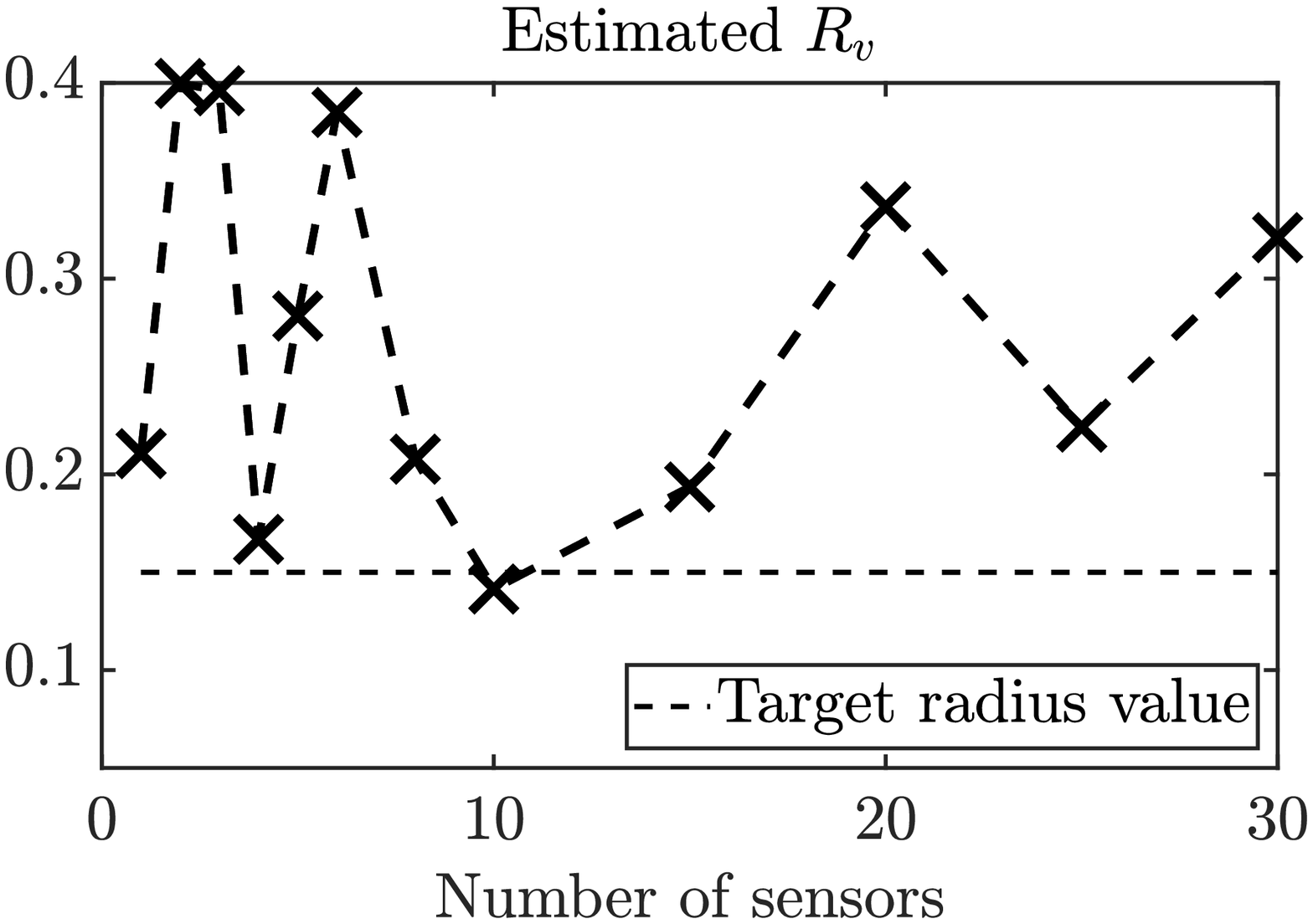}
\caption{Estimated source radius for $v_0$}	
\label{fig_mix_R_v}
\end{subfigure}

\vspace{5mm}

\begin{subfigure}[b]{0.45\textwidth}
\includegraphics[width=\textwidth,height=3.95cm,keepaspectratio]{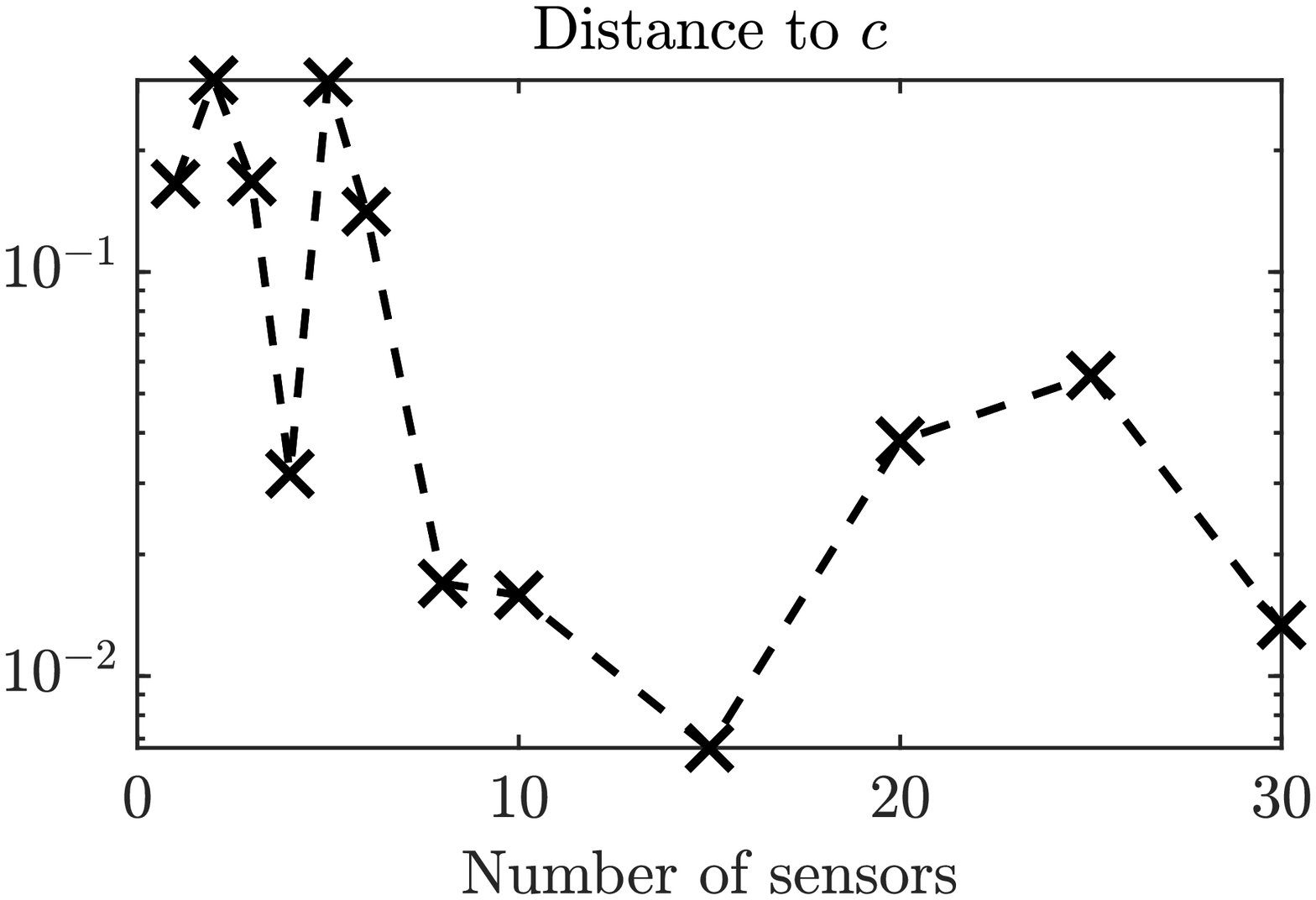}	
\caption{Distance $|c-\hat{c}|$}
\label{fig_mix_c}
\end{subfigure}%

\caption{Hyperparameter estimation for the test case \#3}
\end{figure}

\begin{figure}
\begin{subfigure}[b]{0.45\textwidth}
\includegraphics[width=\textwidth,height=3.95cm,keepaspectratio]{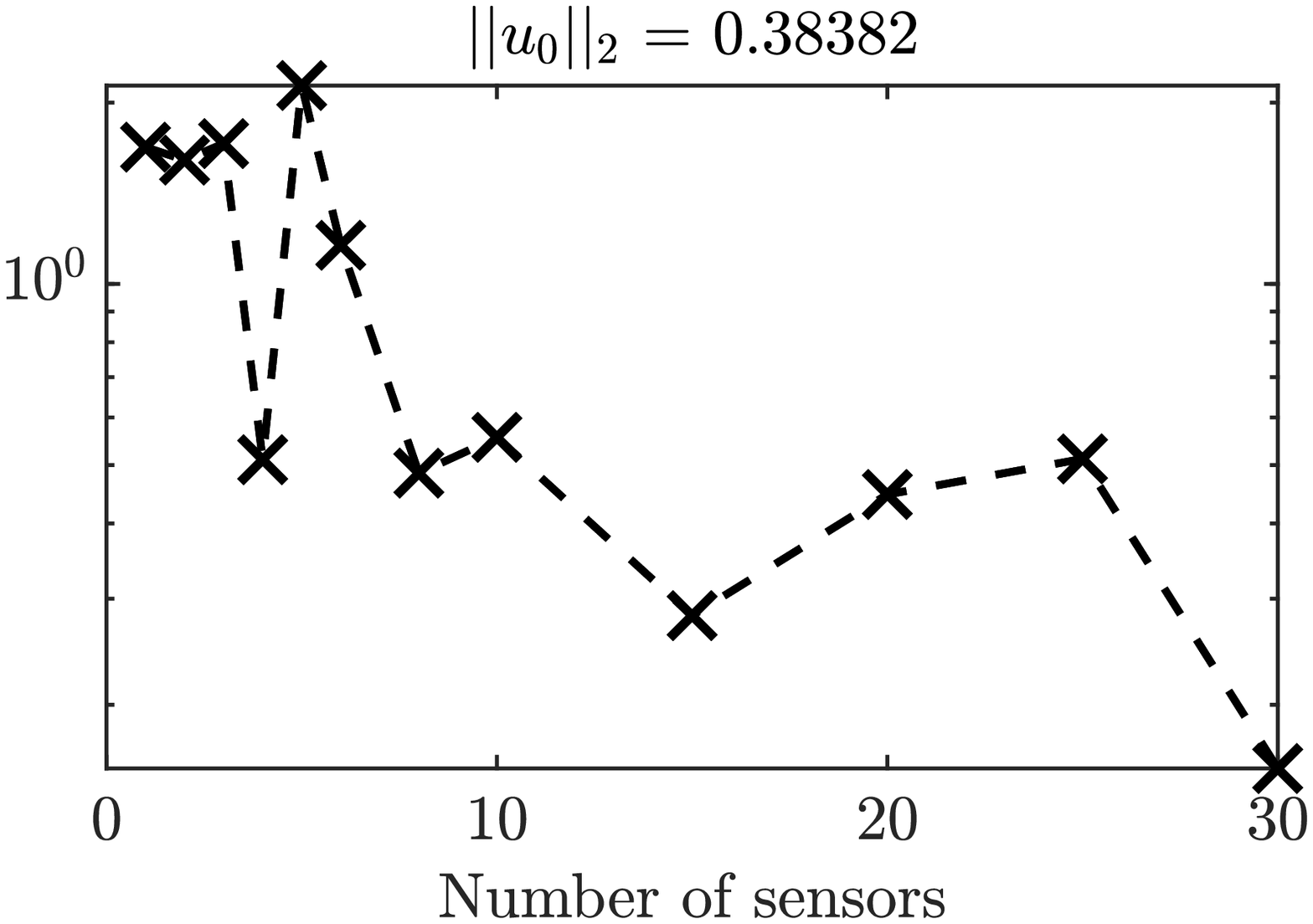}	
\caption{$L^2$ rel. error for $u_0$}
\label{fig_mix_L2_u}
\end{subfigure}
\hfill
\begin{subfigure}[b]{0.45\textwidth}

\includegraphics[width=\textwidth,height=3.95cm,keepaspectratio]{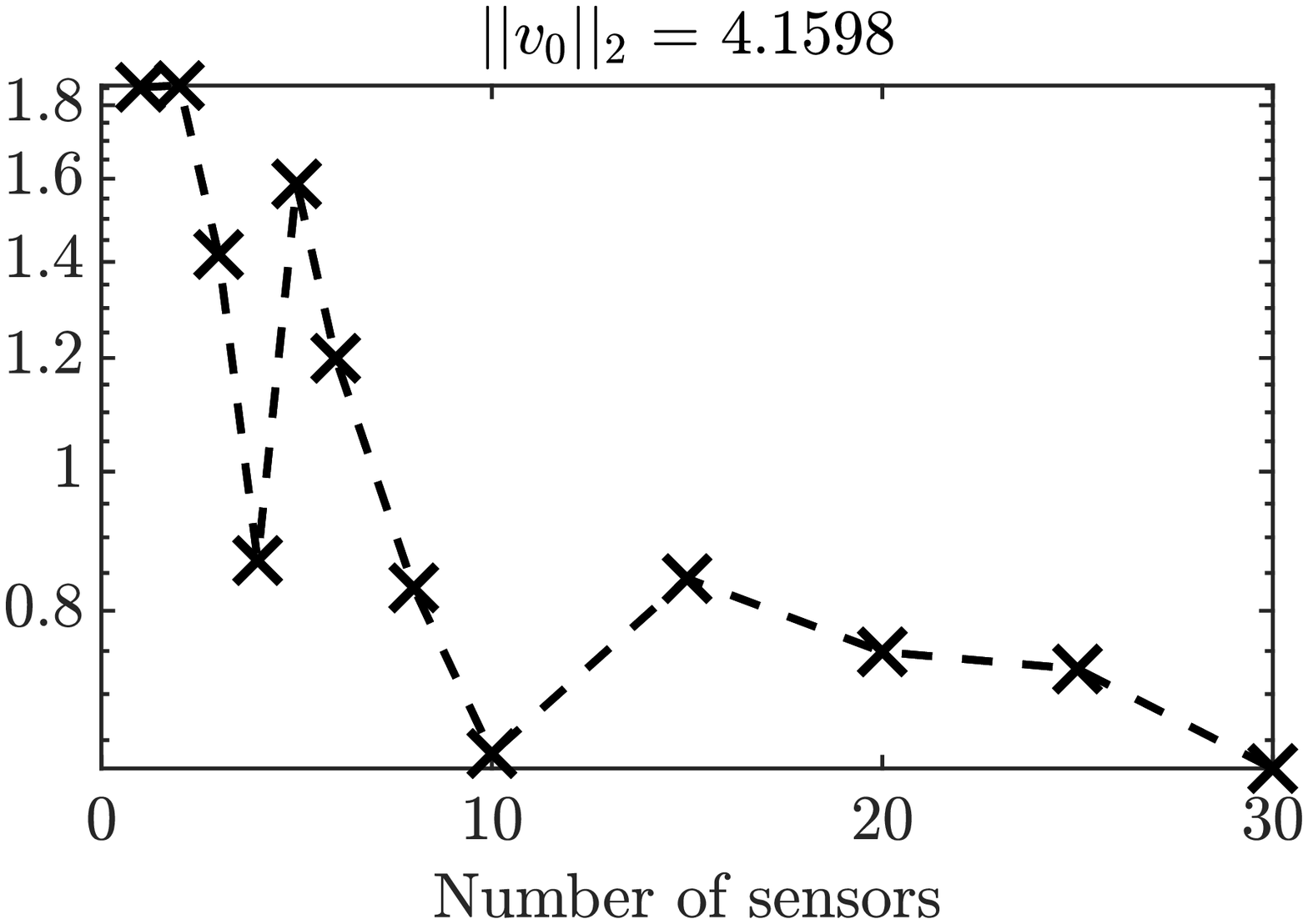}
\caption{$L^2$ rel. error for $v_0$}	
\label{fig_mix_L2_v}
\end{subfigure} %

\vspace{5mm}

\begin{subfigure}[b]{0.45\textwidth}

\includegraphics[width=\textwidth,height=3.95cm,keepaspectratio]{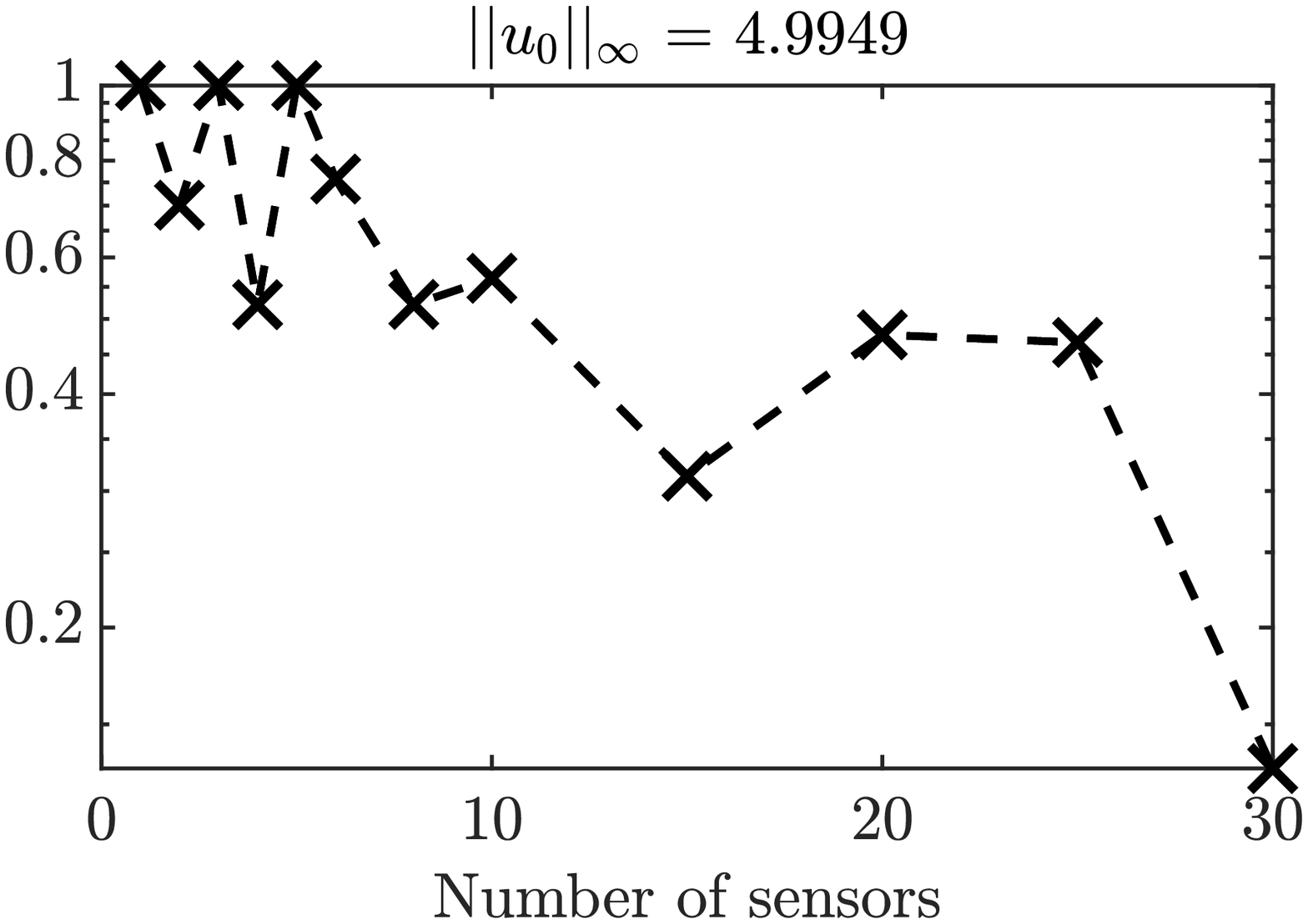}	
\caption{$L^{\infty}$ rel. error for $u_0$}
\label{fig_mix_Linf_u}
\end{subfigure}
\hfill
\begin{subfigure}[b]{0.45\textwidth}
\includegraphics[width=\textwidth,height=3.95cm,keepaspectratio]{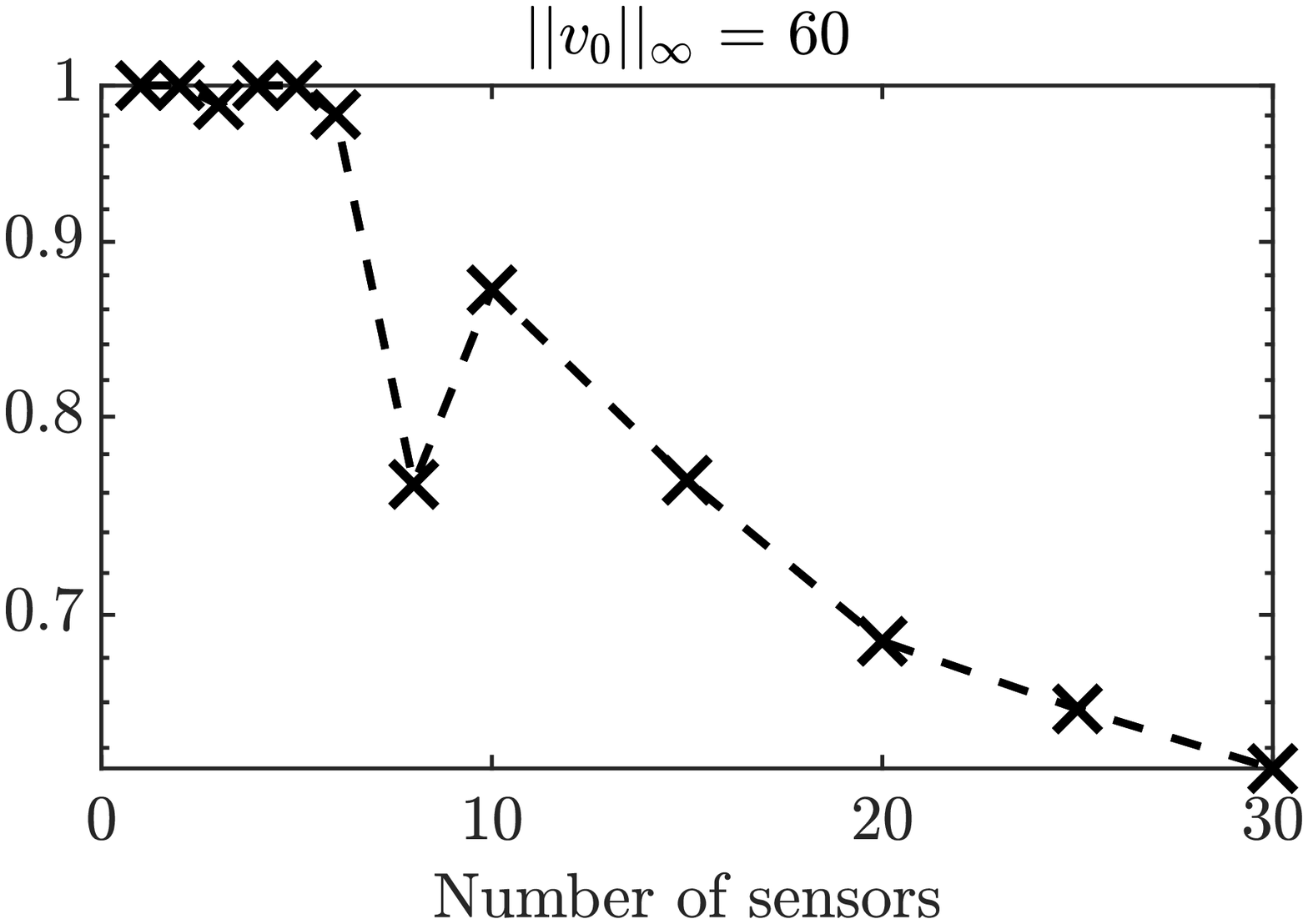}	
\caption{$L^{\infty}$ rel. error for $v_0$}
\label{fig_mix_Linf_v}
\end{subfigure} %

\vspace{5mm}

\begin{subfigure}[b]{0.45\textwidth}

\includegraphics[width=\textwidth,height=3.95cm,keepaspectratio]{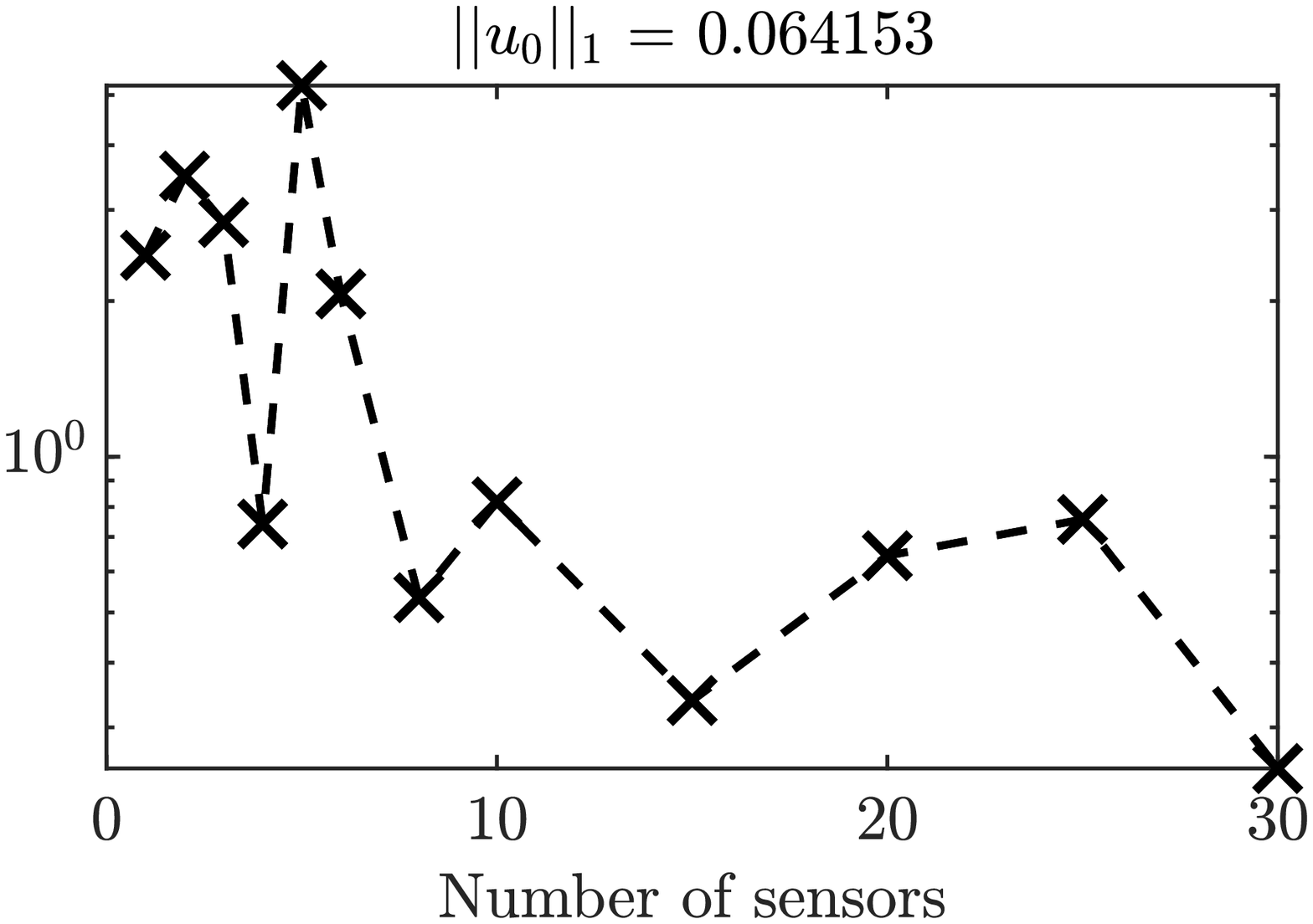}	
\caption{$L^1$ rel. error for $u_0$}
\label{fig_mix_L1_u}
\end{subfigure}
\hfill
\begin{subfigure}[b]{0.45\textwidth}

\includegraphics[width=\textwidth,height=3.95cm,keepaspectratio]{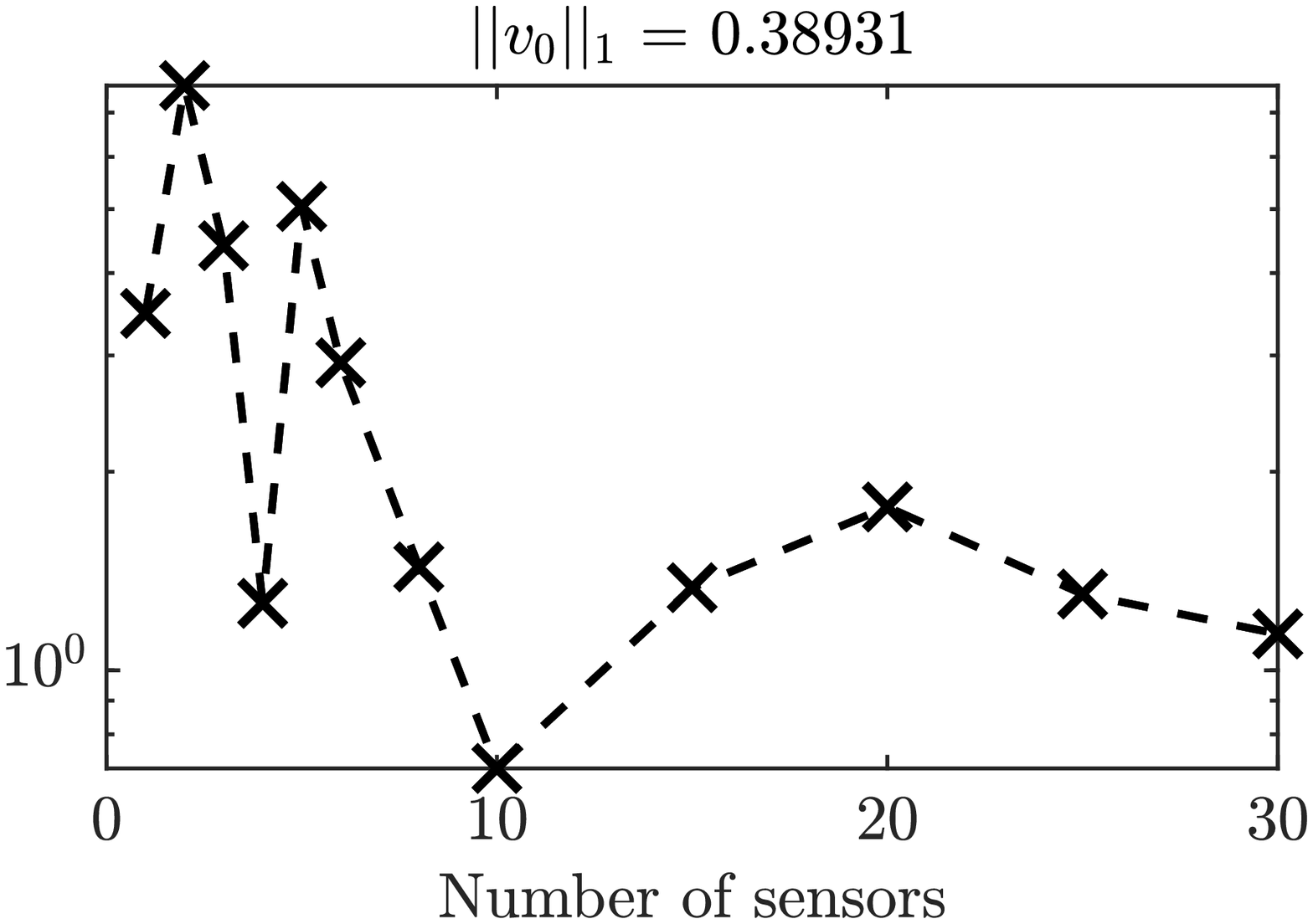}	
\caption{$L^1$ rel. error for $v_0$}
\label{fig_mix_L1_v}
\end{subfigure} %

\caption{Error plots for the test case \#3}
\end{figure}

\clearpage
\section{Experience plan sensibility results}
In all the presented figures are represented the following objects. Each box plot shows the median, the first and the third quartiles of a dataset corresponding to results obtained on the 40 different receiver dispositions. The ``targets" (dot inside a circle) correspond to the median of each boxplot. The dashed line links the mean of each box plot. The black crosses are the mean of each box plot, through which each dashed curve pass.


\begin{figure}[b]

\begin{subfigure}[b]{0.45\textwidth}
\includegraphics[width=\textwidth,height=3.95cm,keepaspectratio]{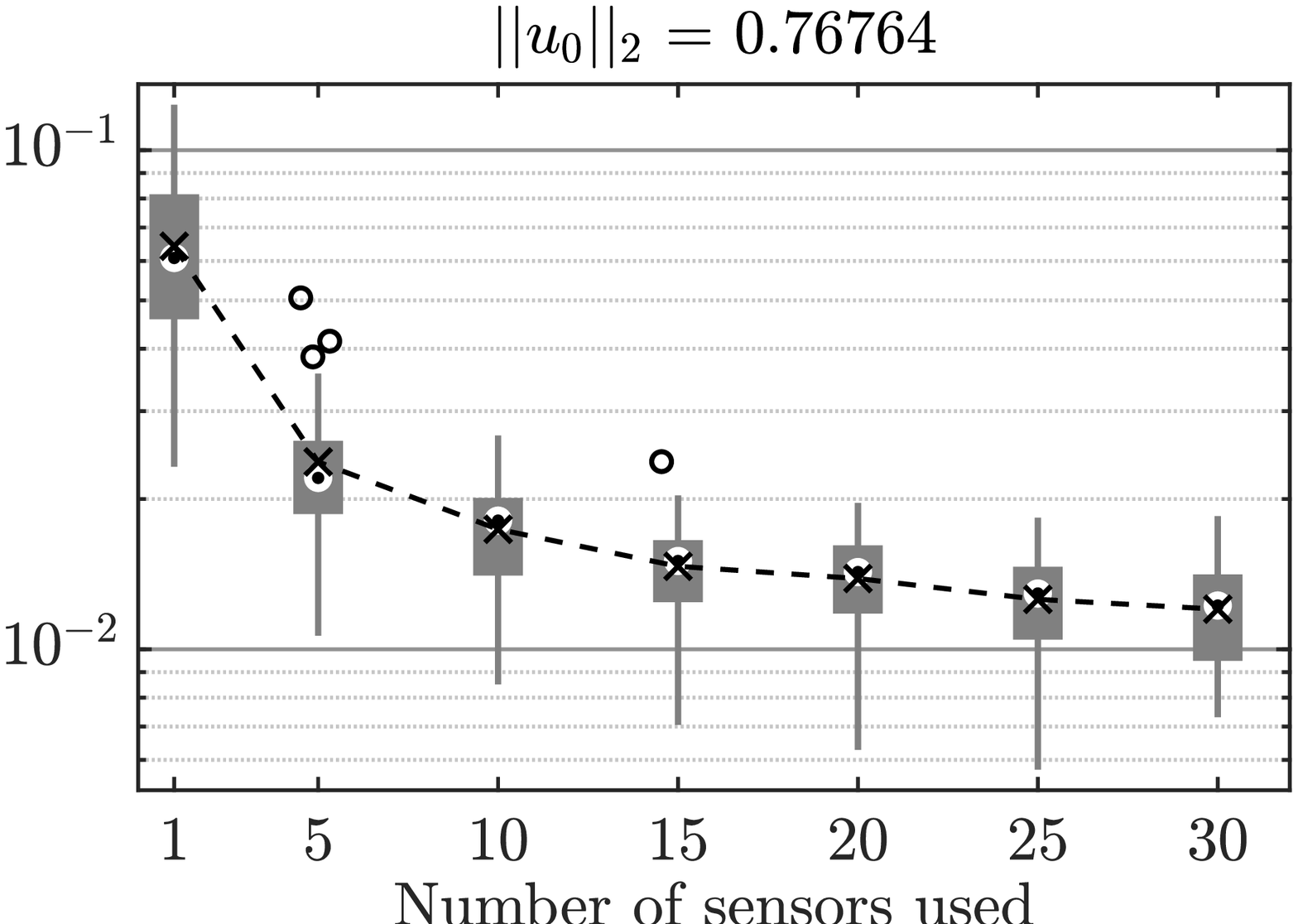}	
\caption{$L^2$ rel. error for $u_0$ (test case \#1)}
\label{fig_sensib_u_u_L2}
\end{subfigure}
\hfill
\begin{subfigure}[b]{0.45\textwidth}
\includegraphics[width=\textwidth,height=3.95cm,keepaspectratio]{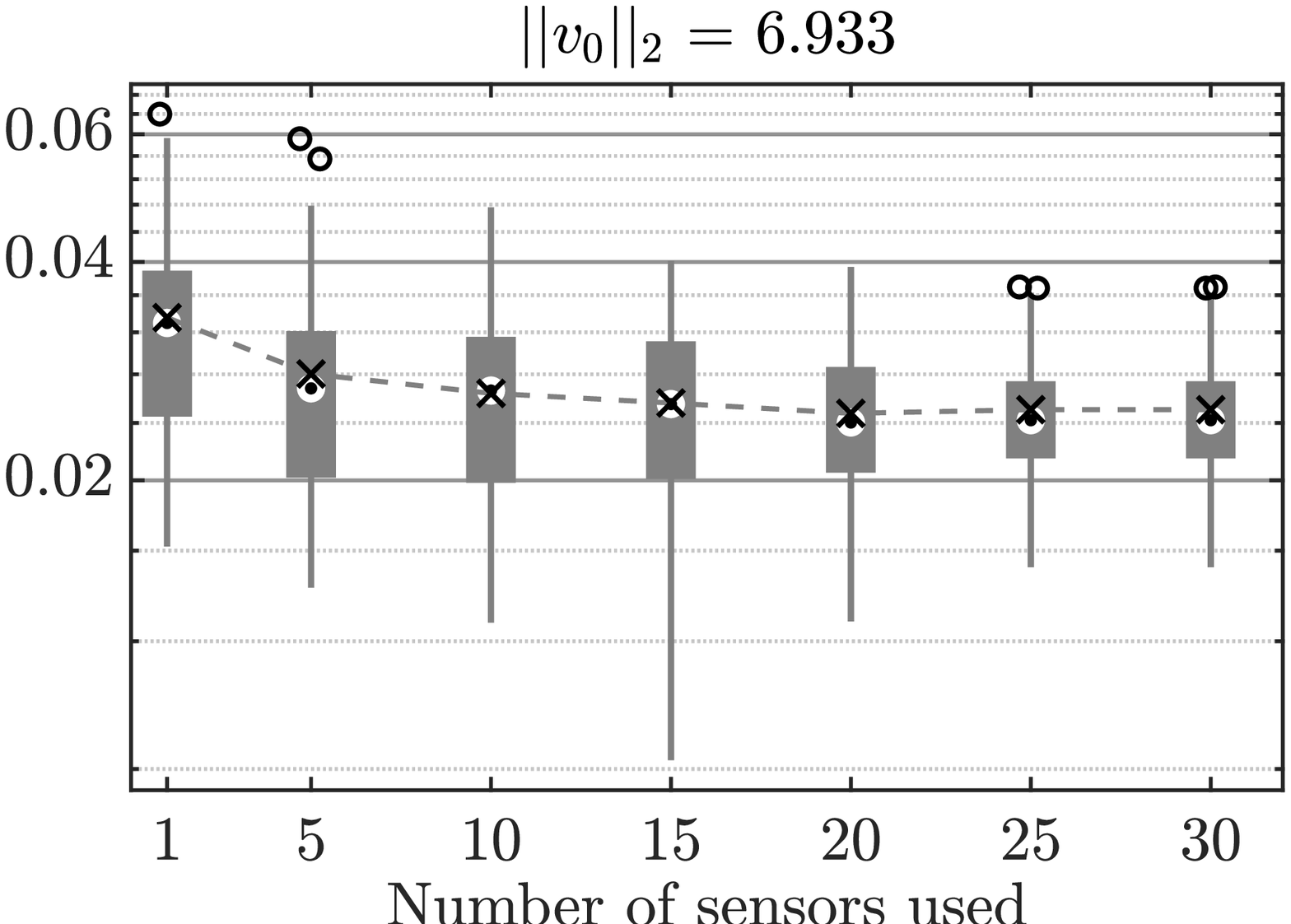}	
\caption{$L^2$ rel. error for $v_0$ (test case \#2)}
\label{fig_sensib_v_v_L2}
\end{subfigure}

\vspace{5mm}

\begin{subfigure}[b]{0.45\textwidth}
\includegraphics[width=\textwidth,height=3.95cm,keepaspectratio]{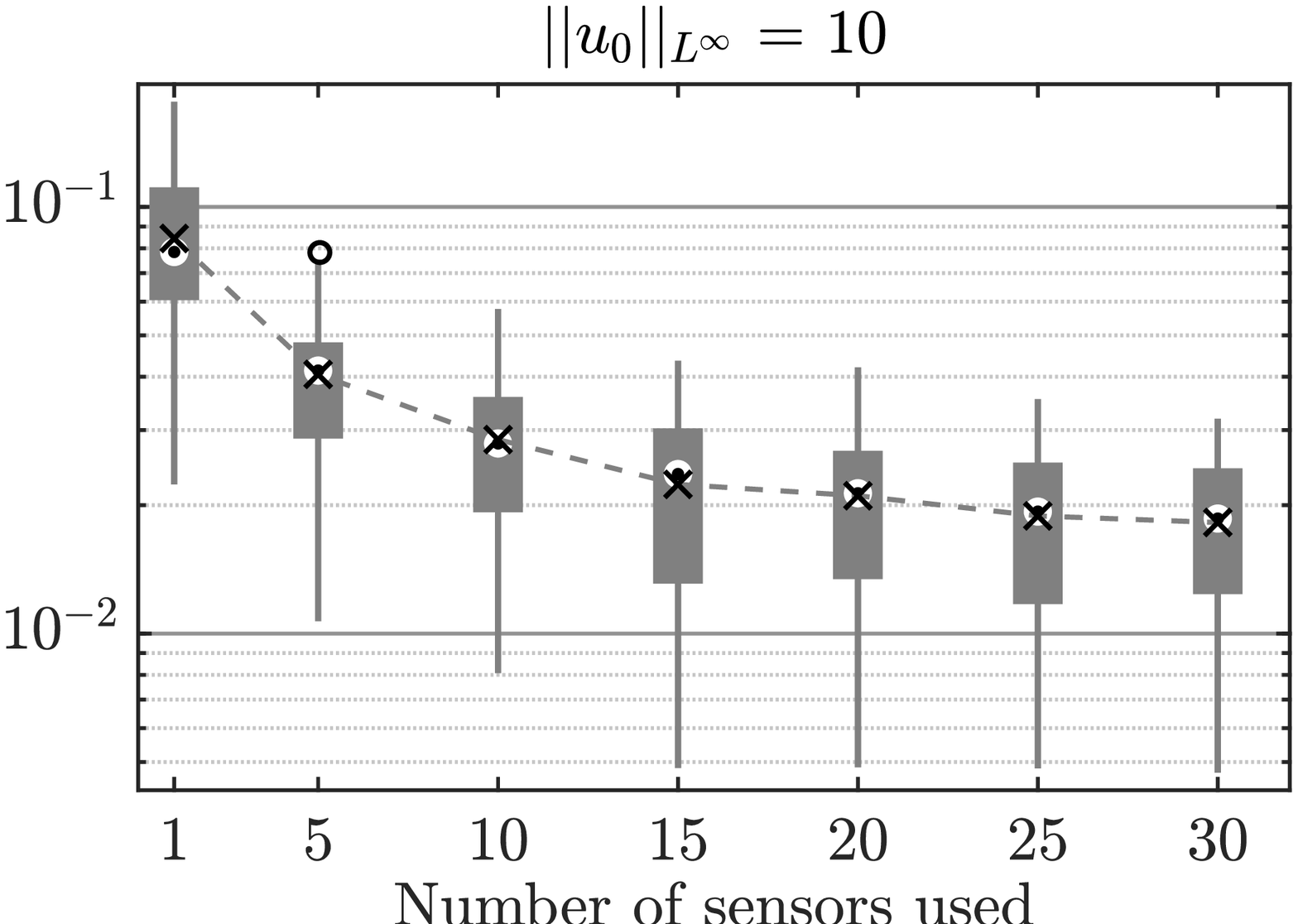}	
\caption{$L^{\infty}$ rel. error for $u_0$ (test case \#1)}
\label{fig_sensib_u_u_Linf}
\end{subfigure}
\hfill
\begin{subfigure}[b]{0.45\textwidth}
\includegraphics[width=\textwidth,height=3.95cm,keepaspectratio]{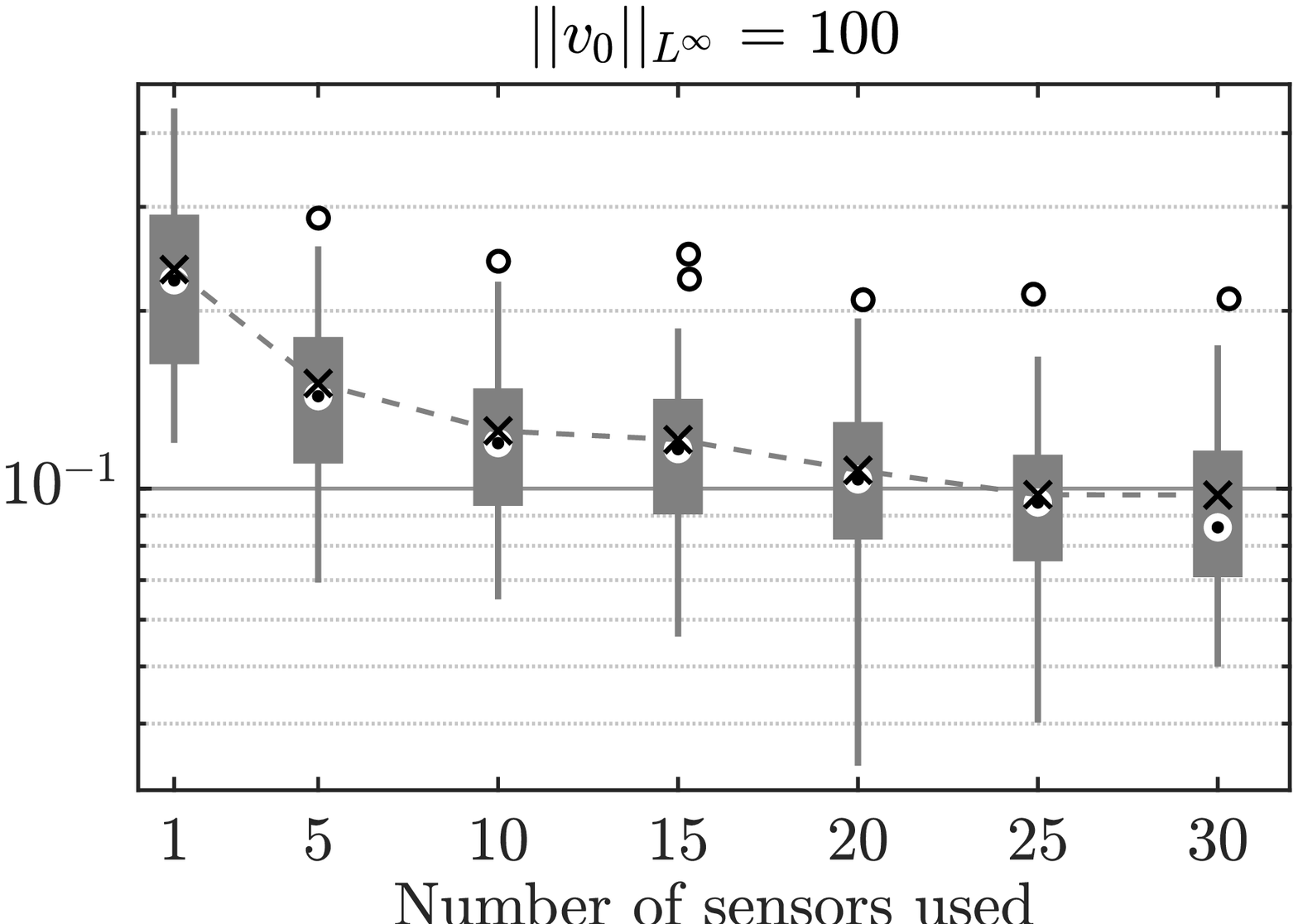}	
\caption{$L^{\infty}$ rel. error for $v_0$ (test case \#2)}
\label{fig_sensib_v_v_Linf}
\end{subfigure}

\vspace{5mm}

\begin{subfigure}[b]{0.45\textwidth}
\includegraphics[width=\textwidth,height=3.95cm,keepaspectratio]{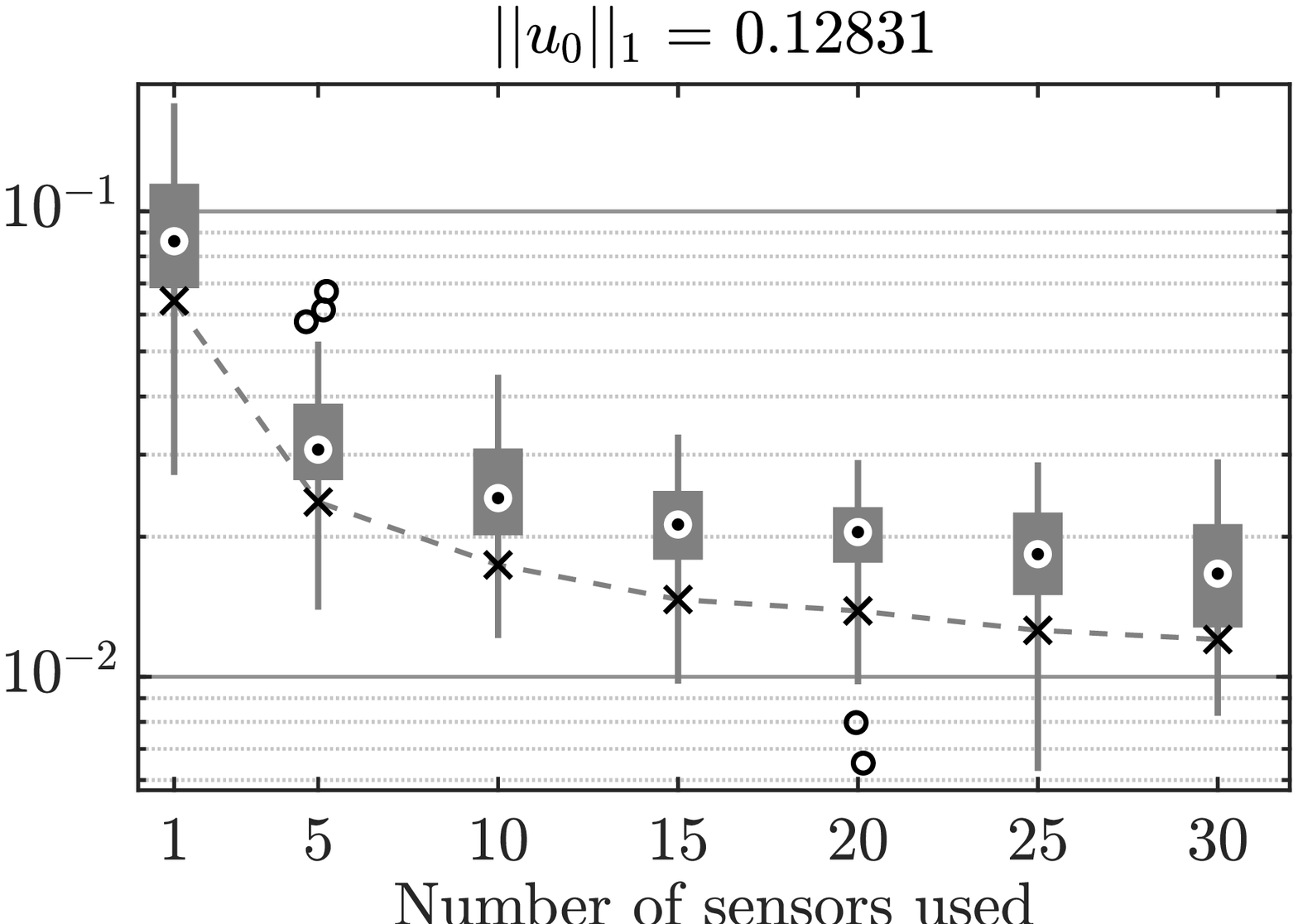}	
\caption{$L^{1}$ rel. error for $u_0$ (test case \#1)}
\label{fig_sensib_u_u_L1}
\end{subfigure}
\hfill
\begin{subfigure}[b]{0.45\textwidth}
\includegraphics[width=\textwidth,height=3.95cm,keepaspectratio]{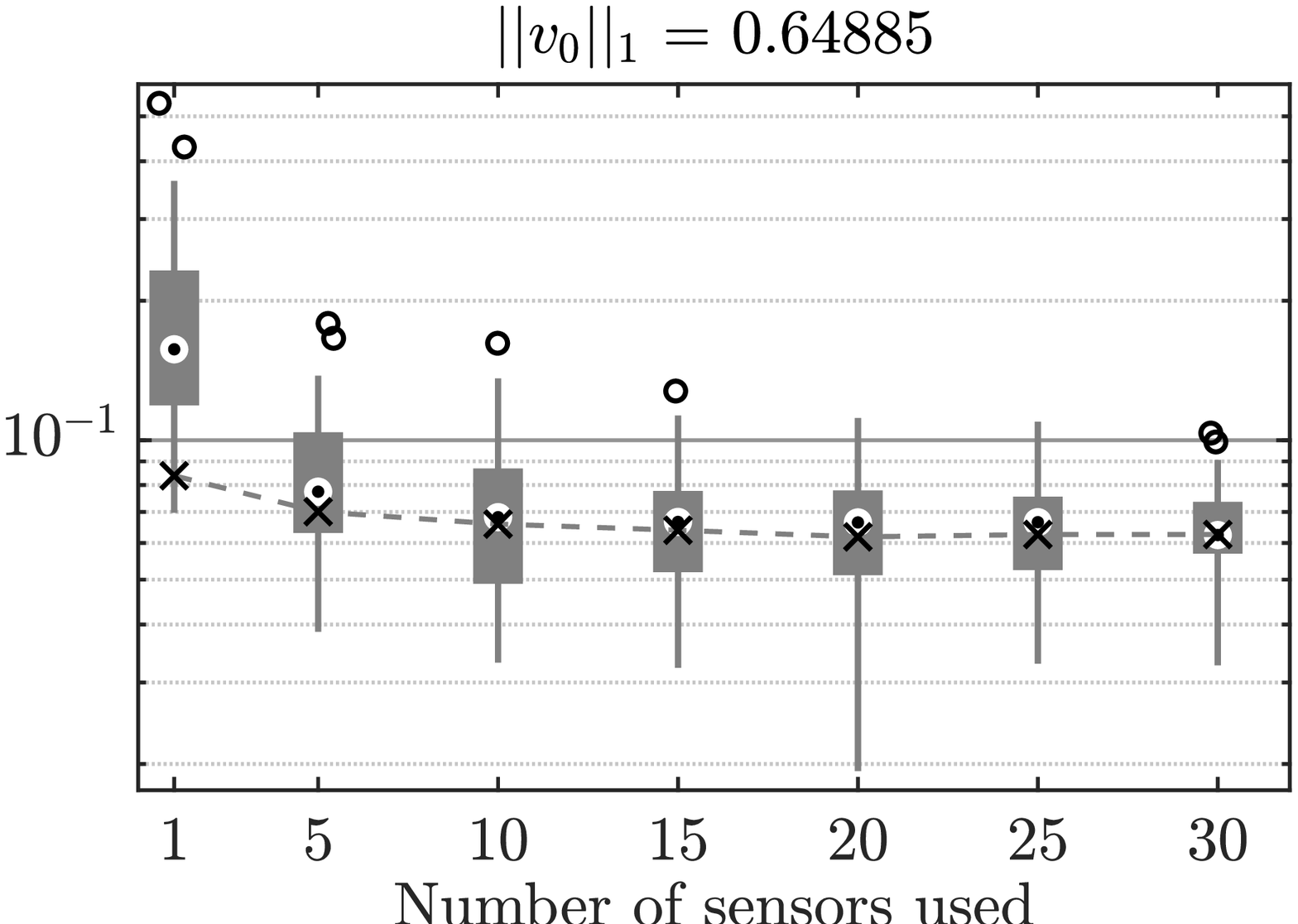}	
\caption{$L^{1}$ rel. error for $v_0$ (test case \#2)}
\label{fig_sensib_v_v_L1}
\end{subfigure}

\caption{Error plots for the test cases \#1 and \#2}
\label{sensib_1_and_2}
\end{figure}


\begin{figure}

\begin{subfigure}[b]{0.45\textwidth}
\includegraphics[width=\textwidth,height=3.95cm,keepaspectratio]{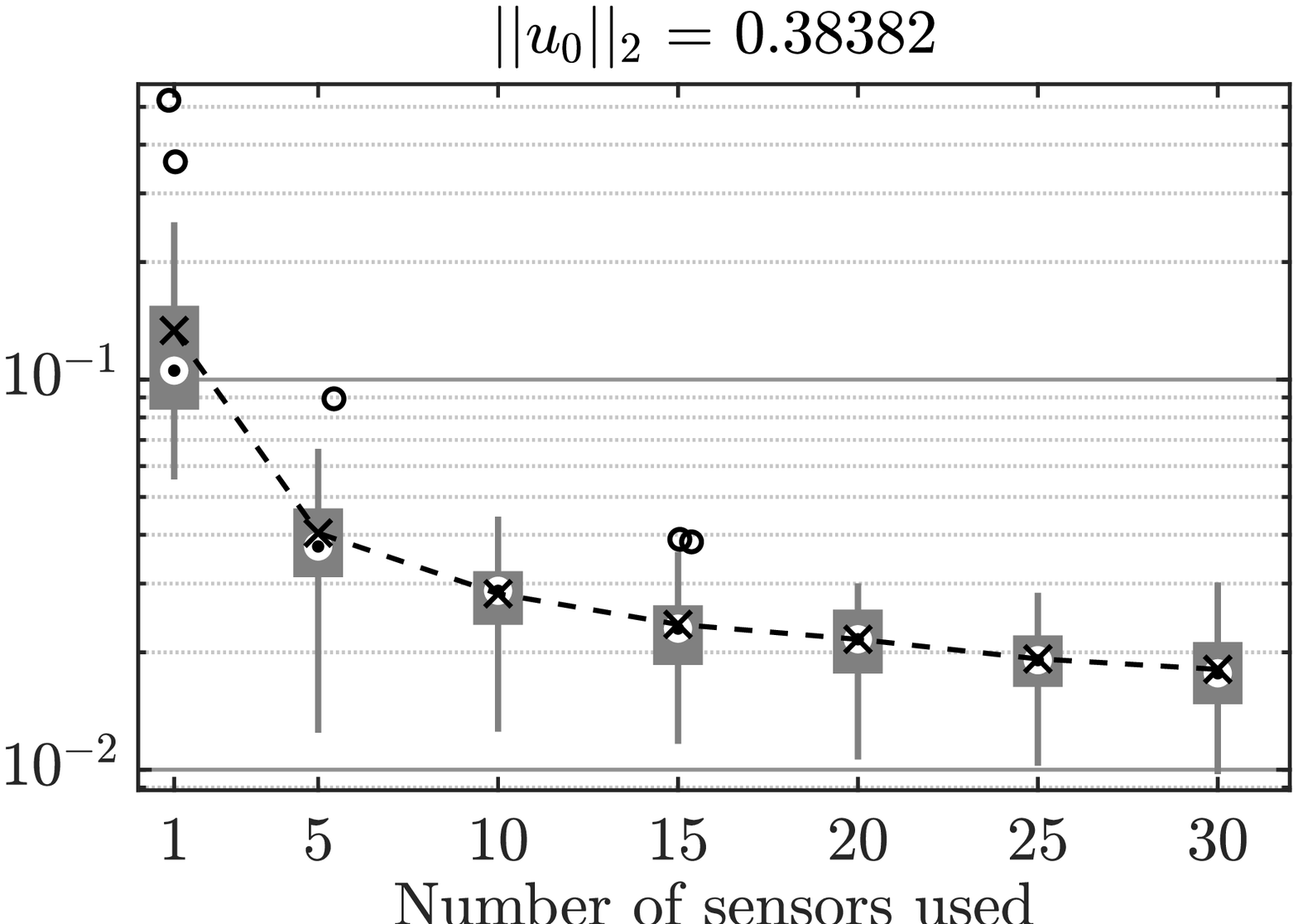}	
\caption{$L^2$ rel. error for $u_0$ (test case \#3)}
\label{fig_sensib_mix_L2_u}
\end{subfigure}
\hfill
\begin{subfigure}[b]{0.45\textwidth}
\includegraphics[width=\textwidth,height=3.95cm,keepaspectratio]{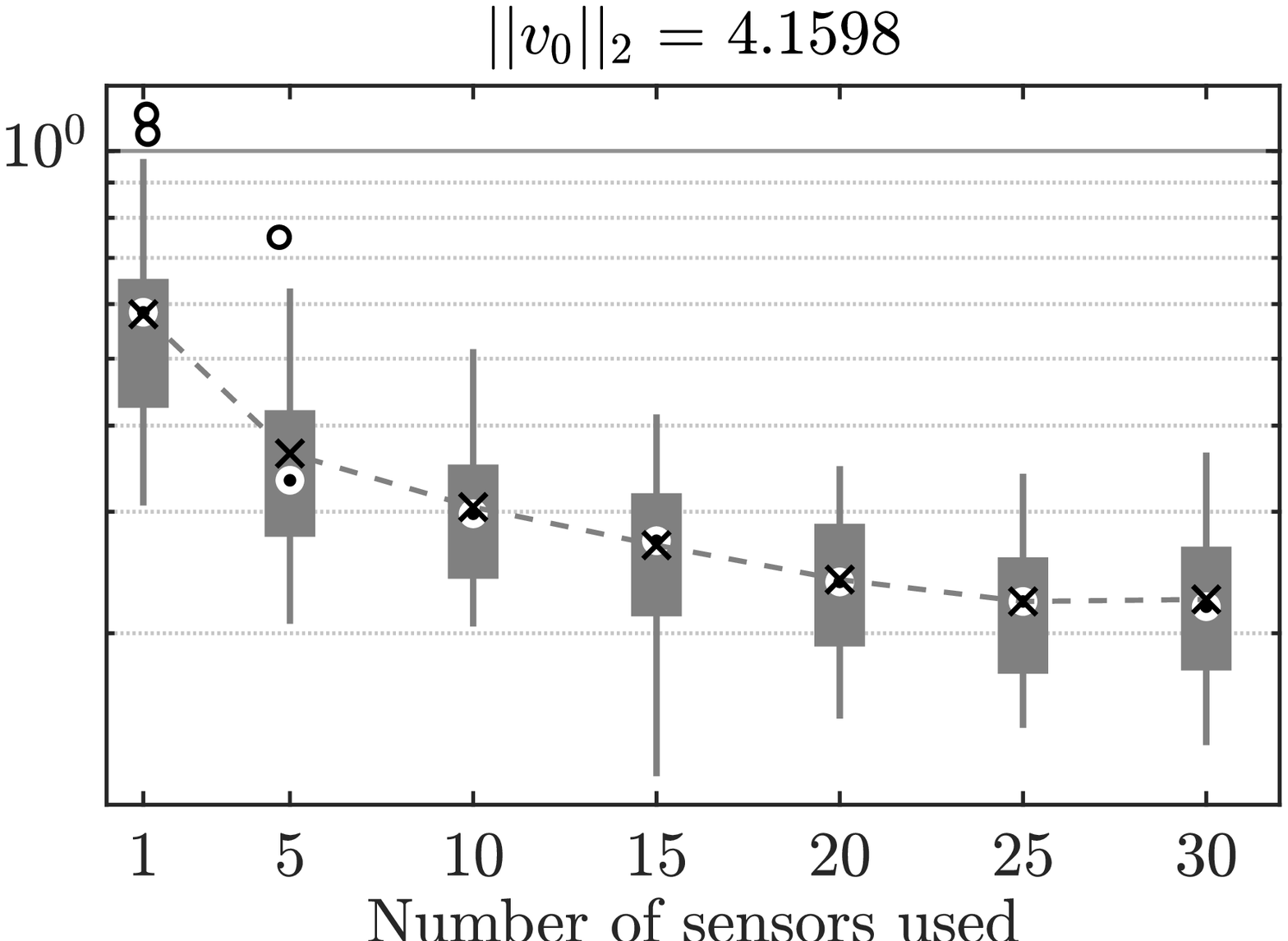}
\caption{$L^2$ rel. error for $v_0$ (test case \#3)}	
\label{fig_sensib_mix_L2_v}
\end{subfigure}\\
\vspace{5mm}
\begin{subfigure}[b]{0.45\textwidth}
\includegraphics[width=\textwidth,height=3.95cm,keepaspectratio]{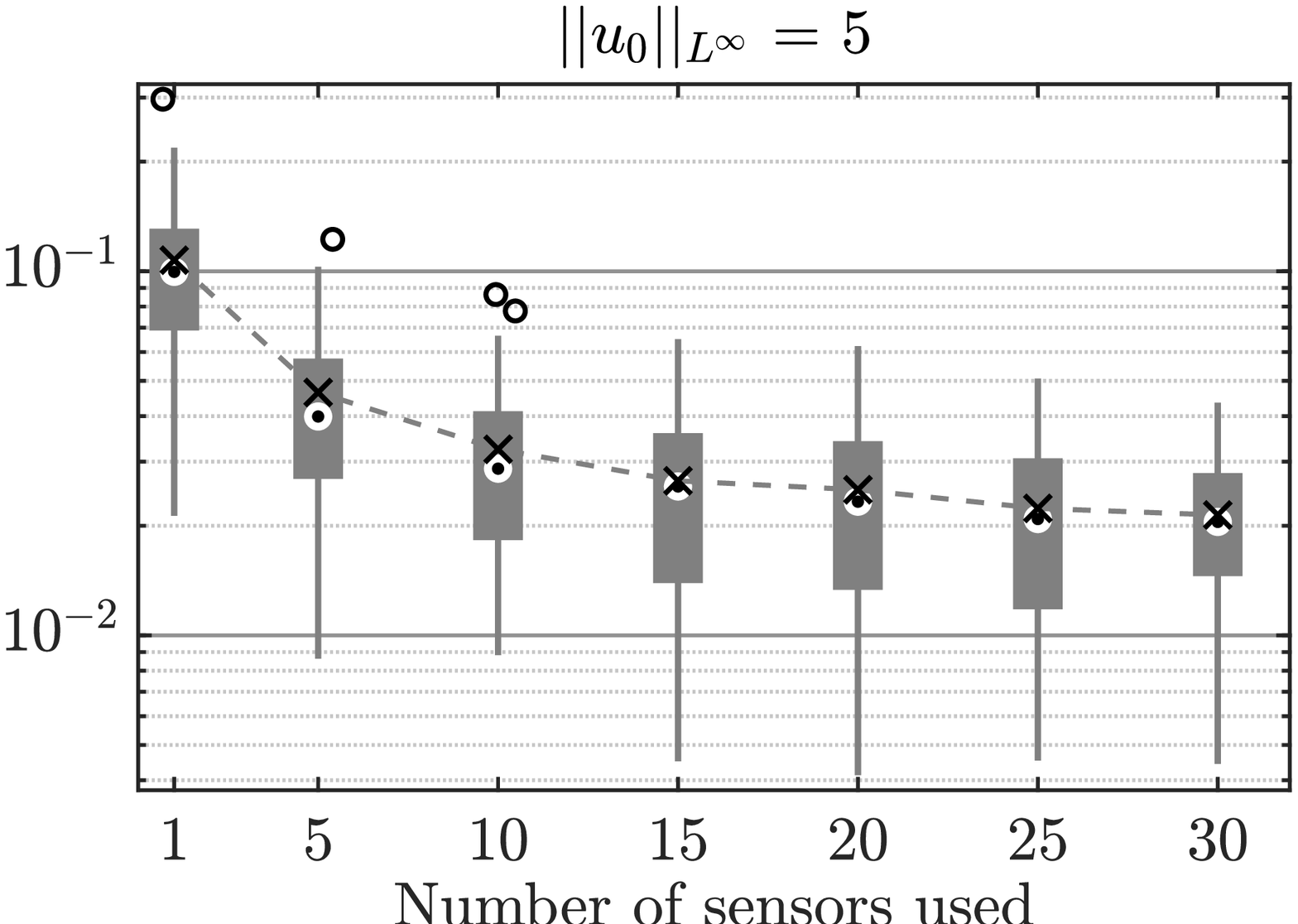}
\caption{$L^{\infty}$ rel. error for $u_0$ (test case \#3)}	
\label{fig_sensib_mix_Linf_u}
\end{subfigure} %
\hfill
\begin{subfigure}[b]{0.45\textwidth}
\includegraphics[width=\textwidth,height=3.95cm,keepaspectratio]{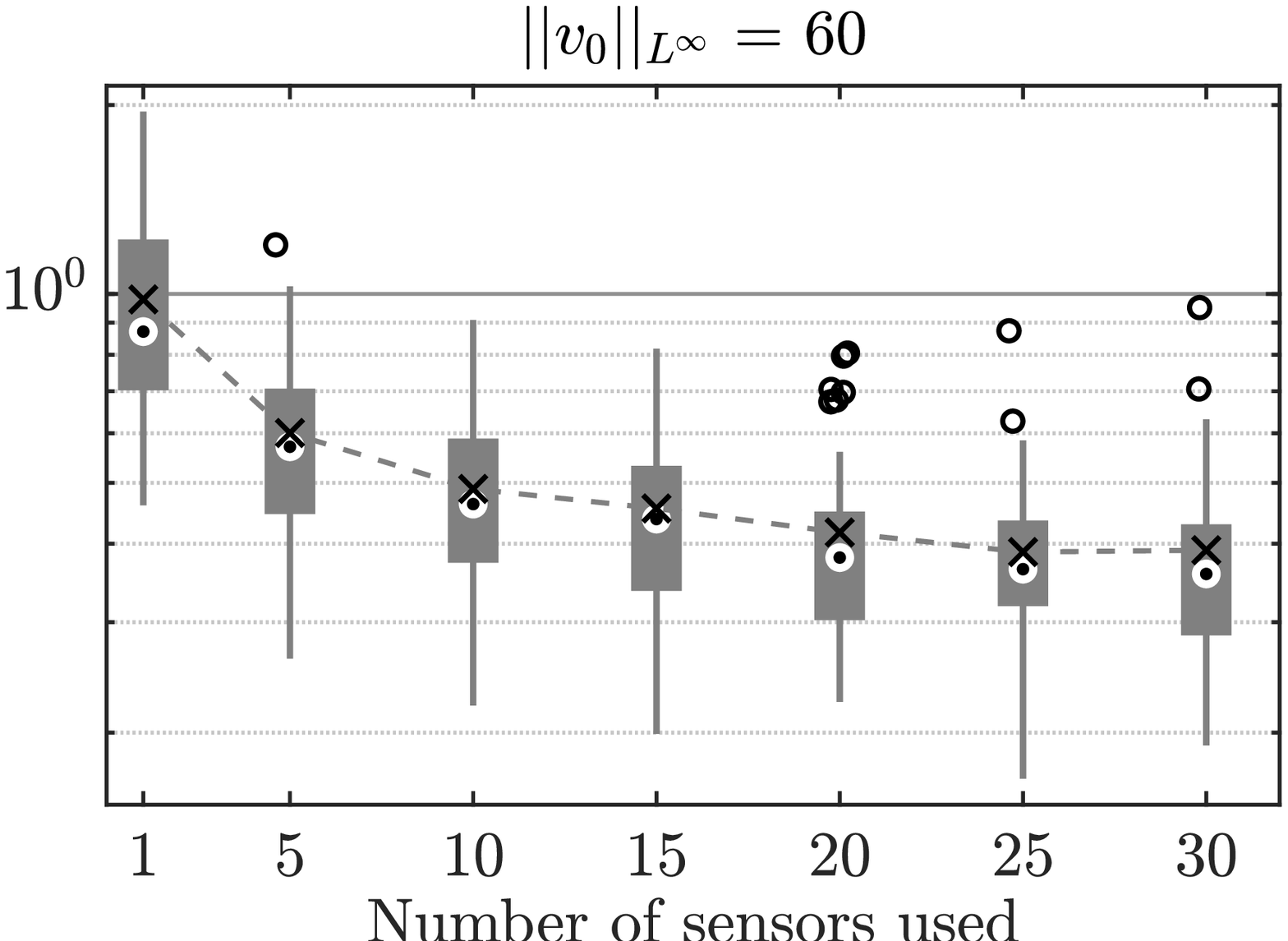}	
\caption{$L^{\infty}$ rel. error for $v_0$ (test case \#3)}	
\label{fig_sensib_mix_Linf_v}
\end{subfigure} %

\vspace{5mm}

\begin{subfigure}[b]{0.45\textwidth}
\includegraphics[width=\textwidth,height=3.95cm,keepaspectratio]{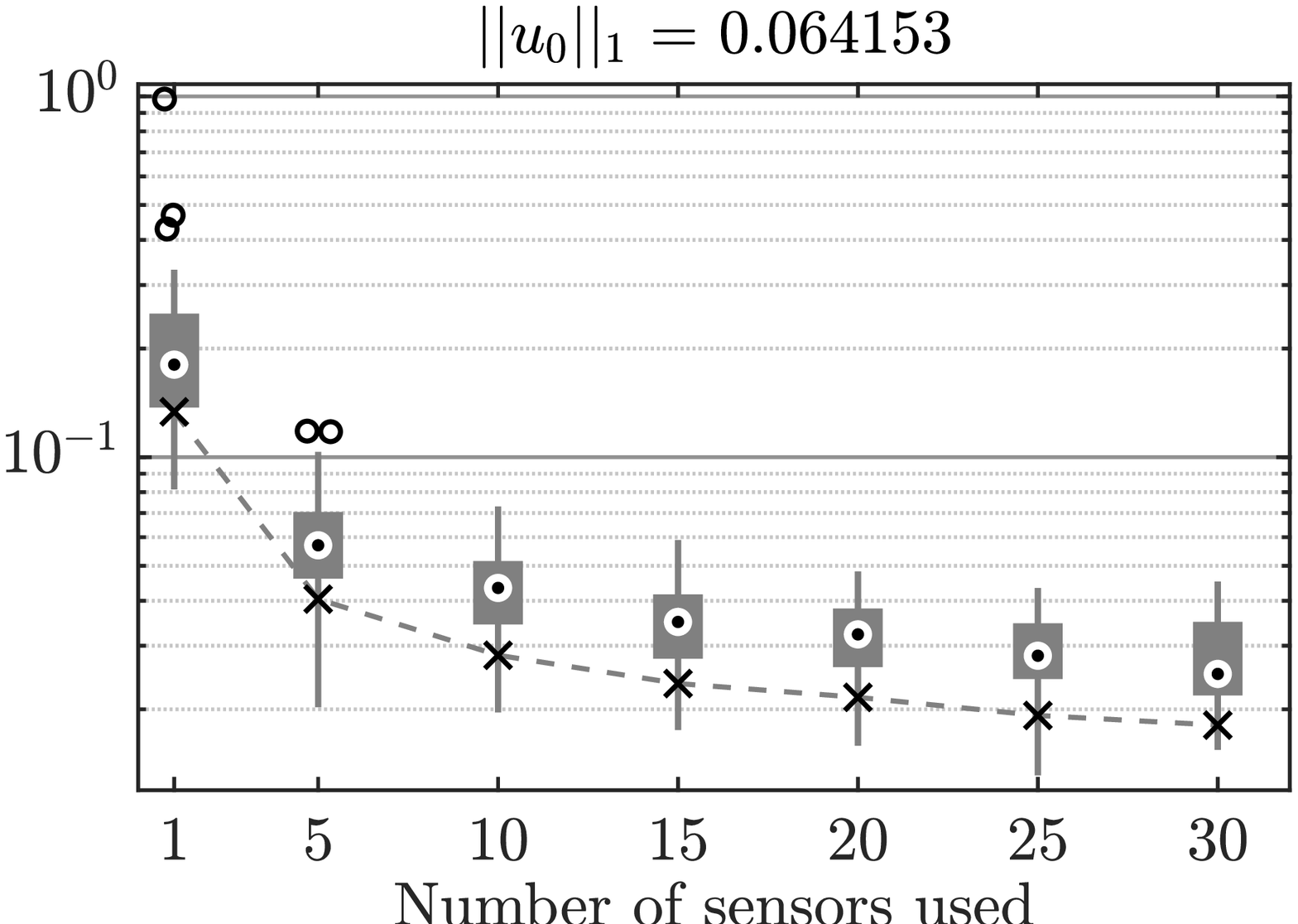}	
\caption{$L^1$ rel. error for $u_0$ (test case \#3)}
\label{fig_sensib_mix_L1_u}
\end{subfigure}
\hfill
\begin{subfigure}[b]{0.45\textwidth}
\includegraphics[width=\textwidth,height=3.95cm,keepaspectratio]{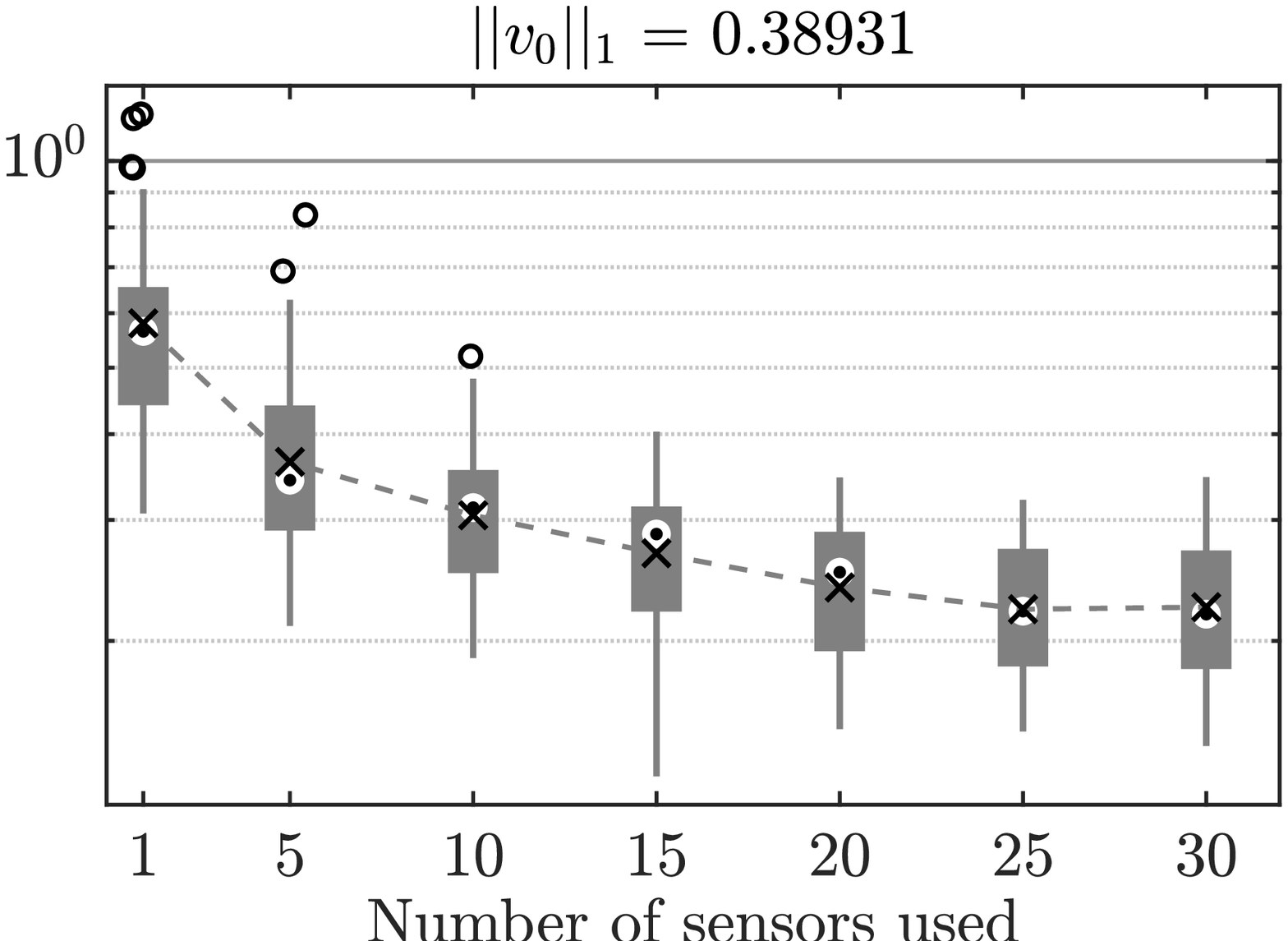}	
\caption{$L^1$ rel. error for $v_0$ (test case \#3)}
\label{fig_sensib_mix_L1_v}
\end{subfigure} %

\caption{Error plots for the test case \#3}
\label{sensib_3}
\end{figure}

\clearpage

\bibliographystyle{amsplain}
\bibliography{bibliography}

\end{document}